\newtheorem{theorem}{Theorem}[section]
\newtheorem{lemma}[theorem]{Lemma}
\newtheorem{corollary}[theorem]{Corollary}
\newtheorem{definition}[theorem]{Definition}
\newtheorem{proposition}[theorem]{Proposition}
\theoremstyle{definition}
\newtheorem{remark}[theorem]{Remark}
\def\bV{\mathbf{V}}
\def\bfEVO{\mathbf{E}_{\mathbf{V},\Omega}}
\def\bfEVH{\mathbf{E}_{\mathbf{V},H}}
\def\mD{\mathcal{D}}
\def\mP{\mathcal{P}}
\def\mQ{\mathcal{Q}}
\def\mN{\mathcal{N}}
\def\mK{\mathcal{K}}
\def\mL{\mathcal{L}}
\def\Hom{{\rm{Hom}}}
\def\CC{{\rm{CC}}}
\begin{document}
\title{Lusztig sheaves, decomposition rule and restriction rule}
\author[Yixin Lan]{Yixin Lan}
\address{Academy of Mathematics and Systems Science, Chinese Academy of Sciences, Beijing 100190, P.R.China}
\email{lanyixin@amss.ac.cn (Y.Lan)}
\thanks{The author is supported by the National Natural Science Foundation of China [Grant No. 1288201] and [Grant No. 12471030].}

\subjclass[2000]{Primary 17B37;  Secondary 14D21, 16G20.}

\date{\today}

\bibliographystyle{abbrv}

\keywords{Canonical basis, based modules, quiver variety, Littlewood-Richardson rule}

\begin{abstract}
In this article, we realize the subquotient based modules of certain tensor products or restricted modules via Lusztig's perverse sheaves on  multi-framed quivers, and provide a construction of their canonical bases. As an application, we prove that  the decomposition and restriction coefficients of symmetric Kac-Moody algebras equal to the dimensions of top Borel-Moore homology groups for certain locally closed subsets of Nakajima's quiver varieties.
\end{abstract}
\maketitle
\setcounter{tocdepth}{1}\tableofcontents

\begin{spacing}{1}

\section{Introduction}\label{introduction}
Given a symmetric Cartan matrix $C=(a_{ij})_{i,j\in I}$, the matrix $C$ defines a bilinear form $\langle -,-\rangle: \mathbb{Z}I \times \mathbb{Z} I \rightarrow \mathbb{Z}$, and there is a Kac-Moody Lie algebra $\mathfrak{g}$ associated to $C$.  The quantum group $\mathbf{U}(\mathfrak{g})$ is the $\mathbb{Q}(v)$-algebra generated by $E_i, F_i, K_{\nu}$ for $i\in I, \nu\in \mathbb{Z}I$, subject to the following relations:
\begin{itemize}
	\item {\rm{(a)}} $K_0 = 1$ and $K_{\nu}K_{\nu'} = K_{\nu+\nu'}$ for any $\nu,\nu' \in \mathbb{Z}I$;
	\item {\rm{(b)}} $K_{\nu}E_i = v^{\langle \nu,i \rangle} E_i K_{\nu}$ for any $i \in I, \nu \in \mathbb{Z}I$;
	\item {\rm{(c)}} $K_{\nu}F_i = v^{-\langle \nu,i \rangle} F_i K_{\nu}$ for any $i \in I, \nu \in \mathbb{Z}I$;
	\item {\rm{(d)}} $E_i F_j - F_j E_i = \delta_{ij} \frac{K_{i} - K_{-i}}{v - v^{-1}}$ for any $i,j\in I$;
	\item {\rm{(e)}} $\sum_{p+q=1-a_{ij}} (-1)^p E_i^{(p)} E_j E_i^{(q)} = 0$ for any $i\not=j$;
	\item {\rm{(f)}} $\sum_{p+q=1-a_{ij}} (-1)^p F_i^{(p)} F_j F_i^{(q)} = 0$ for any $i\not=j$.
\end{itemize}
Here $E_i^{(n)}= E_i^n/[n]!$, $F_i^{(n)}= F_i^n/[n]!$ are the divided power of $E_i,F_{i}$, where $[n]! := \prod _{s=1}^n \frac{v^s - v^{-s}}{v - v^{-1}}$.  Taking the classical limit $v\rightarrow 1$, we obtain the universal enveloping algebra $\mathbf{U}_{1}(\mathfrak{g})$ of $\mathfrak{g}$ from the quantum group.

Given a dominant integral  weights  $\lambda$, the module $M(\lambda)$ is realized by 
\begin{equation*}
	\begin{split}
		M(\lambda) =& \mathbf{U}(\mathfrak{g})/ ( \sum_{i\in I} \mathbf{U}(\mathfrak{g}) E_i + \sum_{\nu \in \mathbb{Z}[I]} \mathbf{U}(\mathfrak{g}) (K_{\nu} - v^{\langle \nu,\lambda \rangle})).
	\end{split}
\end{equation*}
The Verma module $M(\lambda)$ has a unique irreducible quotient,
\begin{equation}\label{canquotient}
	\begin{split}
		L(\lambda) =& \mathbf{U}(\mathfrak{g})/ ( \sum_{i\in I} \mathbf{U}(\mathfrak{g}) E_i + \sum_{\nu \in \mathbb{Z}[I]} \mathbf{U}(\mathfrak{g}) (K_{\nu} - v^{\langle \nu,\lambda \rangle}) + \sum_{i \in I} \mathbf{U}(\mathfrak{g}) F_i^{\langle i,\lambda \rangle +1})\\
		\cong & \mathbf{U}^{-}(\mathfrak{g})/ (  \sum_{i \in I} \mathbf{U}^{-}(\mathfrak{g}) F_i^{\langle i,\lambda \rangle +1}).
	\end{split}
\end{equation}
Take $v \rightarrow 1$, we can obtain an irreducible highest weight $\mathfrak{g}$-module $L_{1}(\lambda)$.

\subsection{Decomposition rule and Restriction Rule}
Given a Kac-Moody algebra $\mathfrak{g}$ or its quantized enveloping algebra $\mathbf{U}(\mathfrak{g})$, one can consider the category of integrable highest weight $\mathfrak{g}$(or $\mathbf{U}(\mathfrak{g})$)-modules. The irreducible  integrable highest weight $\mathfrak{g}$-modules $L_{1}(\lambda)$  (or $\mathbf{U}(\mathfrak{g})$-modules $L(\lambda)$) are parameterized by dominant integral  weights  $\lambda$ of $\mathfrak{g}$.

For any two dominant integral  weights  $\lambda^{1}$ and $\lambda^{2}$, the tensor product $L_{1}(\lambda^{1}) \otimes L_{1}(\lambda^{2})$ is also an integrable highest weight $\mathfrak{g}$-module, and decomposed into a direct sum of  irreducible integrable highest weight $\mathfrak{g}$-modules
\begin{equation}\label{decnumber}
	L_{1}(\lambda^{1}) \otimes L_{1}(\lambda^{2}) \cong \bigoplus_{\mu, \alpha \in S(\mu)}L_{1}(\mu)^{\alpha},
\end{equation}
here $\mu$ runs over  dominant integral  weights  of $\mathfrak{g}$ and $S(\mu)$ is the set of direct summands of $L_{1}(\lambda^{1}) \otimes L_{1}(\lambda^{2})$, which is isomorphic to $L_{1}(\mu)$. This is called the decomposition rule (or Littlewood-Richardson rule), and the order $m_{\mu}^{\lambda^{1},\lambda^{2}}$ of $S(\mu)$ is called the decomposition coefficient.

Similarly, we can consider a Levi subalgebra $\mathfrak{l}$ of $\mathfrak{g}$, and consider the restricted module $\mathbf{res}^{\mathfrak{g}}_{\mathfrak{l}} L_{1}(\lambda)$. As an integrable highest weight $\mathfrak{l}$-module,  $\mathbf{res}^{\mathfrak{g}}_{\mathfrak{l}} L_{1}(\lambda)$ is also ecomposed into a direct sum of  irreducible integrable highest weight $\mathfrak{l}$-modules
\begin{equation}\label{resnumber}
	\mathbf{res}^{\mathfrak{g}}_{\mathfrak{l}} L_{1}(\lambda)=\bigoplus_{\mu_{\mathfrak{l}}, \alpha \in S(\mu_{\mathfrak{l}})}L_{1}(\mu_{\mathfrak{l}})^{\alpha},
\end{equation}
here $\mu_{\mathfrak{l}}$ runs over  dominant integral  weights  of $\mathfrak{l}$ and $S(\mu_{\mathfrak{l}})$ is the set of direct summands of $\mathbf{res}^{\mathfrak{g}}_{\mathfrak{l}} L(\lambda)$, which is isomorphic to $L_{1}(\mu_{\mathfrak{l}})$. This is called the restriction rule, and  the order $m_{\mu_{\mathfrak{l}}}^{\lambda}$ of $S(\mu_{\mathfrak{l}})$ is called the restriction coefficient. 

It is an important question to determine the decomposition rule and restriction rule.  The author in \cite{Path} construct Demazure modules from  paths and root operators, and prove that the decomposition rule and restriction rule is determined by certain paths. Later, Kashiwara proves similar Demazure character formula for his crystals in \cite{KM}. Their results give a  combinatorial solution  to this question. In this article, we expect to give a categorical and geometric answer to this question.

\subsection{Perverse sheaves and Canonical bases for quantum groups and their representations}

For a given symmetric generalized Cartan matrix $C$, there is a finite quiver $Q=(I,H,\Omega)$ associated to $C$. We assume that $Q$ doesn't have loops but may have multiple arrows. In \cite{MR1088333},\cite{MR1227098}, \cite{MR1653038} and \cite{CBandH}, G.Lusztig has considered a certain class of  perverse sheaves on the moduli spaceof  representations of $Q$. These semisimple complexes are called Lusztig's sheaves.  The Grothendieck group of Lusztig's sheaves  has a bialgebra structure, which is canonically isomorphic to the integral form ${_{\mathcal{A}}\mathbf{U}^{+}}(\mathfrak{g})$ (or ${_{\mathcal{A}}\mathbf{U}^{-}}(\mathfrak{g})$) of the positive (or negative) part of the quantum group.  Moreover, the simple Lusztig sheaves forms the canonical basis.

Under the canonical map (\ref{canquotient}), the nonzero images of elements in the canonical basis provide a basis of $L(\lambda)$, which is called the canonical basis of $L(\lambda)$ by Lusztig. By a algebraic construction, Lusztig also defines the canonical basis for certain tensor products in \cite{lusztig1992canonical}, and his construction is generalized by \cite{bao2016canonical}. One can also see details in Section 2.1. Inspired by \cite{zheng2014categorification} and \cite{MR3177922}, the authors of \cite{fang2023lusztig} and \cite{fang2023tensor} consider certain localizations of Lusztig's category $\mathcal{Q}$ on multi-framed quivers, and give a sheaf theoretic realization of the irreducible integrable highest weight module $L(\lambda)$ and their tensor products. They also prove that the nonzero simple perverse sheaves in localization coincide with the canonical basis defined by \cite{bao2016canonical} and \cite{lusztig1992canonical}. See details in Section 3.

To determine the decomposition rule, it is a natural idea to decompose the canonical basis  of the tensor product into canonical bases of direct summands. However, the canonical basis is not compatible with direct summands in general, but only compatible with certain subquotients $M[\geqslant \mu]$, $M[>\mu]$ and $M[\geqslant \mu]/M[> \mu]$. See details in Section 2.2. Fortunately, the informations of these subquotients are enough to determine the decomposition rule. 

Our first main result is to give a  sheaf theoretic realization of those sub/quotient based modules mentioned above. More precisely, we base on the results in \cite{fang2023tensor}, and define thick subcategories $\mD^{\geqslant \mu}$ and  $\mD^{> \mu}$ for a dominant integral wight $\mu$ of $\mathfrak{g}$ by some micro-local conditions. The Grothendieck groups of localized Lusztig's sheaves $\mL^{\geqslant \mu}, \mL^{> \mu}$ and $\mL^{ \mu}$ in these thick subcategories are isomorphic to certain subquotients defined in \cite[27.1 and 27.2]{MR1227098}, and nonzero simple perverse sheaves provide the canonical bases of these subquotients. See details in Theorem \ref{main1}.

Moreover, if we focus on a subset $J \subseteq I$ and regard the vertices in $I \backslash J$ as framing, similar  micro-local conditions define thick subcategories $\mD^{\geqslant \mu_{J}}$ and  $\mD^{> \mu_{J}}$ for a dominant integral wight $\mu_{J}$ of the Levi subalgebra $\mathfrak{l}=\mathfrak{l}_{J}$, such that the Grothendieck groups of localized Lusztig's sheaves $\mL^{\geqslant \mu_{J}}, \mL^{> \mu_{J}}$ and $\mL^{ \mu_{J}}$ in these thick subcategories are isomorphic to certain subquotients of restricted modules.  Simple perverse sheaves also provide the canonical bases, which can determine the restriction rule. See details in Theorem \ref{main2}.

\subsection{Nakajima's quiver varieties and their relations with canonical bases}

Using GIT theory, Nakajima defines the quiver varieties and consider their  top Borel-Moore homology groups in \cite{MR1604167}. The Hecke correspondence induces operators on  Borel-Moore homology groups, which give  $\mathfrak{g}$-module structures on  Borel-Moore homology groups of quiver varieties. In particular, the top Borel-Moore homology group of the Lagrangian fiber $\mathfrak{L}(\omega)$ is isomorphic to the irreducible integrable highest weight  $\mathfrak{g}$-module. Later, in \cite{MR1865400}, Nakajima also defines the tensor product varieties, whose top Borel-Moore homology groups realize the tensor products of irreducible integrable highest weight  $\mathfrak{g}$-modules. Later, other homology  theory on quiver varieties are considered by many mathematicians. For example,  the authors in \cite{maulik2019quantum}  consider the quantum cohomology, and the authors in \cite{nakajima2001quiver},  \cite{KHA} and \cite{VV} consider the equivariant K-theory. Other theories like coherent sheaves, $\mathcal{D}$-modules and (monodromic) mixed Hodge modules are also considered in \cite{BL}, \cite{CKL}, \cite{BenD1} and \cite{WG}.

Following \cite{hennecart2024geometric}, the authors in \cite{fang2025lusztigsheavescharacteristiccycles} use the characteristic cycles to build a $\mathfrak{g}$-linear morphism from the Grothendieck groups of localization in \cite{fang2023lusztig} and \cite{fang2023tensor} to the Borel-Moore homology groups of Nakajima's quiver variety \cite{MR1604167} and tensor product variety \cite{MR1865400}. This result builds a relation between Lusztig's canonical basis and Nakajima's quiver varieties. See details in Section 5.

As an application, by applying the characteristic cycles defined in \cite{fang2025lusztigsheavescharacteristiccycles}, we can prove that the decomposition and restriction coefficients in (\ref{decnumber}) and (\ref{resnumber}) and the dimension of the coinvariants are equal to the top Borel-Moore homology groups of certain locally closed subset of Nakajima's quiver variety. See details in \ref{resrule}, \ref{decrule}  and    \ref{main5}.

\subsection{The structure of this article}

In Section 2, we recall the algebraic construction of based modules arising from tensor products. In Section 3, we recall the sheaf theoretic realization of the tensor products and their canonical bases. In Section 4, we define our $\mD^{\geqslant \mu},\mD^{> \mu}$ and $\mD^{\geqslant \mu_{J}},\mD^{> \mu_{J}}$ to realize  certain based modules, and prove our main theorems. In Section 5, we establish the relation between our sheaf theoretic realization and homological realization of Nakajima's quiver variety.

\subsection*{Acknowledgements.}
Y. Lan is supported by the National Natural Science Foundation of China [Grant No. 1288201] and [Grant No. 12471030]. This paper is a continuation of  collaborative works with Jiepeng Fang and Jie Xiao, I am very grateful to their detailed discussion. 

\subsection*{Convention}
Throughout this paper, all varieties are over $\mathbb{C}$ and all sheaves are constructible and with $\mathbb{C}$-coefficients. Since we only work on derived categories, we denote by $f^{\ast},f_{\ast},f^{!},f_{!}$ their derived functors for simplicity.   We denote the $n$-times shift functor in a triangulated category  by $[n]$. We denote the singular support of a complex $L$ by $SS(L)$. The subscript $\pm 1$ means the specialization of a quantized algebra or module at $v=\pm 1$, and the subscript $\mathbb{Z}$, $\mathbf{A}=\mathbb{Z}[v^{-1}]$ or $\mathcal{A}=\mathbb{Z}[v^{\pm}]$ means the $\mathbb{Z}$-form, $\mathbf{A}$-lattice or the integral form of an algebra or module.  For example, $_{\mathbb{Z}}\mathbf{U}_{1}(\mathfrak{g})$ means the $\mathbb{Z}$-form of the specialization of the quantum group at $v=1$, and $_{\mathcal{A}} L(\lambda)$ is the $\mathcal{A}$-submodule of $L(\lambda)$ spanned by its canonical basis.

\section{Preliminaries}
In this section, we briefly recall the algebraic construction of the tensor products of based modules in \cite{bao2016canonical} and \cite{lusztig1992canonical}, and the based module structure of coinvariants in \cite[27.2.5]{MR1227098}.

Recall that a based module is an integrable $\mathbf{U}$-module $M$ with a given $\mathbb{Q}(v)$-basis $B(M)$ such that
\begin{enumerate}
	\item[(1)] For any weight space $M_{\mu}$ of $M$, $B(M) \cap M_{\mu}$ is a basis of $M_{\mu}$;  
	\item[(2)] The $\mathcal{A}$-submodule $_{\mathcal{A}}M$ spanned by $B(M)$ is stable under $_{\mathcal{A}}\dot{\mathbf{U}}$;
	\item[(3)] The involution $\bar{ }:M \rightarrow M$ defined by $\bar{fb}=\bar{f}b$ for all $f\in \mathbb{Q}(v)$ satisfies $\bar{um}=\bar{u}\bar{m}$ for any $u\in \mathbf{U}$ and $m \in M$;
	\item[(4)] The $\mathbb{Q}[[v^{-1}]] \cap \mathbb{Q}(v)$-submodule $L(M)$ spanned by $B(M)$, and the images of $B(M)$ in $L(M)/v^{-1}L(M)$, induce a basis at $\infty$ of $M$.
\end{enumerate}

A morphism of based module $f:(M,B(M)) \rightarrow (M',B(M'))$ is a morphism $f:M \rightarrow M'$ of $\mathbf{U}$-modules such that 
\begin{enumerate}
	\item[(1)] For any $b \in B(M)$, $f(b) \in B(M') \cup \{0\}$;  
	\item[(2)] The set $B \cap \ker(f)$ is a basis of $\ker(f)$.
\end{enumerate}

\subsection{Canonical bases of tensor products}
Given a dominant integral weight $\lambda$ of $\mathfrak{g}$, the canonical bases $B(\lambda)$ of the irreducible highest weight $\mathbf{U}$-module $L(\lambda)$ is defined by 
$$ \{ \pi_{\lambda}(b)\neq 0| b \in B(\infty) \},  $$
where $B(\infty)$ is the canonical basis of $\mathbf{f} \cong {_{\mathcal{A}}\mathbf{U}}^{-}$ and $\pi_{\lambda}:\mathbf{U}^{-} \rightarrow  L(\lambda) $  is the canonical projection, sending the unit $1$ to the fixed highest weight vector $v_{\lambda}$. Then the pair $(L(\lambda),B(\lambda))$ is a based module.

Now we assume $(M,B(M))$ is a based module, and let $(M',B(M'))$ be the based module $(L(\lambda),B(\lambda))$. By \cite[27.3.1]{MR1227098}, the set  $B(M)\times B(\lambda)$ has a partial order $<$. Moreover, the quasi $\mathcal{R}$-matrix $\Theta$ acts on $M\otimes M'$ and by \cite[Proposition 2.4]{bao2016canonical}, it preserves the $\mathcal{A}$-submodule $_{\mathcal{A}}M \otimes {_{\mathcal{A}}M'}$. Let $\Psi: M \otimes M' \rightarrow M\otimes M' $ be the involution defined by $\Psi(m\otimes m')= \Theta (\bar{m}\otimes \bar{m'}),$ then we have the following theorem.

\begin{theorem}\cite{lusztig1992canonical}\cite[Theorem 2.7]{bao2016canonical}
	Let $\mathcal{L}(M\otimes M')$ be the $\mathbf{A}$-submodule of $M\otimes M'$  generated by $B \otimes B'$, then  the pair $(M\otimes M', B(M) \diamond B(M'))$ is a based module. More precisely, we have the following statements.
	
	(1) For any $(b,b') \in B(M)\times B(\lambda)$, there exists a unique element $b\diamond b' \in \mathcal{L}(M\otimes M')$  such that $\Psi(b\diamond b')=b\diamond b'$ and $b\diamond b'- b\otimes b' \in v^{-1}\mathcal{L}(M \otimes M')$.
	
	(2) There is a family of  $p_{b,b',b_{2},b'_{2}} \in v^{-1}\mathbf{A}$ such that $$b\diamond b'=\sum_{b_{2}\in B(M),b'_{2}\in B(\lambda)} p_{b,b',b_{2},b'_{2}}b_{2}\otimes b'_{2}.$$ Moreover, $p_{b,b',b,b'}=1 $ and  $p_{b,b',b_{2},b'_{2}} \neq 0$ implies that $(b_{2},b'_{2}) < (b,b')$.
	
	(3) The set $B(M) \diamond B(M')=\{b \diamond b' |(b,b') \in B(M)\times B(\lambda)\}$ forms an $\mathcal{A}$-basis of $_{\mathcal{A}}(M\otimes M')=\mathcal{A}\otimes_{\mathbf{A}} \mathcal{L}(M\otimes M'),$  an $\mathbf{A}$-basis of $\mathcal{L}(M\otimes M')$ and a $\mathbb{Q}(v)$-basis of $M\otimes M'$.
\end{theorem}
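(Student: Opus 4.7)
The plan is to reduce the theorem to Lusztig's standard existence-uniqueness lemma for bar-invariant elements, exactly as in the proofs of canonical bases for $\mathbf{U}^{-}$, for $L(\lambda)$, and for $L(\lambda^{1})\otimes L(\lambda^{2})$ in \cite{MR1227098}. The role of the usual bar involution is played here by $\Psi$, and the role of the order used to write the canonical basis is the partial order on $B(M)\times B(\lambda)$ recalled from \cite[27.3.1]{MR1227098}. First I would verify that $\Psi$ is a well-defined involution of $M\otimes M'$. Since $M$ and $M'=L(\lambda)$ are integrable, on each homogeneous vector $m\otimes m'$ only finitely many summands $\Theta_{\nu}\in\mathbf{U}^{-}_{-\nu}\otimes\mathbf{U}^{+}_{\nu}$ contribute, as $\mathbf{U}^{+}$ acts locally nilpotently; that $\Psi^{2}=\mathrm{id}$ is then the standard consequence of the defining quadratic identity for the quasi-$\mathcal{R}$-matrix, and lattice preservation, cited from \cite[Proposition 2.4]{bao2016canonical}, shows that $\Psi$ stabilises $_{\mathcal{A}}M\otimes{_{\mathcal{A}}L(\lambda)}$.

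Next I would establish the unitriangular expansion
\[
\Psi(b\otimes b') \;=\; b\otimes b' + \sum_{(b_{2},b_{2}')<(b,b')} a_{b_{2},b_{2}'}\, b_{2}\otimes b_{2}', \qquad a_{b_{2},b_{2}'}\in\mathbf{A},
\]
by writing $\Theta=1+\sum_{\nu>0}\Theta_{\nu}$ and combining two inputs: (i) the bar-triangularity of $B(M)$ and $B(\lambda)$, namely $\bar{b}\in b+\sum_{b_{1}<b}v^{-1}\mathbf{A}\,b_{1}$ and likewise for $\bar{b'}$; and (ii) the fact that each nonzero summand of $\Theta_{\nu}$ sends a canonical basis element to an $\mathcal{A}$-linear combination of canonical basis elements that strictly decreases the pair $(b,b')$ in the order of \cite[27.3.1]{MR1227098}, since applying a positive-degree element to $b'$ raises its weight. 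Granting this expansion, Lusztig's lemma produces a unique bar-invariant element $b\diamond b'\in b\otimes b' + v^{-1}\mathcal{L}(M\otimes M')$, which is (1). Re-expanding the equation $\Psi(b\diamond b')=b\diamond b'$ along the basis $\{b_{2}\otimes b_{2}'\}$ and comparing coefficients yields the coefficients $p_{b,b',b_{2},b_{2}'}\in v^{-1}\mathbf{A}$ with the stated support and normalisation $p_{b,b',b,b'}=1$, proving (2).

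Finally, (3) follows formally: the transition matrix between $\{b\otimes b'\}$ and $\{b\diamond b'\}$ is unitriangular with diagonal entries $1$ and off-diagonal entries in $v^{-1}\mathbf{A}$, hence invertible over $\mathbf{A}$, so $B(M)\diamond B(M')$ is simultaneously an $\mathbf{A}$-basis of $\mathcal{L}(M\otimes M')$, an $\mathcal{A}$-basis of $_{\mathcal{A}}(M\otimes L(\lambda))=\mathcal{A}\otimes_{\mathbf{A}}\mathcal{L}(M\otimes M')$, and a $\mathbb{Q}(v)$-basis of $M\otimes M'$; stability under $_{\mathcal{A}}\dot{\mathbf{U}}$ is inherited from that of $_{\mathcal{A}}M$ and $_{\mathcal{A}}L(\lambda)$ together with the lattice preservation of $\Theta$, completing the proof.

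The main obstacle in the plan is the \emph{strict} unitriangularity: proving the off-diagonal coefficients lie in $v^{-1}\mathbf{A}$ rather than merely in $\mathbf{A}$. This forces one to balance the $v^{-1}$-corrections coming from each individual bar involution against the possibly $v^{0}$ contribution of $\Theta_{\nu}$, and is precisely the place where one genuinely uses that $B(M)$ and $B(\lambda)$ are canonical bases of based modules and not only bar-invariant bases; the rest of the argument is a mechanical application of Lusztig's lemma once this inequality of valuations has been secured.
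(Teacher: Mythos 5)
The paper itself does not prove this theorem; it is stated as background and cited from \cite{lusztig1992canonical} and \cite[Theorem 2.7]{bao2016canonical}, so there is no internal proof to compare against, only the arguments in those references.

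Your sketch does follow the right skeleton from those sources: show that $\Psi$ is a bar-antilinear involution, establish a unitriangular expansion of $\Psi(b\otimes b')$ relative to the partial order of \cite[27.3.1]{MR1227098}, and apply Lusztig's existence--uniqueness lemma \cite[24.2.1]{MR1227098}. However, you have misdiagnosed both the nature and the location of the hard step. Lusztig's lemma requires only that the expansion coefficients $a_{b_2,b'_2}$ lie in $\mathcal{A}=\mathbb{Z}[v^{\pm}]$, that the expansion is unitriangular with $1$ on the diagonal, and that $\Psi^{2}=\mathrm{id}$. It is neither necessary nor in general true that these coefficients lie in $\mathbf{A}=\mathbb{Z}[v^{-1}]$, as your displayed equation asserts, and it is emphatically not necessary that they lie in $v^{-1}\mathbf{A}$, which you single out as ``the main obstacle.'' The descent of the off-diagonal coefficients of $b\diamond b'$ into $v^{-1}\mathbf{A}$ is a \emph{conclusion} of the lemma, obtained by the standard induction on the order using that each element $p-\bar p\in\mathcal{A}$ has a unique lift $p\in v^{-1}\mathbb{Z}[v^{-1}]$; there is no valuation balancing to perform. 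The genuine work in the reference proofs lies elsewhere: (i) in proving that $\Theta$ preserves the $\mathcal{A}$-lattice ${_{\mathcal{A}}M}\otimes{_{\mathcal{A}}L(\lambda)}$, which is \cite[Proposition 2.4]{bao2016canonical} (note the paper's convention $\mathbf{A}=\mathbb{Z}[v^{-1}]\subsetneq\mathcal{A}=\mathbb{Z}[v^{\pm}]$, which you conflate in a couple of places); and (ii) in verifying unitriangularity against the specific order of \cite[27.3.1]{MR1227098}, which compares both tensor factors simultaneously, not just weights, so your one-line justification that ``applying a positive-degree element to $b'$ raises its weight'' does not by itself track what $\Theta_{\nu}\in\mathbf{U}^{-}_{-\nu}\otimes\mathbf{U}^{+}_{\nu}$ does to the left factor nor show compatibility with the combinatorial order. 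Once those two points are secured, the rest does reduce to a mechanical application of the lemma, as you say.
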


Let $\lambda^{\bullet}=(\lambda^{1},\lambda^{2},\cdots,\lambda^{N})$ be a sequence of dominant weights, and denote $L(\lambda^{\bullet})=L(\lambda^{N})\otimes L(\lambda^{N-1})\otimes \cdots\otimes L(\lambda^{1})$ by the tensor product. Define $\mathcal{L}(L(\lambda^{\bullet}))$ be the $\mathbf{A}$-submodule generated by $B(\lambda^{N}) \otimes B(\lambda^{N-1}) \otimes \cdots \otimes B(\lambda^{1}), $ then after using the above theorem inductively, we obtain the following theorem.

\begin{theorem}\cite{lusztig1992canonical}\cite[Theorem 2.9]{bao2016canonical} \label{CBT}
	(1) For any $b_{l} \in B(\lambda^{N})$, there exists a unique element $b_{N}\diamond b_{N-1} \diamond \cdots \diamond  b_{1} \in \mathcal{L}(L(\lambda^{\bullet}))$  such that $\Psi(b_{N}\diamond b_{N-1} \diamond \cdots \diamond b_{1})=b_{N}\diamond b_{N-1} \diamond \cdots \diamond b_{1}$ and $b_{N}\diamond b_{N-1} \diamond \cdots \diamond b_{1}- b_{N}\otimes b_{N-1} \otimes \cdots \otimes b_{1} \in v^{-1}\mathcal{L}(L(\lambda^{\bullet}))$.
	
	(2) There is a family of  $p^{b'_{1},b'_{2},\cdots,b'_{N} }_{b_{1},b_{2},\cdots,b_{N}} \in v^{-1}\mathbf{A}$ such that $$b_{N}\diamond b_{N-1} \diamond \cdots \diamond b_{1}=\sum_{b'_{l}\in B(\lambda^{l})} p^{b'_{1},b'_{2},\cdots,b'_{N} }_{b_{1},b_{2},\cdots,b_{N}}b'_{N}\diamond b'_{N-1} \diamond \cdots \diamond b'_{1}.$$ Moreover,   $p^{b_{1},b_{2},\cdots,b_{N} }_{b_{1},b_{2},\cdots,b_{N}}=1$.
	
	(3) The set  $B(\lambda^{\bullet})=\{b_{N}\diamond b_{N-1} \diamond \cdots \diamond  b_{1}| b_{l} \in B(\lambda^{l}),1\leqslant l \leqslant N\}$ forms an $\mathcal{A}$-basis of $_{\mathcal{A}}L(\lambda^{\bullet})=\mathcal{A}\otimes_{\mathbf{A}} \mathcal{L}(L(\lambda^{\bullet})),$  an $\mathbf{A}$-basis of $\mathcal{L}(L(\lambda^{\bullet}))$ and a $\mathbb{Q}(v)$-basis of $L(\lambda^{\bullet})$.
	
	(4)The natural map $\mathcal{L}(L(\lambda^{\bullet})) \cap \Psi(\mathcal{L}(L(\lambda^{\bullet}))) \rightarrow \mathcal{L}(L(\lambda^{\bullet}))/v^{-1}\mathcal{L}(L(\lambda^{\bullet}))$ is an isomorphism.
\end{theorem}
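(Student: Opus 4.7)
The plan is to proceed by induction on $N$, using the two-factor statement (the preceding theorem) as the engine. The base case $N=1$ is the definition of $B(\lambda^{1})$. For the inductive step, assume the theorem holds for $N-1$, so that $M:=L(\lambda^{N})\otimes\cdots\otimes L(\lambda^{2})$ is a based module with basis $\{b_{N}\diamond\cdots\diamond b_{2}\mid b_{l}\in B(\lambda^{l})\}$, crystal lattice $\mathcal{L}(M)$, and involution $\Psi_{M}$. Take $M':=L(\lambda^{1})$ with basis $B(\lambda^{1})$, and apply the preceding theorem to $(M,B(M))$ and $(M',B(M'))$ to obtain the based module structure on $M\otimes M'=L(\lambda^{\bullet})$, setting by definition
\begin{equation*}
b_{N}\diamond b_{N-1}\diamond\cdots\diamond b_{1}\;:=\;(b_{N}\diamond\cdots\diamond b_{2})\diamond b_{1}.
\end{equation*}

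With this definition, parts (1) and (4) are immediate from the corresponding statements in the preceding theorem applied to $(M,M')$, together with the inductive integrality $\mathcal{L}(L(\lambda^{\bullet}))=\mathcal{L}(M)\otimes_{\mathbf{A}}\mathcal{L}(M')$, which is forced by the definition of the $\mathbf{A}$-lattice generated by the pure tensors $b_{N}\otimes\cdots\otimes b_{1}$. Part (3) then follows because bases are preserved under such tensor products, giving simultaneously an $\mathcal{A}$-basis, an $\mathbf{A}$-basis, and a $\mathbb{Q}(v)$-basis.

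For part (2), the matrix expressing $b_{N}\diamond\cdots\diamond b_{1}$ in the pure tensor basis is the product of the change-of-basis matrices coming from each inductive step: first expanding $b_{N}\diamond\cdots\diamond b_{2}$ in $M$, then expanding the two-factor $\diamond$ in $M\otimes M'$. Each of these matrices is upper unitriangular with respect to the partial order, with diagonal entries $1$ and off-diagonal entries in $v^{-1}\mathbf{A}$, so the product retains the same shape; in particular $p^{b_{1},\ldots,b_{N}}_{b_{1},\ldots,b_{N}}=1$, and all $p^{b'_{\bullet}}_{b_{\bullet}}$ lie in $\mathbf{A}$ (with the off-diagonal entries in $v^{-1}\mathbf{A}$).

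The main obstacle is to check that the iterated bar involutions fit together correctly: that is, the $\Psi$ on $L(\lambda^{\bullet})$ built from the quasi-$\mathcal{R}$-matrix for the full tensor product coincides, up to the order of application, with applying $\Psi_{M}$ on the first $N-1$ factors followed by the two-factor $\Psi$ for $M\otimes M'$. This is precisely the compatibility of the quasi-$\mathcal{R}$-matrix with iterated coproducts, proved in \cite{lusztig1992canonical} (and recorded in \cite[Proposition 2.4]{bao2016canonical} at the level of stability of the $\mathcal{A}$-form). Once this compatibility is in hand, the uniqueness and bar-invariance in (1) transfer directly from the two-factor case to the $N$-factor setting, and the induction closes.
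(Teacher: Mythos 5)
Your proposal is correct and follows essentially the same route as the paper, which asserts the theorem ``after using the above theorem inductively''; you take $M=L(\lambda^{N})\otimes\cdots\otimes L(\lambda^{2})$, $M'=L(\lambda^{1})$, and apply the two-factor theorem at each step. You also correctly isolate the one non-formal point---that the bar involution $\Psi$ defined via the quasi-$\mathcal{R}$-matrix for the full $N$-fold coproduct agrees with the iteratively-built one---and delegate it to the same references the paper cites.
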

In particular, the set $B(\lambda^{\bullet})$ is called the canonical basis of the  tensor product $L(\lambda^{\bullet})$ by the authors in \cite{lusztig1992canonical} and \cite{bao2016canonical}.

\subsection{Based modules and their subquotients}

Given an integrable highest weight module $M$ of $\mathbf{U}(\mathfrak{g})$, we denote by $M[\lambda]$ be the sum of irreducible subobjects of $M$ which are isomorphic to $L(\lambda)$, then $M=\bigoplus_{\lambda}M[\lambda]$. We also define $M[\geqslant \lambda]=\bigoplus_{\lambda'\geqslant \lambda} M[\lambda']$ and $M[> \lambda]=\bigoplus_{\lambda'> \lambda} M[\lambda']$.

As mentioned in \cite[Remark 2.10]{bao2016canonical}, even though the main results of \cite[27.1.1-27.1.8 and 27.2.1-27.2.2]{MR1227098} are only proved for finite types, the following statements of based modules remain valid for quantum groups of Kac-Moody types.

\begin{proposition}\cite[27.1.8]{MR1227098}
	Let $(M,B)$ be a based module and $\lambda$ be an integral dominant weight. Then the set
	 $B\cap M[\geqslant \lambda]$ is a basis of $M[\geqslant \lambda]$, and the set $B\cap M[> \lambda]$ is a basis of $M[> \lambda]$.
\end{proposition}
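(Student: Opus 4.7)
The plan is to show that $B$ is partitioned in a way compatible with the isotypic decomposition $M = \bigoplus_{\mu} M[\mu]$, since this immediately gives both claims: $B \cap M[\geqslant \lambda]$ is the union of the parts indexed by $\mu \geqslant \lambda$, and similarly with strict inequality. Concretely, I will construct an isomorphism of based modules $\Phi : \bigoplus_{b \in B^{\mathrm{hi}}} L(\mu_b) \xrightarrow{\sim} M$, where $B^{\mathrm{hi}} = \{b \in B \mid E_i b = 0 \text{ for all } i \in I\}$ and $\mu_b$ is the weight of $b$; then $B = \bigsqcup_{b} \phi_b(B(\mu_b) \setminus \{0\})$ with $\phi_b := \Phi|_{L(\mu_b)}$, and $M[\geqslant \lambda] = \bigoplus_{b : \mu_b \geqslant \lambda} \phi_b(L(\mu_b))$ has basis exactly $B \cap M[\geqslant \lambda]$.

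For the construction, given $b \in B^{\mathrm{hi}}$ of weight $\mu_b$, integrability of $M$ (so that $F_i^{\langle i, \mu_b \rangle + 1} b = 0$ for all $i$) together with $E_i b = 0$ yields a unique $\mathbf{U}$-module map $\phi_b : L(\mu_b) \to M$ sending the canonical generator $v_{\mu_b}$ to $b$. I would then upgrade $\phi_b$ to a morphism of based modules. Bar-equivariance follows from $\overline{b} = b$ (axiom (3)) since both sides are generated over $\mathbf{U}$ by their bar-fixed highest weight vector; the lattice axiom reduces to $b \in {_{\mathcal{A}}M}$; and the essential content, namely $\phi_b(B(\mu_b)) \subseteq B \cup \{0\}$, is proved by descent to the crystal at infinity $L(M)/v^{-1}L(M)$. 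Using axiom (4) the images of $B$ form a crystal basis, and the image of $\bar b$ spans a connected component isomorphic to the crystal of $L(\mu_b)$; standard triangularity then lifts this identification to the level of canonical bases in $M$ itself.

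Assembling the $\phi_b$'s into $\Phi$, surjectivity follows because $M$, being integrable of highest weight type, is generated by its highest weight vectors, and (again via axiom (4) applied to the $E_i$) these are spanned by $B^{\mathrm{hi}}$. Injectivity is a weight-by-weight count: the images $\phi_b(B(\mu_b) \setminus \{0\})$ lie in $B$ and, by the decomposition of $L(M)/v^{-1}L(M)$ into connected components under the Kashiwara operators, partition $B \cap M_\nu$ in each weight $\nu$. The main obstacle is the compatibility $\phi_b(B(\mu_b)) \subseteq B \cup \{0\}$, which is the content of Lusztig's \cite[27.1.2--27.1.7]{MR1227098}; those results are formulated there only in finite type, so the remaining task is to verify, as \cite[Remark 2.10]{bao2016canonical} indicates, that their proofs depend only on the axioms (1)--(4) of a based module and on the already-established canonical basis of each $L(\mu)$, both of which hold in arbitrary symmetric Kac-Moody type.
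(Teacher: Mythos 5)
The paper gives no argument for this proposition at all: it is stated as a citation to \cite[27.1.8]{MR1227098}, and the sentence immediately before it (quoting \cite[Remark 2.10]{bao2016canonical}) is the whole of the "proof," namely that Lusztig's arguments in 27.1.1--27.1.8 go through unchanged in symmetric Kac--Moody type. Against that background your proposal is not so much an alternative proof as an expanded gloss on those same sections of Lusztig, and as you yourself note, the decisive step $\phi_b(B(\mu_b)) \subseteq B \cup \{0\}$ is being deferred right back to \cite[27.1.2--27.1.7]{MR1227098}. That deferral is consistent with what the paper does.

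However, there is a genuine gap in your surjectivity argument for $\Phi$. You claim that the highest weight vectors of $M$ are spanned by $B^{\mathrm{hi}} = \{b \in B : E_i b = 0 \text{ for all } i\}$ "via axiom (4) applied to the $E_i$." Axiom (4) only controls the action modulo $v^{-1}L(M)$, i.e.\ the crystal operators $\tilde e_i$; it tells you when the image of $b$ is a crystal highest weight element, not that $E_i b$ vanishes in $M$. Passing from $\tilde e_i b = 0$ to $E_i b = 0$ is not formal: it uses bar-invariance of $b$ (axiom (3)) and $\mathcal{A}$-integrality (axiom (2)) together with a downward induction on the partial order of dominant weights. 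This is precisely the content of \cite[27.1.4--27.1.5]{MR1227098}, where Lusztig first treats maximal weights of $M$ (so that the vanishing of the higher weight spaces forces $E_i b = 0$ for $b$ in that weight), establishes that the resulting $\mathbf{U} b$ assemble into a based submodule, and then descends to the quotient. Your $\Phi$, as assembled, in effect presupposes the conclusion of that induction. You should either carry out that induction instead of invoking axiom (4) alone, or state openly that surjectivity is also among the facts being imported from \cite[27.1.4--27.1.5]{MR1227098} and reverified in Kac--Moody type.
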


With the notations above, for any $b \in B$, there is  a unique maximal integral dominant weight $\lambda$ such that $b \in M[\geqslant \lambda]$. We denote $B[\lambda]$ by the subset of $B$, which consists of $b \in B$ as above.

\begin{proposition}
	Let $f:(M,B(M)) \rightarrow (M',B(M)')$ be a morphism of based modules, then for any integral dominant weight $\lambda$, we have $f(B(M)[\lambda]) \subseteq B(M')[\lambda] \cup\{0\}.$
\end{proposition}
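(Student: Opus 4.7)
The plan is to fix $b\in B(M)[\lambda]$ with $f(b)\neq 0$ and show $f(b)\in B(M')[\lambda]$. Since $f$ is a morphism of based modules, $f(b)$ already lies in $B(M')$. Combining the preceding proposition (which says that both $B(M)\cap M[\geqslant \lambda]$ and $B(M)\cap M[>\lambda]$ are bases of $M[\geqslant \lambda]$ and $M[>\lambda]$ respectively) with the isotypic decomposition $M[\geqslant \lambda]=M[\lambda]\oplus M[>\lambda]$, one may identify $B(M)[\lambda]$ with $B(M)\cap\bigl(M[\geqslant \lambda]\setminus M[>\lambda]\bigr)$, and similarly for $B(M')[\lambda]$. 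Consequently the task reduces to verifying $f(b)\in M'[\geqslant \lambda]$ and $f(b)\notin M'[>\lambda]$.

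The first containment is immediate: $f$ is $\mathbf{U}$-linear and both modules are integrable, so $f$ sends each isotypic component $M[\mu]$ into $M'[\mu]$, yielding $f(M[\geqslant \lambda])\subseteq M'[\geqslant \lambda]$ and in particular $f(b)\in M'[\geqslant \lambda]$.

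The heart of the argument is to exclude $f(b)\in M'[>\lambda]$. Assume the contrary. A direct check using that $f$ respects the isotypic decomposition gives $f^{-1}(M'[>\lambda])=M[>\lambda]+\ker f$, so I may write $b=b_1+b_2$ with $b_1\in M[>\lambda]$ and $b_2\in\ker f$; since $b,b_1\in M[\geqslant \lambda]$, also $b_2\in\ker f\cap M[\geqslant \lambda]$. By condition (2) of a morphism of based modules, $(\ker f,\,B(M)\cap\ker f)$ is a based submodule of $M$, so the preceding proposition applied to it shows $B(M)\cap\ker f\cap M[\geqslant \lambda]$ is a basis of $\ker f\cap M[\geqslant \lambda]$. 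Expanding $b_1$ in $B(M)\cap M[>\lambda]$ and $b_2$ in $B(M)\cap\ker f\cap M[\geqslant \lambda]$ produces an expansion of $b$ as a linear combination of basis elements of $B(M)$. The uniqueness of the basis expansion of $b\in B(M)$ then forces $b$ to coincide with one of these basis elements; but such a basis element lies either in $M[>\lambda]$, contradicting $b\in B(M)[\lambda]$, or in $\ker f$, contradicting $f(b)\neq 0$. This closes the argument.

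The main subtlety I anticipate is the identification $B(M)[\lambda]=B(M)\cap\bigl(M[\geqslant \lambda]\setminus M[>\lambda]\bigr)$, which converts the paper's ``unique maximal $\lambda$'' characterization into a condition involving only the two concrete submodules on which the preceding proposition supplies basis-compatibility. Once this identification and the basis compatibility of $\ker f$ granted by the morphism condition are in place, the remainder of the proof is the short uniqueness-of-basis-expansion contradiction above.
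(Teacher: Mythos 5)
The paper states this proposition without proof; it is one of the results from \cite[27.1--27.2]{MR1227098} that the paper (following \cite[Remark 2.10]{bao2016canonical}) asserts remain valid in Kac--Moody type, so there is no ``paper's own proof'' to compare against. Your argument is nonetheless correct and essentially reproduces Lusztig's style of reasoning: the crucial steps are (i) that $f$ preserves the isotypic decomposition, hence $f^{-1}(M'[>\lambda]) = M[>\lambda] + \ker f$; (ii) that $\ker f$ inherits a based-module structure with basis $B(M)\cap\ker f$ from condition (2) of a morphism of based modules, so the preceding proposition applies to it (with the standard compatibility $(\ker f)[\geqslant\lambda] = \ker f\cap M[\geqslant\lambda]$); and (iii) uniqueness of basis expansions in $B(M)$. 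All three steps are handled correctly.

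The one place you should be careful, and which you rightly flag as the ``main subtlety,'' is the identification $B(M)[\lambda] = B(M)\cap\bigl(M[\geqslant\lambda]\setminus M[>\lambda]\bigr)$. The paper defines $B[\lambda]$ via ``the unique maximal $\lambda$ with $b\in M[\geqslant\lambda]$.'' The inclusion $B(M)\cap\bigl(M[\geqslant\lambda]\setminus M[>\lambda]\bigr)\subseteq B(M)[\lambda]$ is easy (if $b_\lambda\neq 0$ in the isotypic decomposition, any $\lambda'$ with $b\in M[\geqslant\lambda']$ must satisfy $\lambda\geqslant\lambda'$), but the converse requires knowing that $B = \bigsqcup_\lambda B(M)\cap\bigl(M[\geqslant\lambda]\setminus M[>\lambda]\bigr)$, which ultimately relies on the structure theory of based modules and is not a formal consequence of the preceding proposition alone. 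The paper implicitly uses exactly this characterization in the subsequent discussion of coinvariants (``$\bigcup_{\lambda\neq0}B[\lambda]$ is a basis of $M[\neq 0]$'' and ``$B[0]$ is mapped bijectively onto $B_*$''), so your reading matches the intended one; but strictly speaking it is an additional input from \cite[Chapter 27]{MR1227098} rather than something immediate from what the paper has explicitly stated before this point.
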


Recall that the space of coinvariants $M_{\ast}$ of $M$ is defined by $M_{\ast}=M/M[\neq 0]$, where $M[\neq 0] =\bigoplus _{\lambda \neq 0}M[\lambda].$ Then $\bigcup_{\lambda\neq 0} B[\lambda]$ is a basis of $M[\neq 0]$. Under the canonical projection $\pi: M \rightarrow M_{\ast}$, the subset $B[0]$ is mapped bijectively onto a basis $B_{\ast}$ of $M_{\ast}$.  Moreover, the pair $(M_{\ast},B_{\ast})$ becomes a based module with trivial $\mathbf{U}$-action, and $\pi:(M,B) \rightarrow (M_{\ast},B_{\ast})$ is a morphism of based modules.

In general, for any dominant weight $\lambda$, let $\pi_{\geqslant \lambda}:M[\geqslant \lambda] \rightarrow M[\geqslant \lambda]/M[> \lambda]  $ be the projection map, then the nonzero image of $B(M) \cap M[\geqslant \lambda]$ under $\pi_{\geqslant \lambda}$ form a basis of $M[\geqslant \lambda]/M[> \lambda]$, which makes $M[\geqslant \lambda]/M[> \lambda]$ a based module.

\section{Realization of integrable highest weight  modules}
In this section, we recall the sheaf theoretic construction of tensor products in \cite{fang2023tensor}. We fix a positive integer $N$. 

\subsection{The $N$-framed quiver}
For a given quiver $Q=(I,H,\Omega)$, the $N$-framed quiver  $Q^{(N)}=(I^{(N)},H^{(N)},\Omega^{(N)})$ associated to $Q$ is defined as the following.
\begin{enumerate}
	\item[(1)] The set of vetices $I^{(N)}=I \cup I^{1} \cup \cdots I^{N}$ is obtained by adding $N$-copies of $I$ from $I$, here each $I^{l}$ is the $l$-th copy of $I$. We also denote $i^{l}$ by the $l$-th copy of $i \in I$.
	\item[(2)] The set of oriented arrows $\Omega^{(N)}=\Omega \cup \{i \rightarrow i^{l}|i\in I,1\leqslant l \leqslant N \} $ is obtained by adding arrows  $i \rightarrow i^{l}$ to $\Omega$. 
	\item [(3)] The set of disoriented edges $H^{(N)}$ is defined by $H^{(N)}=\Omega^{(N)} \cup \overline{\Omega^{(N)}}$, where  $~\bar{ }~$  is the involution taking the opposite orientation for arrows.
\end{enumerate}

Given a sequence of integral dominant weights $\lambda^{\bullet}=(\lambda^{1},\lambda^{2},\cdots,\lambda^{N}), $ we assume that $\lambda^{l}=\sum_{i \in I}\lambda^{l}_{i} \Lambda_{i}$, where $\Lambda_{i}$ is the $i$-th fundamental weight of $\mathfrak{g}$. Take $I^{l}$-graded space $\mathbf{W}^{l}$ such that $\dim \mathbf{W}^{l}_{i^{l}}= \lambda^{l}_{i} $, we call each $\mathbf{W}^{l}$ a framing of $Q$. For any $I$-graded space $\mathbf{V}$ with dimension vector $\nu$, the moduli space of the $N$-framed quiver with the framing $\mathbf{W}^{\bullet}=\bigoplus_{1\leqslant l\leqslant N}\mathbf{W}^{l}$ is defined by
$$\mathbf{E}_{\mathbf{V},\mathbf{W}^{\bullet},\Omega^{(N)}}=\bigoplus_{h \in \Omega}\mathbf{Hom}(\mathbf{V}_{h'},\mathbf{V}_{h''}) \oplus \bigoplus_{i \in I,1\leqslant l\leqslant N}\mathbf{Hom}(\mathbf{V}_{i},\mathbf{W}^{l}_{i^{l}}). $$
The group $G_{I}=(\mathbb{G}_{m})^{I}$ acts on $\bigoplus_{i \in I,1\leqslant l\leqslant N}\mathbf{Hom}(\mathbf{V}_{i},\mathbf{W}^{l}_{i^{l}})$ by scaling
$$ (t_{i})_{i \in I} \cdot (y^{l}_{i})_{i\in I,1\leqslant l \leqslant N} = (t_{i}y^{l}_{i})_{i\in I,1\leqslant l \leqslant N}, $$
and the algebraic group $G_{\mathbf{V}}=\prod_{i \in I}GL(\mathbf{V}_{i})$ acts naturally on $\mathbf{E}_{\mathbf{V},\mathbf{W}^{\bullet},\Omega^{(N)}}$ by taking composition
$$(g_{i})_{i\in I} \cdot (x_{h},y^{l}_{i})_{h\in \Omega,i\in I,1\leqslant l \leqslant N} = (g_{h''}x_{h}g^{-1}_{h'}, y_{i}^{l}g^{-1}_{i} )_{h\in \Omega,i\in I,1\leqslant l \leqslant N}. $$
Here we need the $G_{I}$-equivariant structure because we want to use monodromic structures to define Fourier-Sato transforms reversing arrows between $i$ and  $i^{l}, 1\leqslant l \leqslant N$. 

We consider the (bounded) $G_{I}\times G_{\mathbf{V}}$-equivariant derived category  of constructible $\mathbb{C}$-sheaves on $\mathbf{E}_{\mathbf{V},\mathbf{W}^{\bullet},\Omega^{(N)}} $  and denote it by $\mathcal{D}_{\nu}(\lambda^{\bullet})=\mathcal{D}^{b}_{G_{I}\times G_{\mathbf{V}}} (\mathbf{E}_{\mathbf{V},\mathbf{W}^{\bullet},\Omega^{(N)}})$. We denote $\mathbf{D}$ by the Verdier duality and $[1]$ by the shift functor. Note that $\mathcal{D}_{\nu}(\lambda^{\bullet})$ has a perverse $t$-structure.

Assume the set of vertices $I$ is numbered by $I=\{i_{1},i_{2},\cdots, i_{n}\}$, take $\boldsymbol{d}^{l}=(\lambda^{l}_{i_{1}}i^{l}_{1},\lambda^{l}_{i_{2}}i^{l}_{2},\cdots,\lambda^{l}_{i_{n}}i^{l}_{n})$ be the ordered dimension vector of $\mathbf{W}^{l}$ for any $1\leqslant l \leqslant N$. For each $l$ we take a sequence $\boldsymbol{\nu}^{l}=(\nu^{l}_{1},\nu^{l}_{2},\cdots,\nu^{l}_{s_{l}}),1\leqslant l \leqslant N$ of dimension vectors in $\mathbb{N}I$ such that every  $\nu^{l}_{r}$ equals to some $ai$ with $a \in \mathbb{N}_{>0}, i \in I$, we denote $|\boldsymbol{\nu}^{l}|=\sum_{r} \nu^{l}_{r} \in \mathbb{N}I$. If the dimension vector of $\mathbf{V}$ is $\nu$ and $\sum_{1\leqslant l\leqslant N}|\boldsymbol{\nu}^{l}|=\nu$, the sequence $(\boldsymbol{\nu}^{1}\boldsymbol{d}^{1}\boldsymbol{\nu}^{2}\boldsymbol{d}^{2}\cdots\boldsymbol{\nu}^{N}\boldsymbol{d}^{N})$ is a flag type of $\mathbf{V}\oplus \mathbf{W}^{\bullet}$. In particular, by \cite[2.2]{MR1088333}, there is a semisimple complex $L_{ \boldsymbol{\nu}^{1}\boldsymbol{d}^{1}\boldsymbol{\nu}^{2}\boldsymbol{d}^{2}\cdots\boldsymbol{\nu}^{N}\boldsymbol{d}^{N}}$ in $\mathcal{D}_{\nu}(\lambda^{\bullet})$.

\begin{definition}
		Let $\mathcal{P}_{\nu}(\lambda^{\bullet})$  be the set of simple perverse sheaves which appear as shifted direct summands in semisimple complexes of the form $L_{ \boldsymbol{\nu}^{1}\boldsymbol{d}^{1}\boldsymbol{\nu}^{2}\boldsymbol{d}^{2}\cdots\boldsymbol{\nu}^{N}\boldsymbol{d}^{N}},$ we define $\mathcal{Q}_{\nu}(\lambda^{\bullet})$ to be the full subcategory of $\mathcal{D}_{\nu}(\lambda^{\bullet})$ which consists of finite direct sums of shifted objects in $\mathcal{P}_{\nu}(\lambda^{\bullet})$. We call $\mathcal{Q}_{\nu}(\lambda^{\bullet})$ the category of Lusztig's sheaves for $N$-framed quiver. Together with the shift functor, Lusztig's sheaves form a graded linear category.
\end{definition}

\subsection{Lusztig's induction and restriction functors}

Given graded spaces $\bV,\bV'$ and $\bV''$ such that $\bV=\bV'\oplus \bV''$, denote their dimension vectors by $\nu,\nu'$ and $\nu''$ respectively. 

Let $F$ be the closed subset of $\mathbf{E}_{\bV,\mathbf{W}^{\bullet},\Omega^{(N)}}$ consisting of $(x,y)$ such that $(x,y)(\bV''\oplus \mathbf{W}^{\bullet})\subseteq \bV''\oplus \mathbf{W}^{\bullet}$, and let $P\subseteq G_{\bV}$ be the stabilizer of $\bV''$ which is a parabolic subgroup, and let $U\subseteq P$ be the unipotent radical. Consider the following diagrams
\begin{equation}\label{indd}
	\mathbf{E}_{\mathbf{V}',\Omega} \times \mathbf{E}_{\mathbf{V}'',\mathbf{W}^{\bullet},\Omega^{(N)}} \xleftarrow{p_{1}} G_{\bV} \times^{U} F \xrightarrow{p_{2}} G_{\bV} \times^{P} F \xrightarrow{p_{3}} \mathbf{E}_{\mathbf{V},\mathbf{W}^{\bullet},\Omega^{(N)}},
\end{equation}
where $p_{1}(g,x,y) =\kappa (x,y), p_{2}(g,x,y)=(g,x,y), p_{3}(g,x,y)=g(\iota(x,y))$, and $\iota:F \rightarrow \mathbf{E}_{\mathbf{V},\mathbf{W}^{\bullet},\Omega^{(N)}}$ is the closed embedding. The induction functor functor is defined by 
\begin{align*}
	\mathbf{Ind}^{\mathbf{V}\oplus \mathbf{W}^{\bullet}}_{\mathbf{V}',\mathbf{V}''\oplus \mathbf{W}^{\bullet}}: 
	&\mathcal{D}^b_{G_{\bV'}\times G_{I}}(\mathbf{E}_{\mathbf{V}',\Omega}) \boxtimes \mD_{\nu''}(\lambda^{\bullet})\rightarrow  \mD_{\nu}(\lambda^{\bullet}),\\
	&(A\boxtimes B) \mapsto (p_{3})_{!}(p_{2})_{\flat}(p_{1})^{\ast}(A\boxtimes B)[d_{1}-d_{2}],
\end{align*}
where $(p_{2})_{\flat}$ is the equivariant decent functor of the principal bundle $p_2$ and $d_1,d_2$ are the dimension of the fibers of $p_1,p_2$ respectively.

Let $\bullet^{1}$ be a subset of $\{1,2,\cdots, N\}$ and $\bullet^{2}$ be its complement, denote $\bigoplus_{l \in \bullet^{1} }\mathbf{W}^{l}$  and  $\bigoplus_{l \in \bullet^{2} }\mathbf{W}^{l}$ by $\mathbf{W}^{\bullet,1}$ and $\mathbf{W}^{\bullet,2}$ respectively. Let $F'$ be  the closed subset of $\mathbf{E}_{\bV,\mathbf{W}^{\bullet},\Omega^{(N)}}$ consisting of $(x,y)$ such that $(x,y)(\bV''\oplus \mathbf{W}^{\bullet,2})\subseteq \bV''\oplus \mathbf{W}^{\bullet,2}$. For any $(x,y)\in F'$, we denote the restriction of $(x,y)$ on $\bV'\oplus \mathbf{W}^{\bullet,1}$ by $(x',y')$, denote the restriction of $(x,y)$ on $\bV''\oplus \mathbf{W}^{\bullet,2}$ by $(x'',y'')$. Consider the following diagram
\begin{equation}\label{resd}
	\mathbf{E}_{\mathbf{V}',\mathbf{W}^{\bullet,1},\Omega^{(N)}} \times \mathbf{E}_{\mathbf{V}'',\mathbf{W}^{\bullet,2},\Omega^{(N)}} \xleftarrow{\kappa } F' \xrightarrow{\iota'} \mathbf{E}_{\mathbf{V},\mathbf{W}^{\bullet},\Omega^{(N)}},
\end{equation}
where $\iota'(x,y)=(x,y), \kappa(x,y)=(x',y',x'',y'')$. The restriction functor $$\mathbf{Res}^{\mathbf{V}\oplus\mathbf{W}^{\bullet}}_{\mathbf{V}'\oplus\mathbf{W}^{\bullet,1},\mathbf{V}''\oplus \mathbf{W}^{\bullet,2}}:\mD_{\nu}(\lambda^{\bullet}) \rightarrow \mathcal{D}^{b}_{G_{\mathbf{V}'}\times G_{I} \times G_{\mathbf{V}''} \times G_{I}}(\mathbf{E}_{\mathbf{V}',\mathbf{W}^{\bullet,1},\Omega^{(N)}}\times \mathbf{E}_{\mathbf{V}'',\mathbf{W}^{\bullet,2},\Omega^{(N)}})$$ is defined by
\begin{align*}
C \mapsto (\kappa)_{!} (\iota')^{\ast}(C)[-\langle\nu',\nu''\rangle_{Q^{(N)}}],
\end{align*}
where $\langle \nu',\nu''\rangle_{Q^{(N)}}=\sum_{i\in I^{(N)}}\nu'_i\nu''_i-\sum_{h\in \Omega^{(N)}}\nu'_{h'}\nu''_{h''}$ is the Euler form of the quiver $Q^{(N)}$.

\subsection{The thick subcategory $\mathcal{N}$ and localization}
For each $i \in I$, we fix an orientation $\Omega_{i}$ such that $i$ is a source in $\Omega_{i}$, then $\mathbf{E}_{\mathbf{V},\mathbf{W}^{\bullet},\Omega_{i}^{(N)}}$ admits a partition
$\mathbf{E}_{\mathbf{V},\mathbf{W}^{\bullet},\Omega_{i}^{(N)}}= \bigcup_{r} \mathbf{E}^{r}_{\mathbf{V},\mathbf{W}^{\bullet},i},  $
where
\begin{equation}\label{thick}
	\begin{split}
			\mathbf{E}^{r}_{\mathbf{V},\mathbf{W}^{\bullet},i}=\{ (x,y)\in \mathbf{E}_{\mathbf{V},\mathbf{W}^{\bullet},\Omega_{i}^{(N)}}|&{{\rm{dim}}}{\rm{ker}}((\bigoplus _{h \in \Omega_{i}, h'=i} x_{h} ) \oplus (\bigoplus_{1\leqslant l \leqslant N}y^{l}_{i} ) ):\\
			& \mathbf{V}_{i} \rightarrow \bigoplus_{h \in \Omega_{i},h'=i}\mathbf{V}_{h''}\oplus\bigoplus_{1\leqslant l \leqslant N} \mathbf{W}^{l}_{i^{l}})=r \}. 
	\end{split}
\end{equation} 

Let $\mathcal{N}_{\nu,i}$ be the full subcategory of $\mathcal{D}^{b}_{G_{I}\times G_{\mathbf{V}}} (\mathbf{E}_{\mathbf{V},\mathbf{W}^{\bullet},\Omega_{i}^{(N)}})$, which consists of objects whose supports are contained in the closed subset $\mathbf{E}^{\geqslant 1}_{\mathbf{V},\mathbf{W}^{\bullet},i}=\bigcup_{r\geqslant 1} \mathbf{E}^{r}_{\mathbf{V},\mathbf{W}^{\bullet},i}, $ then $\mathcal{N}_{\nu,i}$ is a thick subcategory of $\mathcal{D}^{b}_{G_{I}\times G_{\mathbf{V}}} (\mathbf{E}_{\mathbf{V},\mathbf{W}^{\bullet},\Omega_{i}^{(N)}})$.

Following \cite[Section 3.3]{fang2025lusztigsheavescharacteristiccycles}, we say two orientations are equivalent under mutation if they  are related to each other by a sequence of quiver mutations at sink or source. Recall that the Fourier-Sato transform $$\mathbf{Four}_{\Omega,\Omega'}:\mathcal{D}^{b}_{G_{I}\times G_{\mathbf{V}}} (\mathbf{E}_{\mathbf{V},\mathbf{W}^{\bullet},\Omega^{(N)}})  \rightarrow \mathcal{D}^{b}_{G_{I}\times G_{\mathbf{V}}} (\mathbf{E}_{\mathbf{V},\mathbf{W}^{\bullet},\Omega^{',(N)}}) $$ is defined for two equivalent orientations in \cite[Section 3.3 and 4.1]{fang2025lusztigsheavescharacteristiccycles}.  (One can also see details in \cite[Section 2.7]{AHJR} and \cite[Appendix D]{HLSS}.)  By similar arguments in \cite[Proposition 10.4.5]{MR4337423}, the Fourier-Sato transform commutes with Lusztig's induction functor and induces perverse equivalences. In particular, if we take an orientation $\Omega_{i}$ such that $i$ is a source in $\Omega_{i}$ and $\Omega^{(N)}_{i}$ and $\Omega^{(N)}$ are  equivalent under mutations, $\mathbf{Four}_{\Omega_{i},\Omega}  ( \mathcal{N}_{\nu,i})$ is a thick subcategory of $\mD_{\nu}(\lambda^{\bullet} )$. (Such $\Omega_{i}$ always exists by \cite[Lemma 3.4]{fang2025lusztigsheavescharacteristiccycles}.)

\begin{definition}
	We define $\mN_{\nu}$ to be the thick subcategory of $\mD_{\nu}(\lambda^{\bullet} )$  generated by  $\mathbf{Four}_{\Omega_{i},\Omega} ( \mathcal{N}_{\nu,i}),i \in I$, and define $\mD_{\nu}(\lambda^{\bullet})/\mN_{\nu}$ to be the Verdier quotient of $\mD_{\nu}(\lambda^{\bullet})$ with respect to the thick subcategory $\mN_{\nu}$.
	
	The localization $\mL_{\nu}(\lambda^{\bullet})=\mQ_{\nu}(\lambda^{\bullet})/\mN_{\nu}$ is defined to be the full subcategory of $\mD_{\nu}(\lambda^{\bullet})/\mN_{\nu}$, which consists of those objects that are isomorphic to objects of $\mQ_{\nu}(\lambda^{\bullet})$ in $\mD_{\nu}(\lambda^{\bullet})/\mN_{\nu}$.
\end{definition}

Let $\mK(\nu,\lambda^{\bullet})=\mK(\mL_{\nu}(\lambda^{\bullet}))$ be the Grothendieck group of $\mL_{\nu}(\lambda^{\bullet})$, that is, a $\mathbb{Z}[v^{\pm}]$-module spanned by the isomorphism classes $[L]$ of objects $L\in \mL_{\nu}(\lambda^{\bullet})$ subject to the relations
$$[L \oplus L']=[L]+[L'],v[L]=[L[1]].$$
We also denote $\bigoplus_{\nu \in \mathbb{N}I} \mK(\nu,\lambda^{\bullet})$ by $\mK(\lambda^{\bullet})$.

\subsection{The realization of integrable modules}

\begin{definition}
	For any $i\in I,r\in \mathbb{N}$ and $\nu=ri+\nu''\in\mathbb{N}I$, we define the functor $\mathcal{F}^{(r)}_{i}$ by
	\begin{align*}
		\mathcal{F}^{(r)}_{i}&=\mathbf{Ind}^{\mathbf{V}\oplus \mathbf{W}^{\bullet}}_{\mathbf{V}',\mathbf{V}''\oplus \mathbf{W}^{\bullet}}( \mathbb{C}_{\mathbf{E}_{\mathbf{V}',\Omega}}\boxtimes-):\mD_{\nu''}(\lambda^{\bullet}) \rightarrow \mD_{\nu}(\lambda^{\bullet}).
	\end{align*}
	For simplicity, we also denote $\mathcal{F}^{(1)}_{i}$ by $\mathcal{F}_{i}$.
\end{definition}

Let $i\in I$ be a source for the orientation $\Omega_{i}$. For any $r\in \mathbb{N}$ and $\nu=ri+\nu''\in\mathbb{N}I$, consider the following diagram.
\begin{equation}\label{UD}
	\begin{split}
		\xymatrix{
			\mathbf{E}_{\mathbf{V},\mathbf{W}^{\bullet},\Omega_{i}^{(N)}}
			&
			& 	\mathbf{E}_{\mathbf{V}'',\mathbf{W}^{\bullet},\Omega_{i}^{(N)}} \\
			\mathbf{E}^{0}_{\mathbf{V},\mathbf{W}^{\bullet},i} \ar[d]_{\phi_{\mathbf{V},i}} \ar[u]^{j_{\mathbf{V},i}}
			&
			& 	\mathbf{E}^{0}_{\mathbf{V}'',\mathbf{W}^{\bullet},i} \ar[d]^{\phi_{\mathbf{V}'',i}} \ar[u]_{j_{\mathbf{V}'',i}} \\
			\txt{$\dot{\mathbf{E}}_{\mathbf{V},\mathbf{W}^{\bullet},i}$\\ $\times$ \\ $\mathbf{Grass}(\nu_i, \tilde{\nu}_{i})$}
			& \txt{$\dot{\mathbf{E}}_{\mathbf{V},\mathbf{W}^{\bullet},i}$\\ $\times$ \\$\mathbf{Flag}(\nu''_{i},\nu_{i},\tilde{\nu}_{i})$ } \ar[r]^-{q_{2}} \ar[l]_-{q_{1}}
			&  \txt{$\dot{\mathbf{E}}_{\mathbf{V}'',\mathbf{W}^{\bullet},i}$\\  $\times$ \\ $\mathbf{Grass}(\nu''_{i}, \tilde{\nu}_{i})$},
		}
	\end{split}
\end{equation}

where $\mathbf{E}^{0}_{\mathbf{V},\mathbf{W}^{\bullet},i},\mathbf{E}^{0}_{\mathbf{V}'',\mathbf{W}^{\bullet},i}$ are defined before in (\ref{thick}), and $j_{\mathbf{V},i},j_{\mathbf{V}'',i}$ are the open embeddings;
\begin{align*}
\dot{\mathbf{E}}_{\mathbf{V},\mathbf{W}^{\bullet},i} &=\bigoplus_{h \in \Omega_{i}, h'\neq i} \mathbf{Hom}(\mathbf{V}_{h'},\mathbf{V}_{h''}) \oplus \bigoplus_{j\in I,j\not=i,1\leqslant l \leqslant N} \mathbf{Hom}(\mathbf{V}_{j},\mathbf{W}^{l}_{j^{l}}),
\end{align*}
and $\mathbf{Grass}(\nu_i, \tilde{\nu}_{i})$ and $\mathbf{Grass}(\nu''_{i}, \tilde{\nu}_{i})$ are the Grassmannian varieties consisting of $\nu_{i}$-dimensional and $\nu''_i$-dimensional subspaces respectively, of  the space
$$\bigoplus_{h\in \Omega_{i}, h'=i}\mathbf{V}_{h''}\oplus \bigoplus_{1\leqslant l \leqslant N}\mathbf{W}^{l}_{i^{l}},$$
which is of dimension $\tilde{\nu}_{i}=\sum_{h\in \Omega,h'=i}\nu_{h''}+\sum_{1\leqslant l \leqslant N} \lambda^{l}_{i}$, and 
\begin{align*}
	\phi_{\mathbf{V},i}((x,y))=((x_{h},y^{l}_{j})_{h' \neq i,j \neq i}, {\rm{Im}}  (\bigoplus _{h \in \Omega_{i}, h'=i;1\leqslant l \leqslant N} x_{h}\oplus y^{l}_{i})),
\end{align*}
which is a principal $\mathrm{GL}(\bV_{i})$-bundle. The variety  $\dot{\mathbf{E}}_{\mathbf{V}'',\mathbf{W}^{\bullet},i}$ and the morphism $\phi_{\mathbf{V}'',i}$ are defined in a similar way. The variety $\mathbf{Flag}(\nu''_{i},\nu_{i},\tilde{\nu}_{i})$ is the flag variety, and $q_{1}, q_{2}$ are natural projections, which are smooth and proper.

\begin{definition}
	For any $i\in I,r\in \mathbb{N}$ and $\nu=ri+\nu''\in\mathbb{N}I$, we define the functor 
		$\mathcal{E}^{(r)}_{i}:\mD_{\nu}(\lambda^{\bullet}) \rightarrow \mD_{\nu''}(\lambda^{\bullet})$ by  the following composition
\begin{align*}
	 \mathbf{Four}_{\Omega_i^{(N)},\Omega^{(N)}} \circ ((j_{\mathbf{V}'',i})_{!} (\phi_{\mathbf{V},i})^{\ast} (q_{2})_{!}(q_{1})^{\ast} (\phi_{\mathbf{V},i})_{\flat}(j_{\mathbf{V},i})^{\ast}[-r\nu_{i}]) \circ \mathbf{Four}_{\Omega^{(N)},\Omega_{i}^{(N)}}.
\end{align*}
We also denote $\mathcal{E}^{(1)}_{i}$ by $\mathcal{E}_{i}$. Note that $\mathcal{E}^{(r)}_{i}$ does not depend on the choice of the orientation $\Omega_{i}$.
\end{definition}

By \cite[Proposition 3.18, Corollary 3.19]{fang2023lusztig}, the functors $\mathcal{E}^{(r)}_{i}$ and $\mathcal{F}^{(r)}_{i}$ are well-defined on the Verdier quotients
$$\mathcal{E}^{(r)}_{i}:\mD_{\nu}(\lambda^{\bullet})/\mN_{\nu} \rightarrow \mD_{\nu''}(\lambda^{\bullet})/\mN_{\nu''},$$
$$\mathcal{F}^{(r)}_{i}:\mD_{\nu''}(\lambda^{\bullet})/\mN_{\nu''} \rightarrow \mD_{\nu}(\lambda^{\bullet})/\mN_{\nu},$$
and induces functors  of localizations
$$\mathcal{E}^{(r)}_{i}:\mL_{\nu}(\lambda^{\bullet}) \rightarrow \mL_{\nu''}(\lambda^{\bullet}),$$
$$\mathcal{F}^{(r)}_{i}:\mL_{\nu''}(\lambda^{\bullet}) \rightarrow \mL_{\nu}(\lambda^{\bullet}).$$

\begin{definition}
	We define the functors $\mathcal{K}_i,\mathcal{K}_{-i}$ to be the shift functors
	\begin{align*}
		&\mathcal{K}_{i}={\mathrm{Id}}[-2\nu_{i}+\sum_{h \in \Omega_{i}, h'=i} \nu_{h''} +\sum_{1\leqslant l \leqslant N}\lambda^{l}_{i}]:\mL_{\nu}(\lambda^{\bullet}) \rightarrow \mL_{\nu}(\lambda^{\bullet}),\\
		&\mathcal{K}_{-i}={\mathrm{Id}}[2\nu_{i}-\sum_{h \in \Omega_{i}, h'=i} \nu_{h''} -\sum_{1\leqslant l \leqslant N}\lambda^{l}_{i}]:\mL_{\nu}(\lambda^{\bullet}) \rightarrow \mL_{\nu}(\lambda^{\bullet}).
	\end{align*}
\end{definition}

\begin{proposition}\label{c1}
	As endofunctors of $\coprod_{\nu \in \mathbb{N}I} \mL_{\nu}(\lambda^{\bullet}) $, the functors $\mathcal{E}_{i}$, $\mathcal{F}_{i}$ and $\mathcal{K}_{i},i\in I$ satisfy the following relations
	\begin{equation*} 
		\mathcal{K}_{i}\mathcal{K}_{j}=\mathcal{K}_{j}\mathcal{K}_{i},
	\end{equation*}
	\begin{equation*}
		\mathcal{E}_{i}\mathcal{K}_{j}=\mathcal{K}_{j}\mathcal{E}_{i}[-a_{j,i}],
	\end{equation*}
	\begin{equation*}
		\mathcal{F}_{i}\mathcal{K}_{j}=\mathcal{K}_{j}\mathcal{F}_{i}[a_{i,j}],
	\end{equation*}
	\begin{equation*}
		\mathcal{E}^{(r)}_{i}\mathcal{F}^{(s)}_{j}=\mathcal{F}^{(s)}_{j}\mathcal{E}^{(r)}_{i}\ \textrm{for}\ i \neq j.
	\end{equation*}
	As endofunctors of $ \mL_{\nu}(\lambda^{\bullet})$, we have
	\begin{equation*}
		\mathcal{E}_{i}\mathcal{F}_{i} \oplus \bigoplus\limits_{0\leqslant m \leqslant M-1} Id[M-1-2m] \cong \mathcal{F}_{i}\mathcal{E}_{i} \oplus \bigoplus\limits_{0\leqslant m \leqslant -M-1} Id[-2m-M-1],
	\end{equation*}
	where $M= 2\nu_{i}-\tilde{\nu}_{i}$. Recall that $\tilde{\nu}_{i}=\sum_{h\in H,h'=i}\nu_{h''}+\sum_{1\leqslant l \leqslant N} \lambda^{l}_{i}$.
\end{proposition}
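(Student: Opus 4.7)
The plan is to establish the five families of relations in order of increasing difficulty, always working upstairs in the $G_I\times G_{\bV}$-equivariant derived categories and then descending to the localization $\mL_\nu(\lambda^\bullet)$. The first three identities involving $\mathcal{K}$ are essentially bookkeeping of shifts; the commutation for $i\neq j$ is a transverse base change; the $[\mathcal{E}_i,\mathcal{F}_i]$ identity is the genuinely geometric statement.

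First I would dispose of the $\mathcal{K}$ relations. Since $\mathcal{K}_j$ is a shift functor whose amount depends linearly on the dimension vector, $\mathcal{K}_i\mathcal{K}_j=\mathcal{K}_j\mathcal{K}_i$ is immediate. To compare $\mathcal{E}_i\mathcal{K}_j$ with $\mathcal{K}_j\mathcal{E}_i$ (and similarly for $\mathcal{F}$), I would evaluate both sides on a complex $C\in\mL_\nu(\lambda^\bullet)$ and track the shifts: the difference between $\mathcal{K}_j$ applied at dimension vector $\nu$ versus $\nu-i$ is controlled by the change of $\tilde{\nu}_j=\sum_{h\in\Omega_j,h'=j}\nu_{h''}+\sum_l\lambda^l_j$, which equals $-a_{j,i}$ by the very definition of the symmetric Cartan matrix encoded by $Q$.

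For the commutation $\mathcal{E}_i^{(r)}\mathcal{F}_j^{(s)}\cong\mathcal{F}_j^{(s)}\mathcal{E}_i^{(r)}$ with $i\neq j$, I would unwind both sides via the induction diagram (\ref{indd}) and the diagram (\ref{UD}). When $i\neq j$ the two operations act on independent factors of the Grassmannian and flag data at vertex $i$ and at vertex $j$, so one assembles a single Cartesian square whose horizontal arrows implement $\mathcal{F}_j^{(s)}$ and vertical arrows implement $\mathcal{E}_i^{(r)}$, and a standard smooth base change (parallel to the unframed argument in \cite{fang2023lusztig}) yields the isomorphism. The compatibility of Lusztig's induction functor with the Fourier--Sato transform, recorded in Section~3.3, ensures that the choice of auxiliary orientation used to define $\mathcal{E}_i$ does not obstruct this argument.

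The main obstacle is the $[\mathcal{E}_i,\mathcal{F}_i]$ identity. My plan is to fix an orientation $\Omega_i$ in which $i$ is a source and build a common Hecke correspondence parameterizing pairs consisting of a point of $\mathbf{E}_{\bV,\mathbf{W}^\bullet,\Omega_i^{(N)}}$ together with a line in $\bV_i$; both $\mathcal{E}_i\mathcal{F}_i$ and $\mathcal{F}_i\mathcal{E}_i$ are then realized as push-pulls along the two natural projections from this correspondence. Stratifying by whether the chosen line lies in the image of the outgoing map $\bigoplus_{h\in\Omega_i,h'=i}x_h\oplus\bigoplus_l y^l_i$ splits the correspondence into two pieces: the generic stratum contributes the common summand to both compositions, while the complementary stratum contributes a sum of identity functors shifted by the cohomology of a Grassmannian of rank $M$ (respectively $-M$); after invoking the decomposition theorem and a fiber dimension count, this produces exactly the shifts $M-1-2m$ and $-M-1-2m$ appearing in the statement. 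The subtle final point, and where the localization plays a genuinely nontrivial role, is that the residual contributions arising from the boundary of the stratification are supported on $\mathbf{E}^{\geqslant 1}_{\bV,\mathbf{W}^\bullet,i}$; by the definition of $\mN_\nu$ these lie in the thick subcategory and therefore vanish in $\mL_\nu(\lambda^\bullet)$, so the identity upgrades from an equality with error terms in $\mD_\nu(\lambda^\bullet)$ to a clean isomorphism in the localization.
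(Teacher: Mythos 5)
The paper itself does not prove Proposition~\ref{c1}: it is stated without proof immediately after the well-definedness of $\mathcal{E}^{(r)}_i,\mathcal{F}^{(r)}_i$ on the localizations is attributed to \cite[Proposition 3.18, Corollary 3.19]{fang2023lusztig}, and the proposition is clearly meant to be imported from the parallel results of \cite{fang2023lusztig,fang2023tensor} (which themselves follow \cite{zheng2014categorification} and \cite{MR3177922}). Your proposal reconstructs essentially that standard route, and the overall architecture --- $\mathcal{K}$ relations as degree bookkeeping, $\mathcal{E}_i$--$\mathcal{F}_j$ commutation for $i\neq j$ by a base-change argument, and the $\mathfrak{sl}_2$-relation by stratifying a composed Grassmannian/flag correspondence over the diagonal versus off-diagonal part --- is the correct one.

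Two points are imprecise and would need to be fixed if you were to write this out. For $\mathcal{E}_i\mathcal{K}_j$ you assert that $\kappa_j(\nu)-\kappa_j(\nu-i)$ is governed entirely by the change in $\tilde\nu_j$ and that this change equals $-a_{j,i}$; that is only true for $j\neq i$. When $j=i$, $\tilde\nu_i$ (which sums dimensions at vertices \emph{adjacent} to $i$) does not change at all as $\nu_i$ varies, and the $-2=-a_{ii}$ comes instead from the $-2\nu_j$ summand in the definition of $\mathcal{K}_j$. Both cases land on $-a_{j,i}$, but the reasoning you give only covers $j\neq i$. Second, in the $[\mathcal{E}_i,\mathcal{F}_i]$ step, the diagonal contributions arise from projective-space fibers $\mathbb{P}^{\tilde\nu_i-\nu_i-1}$ and $\mathbb{P}^{\nu_i-1}$ in the two composed correspondences, not from ``a Grassmannian of rank $M$''; only the \emph{difference} of the two resulting geometric series of shifted identities can be re-indexed to yield $\bigoplus_{0\le m\le M-1}\mathrm{Id}[M-1-2m]$ or $\bigoplus_{0\le m\le -M-1}\mathrm{Id}[-M-1-2m]$ depending on the sign of $M=2\nu_i-\tilde\nu_i$. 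Your final remark about killing boundary contributions by passage to $\mN_\nu$ is somewhat indirect but is the right mechanism: after descending through $\phi_{\bV,i}$ the identification $(j_{\bV,i})_!(j_{\bV,i})^*\cong\mathrm{Id}$ in $\mD_\nu(\lambda^\bullet)/\mN_\nu$ is exactly what makes the diagonal/off-diagonal decomposition produce the clean isomorphism.
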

\begin{proposition}\label{c2}
	These functors  $\mathcal{E}^{(r)}_{i}$ and $\mathcal{F}^{(r)}_{i}$ satisfy the Serre relations and the relation of divided powers.
	\begin{equation*}
		\bigoplus\limits_{0\leqslant m \leqslant 1- a_{i,j},m~odd}\mathcal{E}^{(m)}_{i}\mathcal{E}_{j}\mathcal{E}^{(1-a_{i,j}-m)}_{i} \cong 	\bigoplus\limits_{0\leqslant m \leqslant 1- a_{i,j},m~even}\mathcal{E}^{(m)}_{i}\mathcal{E}_{j}\mathcal{E}^{(1-a_{i,j}-m)}_{i},
	\end{equation*}
	\begin{equation*}
		\bigoplus\limits_{0\leqslant m \leqslant 1-a_{i,j},m~ odd}\mathcal{F}^{(m)}_{i}\mathcal{F}_{j}\mathcal{F}^{(1-a_{i,j}-m)}_{i}\cong 	\bigoplus\limits_{0\leqslant m \leqslant 1-a_{i,j},m~ even}\mathcal{F}^{(m)}_{i}\mathcal{F}_{j}\mathcal{F}^{(1-a_{i,j}-m)}_{i},
	\end{equation*}
	\begin{equation*}
		\bigoplus \limits_{0 \leqslant m < r } \mathcal{E}^{(r)}_{i}[r-1-2m] \cong \mathcal{E}^{(r-1)}_{i}\mathcal{E}_{i}\cong \mathcal{E}_{i}\mathcal{E}^{(r-1)}_{i}, \textrm{for}\ r \geqslant 1,
	\end{equation*}
	\begin{equation*}
		\bigoplus \limits_{0 \leqslant m < r } \mathcal{F}^{(r)}_{i}[r-1-2m] \cong \mathcal{F}^{(r-1)}_{i}\mathcal{F}_{i}\cong \mathcal{F}_{i} \mathcal{F}^{(r-1)}_{i}\ \textrm{for}\ r \geqslant 1.
	\end{equation*}
	Here $\mathcal{F}^{(0)}_{i}=\mathcal{E}^{(0)}_{i}$ is defined to be the identity functor.
\end{proposition}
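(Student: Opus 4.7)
The plan is to prove all relations in $\mathcal{Q}_\nu(\lambda^\bullet)$ (before passage to localization), and then descend to $\mathcal{L}_\nu(\lambda^\bullet)$ using the fact, already established in \cite{fang2023lusztig}, that $\mathcal{E}^{(r)}_i$ and $\mathcal{F}^{(r)}_i$ preserve the thick subcategories $\mathcal{N}_\nu$. For both the Serre and the divided-power relations, the framing plays a purely passive role: in the induction diagram (\ref{indd}), the distinguished factor $\mathbb{C}_{\mathbf{E}_{\mathbf{V}',\Omega}}$ depends only on the unframed quiver, so iterated compositions of $\mathcal{F}^{(r)}_i$'s can be rewritten, by transitivity of induction, as a single induction of a sheaf on $\mathbf{E}_{\mathbf{V}',\Omega}$ boxed with the original object on $\mathbf{E}_{\mathbf{V}''\oplus\mathbf{W}^\bullet,\Omega^{(N)}}$. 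Thus the isomorphisms we need reduce to isomorphisms of semisimple complexes on the unframed side.

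For the $\mathcal{F}$-Serre relation, I would first use transitivity of induction to write both sides as $\mathbf{Ind}^{\mathbf{V}\oplus\mathbf{W}^\bullet}_{\mathbf{V}',\mathbf{V}''\oplus\mathbf{W}^\bullet}(L^{\pm}_m\boxtimes(-))$, where $L^{\pm}_m$ are certain sums (over $m$ odd or even) of push-forwards of constant sheaves from the three-step flag variety of type $(mi,j,(1-a_{ij}-m)i)$ on $\mathbf{E}_{\mathbf{V}',\Omega}$. The identity $\bigoplus_{m\ \mathrm{odd}} L^+_m \cong \bigoplus_{m\ \mathrm{even}} L^-_m$ is then precisely Lusztig's Serre-type decomposition, established in \cite{MR1227098} by induction on the flag. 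For the divided powers, the isomorphism $\mathcal{F}^{(r-1)}_i\mathcal{F}_i \cong \bigoplus_{0\leqslant m<r}\mathcal{F}^{(r)}_i[r-1-2m]$ follows by rewriting both sides as induction of $\mathbb{C}$ from a partial flag variety, and then using that the natural projection from the $(r-1,r)$-flag to the full $r$-Grassmannian has $\mathbb{P}^{r-1}$ fibers whose cohomology gives the desired Lefschetz-type decomposition.

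For the $\mathcal{E}$-Serre and divided-power relations, I would use the definition of $\mathcal{E}^{(r)}_i$ via the diagram (\ref{UD}) together with the Fourier--Sato transform $\mathbf{Four}_{\Omega_i^{(N)},\Omega^{(N)}}$. Since Fourier--Sato transforms commute with Lusztig's induction functors up to equivalence of orientations (by the analogue of \cite[Proposition 10.4.5]{MR4337423} used in \cite{fang2025lusztigsheavescharacteristiccycles}), compositions of $\mathcal{E}$'s can be expressed, after conjugation by the Fourier transform, through the $q_1,q_2,\phi,j$ diagram applied iteratively. The smoothness and properness of the flag-Grassmannian morphisms $q_1,q_2$ and the properties of the principal bundle $\phi_{\mathbf{V},i}$ reduce the required isomorphisms to decompositions of pushforwards along iterated flag correspondences, formally dual to those used on the $\mathcal{F}$-side. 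Once these isomorphisms are verified on $\mathcal{D}_\nu(\lambda^\bullet)$, they descend to $\mathcal{L}_\nu(\lambda^\bullet)$ because all functors involved, together with Fourier--Sato, respect $\mathcal{N}_\nu$.

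The main obstacle is the $\mathcal{E}$-Serre relation: while the $\mathcal{F}$-relations come directly from Lusztig's classical flag-variety computations, the $\mathcal{E}$-relations require carefully tracking how the open embeddings $j_{\mathbf{V},i}$ and the partitions $\mathbf{E}^r_{\mathbf{V},\mathbf{W}^\bullet,i}$ in (\ref{thick}) interact with iterated compositions across different vertices $i\neq j$, and how the Fourier--Sato functor intertwines these with a single global operation. This requires a careful choice of a common orientation in which both $i$ and $j$ behave favorably, and a verification that the resulting identifications are compatible with the equivariant descent $(\phi_{\mathbf{V},i})_\flat$. Once this compatibility is established, the same flag-variety decompositions as in the $\mathcal{F}$-case produce the required isomorphisms, completing the proof.
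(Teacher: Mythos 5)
The paper does not prove Proposition~\ref{c2} internally; it is recalled from \cite{fang2023lusztig} (and the adjunction remark in Section~3.4 indicates how the $\mathcal{E}$-relations are actually obtained there). Your plan for the $\mathcal{F}$-relations and for the $\mathcal{E}$-divided-power relation (same vertex $i$) is sound: transitivity of induction reduces everything to Lusztig's flag-variety decompositions on the unframed side, these already hold at the level of $\mathcal{Q}_\nu(\lambda^\bullet)$, and they descend to the localization. For the same-vertex $\mathcal{E}^{(r-1)}_i\mathcal{E}_i$ identity, the inner Fourier--Sato transforms cancel and $j^*_{\mathbf{V}'',i}(j_{\mathbf{V}'',i})_! \cong \mathrm{Id}$ holds for an open embedding, so the Lefschetz decomposition along the flag-to-Grassmannian projection goes through directly.

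The gap is in the $\mathcal{E}$-Serre relation, and you have correctly identified it as the hard part but not correctly diagnosed why, nor how to repair it. First, that relation does not hold on $\mathcal{D}_\nu(\lambda^\bullet)$ at all, so the strategy ``verify on $\mathcal{D}_\nu(\lambda^\bullet)$, then descend'' cannot start. When you compose $\mathcal{E}_j\circ\mathcal{E}_i$ for $i\ne j$, the middle of the composite is $\mathbf{Four}_{\Omega^{(N)},\Omega^{(N)}_j}\circ\mathbf{Four}_{\Omega^{(N)}_i,\Omega^{(N)}}\circ(j_{\mathbf{V}'',i})_!$ followed later by $(j_{\mathbf{V}'',j})^*$; these do not cancel because the two open embeddings live on spaces with different orientations and refer to different vertices, and the needed identification $(j_{\mathbf{V}'',j})^*\cdots(j_{\mathbf{V}'',i})_!\cong \cdots$ holds only modulo $\mathcal{N}$. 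This is exactly the point of the localization. Second, your proposed fix --- choosing ``a common orientation in which both $i$ and $j$ behave favorably'' --- is impossible in precisely the cases that matter: the Serre relation is nontrivial only when $a_{ij}<0$, i.e. when $i$ and $j$ are joined by at least one arrow, and then no orientation of $Q$ has both $i$ and $j$ as sources simultaneously. The standard route (and the one the paper leans on, cf.\ the remark that $\mathcal{E}_i$ is left and right adjoint to $\mathcal{F}_i$ up to shift in $\mathcal{D}_\nu(\lambda^\bullet)/\mathcal{N}_\nu$) sidesteps all of this: once the $\mathcal{F}$-Serre relation is established in the localization, taking adjoints of both sides immediately yields the $\mathcal{E}$-Serre relation, since an adjoint functor is unique up to canonical isomorphism. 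You should replace your direct flag-correspondence argument for $\mathcal{E}$-Serre with this adjunction argument, performed after passage to the Verdier quotient.
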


\begin{theorem}[{\cite[Theorem 3.28]{fang2023lusztig}}]\label{high}
	When $N=1$, we denote $\mK(\lambda^{\bullet})$ by $\mK(\lambda)$, then $\mK(\lambda)$ becomes an integrable highest weight module together with the functors $\mathcal{E}^{(r)}_i,\mathcal{F}^{(r)}_i,\mathcal{K}^{\pm}_i$ for $i\in I,r\in \mathbb{N}_{>0}$. There is a unique canonical isomorphism $$\chi^{\lambda}: \mK(\lambda) \rightarrow  {_{\mathcal{A}}L}_{v}(\lambda),$$ which sends the image of constant sheaf $\mathbb{C}_{\mathbf{E}_{0,\mathbf{W},\Omega}^{(1)}}$ to the fixed highest weight vector. Moreover, the images of nonzero simple perverse sheaves in $\mL_{\nu}(\lambda),\nu\in\mathbb{N}I$ under $\chi^{\lambda}$ form an $\mathcal{A}$-basis of ${_{\mathcal{A}}L}_{v}(\lambda)$, which coincides with the canonical basis. 
\end{theorem}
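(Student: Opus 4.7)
The plan is to compare the sheaf-theoretic side with the algebraic realization $L(\lambda)\cong \mathbf{U}^-(\mathfrak{g})/\sum_{i\in I}\mathbf{U}^-(\mathfrak{g})F_i^{\langle i,\lambda\rangle+1}$ from (\ref{canquotient}) and transport Lusztig's canonical basis along this comparison.

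First I would endow $\mK(\lambda)=\bigoplus_{\nu}\mK(\nu,\lambda)$ with a $\mathbf{U}$-module structure. The functor isomorphisms recorded in Propositions~\ref{c1} and~\ref{c2} pass to the Grothendieck group, where the shift $[1]$ becomes multiplication by $v$; the Kashiwara commutation $\mathcal{E}_i\mathcal{F}_i\oplus\bigoplus_{0\le m\le M-1}\mathrm{Id}[M-1-2m]\cong \mathcal{F}_i\mathcal{E}_i\oplus\bigoplus_{0\le m\le -M-1}\mathrm{Id}[-2m-M-1]$ with $M=2\nu_i-\tilde\nu_i=\langle i,\lambda-\nu\rangle$ produces precisely relation (d), while the remaining Serre/divided-power relations produce (a)--(c), (e), (f). On each $\mK(\nu,\lambda)$ the weight is $\lambda-\nu$, so the weight spaces are finite-dimensional and the $\mathcal{E}_i$, $\mathcal{F}_i$ are locally finite; integrability is immediate.

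Next I would construct $\chi^{\lambda}$. The no-framing Lusztig category $\mQ_{\nu}(\mathbf{E}_{\mathbf{V},\Omega})$ categorifies ${_{\mathcal{A}}\mathbf{U}^-}$ by \cite{MR1227098}, and the induction $A\mapsto \mathbf{Ind}^{\mathbf{V}\oplus\mathbf{W}}_{\mathbf{V},\mathbf{W}}(A\boxtimes \mathbb{C}_{\mathbf{E}_{0,\mathbf{W},\Omega^{(1)}}})$ defines a $\mathbf{U}^-$-linear map $\Phi:{_{\mathcal{A}}\mathbf{U}^-}\to \mK(\lambda)$ sending $1\mapsto [\mathbb{C}_{\mathbf{E}_{0,\mathbf{W},\Omega^{(1)}}}]$ and $F_i^{(r)}x\mapsto \mathcal{F}_i^{(r)}\Phi(x)$, because induction in stages is compatible with the definition of $\mathcal{F}_i^{(r)}$. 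The essential step is to check that $\Phi$ factors through the quotient (\ref{canquotient}), i.e.\ that $\mathcal{F}_i^{\lambda_i+1}[\mathbb{C}_{\mathbf{E}_{0,\mathbf{W},\Omega^{(1)}}}]=0$ in $\mK(\lambda)$. Choosing an orientation $\Omega_i$ in which $i$ is a source and applying the Fourier--Sato transform of \cite{fang2025lusztigsheavescharacteristiccycles}, the iterated induction is the pushforward from a Grassmannian parametrizing $(\lambda_i+1)$-dimensional subspaces of $\mathbf{W}^{1}_{i^1}$ (of dimension $\lambda_i$), so every point of its support lies in $\mathbf{E}^{\geqslant 1}_{\mathbf{V},\mathbf{W},i}$ and the class is killed in the Verdier quotient by $\mN_{\nu}$ by construction (\ref{thick}). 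This produces the desired map $\chi^{\lambda}:\mK(\lambda)\to {_{\mathcal{A}}}L_v(\lambda)$.

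Finally I would show $\chi^{\lambda}$ is an isomorphism matching the bases. Surjectivity is automatic from the construction, since ${_{\mathcal{A}}}L_v(\lambda)$ is generated by $v_{\lambda}$ under ${_{\mathcal{A}}}\mathbf{U}^-$ and $\chi^{\lambda}$ is $\mathbf{U}^-$-equivariant. For the bijection of bases, recall that the canonical basis of $L(\lambda)$ is $\{\pi_{\lambda}(b)\neq 0\mid b\in B(\infty)\}$, where $\pi_{\lambda}$ is the quotient map; on the sheaf side, the simple perverse sheaves in $\mathcal{P}_{\nu}(\mathbf{E}_{\mathbf{V},\Omega})$ are the canonical basis elements of ${_{\mathcal{A}}\mathbf{U}^-}$, and induction with $\mathbb{C}_{\mathbf{E}_{0,\mathbf{W},\Omega^{(1)}}}$ sends them to semisimple complexes in $\mQ_{\nu}(\lambda)$ whose simple summands lie in $\mathcal{P}_{\nu}(\lambda)$. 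Thus $\chi^{\lambda}$ sends classes of simple perverse sheaves to elements of $\pm v^{\mathbb{Z}}B(\lambda)\cup\{0\}$, and the vanishing of a simple perverse sheaf in $\mL_{\nu}(\lambda)$ corresponds exactly to its class lying in the kernel of $\pi_{\lambda}$. The main obstacle, and the technical heart of the proof, is this last equivalence: one must verify that \emph{no other} simple perverse sheaves become zero in the localization and that distinct nonzero simples map to distinct canonical basis elements. This is handled following \cite{zheng2014categorification} and \cite{MR3177922} by a microlocal/induction--restriction argument that matches, weight space by weight space, the classes of nonvanishing simples in $\mL_{\nu}(\lambda)$ with $B(\lambda)_{\lambda-\nu}$, combined with bar-invariance of simple perverse sheaves under Verdier duality to pin down signs and shifts; uniqueness of $\chi^{\lambda}$ then follows from the fact that $\mK(\lambda)$ is generated by the highest weight class.
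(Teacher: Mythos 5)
The paper states this theorem as a recalled result, citing \cite[Theorem 3.28]{fang2023lusztig}, and supplies no proof of its own; so there is nothing in the present paper to compare your argument against directly.

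On its own terms your plan is sound where it is explicit: the functor isomorphisms of Propositions~\ref{c1}--\ref{c2} descend to the Grothendieck group and give the $\mathbf{U}$-relations, and your check that $\mathcal{F}_i^{(\lambda_i+1)}$ kills the highest-weight class is correct --- with $i$ a source and $\nu=(\lambda_i+1)i$, since $\dim\mathbf{V}_i=\lambda_i+1>\lambda_i=\dim\mathbf{W}^1_{i^1}$ the map in~(\ref{thick}) can never be injective, so the entire moduli space equals $\mathbf{E}^{\geqslant 1}_{\mathbf{V},\mathbf{W},i}$ and the class dies in the Verdier quotient. Surjectivity of the resulting map ${_{\mathcal{A}}L}(\lambda)\to\mK(\lambda)$ is likewise fine, since every object of $\mQ_{\nu}(\lambda)$ is a summand of an iterated induction from the highest-weight stratum. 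What is missing is exactly what you flag as the technical heart: that a simple perverse sheaf dies in $\mL_{\nu}(\lambda)$ \emph{if and only if} its class lies in $\ker\pi_{\lambda}$, and that distinct survivors map to distinct canonical basis elements (not merely to $\pm v^{\mathbb{Z}}$-multiples). Deferring this to \cite{zheng2014categorification} and \cite{MR3177922} without stating the lemma invoked leaves a real gap, because surjectivity of a $\mathbf{U}^-$-equivariant map between modules of the same character does not by itself give an isomorphism over $\mathcal{A}$, and says nothing about basis matching. The argument has to run through something like the key lemmas~\ref{rkeyt} and~\ref{lkeyt} and the crystal bijection $\Psi$ of Proposition~\ref{Cryt} (developed later in the paper for the $\geqslant\mu$ analysis), together with the comparison of geometric and algebraic crystals from \cite{MR1458969}; at minimum the statement of that input should appear in the proof rather than being gestured at.
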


For general $N$, we take $\bullet^{1}=\{1\}$, $\bullet^{2}=\{2,3\cdots,N\}$ and consider  the following composition 
\begin{align*}
	&\mathcal{D}^b_{G_\bV\times G_{I}}(\mathbf{E}_{\bV,\mathbf{W}^{\bullet},\Omega^{(N)}})\xrightarrow{\mathbf{D}}\mathcal{D}^b_{G_\bV\times G_{I}}(\mathbf{E}_{\bV,\mathbf{W}^{\bullet},\Omega^{(N)}})\xrightarrow{\mathbf{Res}^{\mathbf{V}\oplus\mathbf{W}^{\bullet}}_{\mathbf{V}'\oplus\mathbf{W}^{\bullet,1},\mathbf{V}''\oplus \mathbf{W}^{\bullet,2}}}\\
	&\mathcal{D}^b_{G_{\bV'}\times G_{I}\times G_{\bV''}\times G_{I}}(
	\mathbf{E}_{\mathbf{V}',\mathbf{W}^{\bullet,1},\Omega^{(1)}}\times \mathbf{E}_{\mathbf{V}'', \mathbf{W}^{\bullet,2},\Omega^{(N-1)}})
	\xrightarrow{\mathbf{D} \boxtimes \mathbf{D}}\\
	&\mathcal{D}^b_{G_{\bV'}\times G_{I}\times G_{\bV''}\times G_{I}}(
	\mathbf{E}_{\mathbf{V}',\mathbf{W}^{\bullet,1},\Omega^{(1)}}\times \mathbf{E}_{\mathbf{V}'', \mathbf{W}^{\bullet,2},\Omega^{(N-1)}})
	\xrightarrow{\mathrm{sw}}\\
	&\mathcal{D}^b_{G_{\bV''}\times G_{I}\times G_{\bV'}\times G_{I}}(
	\mathbf{E}_{\mathbf{V}'',\mathbf{W}^{\bullet,2},\Omega^{(N-1)}}\times \mathbf{E}_{\mathbf{V}', \mathbf{W}^{\bullet,1},\Omega^{(1)}}).
\end{align*}
Take direct sum $\bigoplus_{\bV',\bV''}$ of the composition above, we obtain a well-defined $\mathcal{A}$-linear map $\Delta_{N}$ on the Grothendieck groups
$$\Delta_{N}: \mK(\lambda^{\bullet})\rightarrow \mK(\lambda^{\bullet^{2}}) \otimes_{\mathcal{A}}  \mK(\lambda^{1}). $$
The map $\Delta_{N}: \mK(\lambda^{\bullet})\rightarrow \mK(\lambda^{\bullet^{2}}) \otimes_{\mathcal{A}}  \mK(\lambda^{1})$ is an $\mathcal{A}$-linear isomorphism and allows us reduce $N$-framed quivers to $(N-1)$-framed quivers.

\begin{theorem}{\cite[Theorem 4.15]{fang2023tensor}} \label{tensor}\
	
  (1)With the actions induced from the functors $\mathcal{E}^{(r)}_i,\mathcal{F}^{(r)}_i,\mathcal{K}^{\pm}_i$ for $i\in I,r\in \mathbb{N}_{>0}$, the Grothendieck group $\mK(\lambda^{\bullet})$ becomes an integrable highest weight module. 
  
  (2)The morphism $\Delta_{N}: \mK(\lambda^{\bullet})\rightarrow \mK(\lambda^{\bullet^{2}}) \otimes_{\mathcal{A}}  \mK(\lambda^{1})$ is an isomorphism of $_{\mathcal{A}}\mathbf{U}$-modules.  In particular, the Grothendieck group $\mK(\lambda^{\bullet})$ is canonically isomorphic to the tensor product ${_{\mathcal{A}}L}(\lambda^{\bullet})$ via the composition $$\chi^{\lambda^{\bullet}}= (\chi^{\lambda^{N}}\otimes \chi^{\lambda^{N-1}} \otimes \cdots \otimes \chi^{\lambda^{1}} )\circ (id^{\otimes (N-2)} \otimes \Delta_{2})\circ (id^{\otimes (N-3)} \otimes\Delta_{3})\circ \cdots  \circ \Delta_{N}. $$
  
  (3)The images of nonzero simple perverse sheaves in $\mL_{\nu}(\lambda^{\bullet}),\nu\in\mathbb{N}I$ under $\chi^{\lambda^{\bullet}}$ form an $\mathcal{A}$-basis of the tensor product, which coincides with the canonical basis in Theorem \ref{CBT}. 
\end{theorem}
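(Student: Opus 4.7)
The plan is to proceed by induction on $N$, with the base case $N=1$ being exactly Theorem \ref{high}. For the inductive step, the main engine is the $\mathcal{A}$-linear map $\Delta_N$ built from the restriction functor, which must be upgraded to an isomorphism of $_{\mathcal{A}}\mathbf{U}$-modules; this then unwinds the tensor product one factor at a time.

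First I would establish the $_{\mathcal{A}}\mathbf{U}$-module structure on $\mK(\lambda^{\bullet})$. Propositions \ref{c1} and \ref{c2} already verify the $K$-commutation, Serre, divided-power, and $EF\!-\!FE$ relations at the level of endofunctors on $\coprod_{\nu}\mL_{\nu}(\lambda^{\bullet})$, so passing to Grothendieck groups gives a well-defined action. Integrability reduces to: (a) each weight space $\mK(\nu,\lambda^{\bullet})$ has finite $\mathcal{A}$-rank, which follows since $\mathcal{P}_{\nu}(\lambda^{\bullet})$ is finite; and (b) the class of $\mathbb{C}_{\mathbf{E}_{0,\mathbf{W}^{\bullet},\Omega^{(N)}}}$ is a highest weight generator whose $\mathcal{F}$-orbits eventually vanish on a given weight by support considerations combined with the micro-local conditions defining $\mN_{\nu}$.

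Next I would show $\Delta_N$ is $_{\mathcal{A}}\mathbf{U}$-equivariant when the tensor product carries the comultiplication action. Well-definedness on the Verdier quotients requires that $\mathbf{D}\circ\mathbf{Res}$ preserves $\mN$, which follows from the compatibility of the micro-local kernel condition (\ref{thick}) with the restriction diagram (\ref{resd}) under Fourier–Sato transform. Equivariance with $\mathcal{K}_i$ is a shift computation; equivariance with $\mathcal{F}_i$ comes from a geometric Mackey-type isomorphism comparing $\mathbf{Res}\circ\mathbf{Ind}$ with a sum of $\mathbf{Ind}\circ\mathbf{Res}$, obtained by stratifying the fiber product of the induction diagram (\ref{indd}) with $F'$ according to how a submodule of $\mathbf{V}\oplus\mathbf{W}^{\bullet}$ interacts with the splitting $\mathbf{W}^{\bullet}=\mathbf{W}^{\bullet,1}\oplus\mathbf{W}^{\bullet,2}$; the $\mathcal{E}_i$ case follows by Verdier duality and adjointness. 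Once equivariance holds, $\Delta_N$ is an isomorphism by checking that it sends $[\mathbb{C}_{\mathbf{E}_{0,\mathbf{W}^{\bullet},\Omega^{(N)}}}]$ to the tensor product of highest weight vectors and then comparing ranks weight space by weight space, using the inductive hypothesis for the target.

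Finally, to identify the images of simple perverse sheaves under $\chi^{\lambda^{\bullet}}$ with the canonical basis $B(\lambda^{\bullet})$ of Theorem \ref{CBT}, I would verify the three characterizing properties of that basis. $\Psi$-invariance follows because simple perverse sheaves are Verdier self-dual (up to shift absorbed by the $v$-grading) and, under $\chi^{\lambda^\bullet}\circ\Delta_N^{-1}$, Verdier duality corresponds to the bar involution twisted by the quasi-$\mathcal{R}$-matrix. The $v^{-1}\mathcal{L}$-congruence $b_N\diamond\cdots\diamond b_1-b_N\otimes\cdots\otimes b_1\in v^{-1}\mathcal{L}$ is read off from the semisimple decomposition of the complexes $L_{\boldsymbol{\nu}^{1}\boldsymbol{d}^{1}\cdots\boldsymbol{\nu}^{N}\boldsymbol{d}^{N}}$ together with the inductive compatibility of $\Delta_N$ with the simple classes in $\mathcal{P}_\nu(\lambda^{\bullet^2})\boxtimes\mathcal{P}(\lambda^1)$. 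Uniqueness in Theorem \ref{CBT} then forces the identification. The main obstacle in this plan is Step 2: the Mackey-type isomorphism matching $\Delta_N$ with the comultiplication $\Delta(E_i)=E_i\otimes 1+K_i\otimes E_i$ (and its symmetric twist) must be carried out at the level of equivariant derived categories localized by the thick subcategories $\mN$, so one has to ensure that the extra strata arising from the splitting $\mathbf{W}^{\bullet,1}\oplus\mathbf{W}^{\bullet,2}$ either contribute the expected mixed terms or lie in $\mN$, and this book-keeping is where all the technical weight of the proof sits.
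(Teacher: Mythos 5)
The theorem you are proving is not actually proved in this paper: it is imported verbatim from \cite{fang2023tensor}, and the text after it explicitly notes that the basis comparison in part (3) was carried out there for $N=2$ only (with the remark that the argument extends). So there is no ``paper's own proof'' to line up against; I can only assess whether your outline is a reasonable reconstruction of the proof strategy that would have been used there.

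Your overall shape is sound and broadly in line with how the paper sets things up: $N=1$ is Theorem \ref{high}; Propositions \ref{c1}--\ref{c2} supply the categorified relations so the Grothendieck group becomes an $_{\mathcal{A}}\mathbf{U}$-module; $\Delta_N$, built from $\mathrm{sw}\circ(\mathbf{D}\boxtimes\mathbf{D})\circ\mathbf{Res}\circ\mathbf{D}$, unwinds one framing factor at a time; and part (3) is verified against the axiomatic characterization in Theorem \ref{CBT}. Where I think you are thin, and where the real technical weight sits, is in exactly the two steps you yourself flag. First, the geometric Mackey isomorphism: you need $\Delta_N$ on the Grothendieck group to intertwine $\mathcal{F}_i$ and $\mathcal{E}_i$ with Lusztig's specific comultiplication, not merely some coproduct; the two Verdier duals in the definition of $\Delta_N$ are there precisely to convert the ``wrong'' coproduct coming from $\mathbf{Res}$ into the right one, and your proposal does not explain this normalization. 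Second, the identification of Verdier duality with $\Psi=\Theta(\bar{\phantom{x}}\otimes\bar{\phantom{x}})$ under $\chi^{\lambda^{\bullet}}$: you assert this as a known fact, but it is really the central lemma of the construction and needs to be established (it amounts to tracking how $\mathbf{D}$ interacts with the $\mathrm{sw}\circ(\mathbf{D}\boxtimes\mathbf{D})\circ\mathbf{Res}\circ\mathbf{D}$ composite, and recognizing the discrepancy between the two bar-compatible lattices as exactly the quasi-$\mathcal{R}$-matrix $\Theta$). Once those two points are nailed down, your argument for the $v^{-1}\mathcal{L}$-leading-term condition and the appeal to uniqueness in Theorem \ref{CBT} is the standard and correct way to finish. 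As written, though, the proposal states the conclusions of those two lemmas without arguments, so it would not be accepted as a proof without filling them in.
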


In \cite{fang2023tensor}, we only compare the basis provided by simple perverse sheaves and the canonical basis defined in Theorem \ref{CBT} for $N=2$ case, but the proof indeed works for general $N$.

\begin{remark}
	We can consider the vector bundle $$\mathbf{E}_{\bV,\mathbf{W}^{\bullet},\Omega^{(N)}} \rightarrow \mathbf{E}_{\bV,\mathbf{W}^{\bullet},\Omega^{(N)} \cap \Omega^{(N),'}} $$ for any orientations $\Omega^{(N)} $ and $\Omega^{(N),'}$ ( which may not be equivalent under mutations). All Lusztig's semisimple sheaves are monodromic with respect to this vector bundle, and the Lusztig's sheaves in thick subcategories $\mN_{\nu}$ are preserved under the Fourier-Sato transform. It implies that our construction is independent on the choice of the orientation, but not only independent on the choice of mutation equivalent class of the orientation.
\end{remark} 

\begin{remark}
	One may ask if our construction is a $\mathfrak{sl}_{2}$-categorification in the sense of \cite{CR} and \cite{R2}, or the tensor product $2$-categorification in the sense of \cite{BLW} and \cite{WB15}. Since our functor $\mathcal{F}_{i}$ is defined by Lusztig's induction functor, the composition $\mathcal{F}^{r}_{i}$ is isomorphic to $\mathbf{Ind}(L_{\boldsymbol{i}\cdots \boldsymbol{i}} \boxtimes -)$. By main results of \cite{VVC}, the graded endomorphism ring ${\textrm{Ext}}_{G_{\bV}}^{\ast}(L_{\boldsymbol{i}\cdots \boldsymbol{i}},L_{\boldsymbol{i}\cdots \boldsymbol{i}}  )$ is isomorphic to the KLR algebra of type $A_{1}$, hence there exist generators $X$ and $T$ in  ${\textrm{Ext}}_{G_{\bV}}^{\ast}(L_{\boldsymbol{i}\cdots \boldsymbol{i}},L_{\boldsymbol{i}\cdots \boldsymbol{i}}  )$ induce natural transforms of $\mathcal{F}^{r}_{i}$ satisfying the affine nil-Hecke relations. Our $\mathcal{E}_{i}$ is left and right adjoint to $\mathcal{F}_{i}$ (up to different shifts) as functors between those $\mD_{\nu}(\lambda^{\bullet})/\mN_{\nu},\nu \in \mathbb{N}I$, so there also exist natural transforms of $\mathcal{E}^{r}_{i}$ satisfying the affine nil-Hecke relations. In particular, it implies that our construction is a categorical geometric $\mathfrak{sl}_{2}$-action in the sense of \cite{CKL13}, which is a geometric analogy of the $2$-categorification in \cite{CR}.
\end{remark}

\section{Filtration of based modules arising from tensor products}

We assume $M=L(\lambda^{\bullet})$ is the tensor product of $L(\lambda^{1}),L(\lambda^{2}),\cdots, L(\lambda^{N})$. In this chapter, we construct subcategories $\mL^{\geqslant \mu}(\lambda^{\bullet})$ and $\mL^{>\mu}(\lambda^{\bullet})$ of $\mL(\lambda^{\bullet})$ for each dominant weight $\mu$ of $\mathfrak{g}$, which realize the sub-based modules $M[\geqslant \mu]$ and $M[> \mu]$, and we also generalize our construction to the restricted module $\mathbf{res}^{\mathfrak{g}}_{\mathfrak{l}}M$ of a Levi subalgebra $\mathfrak{l}$.

\subsection{The thick subcategory $\mD^{\geqslant \mu}$ and $\mD^{>\mu}$}\label{section thick}

Recall that the cotangent bundle of the moduli space $\mathbf{E}_{\bV,\mathbf{W}^{\bullet},\Omega^{(N)}}$ can be realized by the moduli space of the representations of doubled quivers
\begin{equation}
	\begin{split}
		 T^{\ast}\mathbf{E}_{\bV,\mathbf{W}^{\bullet},\Omega^{(N)}}=\mathbf{E}_{\bV,\mathbf{W}^{\bullet},H^{(N)}}&=\bigoplus_{h \in \Omega}\mathbf{Hom}(\mathbf{V}_{h'},\mathbf{V}_{h''}) \oplus \bigoplus_{h \in \bar{\Omega}}\mathbf{Hom}(\mathbf{V}_{h'},\mathbf{V}_{h''}) \oplus \\
		 &\bigoplus_{i \in I,1\leqslant l\leqslant N}\mathbf{Hom}(\mathbf{V}_{i},\mathbf{W}^{l}_{i^{l}})  \oplus \bigoplus_{i \in I,1\leqslant l\leqslant N}\mathbf{Hom}(\mathbf{W}^{l}_{i^{l}},\mathbf{V}_{i}).  
	\end{split}
\end{equation}
We denote an element in $\mathbf{E}_{\bV,\mathbf{W}^{\bullet},H^{(N)}}$ by $(x,\bar{x},y,z)$, where $$x=(x_{h})_{h \in \Omega}\in \bigoplus_{h \in \Omega}\mathbf{Hom}(\mathbf{V}_{h'},\mathbf{V}_{h''}),$$ $$\bar{x}=(x_{\bar{h}})_{h\in \Omega} \in \bigoplus_{h \in \bar{\Omega}}\mathbf{Hom}(\mathbf{V}_{h'},\mathbf{V}_{h''}),$$ $$y=(y^{l}_{i})_{i,l} \in \bigoplus_{i \in I,1\leqslant l\leqslant N}\mathbf{Hom}(\mathbf{V}_{i},\mathbf{W}^{l}_{i^{l}}),$$$$z=(z^{l}_{i})_{i,l} \in \bigoplus_{i \in I,1\leqslant l\leqslant N}\mathbf{Hom}(\mathbf{W}^{l}_{i^{l}},\mathbf{V}_{i}).$$

For a fixed dominant weight $\mu \leqslant \sum_{1\leqslant l \leqslant N} \lambda^{l}$, there exists at most one dimension vector $\nu=\nu(\mu)$ such that $wt(\nu)=\sum_{1\leqslant l \leqslant N} \lambda^{l} -\nu =\mu$.

\begin{definition}
Given a fixed dominant weight $\mu \leqslant \sum_{1\leqslant l \leqslant N} \lambda^{l}$ such that $wt^{-1}(\mu)\in \mathbb{N}I$ exists, we say an element $(x,\bar{x},y,z)$ in $\mathbf{E}_{\bV,\mathbf{W}^{\bullet},H^{(N)}}$ is dominated by the dominant weight $\geqslant \mu$, if there exists a subspace $\bV''$  of $\bV$ such that  $\bV'' \oplus \mathbf{W}^{\bullet}$ is  $(x,\bar{x},y,z)$-stable, and its dimension vector $\nu''$ satisfies $\nu'' \leqslant wt^{-1}(\mu)$. 
	
	Define $\mathbf{E}^{\geqslant \mu}_{\bV,\mathbf{W}^{\bullet},H^{(N)}}$ to be the subset of $\mathbf{E}_{\bV,\mathbf{W}^{\bullet},H^{(N)}}$, which consists of those points  dominated by the dominant weight $\geqslant \mu$, and
	we also denote $\mathbf{E}^{> \mu}_{\bV,\mathbf{W}^{\bullet},H^{(N)}}=\bigcup_{\mu'>\mu} \mathbf{E}^{\geqslant \mu'}_{\bV,\mathbf{W}^{\bullet},H^{(N)}}.$
	
	We say a point $(x,\bar{x},y,z)$ in $\mathbf{E}_{\bV,\mathbf{W}^{\bullet},H^{(N)}}$ is strictly dominated by  $\mu$ if it belongs to the subset  $\mathbf{E}^{\geqslant \mu}_{\bV,\mathbf{W}^{\bullet},H^{(N)}} \backslash \mathbf{E}^{> \mu}_{\bV,\mathbf{W}^{\bullet},H^{(N)}}$
\end{definition}

\begin{lemma}\label{close}
	With the notations above, the subset $\mathbf{E}^{\geqslant \mu}_{\bV,\mathbf{W}^{\bullet},H^{(N)}}$ and $\mathbf{E}^{> \mu}_{\bV,\mathbf{W}^{\bullet},H^{(N)}}$ are close subsets of $\mathbf{E}_{\bV,\mathbf{W}^{\bullet},H^{(N)}}$.
\end{lemma}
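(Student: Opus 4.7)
The plan is to realize each of these subsets as a finite union of images of projective morphisms, after which the closed-image-of-a-proper-map principle finishes the argument.

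First, for each dimension vector $\nu'' \in \mathbb{N}I$ with $\nu''_i \leqslant \dim \bV_i$, I would introduce the product of Grassmannians $\mathbf{Grass}(\nu'',\bV) := \prod_{i\in I} \mathbf{Grass}(\nu''_i, \bV_i)$, which is projective, and consider the incidence variety
$$Z_{\nu''} := \{(\bV'', (x,\bar{x},y,z)) \in \mathbf{Grass}(\nu'',\bV) \times \mathbf{E}_{\bV,\mathbf{W}^{\bullet},H^{(N)}} \mid \bV'' \oplus \mathbf{W}^{\bullet} \text{ is } (x,\bar{x},y,z)\text{-stable}\}.$$
The stability condition unpacks as finitely many incidence relations: $x_h(\bV''_{h'}) \subseteq \bV''_{h''}$ and $x_{\bar h}(\bV''_{h''}) \subseteq \bV''_{h'}$ for $h \in \Omega$, together with $z^l_i(\mathbf{W}^l_{i^l}) \subseteq \bV''_i$ for all $i \in I$ and $1 \leqslant l \leqslant N$ (the inclusions involving the $y^l_i$ are automatic, since $\mathbf{W}^l_{i^l}$ is the full codomain). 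Each relation is cut out by vanishing of a composite morphism $\bV''_{h'} \hookrightarrow \bV_{h'} \xrightarrow{x_h} \bV_{h''} \twoheadrightarrow \bV_{h''}/\bV''_{h''}$ expressed via the universal sub- and quotient-bundles, which is a closed condition. Hence $Z_{\nu''}$ is closed in the product.

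Next, the projection $p_{\nu''}\colon Z_{\nu''} \to \mathbf{E}_{\bV,\mathbf{W}^{\bullet},H^{(N)}}$ is proper because $\mathbf{Grass}(\nu'',\bV)$ is projective, so its image $p_{\nu''}(Z_{\nu''})$ is closed. By the definition of $\mathbf{E}^{\geqslant \mu}$, one has
$$\mathbf{E}^{\geqslant \mu}_{\bV,\mathbf{W}^{\bullet},H^{(N)}} = \bigcup_{\substack{\nu'' \in \mathbb{N}I \\ \nu'' \leqslant \nu(\mu)}} p_{\nu''}(Z_{\nu''}),$$
and the indexing set is finite (every $\nu''$ is bounded coordinate-wise by $\nu(\mu)$), so $\mathbf{E}^{\geqslant \mu}$ is a finite union of closed sets. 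For $\mathbf{E}^{>\mu} = \bigcup_{\mu' > \mu} \mathbf{E}^{\geqslant \mu'}$, the union ranges over dominant $\mu'$ with $\mu' > \mu$ and $wt^{-1}(\mu') \in \mathbb{N}I$; but such $\mu'$ correspond to $\nu(\mu') \lneqq \nu(\mu)$ in $\mathbb{N}I$ (since $\mu'-\mu$ is a nonzero nonnegative combination of simple roots), so again only finitely many $\mu'$ contribute, and $\mathbf{E}^{>\mu}$ is a finite union of closed sets.

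The main step worth verifying carefully is that stability is truly a closed algebraic condition on $\mathbf{Grass}(\nu'',\bV) \times \mathbf{E}_{\bV,\mathbf{W}^{\bullet},H^{(N)}}$; this is standard but the only non-bookkeeping input. Beyond that, the only care needed is checking finiteness of the two index sets, both of which are controlled by $\nu(\mu)$. No sophisticated geometric input is required.
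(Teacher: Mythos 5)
Your proposal is correct and follows the same strategy as the paper's proof: both realize $\mathbf{E}^{\geqslant\mu}$ as a finite union over dimension vectors $\nu''\leqslant wt^{-1}(\mu)$ of images of proper projections from an incidence variety (your $Z_{\nu''}$ is the paper's $\mathbf{E}''_{\nu''}$). You supply a bit more detail on why the incidence locus is closed and handle the finiteness for $\mathbf{E}^{>\mu}$ directly rather than reducing it to the $\geqslant$ case up front, but the argument is the same.
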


\begin{proof}
	Since $\mathbf{E}^{> \mu}_{\bV,\mathbf{W}^{\bullet},H^{(N)}}$ is a finite union of $\mathbf{E}^{\geqslant \mu'}_{\bV,\mathbf{W}^{\bullet},H^{(N)}}$, it suffices to prove the statement for $\mathbf{E}^{\geqslant \mu}_{\bV,\mathbf{W}^{\bullet},H^{(N)}}$. For a fixed dimension vector $\nu'' \in \mathbb{N}I$, let $\mathbf{E}''_{\nu''}$  be the variety which consists of $( (x,\bar{x},y,z), \mathbf{V}''\oplus \mathbf{W}^{\bullet} )$ such that $(x,\bar{x},y,z) \in \mathbf{E}_{\bV,\mathbf{W}^{\bullet},H^{(N)}}$, $\mathbf{V}''$ is a subsapce of $\mathbf{V}$ with dimension vector $\nu''$, and $\mathbf{V}''\oplus \mathbf{W}^{\bullet} $ is  $(x,\bar{x},y,z)$-stable.  Notice that the natural projection $$\pi_{\nu''}:\mathbf{E}''_{\nu''} \rightarrow  \mathbf{E}_{\bV,\mathbf{W}^{\bullet},H^{(N)}}$$ is proper, hence the image of $\pi_{\nu''}$ is a close subset. 
	Notice that $\mathbf{E}^{\geqslant \mu}_{\bV,\mathbf{W}^{\bullet},H^{(N)}}= \bigcup_{\nu'' \leqslant wt^{-1}(\mu)} \textrm{Im}(\pi_{\nu''} ) $ is a finite union of close subsets, we finish the proof.
	
\end{proof}

\begin{definition}
	Given a fixed dominant weight $\mu \leqslant \sum_{1\leqslant l \leqslant N} \lambda^{l}$ such that $wt^{-1}(\mu)$ exists,
	let $\mD_{\nu}^{\geqslant \mu}(\lambda^{\bullet})$ be the full subcategory of $\mD_{\nu}(\lambda^{\bullet})$, which consists of objects whose singular supports are contained in $\mathbf{E}^{\geqslant \mu}_{\bV,\mathbf{W}^{\bullet},H^{(N)}}$. Similarly, let $\mD_{\nu}^{> \mu}(\lambda^{\bullet})$ be the full subcategory of $\mD_{\nu}(\lambda^{\bullet})$, which consists of objects whose singular supports are contained in $\mathbf{E}^{> \mu}_{\bV,\mathbf{W}^{\bullet},H^{(N)}}$. 
	We denote $\mP^{\geqslant \mu}_{\nu}(\lambda^{\bullet})$ and  $\mP^{>\mu}_{\nu}(\lambda^{\bullet})$ by the subsets of $\mP_{\nu}(\lambda^{\bullet})$, which consist of simple perverse sheaves in $\mD_{\nu}^{\geqslant \mu}(\lambda^{\bullet})$ and $\mD_{\nu}^{> \mu}(\lambda^{\bullet})$ respectively.
\end{definition}

By definition and the property of singular supports, we can see $\mD_{\nu}^{\geqslant \mu}(\lambda^{\bullet})$ and $\mD_{\nu}^{> \mu}(\lambda^{\bullet})$ are thick subcategories.

\subsection{The localizations and their functors}

\begin{definition}
		Given a fixed dominant weight $\mu \leqslant \sum_{1\leqslant l \leqslant N} \lambda^{l}$ such that $wt^{-1}(\mu)$ exists, define $\mD_{\nu}^{\geqslant \mu}(\lambda^{\bullet})/\mN_{\nu}$ to be the full subcategory of $\mD_{\nu}(\lambda^{\bullet})/\mN_{\nu}$, which consists of objects isomorphic to objects of $\mD_{\nu}^{\geqslant \mu}(\lambda^{\bullet})$. Similarly, define $\mD_{\nu}^{> \mu}(\lambda^{\bullet})/\mN_{\nu}$ to be the full subcategory of $\mD_{\nu}(\lambda^{\bullet})/\mN_{\nu}$, which consists of objects isomorphic to objects of $\mD_{\nu}^{> \mu}(\lambda^{\bullet})$.
\end{definition}

\begin{proposition}\label{keyprop}
	For any $i\in I,r\in \mathbb{N}_{>0}$ and $\nu=ri+\nu''\in\mathbb{N}I$, the functors 
	$$\mathcal{E}^{(r)}_{i}:\mD_{\nu}^{\geqslant \mu}(\lambda^{\bullet})/\mN_{\nu} \rightarrow \mD_{\nu''}^{\geqslant \mu}(\lambda^{\bullet})/\mN_{\nu''},$$
	$$\mathcal{E}^{(r)}_{i}:\mD_{\nu}^{> \mu}(\lambda^{\bullet})/\mN_{\nu} \rightarrow \mD_{\nu''}^{> \mu}(\lambda^{\bullet})/\mN_{\nu''}$$
	are well-defined.
\end{proposition}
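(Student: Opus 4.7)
The plan is to show that if $L\in\mD^{\geqslant\mu}_{\nu}(\lambda^{\bullet})$ then $\mathcal{E}_{i}^{(r)}(L)\in\mD^{\geqslant\mu}_{\nu''}(\lambda^{\bullet})$, and analogously for the strict version. Since $\mathcal{E}_{i}^{(r)}$ is already known to descend to the Verdier quotient $\mD_{\nu}(\lambda^{\bullet})/\mN_{\nu}$ by \cite[Proposition 3.18, Corollary 3.19]{fang2023lusztig}, and the subcategories in the quotient are by definition the images of those in $\mD_{\nu}(\lambda^{\bullet})$, it suffices to verify the singular support statement before passing to the quotient. The $>\mu$ case reduces to the $\geqslant \mu'$ statements for all $\mu'>\mu$ via $\mathbf{E}^{>\mu}=\bigcup_{\mu'>\mu}\mathbf{E}^{\geqslant\mu'}$. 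Thus the whole proof reduces to a step-by-step singular-support analysis through the defining composition of $\mathcal{E}_{i}^{(r)}$.

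First, the two outer Fourier--Sato transforms preserve singular supports under the canonical identification $T^{\ast}\mathbf{E}_{\mathbf{V},\mathbf{W}^{\bullet},\Omega^{(N)}}\cong \mathbf{E}_{\mathbf{V},\mathbf{W}^{\bullet},H^{(N)}}\cong T^{\ast}\mathbf{E}_{\mathbf{V},\mathbf{W}^{\bullet},\Omega_{i}^{(N)}}$; because $\mathbf{E}^{\geqslant\mu}$ is defined intrinsically on the doubled quiver, independently of orientation, these two steps are automatic. For the inner block $(j_{\mathbf{V}'',i})_{!}(\phi_{\mathbf{V}'',i})^{\ast}(q_{2})_{!}(q_{1})^{\ast}(\phi_{\mathbf{V},i})_{\flat}(j_{\mathbf{V},i})^{\ast}[-r\nu_{i}]$, I would handle the easy pieces first: the open restriction $j_{\mathbf{V},i}^{\ast}$ just cuts singular support into the open locus; the principal-bundle equivalence $(\phi_{\mathbf{V},i})_{\flat}$ and its dual $(\phi_{\mathbf{V}'',i})^{\ast}$ preserve the condition because the dominated-by-$\mu$ locus is $G_{\mathbf{V}}$-invariant (change of basis on $\mathbf{V}_{i}$ clearly preserves both stability and dimension of stable subspaces); and the shriek extension $(j_{\mathbf{V}'',i})_{!}$ does not push the singular support outside $\mathbf{E}^{\geqslant\mu}_{\mathbf{V}'',\mathbf{W}^{\bullet},H^{(N)}}$ since this set is closed by Lemma \ref{close}.

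The decisive step is the smooth-proper correspondence $(q_{2})_{!}(q_{1})^{\ast}$ relating $\dot{\mathbf{E}}_{\mathbf{V},\mathbf{W}^{\bullet},i}\times\mathbf{Grass}(\nu_{i},\tilde{\nu}_{i})$ and $\dot{\mathbf{E}}_{\mathbf{V}'',\mathbf{W}^{\bullet},i}\times\mathbf{Grass}(\nu''_{i},\tilde{\nu}_{i})$ through $\dot{\mathbf{E}}_{\mathbf{V},\mathbf{W}^{\bullet},i}\times\mathbf{Flag}(\nu''_{i},\nu_{i},\tilde{\nu}_{i})$. By standard micro-local sheaf theory, $SS((q_{2})_{!}(q_{1})^{\ast}M)$ lies in the image under the codifferential $dq_{2}$ of the conormal preimage $(dq_{1})^{-1}(SS(M))$. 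The key geometric claim is that this cotangent-level correspondence sends dominated-by-$\mu$ points to dominated-by-$\mu$ points: given a representation of $\mathbf{V}\oplus\mathbf{W}^{\bullet}$ on the doubled quiver admitting a stable subspace $\mathbf{V}_{\mathrm{sub}}\oplus\mathbf{W}^{\bullet}$ with $\dim\mathbf{V}_{\mathrm{sub}}\leqslant wt^{-1}(\mu)$, and a compatible partial flag removing an $r$-dimensional piece at vertex $i$, one produces a stable subspace for the restricted representation on $\mathbf{V}''$ of dimension vector bounded componentwise by $\dim\mathbf{V}_{\mathrm{sub}}$. Concretely, at vertices $j\neq i$ one keeps $\mathbf{V}_{\mathrm{sub},j}$; at vertex $i$ one intersects $\mathbf{V}_{\mathrm{sub},i}$ with the distinguished subspace selected by the flag. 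The hard part is verifying compatibility with the cotangent-fiber coordinates $(\bar{x},z)$ because the codifferentials $dq_{1}, dq_{2}$ involve conormal bundles of the flag projections that must be unwound explicitly; this is where a careful linear-algebra analysis on the doubled quiver, in the spirit of the Hecke-correspondence calculations appearing in \cite{fang2025lusztigsheavescharacteristiccycles}, is needed to conclude.
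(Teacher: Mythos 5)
Your plan is structurally aligned with the paper's proof: reduce along the Fourier--Sato transforms (automatic since $\mathbf{E}^{\geqslant\mu}$ is defined on the doubled quiver), pass through the principal bundle $\phi_{\mathbf{V},i}$ (the paper isolates this as Lemma~\ref{key lemma}), and then concentrate the whole problem on the smooth-proper correspondence $(q_{2})_{!}(q_{1})^{*}$ between the two Grassmannian bundles. You also hit the right combinatorial idea for the candidate stable subspace: keep $\mathbf{T}_{j}$ at $j\neq i$ and intersect with the flag step at $i$, i.e.\ $\mathbf{T}'_{i}=\mathbf{T}_{i}\cap\mathbf{S}_{1}$ in the paper's notation.

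However, there is a genuine gap exactly where you flag it. You write that ``a careful linear-algebra analysis on the doubled quiver \dots is needed to conclude,'' but you do not carry it out, and that verification is the entire content of the proposition. In the paper this is done by rewriting $T^{*}Y_{1}$, $T^{*}Z$, $T^{*}Y_{2}$ as the varieties $\bar{Y}_{1},\bar{Z},\bar{Y}_{2}$, where the conormal fiber over $(\dot{x},\dot{\bar{x}},\dot{y},\dot{z},\mathbf{S})$ is encoded by a single endomorphism $\psi:\tilde{\mathbf{V}}_{i}\rightarrow\tilde{\mathbf{V}}_{i}$ satisfying $\psi(\tilde{\mathbf{V}}_{i})\subseteq\mathbf{S}$, $\psi(\mathbf{S})=0$ (and the two-step analogue on $\bar{Z}$). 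In these coordinates the cotangent correspondences $p_{1},p_{2}$ become literal forgetting maps and $dq'_{1},dq'_{2}$ become identities, so that the only thing one must check is that the same $\psi$ still sends $\tilde{\mathbf{T}}_{i}$ into the \emph{smaller} space $\mathbf{T}_{i}\cap\mathbf{S}_{1}$ once the flag condition $\psi(\tilde{\mathbf{V}}_{i})\subseteq\mathbf{S}_{1}$ is imposed by the correspondence; this is what condition (3) in the paper's three-item verification records. Without unwinding the conormal data into the $\psi$ picture --- or some equivalent explicit computation --- there is no argument that the candidate subspace you construct is compatible with the cotangent-fiber coordinates, so the key step remains unproved. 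Secondary, more cosmetic issue: you assert that $(j_{\mathbf{V}'',i})_{!}$ keeps singular support inside $\mathbf{E}^{\geqslant\mu}$ because that set is closed; extension by zero along an open immersion can in general add conormal directions along the boundary, so this needs justification rather than an appeal to closedness of the target (the paper avoids the issue by working in the Verdier quotient, where $(j_{\mathbf{V},i})_{!}j_{\mathbf{V},i}^{*}\cong\mathrm{Id}$, and by the two-sided reduction of Lemma~\ref{key lemma}).
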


In order to prove the proposition above, we need to study the cotangent correspondence of the morphisms in the diagram (\ref{UD}).

Since the Fourier transforms don't change the singular supports by \cite[Theorem D.3]{HLSS}, we  may assume $i$ is a source in $\Omega$, let $\dot{\bV}=\bigoplus_{j\neq i \in I}\mathbf{V}_{j}$ for any $I$-graded space $\bV$, and $$\tilde{\mathbf{V}}_{i}= \bigoplus_{h\in \Omega, h'=i}\mathbf{V}_{h''}\oplus \bigoplus_{1\leqslant l \leqslant N}\mathbf{W}^{l}_{i^{l}}$$ for any $I$-graded space $\bV$ or $I\backslash \{i\}$-graded space $\dot{\bV}$. We also let $\dot{x}=(x_{h})_{h'\neq i,h \in \Omega}$ for any $x=(x_{h})_{h\in \Omega}$, and  $\dot{y}=(y^{l}_{j})_{j \neq i \in I, q\leqslant l \leqslant N}$ for any $y=(y^{l}_{i})$.

Denote $\mathbf{E}^{0}_{\mathbf{V},\mathbf{W}^{\bullet},i}$ by $X$ and denote $\dot{\mathbf{E}}_{\mathbf{V},\mathbf{W}^{\bullet},i} \times \mathbf{Grass}(\nu_i, \tilde{\nu}_{i})$ by $Y$. The cotangent space $T^{\ast}X$ is the subspace of  $\mathbf{E}_{\bV,\mathbf{W}^{\bullet},H^{(N)}}$, which consists of those $(x,\bar{x},y,z)$ such that $${{\rm{dim}}}{\rm{ker}}((\bigoplus _{h \in \Omega, h'=i} x_{h}\oplus \bigoplus_{1\leqslant l \leqslant N} y^{l}_{i}): \mathbf{V}_{i} \rightarrow \bigoplus_{h\in \Omega,h'=i}\mathbf{V}_{h''}\oplus\bigoplus_{1\leqslant l \leqslant N} \mathbf{W}_{i^{l}})=0 .$$

The cotangent space of $T^{\ast}Y$ is the variety, which consists of six-tuples $$(\dot{x},\dot{\bar{x}},\dot{y},\dot{z},\mathbf{S},\rho)$$ such that $(\dot{x},\dot{\bar{x}},\dot{y},\dot{z}) \in T^{\ast}\dot{\mathbf{E}}_{\mathbf{V},\mathbf{W}^{\bullet},i} $, $\mathbf{S}$ is a point in  $ \mathbf{Grass}(\nu_i, \tilde{\nu}_{i})$ and $\rho: \tilde{\mathbf{V}}_{i}/\mathbf{S} \rightarrow \mathbf{S}$ is a linear map.

Let $T^{\ast}X^{\geqslant \mu}= \mathbf{E}^{\geqslant \mu}_{\bV,\mathbf{W}^{\bullet},H^{(N)}} \cap T^{\ast}X$. Let  $T^{\ast}Y^{\geqslant \mu}$ be the subset of $T^{\ast}Y$ such that 
$(\dot{x},\dot{\bar{x}},\dot{y},\dot{z},\mathbf{S},\rho) \in T^{\ast}Y^{\geqslant \mu}$  if and only if there exists a pair of subspace $\dot{\mathbf{T}} \subseteq \dot{\bV}$ and $\mathbf{T}_{i} \subseteq \mathbf{S}$ such that the following three conditions hold
\begin{enumerate}
	\item The dimension vector of $\mathbf{T}_{i} \oplus \dot{\mathbf{T}} $ is less or equal than $wt^{-1}(\mu)$.
	\item The space $\dot{\mathbf{T}}\oplus \dot{\mathbf{W}}^{\bullet}$ is $(\dot{x},\dot{\bar{x}},\dot{y},\dot{z})$-stable.
	\item The composition $\tilde{\mathbf{V}}_{i} \xrightarrow{pr} \tilde{\mathbf{V}}_{i}/\mathbf{S} \xrightarrow{ \rho} \mathbf{S}$ sends $\tilde{\mathbf{T}}_{i}$ to $\mathbf{T}_{i}$.
\end{enumerate}

\begin{lemma}\label{key lemma}
	Assume $i$ is a source, $L$ is an equivariant complex on $Y$ and $K= \phi^{\ast}_{\mathbf{V},i}L$, then $SS(L) \subseteq T^{\ast}Y^{\geqslant \mu}$ if and only if $SS(K) \subseteq T^{\ast}X^{\geqslant \mu}.$
\end{lemma}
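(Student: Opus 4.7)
The plan is to exploit that $\phi=\phi_{\mathbf{V},i}$ is a principal $\mathrm{GL}(\mathbf{V}_{i})$-bundle, hence smooth and surjective with connected fibers. For such a morphism, the singular support of a pullback is governed by the cotangent correspondence
$$T^{\ast}X\xleftarrow{d\phi^{\ast}}X\times_{Y}T^{\ast}Y\xrightarrow{\mathrm{pr}_{2}}T^{\ast}Y,$$
in which $d\phi^{\ast}$ is a closed embedding (because $d\phi$ is surjective on tangent spaces) and $\mathrm{pr}_{2}$ is a smooth surjection with connected fibers. One has $SS(K)=SS(\phi^{\ast}L)=d\phi^{\ast}(\mathrm{pr}_{2}^{-1}(SS(L)))$, and the image of $d\phi^{\ast}$ coincides with the zero locus of the moment map for the vertical $\mathfrak{gl}(\mathbf{V}_{i})$-action; using that $i$ is a source, this moment condition reads
$$\sum_{h\in\Omega,\,h'=i}\bar{x}_{\bar{h}}x_{h}+\sum_{1\leqslant l\leqslant N}z^{l}_{i}y^{l}_{i}\;=\;0\quad\text{in }\mathrm{End}(\mathbf{V}_{i}).$$
Thus the lemma reduces to the geometric identity
$$d\phi^{\ast}\bigl(\mathrm{pr}_{2}^{-1}(T^{\ast}Y^{\geqslant\mu})\bigr)\;=\;T^{\ast}X^{\geqslant\mu}\cap d\phi^{\ast}(X\times_{Y}T^{\ast}Y).\qquad(\star)$$
Granting $(\star)$, the ``only if'' direction follows because $SS(K)$ sits automatically in the image of $d\phi^{\ast}$, so $SS(K)\subseteq T^{\ast}X^{\geqslant\mu}$ forces $SS(K)\subseteq d\phi^{\ast}(\mathrm{pr}_{2}^{-1}(T^{\ast}Y^{\geqslant\mu}))$, and then the surjectivity of $\mathrm{pr}_{2}$ with connected fibers yields $SS(L)\subseteq T^{\ast}Y^{\geqslant\mu}$; the ``if'' direction is immediate from the pullback formula.

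To prove $(\star)$, I make the cotangent correspondence explicit. A moment-zero vector $(x,\bar{x},y,z)\in T^{\ast}X$ over $(x,y)\in X$ is sent to $(\dot{x},\dot{\bar{x}},\dot{y},\dot{z},\mathbf{S},\rho)\in T^{\ast}Y$, where $\mathbf{S}=\mathrm{Im}(\bigoplus_{h'=i}x_{h}\oplus\bigoplus_{l}y^{l}_{i})\subseteq\tilde{\mathbf{V}}_{i}$ is the $\nu_{i}$-dimensional image subspace encoding $\phi(x,y)\in\mathbf{Grass}(\nu_{i},\tilde{\nu}_{i})$, and $\rho\colon\tilde{\mathbf{V}}_{i}/\mathbf{S}\to\mathbf{S}$ is obtained by factoring the map $\bigoplus_{h'=i}\bar{x}_{\bar{h}}+\bigoplus_{l}z^{l}_{i}\colon\tilde{\mathbf{V}}_{i}\to\mathbf{V}_{i}$ through the quotient $\tilde{\mathbf{V}}_{i}/\mathbf{S}$ (permitted precisely by the moment condition) and identifying $\mathbf{V}_{i}\cong\mathbf{S}$ via the tautological injection $\bigoplus x_{h}\oplus\bigoplus y^{l}_{i}$.

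With this dictionary in hand, $(\star)$ becomes a direct translation between subspace data. For $(\supseteq)$, take a witness $\mathbf{V}''\subseteq\mathbf{V}$ for $(x,\bar{x},y,z)\in T^{\ast}X^{\geqslant\mu}$ and set $\dot{\mathbf{T}}=\dot{\mathbf{V}''}$, $\mathbf{T}_{i}=$ the image of $\mathbf{V}''_{i}$ in $\mathbf{S}$ under $\mathbf{V}_{i}\cong\mathbf{S}$; the stability of $\mathbf{V}''\oplus\mathbf{W}^{\bullet}$ under $(x,\bar{x},y,z)$ decomposes as: the dimension bound (1); stability of $\dot{\mathbf{T}}\oplus\dot{\mathbf{W}}^{\bullet}$ under $(\dot{x},\dot{\bar{x}},\dot{y},\dot{z})$ (condition (2)); the automatic inclusion $\mathbf{T}_{i}\subseteq\mathbf{S}\cap\tilde{\mathbf{T}}_{i}$ (equivalent to $x_{h}(\mathbf{V}''_{i})\subseteq\mathbf{V}''_{h''}$ and $y^{l}_{i}(\mathbf{V}''_{i})\subseteq\mathbf{W}^{l}_{i^{l}}$); and the containment $\rho\circ\mathrm{pr}(\tilde{\mathbf{T}}_{i})\subseteq\mathbf{T}_{i}$ (condition (3), equivalent to $\bar{x}_{\bar{h}}(\mathbf{V}''_{h''})\subseteq\mathbf{V}''_{i}$ and $z^{l}_{i}(\mathbf{W}^{l}_{i^{l}})\subseteq\mathbf{V}''_{i}$ via the factorization defining $\rho$). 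The converse $(\subseteq)$ is obtained by reversing the construction: from $(\dot{\mathbf{T}},\mathbf{T}_{i})$ recover $\mathbf{V}''_{j}=\dot{\mathbf{T}}_{j}$ for $j\neq i$ and $\mathbf{V}''_{i}=$ the preimage of $\mathbf{T}_{i}$ under $\mathbf{V}_{i}\cong\mathbf{S}$, and run the same four equivalences backwards.

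The main obstacle is bookkeeping: one must verify that the moment-map factorization yields precisely the $\rho$ appearing in $T^{\ast}Y$, that the subspace correspondence $\mathbf{V}''_{i}\leftrightarrow\mathbf{T}_{i}$ is compatible with $\mathbf{T}_{i}\subseteq\tilde{\mathbf{T}}_{i}$ (which is implicit in the definition of $T^{\ast}Y^{\geqslant\mu}$ and automatic in the forward direction), and that each of the three conditions defining $T^{\ast}Y^{\geqslant\mu}$ corresponds precisely to one piece of the single stability condition defining $T^{\ast}X^{\geqslant\mu}$. Once these correspondences are set up, the verification of $(\star)$ is a direct term-by-term check.
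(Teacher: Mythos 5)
Your proof is correct and takes essentially the same approach as the paper: you both set up the cotangent correspondence for the smooth principal-bundle map $\phi_{\mathbf{V},i}$, invoke the formula $SS(\phi^{\ast}L)=d\phi^{\ast}(\mathrm{pr}_{2}^{-1}(SS(L)))$ (Kashiwara--Schapira), and verify the resulting geometric identity by transporting witnessing subspaces via the correspondence $\mathbf{V}''\leftrightarrow(\dot{\mathbf{T}},\mathbf{T}_{i})$ with $\mathbf{T}_{i}=f^{i}_{x,y}(\mathbf{V}''_{i})$. The paper's maps $pr$ and $d\phi'$ are your $\mathrm{pr}_{2}$ and $d\phi^{\ast}$; isolating the reduction as the identity $(\star)$ and spelling out that $\mathrm{Im}(d\phi^{\ast})$ is the moment-zero locus is a mild reorganization, not a different argument.
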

\begin{proof}
	For any $(x,y) \in X$, we denote the map $(\bigoplus _{h \in \Omega, h'=i} x_{h}\oplus \bigoplus_{1\leqslant l \leqslant N} y^{l}_{i})$ by $f^{i}_{x,y}$, then $T^{\ast}Y \times_{Y} X$ consists of six-tuples  $(x,\dot{\bar{x}},y,\dot{z},{\rm{Im}} f^{i}_{x,y} ,\rho)$ such that $(x,y) \in X$ and $\rho: \tilde{\mathbf{V}}_{i}/{\rm{Im}} f^{i}_{x,y} \rightarrow {\rm{Im}} f^{i}_{x,y}  $ is a linear map.
	Consider the cotangent correspondence
	$$ T^{\ast}Y \xleftarrow{pr} T^{\ast}Y\times_{Y} X \xrightarrow{d\phi'} T^{\ast}X, $$
	where $pr$ maps $(x,\dot{\bar{x}},y,\dot{z},{\rm{Im}} f^{i}_{x,y} ,\rho)$ to  $(\dot{x},\dot{\bar{x}},\dot{y},\dot{z},{\rm{Im}} f^{i}_{x,y},\rho)$, and $d\phi'$ maps  $(x,\dot{\bar{x}},y,\dot{z},{\rm{Im}} f^{i}_{x,y} ,\rho)$ to  $(x,\bar{x}^{\rho},y,z^{\rho})$. Here $\bar{x}^{\rho}$ and $z^\rho$ are the unique linear map such that $\dot{\bar{x}}=\dot{\bar{x}}^{\rho}$, $\dot{z}=\dot{z}^{\rho}$, and $\sum_{1\leqslant l \leqslant N}z^{\rho,l}_{i}+\sum_{h'=i,h \in \Omega} \bar{x}_{\bar{h}}^{\rho} :\tilde{\mathbf{V}}_{i} \rightarrow \mathbf{V}_{i} $ equals to the composition $\tilde{\mathbf{V}}_{i} \xrightarrow{pr} \tilde{\mathbf{V}}_{i}/ {\rm{Im}} f^{i}_{x,y} \xrightarrow{\rho} {\rm{Im}} f^{i}_{x,y}\xrightarrow{(f^{i}_{x,y})^{-1}} \mathbf{V}_{i}. $
	
	By \cite[Proposition 5.4.5]{MR1074006}, the singular support of $K$ is exactly $d\phi'pr^{-1}(SS(L))$. Given a point $(\dot{x},\dot{\bar{x}},\dot{y},\dot{z},\mathbf{S},\rho)$  in $T^{\ast}Y^{\geqslant \mu}$, we assume that $\dot{\mathbf{T}}$ and $\mathbf{T}_{i}$ satisfy the conditions defining $T^{\ast}Y^{\geqslant \mu}$. Then for any point $(x,\bar{x},y,z)$ in $d\phi'pr^{-1}((\dot{x},\dot{\bar{x}},\dot{y},\dot{z},\mathbf{S},\rho)) $, the subspace $\dot{\mathbf{T}} \oplus (f^{i}_{x,y})^{-1} ( \mathbf{T}_{i} ) $ is a subspace of dimension vector $\leqslant wt^{-1}(\mu)$. By definition of $pr$ and $d\phi'$, $\dot{\mathbf{T}} \oplus (f^{i}_{x,y})^{-1} ( \mathbf{T}_{i} )\oplus \mathbf{W}^{\bullet} $ is also $(x,\bar{x},y,z)$-stable. Hence $(x,\bar{x},y,z)$ belongs to $T^{\ast}X^{\geqslant \mu}$. Conversely, if a point $(x,\bar{x},y,z)$ in $d\phi'pr^{-1}((\dot{x},\dot{\bar{x}},\dot{y},\dot{z},\mathbf{S},\rho)) $ admits a stable subspace $\mathbf{T}\oplus \mathbf{W}^{\bullet}$ such that $\mathbf{T}$ is of dimension vector $\leqslant wt^{-1}(\mu)$. Then $\dot{\mathbf{T}}$ and $f^{i}_{x,y}(\mathbf{T}_{i})$ will satisfy the conditions which make $(\dot{x},\dot{\bar{x}},\dot{y},\dot{z},\mathbf{S},\rho)$ in $T^{\ast}Y^{\geqslant \mu}$. We finish the proof.
\end{proof}

Now we go back to the proof of the Proposition \ref{keyprop}.
\begin{proof}[Proof of \ref{keyprop}]
	Denote $\dot{\mathbf{E}}_{\mathbf{V},\mathbf{W}^{\bullet},i} \times \mathbf{Grass}(\nu_i, \tilde{\nu}_{i}) $ and $\dot{\mathbf{E}}_{\mathbf{V},\mathbf{W}^{\bullet},i} \times \mathbf{Grass}(\nu''_i, \tilde{\nu}_{i})$ by $Y_{1}$ and $Y_{2}$ respectively, and also denote $\dot{\mathbf{E}}_{\mathbf{V},\mathbf{W}^{\bullet},i} \times \mathbf{Flag}(\nu''_{i},\nu_{i},\tilde{\nu}_{i})$ by $Z$. By Lemma \ref{key lemma}, it suffices to show that if $L$ is an equivariant complex on $Y_{1}$ with $SS(L) \subseteq T^{\ast}Y_{1}^{\geqslant \mu}$, then $(q_{2})_{!}q^{\ast}_{1}L$ has singular support contained in  $T^{\ast}Y_{2}^{\geqslant \mu}$.
	
	Notice that $T^{\ast} Y_{1}$ is naturally isomorphic to the variety $\bar{Y}_{1}$, which consists of  $(\dot{x},\dot{\bar{x}},\dot{y},\dot{z},\mathbf{S},\psi)$, such that $(\dot{x},\dot{\bar{x}},\dot{y},\dot{z}) \in T^{\ast}\dot{\mathbf{E}}_{\mathbf{V},\mathbf{W}^{\bullet},i} $, $\mathbf{S}$ is a point in  $ \mathbf{Grass}(\nu_i, \tilde{\nu}_{i})$ and $\psi: \tilde{\mathbf{V}}_{i} \rightarrow \tilde{\mathbf{V}}_{i}$ is a linear map satisfying $\psi( \tilde{\mathbf{V}}_{i} )  \subseteq \mathbf{S} , \psi(\mathbf{S}) =0$. The natural isomorphism is given by sending $\rho$ to the composition $\tilde{\mathbf{V}}_{i} \xrightarrow{pr} \tilde{\mathbf{V}}_{i}/\mathbf{S} \xrightarrow{\rho} \mathbf{S} \xrightarrow{incl} \tilde{\mathbf{V}}_{i}$. Under the natural isomorphism above, the subset $T^{\ast} Y_{1}^{\geqslant \mu}$ is bijective to the  subset $\bar{Y}_{1}^{\geqslant \mu}$ of $\bar{Y}_{1}$, such that $(\dot{x},\dot{\bar{x}},\dot{y},\dot{z},\mathbf{S},\psi) \in \bar{Y}_{1}^{\geqslant \mu}$ if and only if there exists  a pair of subspace $\dot{\mathbf{T}} \subseteq \dot{\bV}$ and $\mathbf{T}_{i} \subseteq \mathbf{S}$ satisfying the following three conditions
	\begin{enumerate}
		\item The dimension vector of $\mathbf{T}_{i} \oplus \dot{\mathbf{T}} $ is less or equal than $wt^{-1}(\mu)$.
		\item The space $\dot{\mathbf{T}}\oplus \dot{\mathbf{W}}^{\bullet}$ is $(\dot{x},\dot{\bar{x}},\dot{y},\dot{z})$-stable.
		\item The morphism $\psi$ sends $\tilde{\mathbf{T}}_{i}$ to $\mathbf{T}_{i}$.
	\end{enumerate}
	
	Similarly, $T^{\ast} Z$ is isomorphic to the variety $\bar{Z}$, which consists of  six-tuples $(\dot{x},\dot{\bar{x}},\dot{y},\dot{z},\mathbf{S}_{1},\mathbf{S}_{2},\psi)$, such that $(\dot{x},\dot{\bar{x}},\dot{y},\dot{z}) \in T^{\ast}\dot{\mathbf{E}}_{\mathbf{V},\mathbf{W}^{\bullet},i} $, $\mathbf{S}_{1} \subseteq \mathbf{S}_{2}$ is a point in  $\mathbf{Flag}(\nu''_{i},\nu_{i},\tilde{\nu}_{i})$ and $\psi: \tilde{\mathbf{V}}_{i} \rightarrow \tilde{\mathbf{V}}_{i}$ is a linear map such that $\psi( \tilde{\mathbf{V}}_{i} )  \subseteq \mathbf{S}_{2} ,\psi(\mathbf{S}_{2}) \subseteq \mathbf{S}_{1}, \psi(\mathbf{S}_{1}) =0$.
	
	The cotangent correspondence of $q_{1}$ can be written as 
	$$ \bar{Y}_{1} \xleftarrow{p_{1}} \bar{Y}_{1} \times_{Y_{1}} Z \xrightarrow{dq'_{1}} \bar{Z}, $$
	where $p_{1}$ is the map forgetting $\mathbf{S}_{1}$ and $dq'_{1}$ is the identity.
	
	Similarly, we can define $\bar{Y}_{2}$ and draw the cotangent correspondence  of $q_{2}$,
	$$ \bar{Y}_{2} \xleftarrow{p_{2}} \bar{Y}_{2} \times_{Y_{2}} Z \xrightarrow{dq'_{2}} \bar{Z}, $$
	where $p_{2}$ is the map forgetting $\mathbf{S}_{2}$ and $dq'_{2}$ is the identity.
	
	We assume that $(\dot{x},\dot{\bar{x}},\dot{y},\dot{z},\mathbf{S}_{2},\psi)$ is a point in $\bar{Y}_{1}^{\geqslant \mu}$ and $\dot{\mathbf{T}} \subseteq \dot{\bV}$ and $\mathbf{T}_{i} \subseteq \mathbf{S}_{2}$ is a pair of subspaces such that the following conditions hold
	\begin{enumerate}
		\item The dimension vector of $\mathbf{T}_{i} \oplus \dot{\mathbf{T}} $ is less or equal than $wt^{-1}(\mu)$.
		\item The space $\dot{\mathbf{T}}\oplus \dot{\mathbf{W}}^{\bullet}$ is $(\dot{x},\dot{\bar{x}},\dot{y},\dot{z})$-stable.
		\item The morphism $\psi$ sends $\tilde{\mathbf{T}}_{i}$ to $\mathbf{T}_{i}$.
	\end{enumerate}
	
	Then for any point $(\dot{x}',\dot{\bar{x}}',\dot{y}',\dot{z}',\mathbf{S}_{1},\psi')$ in $p_{2}p^{-1}_{1}((\dot{x},\dot{\bar{x}},\dot{y},\dot{z},\mathbf{S}_{2},\psi))$, the subspaces $\dot{\mathbf{T}} \subseteq \dot{\bV}$ and $\mathbf{T}'_{i}=\mathbf{T}_{i} \cap \mathbf{S}_{1} \subseteq \mathbf{S}_{1}$ will satisfy the following condition 
	\begin{enumerate}
		\item The dimension vector of $\mathbf{T}'_{i} \oplus \dot{\mathbf{T}} $ is less or equal than $wt^{-1}(\mu)$.
		\item The space $\dot{\mathbf{T}}\oplus \dot{\mathbf{W}}^{\bullet}$ is $(\dot{x}',\dot{\bar{x}}',\dot{y}',\dot{z}')$-stable.
		\item The morphism $\psi'$ sends $\tilde{\mathbf{T}}_{i}$ to $\mathbf{T}'_{i}$.
	\end{enumerate}
	By \cite[Proposition 5.4.4,5.4.5]{MR1074006}, we finish the proof for $\geqslant \mu$. Notice that $ T^{\ast}Y^{> \mu}$ is a union of those $T^{\ast}Y^{\geqslant \mu'}, \mu' > \mu$, the proof of the second statement follows from the proof of the first one.
\end{proof}

\begin{proposition}\label{keyprop2}
	For any $i\in I,r\in \mathbb{N}_{>0}$ and $\nu=ri+\nu''\in\mathbb{N}I$, the functors 
	$$\mathcal{F}^{(r)}_{i}:\mD_{\nu''}^{\geqslant \mu}(\lambda^{\bullet})/\mN_{\nu''} \rightarrow \mD_{\nu}^{\geqslant \mu}(\lambda^{\bullet})/\mN_{\nu},$$
	$$\mathcal{F}^{(r)}_{i}:\mD_{\nu''}^{> \mu}(\lambda^{\bullet})/\mN_{\nu''} \rightarrow \mD_{\nu}^{> \mu}(\lambda^{\bullet})/\mN_{\nu}$$
	are well-defined.
\end{proposition}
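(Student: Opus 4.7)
The plan is to follow the same template as the proof of Proposition \ref{keyprop}, but now applied to the induction diagram (\ref{indd})
\begin{equation*}
\mathbf{E}_{\mathbf{V}',\Omega} \times \mathbf{E}_{\mathbf{V}'',\mathbf{W}^{\bullet},\Omega^{(N)}} \xleftarrow{p_{1}} G_{\bV} \times^{U} F \xrightarrow{p_{2}} G_{\bV} \times^{P} F \xrightarrow{p_{3}} \mathbf{E}_{\mathbf{V},\mathbf{W}^{\bullet},\Omega^{(N)}}.
\end{equation*}
Since $\mathcal{F}_i^{(r)}$ is built from the smooth pullback $p_1^*$, the equivariant descent $(p_2)_\flat$ along a principal $U$-bundle, and the proper pushforward $(p_3)_!$, I would analyze the cotangent correspondence of each step and verify that the $\geqslant \mu$ condition on the singular support is preserved along the way. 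The $>\mu$ case then follows because $\mathbf{E}^{>\mu}_{\bV,\mathbf{W}^{\bullet},H^{(N)}}$ is a finite union of subsets of the form $\mathbf{E}^{\geqslant \mu'}_{\bV,\mathbf{W}^{\bullet},H^{(N)}}$, as in Lemma \ref{close}.

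The key geometric input is transitivity of stability. Suppose $K\in\mD_{\nu''}^{\geqslant\mu}(\lambda^{\bullet})$, so points $(x'',\bar x'',y'',z'')$ in $SS(K)$ admit a stable subspace $\bV_0''\oplus \mathbf{W}^{\bullet}$ of dimension vector $\leqslant wt^{-1}(\mu)$. By construction of $F$, every $(x,\bar x,y,z)$ lifting such a point already makes $\bV''\oplus\mathbf{W}^{\bullet}$ stable, and the restriction to $\bV''\oplus\mathbf{W}^{\bullet}$ recovers $(x'',\bar x'',y'',z'')$. Consequently $\bV_0''\oplus\mathbf{W}^{\bullet}\subseteq\bV''\oplus\mathbf{W}^{\bullet}\subseteq\bV\oplus\mathbf{W}^{\bullet}$ remains stable under the induced datum, so the resulting point lies in $\mathbf{E}^{\geqslant\mu}_{\bV,\mathbf{W}^{\bullet},H^{(N)}}$. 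This is the analogue of the compatibility checks carried out in the proof of Proposition \ref{keyprop}.

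To implement the microlocal version, I would introduce intermediate conic subsets $T^*X_1^{\geqslant\mu}$, $T^*X_2^{\geqslant\mu}$, $T^*X_3^{\geqslant\mu}$ in the cotangent bundles of $\mathbf{E}_{\bV',\Omega}\times \mathbf{E}_{\bV'',\mathbf{W}^{\bullet},\Omega^{(N)}}$, $G_{\bV}\times^U F$, $G_{\bV}\times^P F$, each defined by the existence of a subspace $\bV_0\subseteq\bV$ with $\dim \bV_0\leqslant wt^{-1}(\mu)$ compatible with the parabolic data present at that stage (contained in $\bV''$ whenever the latter is being remembered) and stable under the cotangent data. Then I carry out the verification step by step: first $SS(\mathbb{C}_{\mathbf{E}_{\bV',\Omega}}\boxtimes K)\subseteq\{0\}\times SS(K)$ lies in $T^*X_1^{\geqslant\mu}$; next, smooth pullback gives $SS(p_1^*L)={}^tp_1'(p_{1\pi}^{-1}SS(L))\subseteq T^*X_2^{\geqslant\mu}$; descent through the principal bundle $p_2$ preserves the condition by a local triviality argument; and finally the proper pushforward yields $SS((p_3)_!L)\subseteq (p_3)_\pi({}^tp_3'^{-1}SS(L))\subseteq \mathbf{E}^{\geqslant\mu}_{\bV,\mathbf{W}^{\bullet},H^{(N)}}$ by the geometric observation of the previous paragraph.

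The main obstacle is not conceptual but bookkeeping: explicitly parameterizing the cotangent bundles of $G_{\bV}\times^U F$ and $G_{\bV}\times^P F$ (in analogy with the description of $T^*Y$ and $T^*Z$ via linear maps $\psi$ in the proof of Proposition \ref{keyprop}), and verifying that the refinement argument used there --- taking $\bV_0''\cap\mathbf{S}_1$, etc. --- still produces a subspace of dimension $\leqslant wt^{-1}(\mu)$ stable under the transported cotangent data at each intermediate stage. Once these identifications and refinements are in place, the chain of applications of \cite[Proposition 5.4.4, 5.4.5]{MR1074006} concludes the proof, exactly as in the argument for $\mathcal{E}_i^{(r)}$.
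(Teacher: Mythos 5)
Your proposal is correct, but it follows a genuinely different path from the paper's. The paper \emph{does not} work with the raw induction diagram (\ref{indd}) at all. Instead, it exploits that in the localization $\mD_\nu(\lambda^\bullet)/\mN_\nu$ the open inclusion $j_{\mathbf{V},i}$ becomes invertible, so that $\mathcal{F}_i^{(r)}$ can be re-expressed, via a commutative diagram, through the \emph{same} $\dot{\mathbf{E}}\times\mathbf{Grass}$ / $\dot{\mathbf{E}}\times\mathbf{Flag}$ correspondence $(q_1,q_2)$ used for $\mathcal{E}_i^{(r)}$, only with the roles of $q_1$ and $q_2$ exchanged. The proof then reuses Lemma \ref{key lemma} verbatim and just repeats the explicit cotangent calculations on $\bar Y_1, \bar Y_2, \bar Z$. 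Your approach works directly on $\mathbf{E}_{\bV',\Omega}\times\mathbf{E}_{\bV'',\mathbf{W}^{\bullet},\Omega^{(N)}}\leftarrow G_\bV\times^U F\rightarrow G_\bV\times^P F\rightarrow\mathbf{E}_{\bV,\mathbf{W}^{\bullet},\Omega^{(N)}}$, tracking singular supports through the smooth pullback, descent, and proper pushforward. The key point you identify --- that the conormal bundle of $F$ already preserves $\bV''\oplus\mathbf{W}^{\bullet}$ (so the total cotangent vector $(\bar x,z)$ produced by $\iota_!\kappa^*$ still stabilizes $\bV''\oplus\mathbf{W}^{\bullet}$ and restricts there to the original $(\bar x'',z'')\in SS(K)$), so the witness subspace $\bV_0''\oplus\mathbf{W}^{\bullet}$ for the $\geqslant\mu$ condition transports unchanged --- is sound; the trace-pairing computation of $(T^*_F\mathbf{E})$ confirms it. What you lose is the economy of recycling Lemma \ref{key lemma}, and you owe the reader the explicit description of the cotangent correspondences for the bundles $G_\bV\times^U F$ and $G_\bV\times^P F$. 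What you gain is a slightly sharper output: the argument gives $\mathcal{F}_i^{(r)}:\mD_{\nu''}^{\geqslant\mu}(\lambda^\bullet)\to\mD_\nu^{\geqslant\mu}(\lambda^\bullet)$ already before passing to the Verdier quotient $/\mN$, whereas the paper's route relies on the localization to rewrite the functor. One phrasing you should tighten: ``every $(x,\bar x,y,z)$ lifting such a point already makes $\bV''\oplus\mathbf{W}^{\bullet}$ stable'' is not a property of $F$ itself --- the conormal directions have to be computed separately --- but rather the conclusion of the conormal analysis; as written it could be misread as automatic.
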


\begin{proof}
	Since the induction functor and the singular supports are independent of the orientation, we can assume $i$ is a source. Let $j_{\mathbf{V},i}: \mathbf{E}^{0}_{\mathbf{V},\mathbf{W}^{\bullet},i}  \rightarrow  \mathbf{E}_{\mathbf{V},\mathbf{W}^{\bullet},\Omega_{i}^{(N)}} 
	$  be the open embedding, since the localization factors  through $\mN_{\nu,i}$, there is an isomorphism of functors $$(j_{\mathbf{V},i})_{!}j_{\mathbf{V},i}^{\ast} \cong Id :  \mD_{\nu}(\lambda^{\bullet})/\mN_{\nu} \rightarrow \mD_{\nu}(\lambda^{\bullet})/\mN_{\nu}. $$
	In particular, by the following commutative diagram, 
	\[
	\xymatrix{
		\mathbf{E}_{\mathbf{V}'',\mathbf{W}^{\bullet},\Omega^{(N)}} & \mathbf{E}^{'}_{\mathbf{V},\mathbf{W}^{\bullet},\Omega^{(N)}}\ar[l]_{p_{1}} \ar[r]^{p_{2}}& \mathbf{E}^{''}_{\mathbf{V},\mathbf{W}^{\bullet},\Omega^{(N)}} \ar[r]^{p_{3}}& \mathbf{E}_{\mathbf{V},\mathbf{W}^{\bullet},\Omega^{(N)}} \\
		\mathbf{E}^{0}_{\mathbf{V}'',\mathbf{W}^{\bullet},i} \ar[u] \ar[d] & \mathbf{E}^{',0}_{\mathbf{V},\mathbf{W}^{\bullet},i}  \ar[l]_{\tilde{p}_{1}}  \ar[r]^{\tilde{p}_{2}} \ar[u] \ar[d] & \mathbf{E}^{'',0}_{\mathbf{V},\mathbf{W}^{\bullet},i}  \ar[r]^{\tilde{p}_{3}} \ar[u] \ar[d] & \mathbf{E}^{0}_{\mathbf{V},\mathbf{W}^{\bullet},i} \ar[u] \ar[d]\\
		\txt{$\dot{\mathbf{E}}_{\mathbf{V}'',\mathbf{W}^{\bullet},i}$\\ $\times$\\ $\mathbf{Gr}(\nu''_{i}, \tilde{\nu}_{i})$}  & \txt{$\dot{\mathbf{E}}_{\mathbf{V},\mathbf{W}^{\bullet},i}$\\ $\times$\\ $\mathbf{Fl}(\nu''_{i},\nu_{i},\tilde{\nu}_{i})$} \ar[l]_{q_{2}} \ar@2{-}[r]  & \txt{$\dot{\mathbf{E}}_{\mathbf{V},\mathbf{W}^{\bullet},i}$\\  $\times$\\  $\mathbf{Fl}(\nu''_{i},\nu_{i},\tilde{\nu}_{i})$}  \ar[r]^{q_{1}}  & \txt{$\dot{\mathbf{E}}_{\mathbf{V},\mathbf{W}^{\bullet},i}$\\ $\times$\\$\mathbf{Gr}(\nu_{i}, \tilde{\nu}_{i}$)} 
	}
	\]
	we have $\mathcal{F}^{(r)}_{i} \cong (j_{\mathbf{V},i})_{!} (\phi_{\mathbf{V},i})^{\ast} (q_{1})_{!}(q_{2})^{\ast} (\phi_{\mathbf{V}'',i})_{\flat}(j_{\mathbf{V}'',i})^{\ast}[M(r,\nu)]$ for some integer $M(r,\nu)$.
	
	Denote $\dot{\mathbf{E}}_{\mathbf{V},\mathbf{W}^{\bullet},i} \times \mathbf{Grass}(\nu_i, \tilde{\nu}_{i}) $ and $\dot{\mathbf{E}}_{\mathbf{V},\mathbf{W}^{\bullet},i} \times \mathbf{Grass}(\nu''_i, \tilde{\nu}_{i})$ by $Y_{1}$ and $Y_{2}$. By Lemma \ref{key lemma}, it suffices to show that if $L$ is an equivariant complex on $Y_{2}$ with $SS(L) \subseteq T^{\ast}Y_{2}^{\geqslant \mu}$, then $(q_{1})_{!}q^{\ast}_{2}L$ has singular support contained in  $T^{\ast}Y_{1}^{\geqslant \mu}$.
	With the notations in the proof of Proposition 4.5, we can draw the cotangent correspondences 
		$$ \bar{Y}_{1} \xleftarrow{p_{1}} \bar{Y}_{1} \times_{Y_{1}} Z \xrightarrow{dq'_{1}} \bar{Z}, $$
	$$ \bar{Y}_{2} \xleftarrow{p_{2}} \bar{Y}_{2} \times_{Y_{2}} Z \xrightarrow{dq'_{2}} \bar{Z}, $$
	where $p_{1},p_{2}$ are the forgetting maps and $dq'_{1},dq'_{2}$ are the identity maps.
	
	We assume that $(\dot{x},\dot{\bar{x}},\dot{y},\dot{z},\mathbf{S}_{1},\psi)$ is a point in $\bar{Y}_{2}^{\geqslant \mu}$ and $\dot{\mathbf{T}} \subseteq \dot{\bV}$ and $\mathbf{T}_{i} \subseteq \mathbf{S}_{1}$ is a pair of subspaces such that the following conditions hold
	\begin{enumerate}
		\item The dimension vector of $\mathbf{T}_{i} \oplus \dot{\mathbf{T}} $ is less or equal than $wt^{-1}(\mu)$.
		\item The space $\dot{\mathbf{T}}\oplus \dot{\mathbf{W}}^{\bullet}$ is $(\dot{x},\dot{\bar{x}},\dot{y},\dot{z})$-stable.
		\item The morphism $\psi$ sends $\tilde{\mathbf{T}}_{i}$ to $\mathbf{T}_{i}$.
	\end{enumerate}
	
	Then for any point $(\dot{x}',\dot{\bar{x}}',\dot{y}',\dot{z}',\mathbf{S}_{2},\psi')$ in $p_{1}p^{-1}_{2}((\dot{x},\dot{\bar{x}},\dot{y},\dot{z},\mathbf{S}_{1},\psi))$, the subspaces $\dot{\mathbf{T}} \subseteq \dot{\bV}$ and $\mathbf{T}_{i} \subseteq \mathbf{S}_{1} \subseteq \mathbf{S}_{2}$ will satisfy the following condition 
	\begin{enumerate}
		\item The dimension vector of $\mathbf{T}_{i} \oplus \dot{\mathbf{T}} $ is less or equal than $wt^{-1}(\mu)$.
		\item The space $\dot{\mathbf{T}}\oplus \dot{\mathbf{W}}^{\bullet}$ is $(\dot{x}',\dot{\bar{x}}',\dot{y}',\dot{z}')$-stable.
		\item The morphism $\psi'$ sends $\tilde{\mathbf{T}}_{i}$ to $\mathbf{T}_{i}$.
	\end{enumerate}
	By \cite[Proposition 5.4.4,5.4.5]{MR1074006}, we finish the proof.
\end{proof}

\begin{definition}
	Let $\mL^{\geqslant \mu}_{\nu}(\lambda^{\bullet})$ be the full subcategory of $\mD_{\nu}^{\geqslant \mu}(\lambda^{\bullet})/\mN_{\nu}$, which consists of objects isomorphic to objects of $\mQ_{\nu}(\lambda^{\bullet})$, and let $\mL^{> \mu}_{\nu}(\lambda^{\bullet})$ be the full subcategory of $\mD_{\nu}^{> \mu}(\lambda^{\bullet})/\mN_{\nu}$, which consists of objects isomorphic to objects of $\mQ_{\nu}(\lambda^{\bullet})$. 
\end{definition}

\begin{corollary}
	For any $i\in I,r\in \mathbb{N}_{>0}$ and $\nu=ri+\nu''\in\mathbb{N}I$, the functors 
	$$\mathcal{E}^{(r)}_{i}:\mL_{\nu}^{\geqslant \mu}(\lambda^{\bullet}) \rightarrow \mL_{\nu''}^{\geqslant \mu}(\lambda^{\bullet}),$$
	$$\mathcal{F}^{(r)}_{i}:\mL_{\nu''}^{\geqslant \mu}(\lambda^{\bullet}) \rightarrow \mL_{\nu}^{\geqslant \mu}(\lambda^{\bullet}),$$
	$$\mathcal{E}^{(r)}_{i}:\mL_{\nu}^{> \mu}(\lambda^{\bullet}) \rightarrow \mL_{\nu''}^{> \mu}(\lambda^{\bullet}),$$
	$$\mathcal{F}^{(r)}_{i}:\mL_{\nu''}^{> \mu}(\lambda^{\bullet}) \rightarrow \mL_{\nu}^{> \mu}(\lambda^{\bullet})$$
	are well-defined.
\end{corollary}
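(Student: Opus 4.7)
The plan is to observe that the corollary combines two results that are already in place: the well-definedness of $\mathcal{E}^{(r)}_i$ and $\mathcal{F}^{(r)}_i$ on the localization $\mL_{\nu}(\lambda^{\bullet})$ (established in Section 3.4 via \cite[Proposition 3.18, Corollary 3.19]{fang2023lusztig}), and the preservation of the micro-local support conditions proved in Propositions \ref{keyprop} and \ref{keyprop2}. By the very definition of $\mL_{\nu}^{\geqslant \mu}(\lambda^{\bullet})$ and $\mL_{\nu}^{>\mu}(\lambda^{\bullet})$, an object lies in one of these categories if and only if, viewed inside $\mD_{\nu}(\lambda^{\bullet})/\mN_{\nu}$, it is simultaneously isomorphic to an object of $\mQ_{\nu}(\lambda^{\bullet})$ and isomorphic to an object of $\mD_{\nu}^{\geqslant \mu}(\lambda^{\bullet})$ (respectively $\mD_{\nu}^{>\mu}(\lambda^{\bullet})$).

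First I would fix an object $L$ in $\mL_{\nu}^{\geqslant \mu}(\lambda^{\bullet})$ and pick a representative $L' \in \mQ_{\nu}(\lambda^{\bullet})\cap \mD_{\nu}^{\geqslant \mu}(\lambda^{\bullet})$ isomorphic to $L$ in the Verdier quotient. Applying $\mathcal{E}^{(r)}_i$ (resp. $\mathcal{F}^{(r)}_i$) to $L'$ in $\mD_{\nu}(\lambda^{\bullet})/\mN_{\nu}$ yields an object which, on the one hand, lies in $\mL_{\nu''}(\lambda^{\bullet})$ by the results recalled in Section 3.4, and on the other hand lies in $\mD_{\nu''}^{\geqslant \mu}(\lambda^{\bullet})/\mN_{\nu''}$ by Proposition \ref{keyprop} (for $\mathcal{E}^{(r)}_i$) or Proposition \ref{keyprop2} (for $\mathcal{F}^{(r)}_i$). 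Hence the image belongs to $\mL_{\nu''}^{\geqslant \mu}(\lambda^{\bullet})$. The argument for $\mL^{>\mu}$ is identical, using the $>\mu$ statements of the same two propositions.

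The only subtle point, which I would verify carefully, is the compatibility of the two independent representatives: one needs the functor on $\mD_{\nu}(\lambda^{\bullet})/\mN_{\nu}$ to be insensitive to the choice of representative, so that the image of the isomorphism class $[L]$ computed via $L'\in \mQ\cap \mD^{\geqslant \mu}$ agrees with the image computed via any other representative. But this is automatic since $\mathcal{E}^{(r)}_i$ and $\mathcal{F}^{(r)}_i$ are already defined as functors on the Verdier quotient $\mD_{\nu}(\lambda^{\bullet})/\mN_{\nu}$ (and similarly for $\nu''$), so their values only depend on isomorphism classes there. Thus the main obstacle was not in this corollary itself but rather in Propositions \ref{keyprop} and \ref{keyprop2}, where the cotangent correspondence analysis was required; once those are granted, the corollary is a purely formal consequence of intersecting the two preservation statements inside the localized category.
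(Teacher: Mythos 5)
Your proof is correct, but it follows a genuinely different path from the paper's. The paper handles $\mathcal{F}^{(r)}_{i}$ directly (as an induction functor it literally preserves objects of $\mQ_{\nu}(\lambda^{\bullet})$, no passage to the quotient needed), and for $\mathcal{E}^{(r)}_{i}$ it re-runs the inductive argument of \cite[Proposition 3.13]{fang2023tensor}, using the commutation relations between $\mathcal{E}^{(r)}_i$ and $\mathcal{F}^{(s)}_j$ (together with the $\mathcal{F}^{(d^l_{i^l})}_{i^l}$ for the unframed copies $i^l$) and induction on the flag-type lengths $\boldsymbol{\nu}^{l}$, all within the support-restricted category. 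You instead take the $\mQ$-preservation on the nose from the already-stated result that $\mathcal{E}^{(r)}_i,\mathcal{F}^{(r)}_i$ induce functors $\mL_{\nu}(\lambda^{\bullet})\to\mL_{\nu''}(\lambda^{\bullet})$ (Section 3.4, citing \cite{fang2023lusztig}), and then intersect with the singular-support preservation of Propositions~\ref{keyprop} and~\ref{keyprop2} inside $\mD_{\nu}(\lambda^{\bullet})/\mN_{\nu}$. Both ingredients are indeed available in the paper by the time the corollary is stated, so your intersection argument is shorter and avoids re-deriving the inductive step; the paper's approach is more self-contained (it does not rely on the Section 3.4 statement being compatible with the new micro-local constraint) and incidentally makes the $\mathcal{F}$ case even more elementary.

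One phrasing caveat worth flagging: your opening step asks to pick $L'\in \mQ_{\nu}(\lambda^{\bullet})\cap \mD_{\nu}^{\geqslant\mu}(\lambda^{\bullet})$ representing $L$, but it is not automatic that a single representative simultaneously lies in $\mQ_\nu$ and has singular support in $\mathbf{E}^{\geqslant\mu}_{\bV,\mathbf{W}^\bullet,H^{(N)}}$ \emph{before} passing to the Verdier quotient. You in fact repair this yourself in the last paragraph: since $\mathcal{E}^{(r)}_i$ and $\mathcal{F}^{(r)}_i$ are defined as functors on $\mD_{\nu}(\lambda^{\bullet})/\mN_{\nu}$ and $\mD_{\nu}^{\geqslant\mu}(\lambda^{\bullet})/\mN_\nu$ is a full, isomorphism-closed subcategory, one can work with two separate representatives (one in $\mQ_\nu$, one with the right singular support) and transport along the isomorphisms in the quotient. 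You should state the argument directly in that form and drop the claim that the two representatives can be merged.
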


\begin{proof}
	Since $\mathcal{F}^{(r)}_{i}$ is defined by induction fuctor, it preserves objects of those $\mQ_{\nu}(\lambda^{\bullet})$. The proof of $\mathcal{E}^{(r)}_{i}$ is the same as \cite[Proposition 3.13]{fang2023tensor}, by using commutative relations of $\mathcal{E}^{(r)}_{i}$ and $\mathcal{F}^{(s)}_{j}$ and induction on the lengths of those $\boldsymbol{\nu}^{l}$. (Here we also need to regard $i^{l},i \in I, 1\leqslant l \leqslant N$ as unframed vertices and consider the functor $\mathcal{F}^{(d^{l}_{i^{l}})}_{i^{l}}$, which always commute with $\mathcal{E}_{i}$.)
\end{proof}

\begin{definition}
	Let $\mN_{\nu}^{>\mu}$ be the thick subcategory of $\mD_{\nu}(\lambda^{\bullet})$ generated by $\mN_{\nu}$ and $\mD^{>\mu}_{\nu}(\lambda^{\bullet})$, define $\mD_{\nu}(\lambda^{\bullet})/\mN_{\nu}^{>\mu}$ be the Verdier quotient of $\mD_{\nu}(\lambda^{\bullet})$ with respect to the thick subcategory, and define $\mL^{\mu}_{\nu}$ be the full subcategory of $\mD_{\nu}(\lambda^{\bullet})/\mN_{\nu}^{>\mu}$, which consists of objects isomorphic to objects of $\mL^{\geqslant \mu}_{\nu}(\lambda^{\bullet})$.
\end{definition}

Since $\mD_{\nu}(\lambda^{\bullet})/\mN_{\nu}^{>\mu}$ is isomorphic to the Verdier quotient $$(\mD_{\nu}(\lambda^{\bullet})/\mN_{\nu})/(\mD_{\nu}^{> \mu}(\lambda^{\bullet})/\mN_{\nu}),$$ the following proposition follows from Proposition 4.5 and 4.7.
	\begin{proposition}\label{keyprop3}
		For any $i\in I,r\in \mathbb{N}_{>0}$ and $\nu=ri+\nu''\in\mathbb{N}I$, the functors 
		$$\mathcal{E}^{(r)}_{i}:\mD_{\nu}(\lambda^{\bullet})/\mN_{\nu}^{> \mu} \rightarrow \mD_{\nu''}(\lambda^{\bullet})/\mN_{\nu''}^{>\mu},$$
		$$\mathcal{F}^{(r)}_{i}:\mD_{\nu''}(\lambda^{\bullet})/\mN_{\nu''}^{>\mu} \rightarrow \mD_{\nu}(\lambda^{\bullet})/\mN_{\nu}^{>\mu},$$
		are well-defined.
	\end{proposition}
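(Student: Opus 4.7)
The plan is to deduce this proposition directly from Propositions~\ref{keyprop} and~\ref{keyprop2} together with a standard factorization property of Verdier quotients, exploiting the two-step description alluded to in the paragraph preceding the statement. More precisely, since $\mN_{\nu}^{>\mu}$ is by definition the thick subcategory of $\mD_{\nu}(\lambda^{\bullet})$ generated by the two thick subcategories $\mN_{\nu}$ and $\mD_{\nu}^{>\mu}(\lambda^{\bullet})$, a standard fact about triangulated quotients produces a canonical equivalence
\begin{equation*}
\mD_{\nu}(\lambda^{\bullet})/\mN_{\nu}^{>\mu} \;\simeq\; \bigl(\mD_{\nu}(\lambda^{\bullet})/\mN_{\nu}\bigr)\big/\bigl(\mD_{\nu}^{>\mu}(\lambda^{\bullet})/\mN_{\nu}\bigr),
\end{equation*}
and the analogous equivalence for $\nu''$. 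So it suffices to show that $\mathcal{E}^{(r)}_{i}$ and $\mathcal{F}^{(r)}_{i}$, which by Section~3 are already well-defined on $\mD_{\nu}(\lambda^{\bullet})/\mN_{\nu}$, further descend through the inner quotient by $\mD_{\nu}^{>\mu}(\lambda^{\bullet})/\mN_{\nu}$ on each side.

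For the second step, I would invoke Proposition~\ref{keyprop} and Proposition~\ref{keyprop2}: they give exactly that $\mathcal{E}^{(r)}_{i}$ sends $\mD_{\nu}^{>\mu}(\lambda^{\bullet})/\mN_{\nu}$ into $\mD_{\nu''}^{>\mu}(\lambda^{\bullet})/\mN_{\nu''}$, and that $\mathcal{F}^{(r)}_{i}$ sends $\mD_{\nu''}^{>\mu}(\lambda^{\bullet})/\mN_{\nu''}$ into $\mD_{\nu}^{>\mu}(\lambda^{\bullet})/\mN_{\nu}$. Composing with the projection to the second-stage quotient, the universal property of Verdier localization then yields unique factorizations to induced functors on $\mD_{\nu}(\lambda^{\bullet})/\mN_{\nu}^{>\mu}$ and $\mD_{\nu''}(\lambda^{\bullet})/\mN_{\nu''}^{>\mu}$, which are the required $\mathcal{E}^{(r)}_{i}$ and $\mathcal{F}^{(r)}_{i}$.

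I do not anticipate a genuinely hard step: once the cotangent-correspondence analysis in Lemma~\ref{key lemma} and in the proofs of Propositions~\ref{keyprop} and~\ref{keyprop2} has been granted, what remains is triangulated-category bookkeeping. The only point worth verifying carefully is the iterated-quotient equivalence displayed above; it relies on checking that the thick subcategory generated by $\mN_{\nu}$ and $\mD_{\nu}^{>\mu}(\lambda^{\bullet})$ coincides with the preimage in $\mD_{\nu}(\lambda^{\bullet})$ of the thick subcategory $\mD_{\nu}^{>\mu}(\lambda^{\bullet})/\mN_{\nu}$ under the localization. This follows from the fact that both are triangulated and closed under direct summands, so the preimage is automatically thick and contains both generating subcategories, while the generated subcategory maps into $\mD_{\nu}^{>\mu}(\lambda^{\bullet})/\mN_{\nu}$ under the quotient functor.
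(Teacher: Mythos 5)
Your proposal is correct and follows exactly the same route as the paper: the paper's proof is the single sentence preceding the proposition, identifying $\mD_{\nu}(\lambda^{\bullet})/\mN_{\nu}^{>\mu}$ with the two-stage quotient $(\mD_{\nu}(\lambda^{\bullet})/\mN_{\nu})/(\mD_{\nu}^{>\mu}(\lambda^{\bullet})/\mN_{\nu})$ and then invoking Propositions~\ref{keyprop} and~\ref{keyprop2}. Your added verification of the iterated-quotient equivalence is just an unpacking of a step the paper takes for granted.
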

	As a corollary, these functors also restrict to functors of Lustzig's sheaves.
	\begin{corollary}
		For any $i\in I,r\in \mathbb{N}_{>0}$ and $\nu=ri+\nu''\in\mathbb{N}I$, the functors 
		$$\mathcal{E}^{(r)}_{i}:\mL^{\mu}_{\nu}(\lambda^{\bullet}) \rightarrow \mL^{\mu}_{\nu''}(\lambda^{\bullet}),$$
		$$\mathcal{F}^{(r)}_{i}:\mL^{\mu}_{\nu''}(\lambda^{\bullet}) \rightarrow \mL^{\mu}_{\nu}(\lambda^{\bullet})$$
		are well-defined.
	\end{corollary}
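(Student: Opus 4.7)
The corollary is an immediate combination of Proposition \ref{keyprop3} with the fact, already established in the preceding corollary for $\mL^{\geqslant \mu}_{\nu}$ and $\mL^{> \mu}_{\nu}$, that $\mathcal{E}^{(r)}_i$ and $\mathcal{F}^{(r)}_i$ preserve the property of being isomorphic to an object of $\mQ_{\nu}(\lambda^{\bullet})$. The plan is to assemble these two ingredients.

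First, I would note that by definition $\mL^{\mu}_{\nu}(\lambda^{\bullet})$ is the full subcategory of $\mD_{\nu}(\lambda^{\bullet})/\mN_{\nu}^{>\mu}$ whose objects are isomorphic (in that quotient) to objects of $\mL^{\geqslant \mu}_{\nu}(\lambda^{\bullet})$, hence in particular isomorphic to objects of $\mQ_{\nu}(\lambda^{\bullet})$. By Proposition \ref{keyprop3}, the functors $\mathcal{E}^{(r)}_i$ and $\mathcal{F}^{(r)}_i$ descend to endofunctors on the collection $\{\mD_{\nu}(\lambda^{\bullet})/\mN_{\nu}^{>\mu}\}_{\nu}$; so given $L\in \mL^{\mu}_{\nu}(\lambda^{\bullet})$, the image $\mathcal{E}^{(r)}_i(L)$ (respectively $\mathcal{F}^{(r)}_i(L)$) is a well-defined object of $\mD_{\nu''}(\lambda^{\bullet})/\mN_{\nu''}^{>\mu}$. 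It only remains to check that this image is isomorphic in the quotient to some object of $\mQ_{\nu''}(\lambda^{\bullet})$ (respectively $\mQ_{\nu}(\lambda^{\bullet})$).

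For $\mathcal{F}^{(r)}_i$, this is immediate: since $\mathcal{F}^{(r)}_i$ is defined by Lusztig's induction with the constant sheaf $\mathbb{C}_{\mathbf{E}_{\mathbf{V}',\Omega}}$, it sends $\mQ_{\nu''}(\lambda^{\bullet})$ into $\mQ_{\nu}(\lambda^{\bullet})$ by \cite[2.2]{MR1088333}, so it preserves the property of being isomorphic (in the quotient) to a Lusztig sheaf.

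For $\mathcal{E}^{(r)}_i$, I would repeat verbatim the argument already used in the preceding corollary and in \cite[Proposition 3.13]{fang2023tensor}: namely, represent a typical Lusztig sheaf $L$ as a direct summand (up to shifts) of some $L_{\boldsymbol{\nu}^1\boldsymbol{d}^1\cdots\boldsymbol{\nu}^N\boldsymbol{d}^N}$, and proceed by induction on the total length $\sum_l |\boldsymbol{\nu}^l|$. The induction step uses the commutation relations of Proposition \ref{c1} between $\mathcal{E}^{(r)}_i$ and $\mathcal{F}^{(s)}_j$ (together with the fact that $\mathcal{E}^{(r)}_i$ commutes with the framing functors $\mathcal{F}^{(d^l_{i^l})}_{i^l}$ attached to framing vertices), to reduce the computation of $\mathcal{E}^{(r)}_i$ on $L_{\boldsymbol{\nu}^1\boldsymbol{d}^1\cdots\boldsymbol{\nu}^N\boldsymbol{d}^N}$ to Lusztig's sheaves of strictly smaller length. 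All identities in the argument are already known in $\mD_{\nu}(\lambda^{\bullet})/\mN_{\nu}$; passing them to the further quotient $\mD_{\nu}(\lambda^{\bullet})/\mN_{\nu}^{>\mu}$ is automatic via the exact quotient functor $\mD_{\nu}(\lambda^{\bullet})/\mN_{\nu} \to \mD_{\nu}(\lambda^{\bullet})/\mN_{\nu}^{>\mu}$.

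There is no real obstacle here: the micro-local work has been done in Propositions \ref{keyprop} and \ref{keyprop2}, and the ``Lusztig-sheaf preservation'' is a purely formal consequence of the definition of $\mathcal{F}^{(r)}_i$ and of the commutation relations among the $\mathcal{E}^{(r)}_i$ and $\mathcal{F}^{(s)}_j$. The only point to be careful about is that the induction base case $\mathbf{V}=0$ gives the constant sheaf on a point, which trivially belongs to $\mQ_0(\lambda^{\bullet})$, so the induction terminates correctly.
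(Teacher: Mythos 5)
Your overall plan --- assemble Proposition \ref{keyprop3} with the preceding corollary for $\mL^{\geqslant \mu}_{\nu}(\lambda^{\bullet})$ and $\mL^{> \mu}_{\nu}(\lambda^{\bullet})$ --- is exactly what the paper intends, and the result is a formal consequence of those two inputs. But there is an imprecision in your reduction that deserves to be flagged. You say that once Proposition \ref{keyprop3} makes $\mathcal{E}^{(r)}_i(L)$ a well-defined object of $\mD_{\nu''}(\lambda^{\bullet})/\mN_{\nu''}^{>\mu}$, it ``only remains to check that this image is isomorphic in the quotient to some object of $\mQ_{\nu''}(\lambda^{\bullet})$.'' That is not the membership criterion for $\mL^{\mu}_{\nu''}(\lambda^{\bullet})$. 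By definition this category consists of objects of $\mD_{\nu''}(\lambda^{\bullet})/\mN_{\nu''}^{>\mu}$ isomorphic to objects of $\mL^{\geqslant \mu}_{\nu''}(\lambda^{\bullet})$, i.e., to Lusztig sheaves whose singular support is contained in $\mathbf{E}^{\geqslant \mu}_{\bV'',\mathbf{W}^{\bullet},H^{(N)}}$; being isomorphic to some object of $\mQ_{\nu''}(\lambda^{\bullet})$ is strictly weaker, since a Lusztig sheaf with singular support outside $\mathbf{E}^{\geqslant\mu}_{\bV'',\mathbf{W}^{\bullet},H^{(N)}}$ need not become isomorphic to anything in $\mL^{\geqslant \mu}_{\nu''}(\lambda^{\bullet})$ after passing to the Verdier quotient by $\mN_{\nu''}^{>\mu}$. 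So, as written, the verification you then propose (show the image is a Lusztig sheaf) would not complete the proof.

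The fix is to bypass the reduction entirely: $\mL^{\mu}_{\nu}(\lambda^{\bullet})$ is by construction the essential image of $\mL^{\geqslant \mu}_{\nu}(\lambda^{\bullet})$ under the exact quotient functor $\pi_{\nu}:\mD_{\nu}(\lambda^{\bullet})/\mN_{\nu}\to\mD_{\nu}(\lambda^{\bullet})/\mN_{\nu}^{>\mu}$; Proposition \ref{keyprop3} says $\mathcal{E}^{(r)}_i,\mathcal{F}^{(r)}_i$ intertwine the $\pi_{\nu}$'s; and the preceding corollary says they carry $\mL^{\geqslant \mu}_{\nu}(\lambda^{\bullet})$ to $\mL^{\geqslant \mu}_{\nu''}(\lambda^{\bullet})$. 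Commuting these two facts gives the claim at once, with no further sheaf-theoretic check. Equivalently: when you say you would ``repeat verbatim'' the induction from the earlier corollary, note that that argument, because it tracks singular supports through Propositions \ref{keyprop} and \ref{keyprop2}, already produces membership in $\mL^{\geqslant \mu}_{\nu''}(\lambda^{\bullet})$ rather than merely in $\mQ_{\nu''}(\lambda^{\bullet})$ --- so there is no reason to weaken the target, and indeed you must not, or the proof does not close.
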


\subsection{The structure of the Grothendieck groups}
In this subsection, we will determine the module structure of the Grothendieck groups of $\mL^{\mu}_{\nu}(\lambda^{\bullet}),\mL^{\mu}_{>\nu}(\lambda^{\bullet})$ and $\mL^{\mu}_{\geqslant \nu}(\lambda^{\bullet})$. The main tool is the crystal structure of irreducible components of Lusztig's nilpotent variety defined in \cite{MR1088333} and \cite{MR1758244}. 

\subsubsection{The variety $\Lambda_{\bV}$ and its generalization}
For any quiver  $Q=(I,H,\Omega)$ and a dimension vector $\nu\in \mathbb{N}I$, we fix a $I$-graded $\mathbb{C}$-vector space $\bV$ such that its dimension vector is $\nu$. We define the moduli space of the double quiver
$$\bfEVH= \bigoplus_{h\in H} \Hom (\bV_{h'},\bV_{h''})$$
such that the connected algebraic group 
$G_{\bV}$ acts on $\bfEVH$ by conjugation. We fix a function $\varepsilon:H\rightarrow\mathbb{C}^*$ such that $\varepsilon(h)+\varepsilon(\overline{h})=0$ for any $h\in H$. There is a non-degenerate $G_{\bV}$-invariant symplectic form on $\bfEVH$ given by 
$$\langle x,x' \rangle=\sum_{h\in H}\varepsilon(h)\mathrm{tr}(x_hx'_{\overline{h}}:V_{h''}\rightarrow V_{h''})$$
such that $\bfEVH$ can be identified with the cotangent bundle of $\bfEVO$, see \cite[Section 12.8]{MR1088333}. Hence we also use $(x,\bar{x})$ to denote a point in $\bfEVH$, where $x\in \bfEVO$ and $\bar{x} \in \mathbf{E}_{\bV,\bar{\Omega}}$. The moment map attached to the $G_{\bV}$-action on the symplectic vector space $\bfEVH$ is 
\begin{align*}
	\mu_{\bV}:\bfEVH&\rightarrow \bigoplus_{i\in I}\mathrm{End}(\bV_i)\\
	(x,\bar{x})&\mapsto (\sum_{h\in H, h''=i}\varepsilon(h)x_hx_{\overline{h}})_{i\in I}.
\end{align*}
Lusztig's nilpotent variety is defined to be
$$\Lambda_{\bV}=\{(x,\bar{x})\in \mu_{\bV}^{-1}(0)\mid (x,\bar{x})\ \textrm{is nilpotent}\},$$
where $(x,\bar{x})\in \bfEVH$ is said to be nilpotent, if there exists an $N\geqslant 2$ such that for any sequence $h_1,...,h_N\in H$ satisfying $h_1'=h_2'',...,h_{N-1}'=h_N''$, the composition $x_{h_1}...x_{h_N}:\bV_{h'_N}\rightarrow \bV_{h_1''}$ is zero. By \cite[Theorem 12.9]{MR1088333}, the nilpotent variety $\Lambda_{\bV}$ is a Lagrangian subvariety of $\bfEVH$.

Now we replace $Q$ by $Q^{(N)}$, and denote the corresponding moment map by $$\mu_{\bV,\mathbf{W}^{\bullet}}: \mathbf{E}_{\bV,\mathbf{W}^{\bullet},H^{(N)}} \longrightarrow \bigoplus_{i\in I,1\leqslant l \leqslant N}\mathrm{End}(\bV_i) \oplus  \mathrm{End}(\mathbf{W}^{l}_{i^{l}})$$
$$ \mu_{\bV,\mathbf{W}^{\bullet}}:(x,\bar{x},y,z ) \mapsto ( (\sum_{h\in H, h''=i}\varepsilon(h)x_hx_{\overline{h}}+\sum_{1\leqslant l \leqslant N}z^{l}_{i}y^{l}_{i} )_{i\in I},(y^{l}_{i}z^{l}_{i})_{i\in I,1\leqslant l \leqslant N}) .$$
Similarly, we denote the nilpotent variety of $Q^{(N)}$ by $$\Lambda_{\bV,\mathbf{W}^{\bullet}}= \{(x,\bar{x},y,z )\in \mu_{\bV,\mathbf{W}^{\bullet}}^{-1}(0)\mid (x,\bar{x},y,z)\ \textrm{is nilpotent}\}. $$
By \cite[Theorem 12.9]{MR1088333}, the nilpotent variety $\Lambda_{\bV,\mathbf{W}^{\bullet}}$ is also a Lagrangian subvariety.

For any $i\in I, \nu\in \mathbb{N}I$ and $0\leqslant t\leqslant \nu_i$, we define $\Lambda_{\bV,i,t}\subset \Lambda_{\bV}$  to be the locally closed subset consisting of $(x,\bar{x})$  such that 
$$\mathrm{comdim}_{\bV_i}(\sum_{h\in H,h''=i}\mathrm{Im}(x_h:\bV_{h'}\rightarrow \bV_i))=t,$$
and define $\Lambda_{\bV,\mathbf{W}^{\bullet},i,t}\subset \Lambda_{\bV,\mathbf{W}^{\bullet}}$ to be the locally closed subset consisting of  $(x,\bar{x},y,z)$ such that
$$\mathrm{comdim}_{\bV_i}(\sum_{h\in H,h''=i}\mathrm{Im}(x_h:\bV_{h'}\rightarrow \bV_i)+\sum_{1\leqslant l \leqslant N} \mathrm{Im}(z^{l}_i:\mathbf{W}^{l}_{i^{l}}\rightarrow \bV_i))=t,$$
then all these $\Lambda_{\bV,i,t}$ or $\Lambda_{\bV,\mathbf{W}^{\bullet},i,t},0\leqslant t\leqslant \nu_i$ form a stratification of $\Lambda_{\bV}$ or $\Lambda_{\bV,\mathbf{W}^{\bullet}}$ respectively. 

Dually, we define $\Lambda^t_{\bV,i}\subset \Lambda_{\bV}$ to be the locally closed subset consisting of $(x,\bar{x})$ such that
$$\mathrm{dim}\bigcap_{h\in H,h'=i}\mathrm{Ker}(x_h:\bV_i\rightarrow \bV_{h''})=t,$$
and define $\Lambda^t_{\bV,\mathbf{W}^{\bullet},i}\subset \Lambda_{\bV,\mathbf{W}^{\bullet}}$ to be the locally closed subset consisting of $(x,\bar{x},y,z)$ such that
$$\mathrm{dim}\bigcap_{h\in H,h'=i}\mathrm{Ker}(x_h:\bV_i\rightarrow \bV_{h''}) \cap \bigcap_{1\leqslant l \leqslant N}\mathrm{Ker}(y^{l}_{i}:\bV_i\rightarrow \mathbf{W}^{l}_{i^{l}})=t,$$
then all these $\Lambda^t_{\bV,i}$ or $\Lambda^t_{\bV,\mathbf{W}^{\bullet},i},0\leqslant t\leqslant \nu_i$ also form a stratification of $\Lambda_{\bV}$ or or $\Lambda_{\bV,\mathbf{W}^{\bullet}}$ respectively. 

For any irreducible component $Z\subset \Lambda_{\bV}$ (or $Z\subset \Lambda_{\bV,\mathbf{W}^{\bullet}} $), there are unique integers $\epsilon_{i}(Z),\epsilon_{i}^\ast(Z)$ such that $\Lambda_{\bV,i,\epsilon_{i}(Z)} \cap Z$ and $ \Lambda^{\epsilon^\ast_{i}(Z)}_{\bV,i} \cap Z$ ( or $\Lambda_{\bV,\mathbf{W}^{\bullet},i,\epsilon_{i}(Z)} \cap Z$ and $ \Lambda^{\epsilon^\ast_{i}(Z)}_{\bV,\mathbf{W}^{\bullet},i} \cap Z$ respectively) are open dense in $Z$.
\begin{lemma}[{\cite[Lemma 12.5]{MR1088333}}]
	For any $0\leqslant t\leqslant \nu_i$, let $\nu',\nu''=\nu-ti$, then there are bijections
	\begin{align*}
		\rho_{i,t}: \{Z \in {\rm{Irr}} \Lambda_{\bV}\mid\epsilon_{i}(Z)=t \} \rightarrow   \{Z' \in {\rm{Irr}} \Lambda_{\bV''}\mid\epsilon_{i}(Z')=0 \},\\
		\rho^{\ast}_{i,t}: \{Z \in {\rm{Irr}} \Lambda_{\bV}\mid\epsilon^{\ast}_{i}(Z)=t \} \rightarrow   \{Z' \in {\rm{Irr}} \Lambda_{\bV'}\mid\epsilon^{\ast}_{i}(Z')=0 \},
	\end{align*}
	and bijections
	\begin{align*}
		\rho_{i,t}: \{Z \in {\rm{Irr}} \Lambda_{\bV,\mathbf{W}^{\bullet}}\mid\epsilon_{i}(Z)=t \} \rightarrow   \{Z' \in {\rm{Irr}} \Lambda_{\bV'',\mathbf{W}^{\bullet}}\mid\epsilon_{i}(Z')=0 \},\\
		\rho^{\ast}_{i,t}: \{Z \in {\rm{Irr}} \Lambda_{\bV,\mathbf{W}^{\bullet}}\mid\epsilon^{\ast}_{i}(Z)=t \} \rightarrow   \{Z' \in {\rm{Irr}} \Lambda_{\bV',\mathbf{W}^{\bullet}}\mid\epsilon^{\ast}_{i}(Z')=0 \}.
	\end{align*}
\end{lemma}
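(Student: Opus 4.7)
The unframed case of this statement is Lusztig's original Lemma 12.5 in [MR1088333]. I would extend the argument to $\Lambda_{\bV,\mathbf{W}^{\bullet}}$ by regarding the framing maps $y^l_i$ and $z^l_i$ as extra arrows incident to vertex $i$, which is exactly how they enter the definitions of $\Lambda_{\bV,\mathbf{W}^{\bullet},i,t}$ and $\Lambda^t_{\bV,\mathbf{W}^{\bullet},i}$. Concretely, for each $(i,t)$ I construct a correspondence
$$\Lambda_{\bV,\mathbf{W}^{\bullet},i,t}\ \xleftarrow{\ p\ }\ \tilde{\Lambda}_{i,t}\ \xrightarrow{\ q\ }\ \Lambda_{\bV'',\mathbf{W}^{\bullet},i,0},$$
show that both legs are smooth with irreducible fibers, and invoke the standard fact that a smooth morphism with irreducible fibers induces a bijection on the sets of irreducible components. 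Taking $\rho_{i,t}$ to be the composite bijection then yields the first claim.

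\textbf{Construction.} Fix $\bV''$ of dimension $\nu''=\nu-ti$ with $\bV''_j=\bV_j$ for $j\neq i$. On the stratum $\Lambda_{\bV,\mathbf{W}^{\bullet},i,t}$ the subspace $\mathbf{U}:=\sum_{h''=i}\mathrm{Im}(x_h)+\sum_{l}\mathrm{Im}(z^l_i)\subseteq \bV_i$ has codimension exactly $t$. Let $\tilde{\Lambda}_{i,t}$ parametrize tuples $((x,\bar{x},y,z),\sigma)$ with $(x,\bar{x},y,z)\in\Lambda_{\bV,\mathbf{W}^{\bullet},i,t}$ and $\sigma:\bV_i\twoheadrightarrow\bV''_i$ a surjection whose kernel is a complement of $\mathbf{U}$. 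The forgetful projection $p$ is a principal $GL(\bV''_i)$-torsor (up to the choice of complement), hence smooth with irreducible fibers. The projection $q$ pushes the data forward: for maps into $\bV_i$ it composes with $\sigma$, for maps out of $\bV_i$ it uses a section of $\sigma$, and at vertices $j\neq i$ it does nothing. Its fibers are affine spaces recording the choice of complement to $\bV''_i$ in $\bV_i$ together with the matrix entries of the restricted data on that complement, cut by the linear equation coming from the moment map at $i$.

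\textbf{Verifications.} I would check (i) the pushed-forward tuple lies in $\mu_{\bV'',\mathbf{W}^{\bullet}}^{-1}(0)$: this follows by applying $\sigma$ to the original moment-map equation $\sum_{h''=i}\varepsilon(h)x_h\bar{x}_{\bar h}+\sum_l z^l_i y^l_i=0$ on $\bV_i$ and using that its image lies in $\mathbf{U}$; (ii) nilpotency is preserved, by restricting a nilpotent filtration of $\bV\oplus\mathbf{W}^{\bullet}$ and lifting it through the chosen section; and (iii) the image point has $\epsilon_i=0$, immediate from the surjectivity of $\sigma$ and of the original maps onto $\mathbf{U}$. For the second bijection $\rho^\ast_{i,t}$ I would run the formally dual argument, replacing $\mathbf{U}$ by a complement of $\bigcap_{h'=i}\mathrm{Ker}(x_h)\cap\bigcap_{l}\mathrm{Ker}(y^l_i)$ in $\bV_i$; alternatively, the involution $(x,\bar{x})\leftrightarrow(\bar{x},x),\ (y,z)\leftrightarrow(z,y)$ on $\mathbf{E}_{\bV,\mathbf{W}^{\bullet},H^{(N)}}$ preserves $\Lambda_{\bV,\mathbf{W}^{\bullet}}$ and swaps $\epsilon_i$ with $\epsilon^\ast_i$, reducing the $\rho^\ast$ statement to the $\rho$ statement.

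\textbf{Main obstacle.} The principal subtlety is the dimension count ensuring that the fibers of $q$ are precisely affine spaces of the expected dimension after imposing the moment map condition — i.e.\ that the extra moment-map equations at vertex $i$ cut out a linear subspace of the maximal expected codimension. Once this dimension count matches, both $p$ and $q$ are smooth with irreducible fibers and the bijection between irreducible components is automatic. This dimension count is where the calculation is identical to that in [MR1088333, Section 12], the framing data contributing only additional summands that behave exactly like ordinary arrows attached to $i$.
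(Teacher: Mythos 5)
Your proposal is correct, but you are doing considerably more work than the paper, which simply cites Lusztig's Lemma~12.5 and leaves the framed case implicit. The point you touch on but do not exploit to the hilt is that the framed case is not an ``extension'' of Lusztig's argument requiring a parallel construction: the variety $\Lambda_{\bV,\mathbf{W}^{\bullet}}$ is, by the paper's own definition, exactly Lusztig's nilpotent variety $\Lambda_{\mathbf{M}}$ for the quiver $Q^{(N)}$ and the $I^{(N)}$-graded space $\mathbf{M}=\bV\oplus\mathbf{W}^{\bullet}$ (the moment map $\mu_{\bV,\mathbf{W}^{\bullet}}$ has components at all vertices of $I^{(N)}$, and the paper even invokes \cite[Theorem 12.9]{MR1088333} to deduce it is Lagrangian). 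The strata $\Lambda_{\bV,\mathbf{W}^{\bullet},i,t}$ and $\Lambda^{t}_{\bV,\mathbf{W}^{\bullet},i}$ are verbatim Lusztig's $\Lambda_{\mathbf{M},i,t}$ and $\Lambda^{t}_{\mathbf{M},i}$ for the vertex $i\in I\subset I^{(N)}$, and subtracting $ti$ from $\dim\mathbf{M}$ only changes the component at $i$, hence replaces $\bV$ by $\bV''$ and leaves $\mathbf{W}^{\bullet}$ untouched. So the third and fourth bijections are a \emph{literal instance} of Lusztig's Lemma~12.5 applied to $Q^{(N)}$ and the vertex $i$; there is no new correspondence to build and no new dimension count to carry out. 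Consequently the ``main obstacle'' you flag does not exist: the moment-map dimension count is wholesale Lusztig's, not a framed analogue of it. Your explicit construction of $\tilde\Lambda_{i,t}$ and the involution trick for $\rho^{\ast}_{i,t}$ are both fine as far as they go, but they amount to re-proving Lusztig's lemma for an arbitrary quiver, which is exactly the thing the citation is meant to spare you.
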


It is well-known that the bijections $\rho_{i,t}$ induce a crystal structure on the set $\bigcup_{\nu} {\rm{Irr}} \Lambda_{\bV}$ of irreducible components  of $\Lambda_{\bV}$, which is isomorphic to the crystal structure of $B(\infty)$ by \cite{MR1458969}. Similar geometric crystal structures of $B(\lambda)$  are also constructed in  \cite{Saito2002} and \cite{SA}.

\subsubsection{The key lemma of Lusztig's sheaves}
Lusztig's simple perverse sheaves in $\mathcal{P}_{\nu}$ share similar properties of irreducible components of nilpotent varieties. (For the definition of $\mathcal{P}_{\nu}$, one can see details in \cite{MR1088333} or \cite{MR1227098}.)

Let $i$ be a source for the orientation $\Omega$. For any $\nu\in \mathbb{N}I, 0\leqslant t\leqslant \nu_i$, we define $\bfEVO^{i,t}\subset \bfEVO$ to be the locally closed subset consisting of $x$ such that
$${\mathrm{dim}}\,(\bigcap_{h\in \Omega,h'=i} \mathrm{Ker}(x_h:\bV_i\rightarrow \bV_{h''}))=t,$$
then all these $\bfEVO^{i,t},0\leqslant t\leqslant \nu_i$ form a  stratification of $\bfEVO$. For any equivariant simple perverse sheaf $L$, there exists a unique integer $t_i^{\ast}(L)$ such that $\bfEVO^{i,t_i^{\ast}(L)}\cap \mathrm{supp}(L)$ is open dense in $\mathrm{supp}(L)$. The following lemma is  dual to Lusztig's key lemma in \cite[Lemma 6.4]{MR1088333}.

\begin{lemma} \label{rkey}
	For any $0 \leqslant t \leqslant \nu_{i}$, let $\nu''=ti$, then $\mathbf{E}_{\mathbf{V}'',\Omega}=\{0\}$ is a point. For any $\nu'\in \mathbb{N}I$ such that $\nu=\nu'+\nu''$, we identify $\mathbf{E}_{\mathbf{V}',\Omega}\times \mathbf{E}_{\mathbf{V}'',\Omega}$ with $\mathbf{E}_{\mathbf{V}',\Omega}$. \\ 
	{{\rm{(a)}}} For any $L\in \mP_{\nu}$ with $t_{i}^{\ast}(L)=t$, we have 
	$$\mathbf{Res}^{\mathbf{V}}_{\mathbf{V'},\mathbf{V}''}(L)= K \oplus \bigoplus_{t_{i}^{\ast}(K')>0} K'[d']^{\oplus n(K',d')},$$ 
	where $K, K'\in \mP_{\nu'},d'\in \mathbb{Z}, n(K',d')\in \mathbb{N}$ and $K$ is the unique direct summand satisfying $t_{i}^{\ast}(K)=0$.\\ 
	{{\rm{(b)}}} For any $K \in \mP_{\nu'}$ with $t_{i}^{\ast}(K)=0$, we have
	$$\mathbf{Ind}^{\mathbf{V}}_{\mathbf{V}',\mathbf{V}''}( K \boxtimes \mathbb{C}_{\mathbf{E}_{\mathbf{V}'',\Omega}} ) = L \oplus \bigoplus_{t_{i}^{\ast}(L')>t} L'[d']^{\oplus n(L',d')}, $$ 
	where $L, L'\in \mP_{\nu},d'\in \mathbb{Z}, n(L',d')\in \mathbb{N}$ and $L$ is the unique direct summand satisfying $t_{i}^{\ast}(L)=t$.\\
	{{\rm{(b)}}} The maps $L\mapsto K$ and $K\mapsto L$ define a bijection   $\pi^{\ast}_{i,t}:\{ L \in \mP_{\nu}| t^{\ast}_{i}(L)=t \} \rightarrow \{ K \in \mP_{\nu'}| t^{\ast}_{i}(K)=0 \} $.
\end{lemma}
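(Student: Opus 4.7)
The plan is to deduce this lemma as the exact dual of Lusztig's key lemma \cite[Lemma 6.4]{MR1088333}, where $i$ is taken to be a sink and the role of the generic kernel of $\bigoplus_{h'=i}x_h$ is played by the generic cokernel of $\bigoplus_{h''=i}x_h$. The most direct route is via a partial Fourier--Sato transform: let $s_{i}\Omega$ be the orientation obtained from $\Omega$ by reversing exactly the arrows incident to $i$. Since $Q$ has no loops, $i$ becomes a sink in $s_{i}\Omega$, and the Fourier--Sato transform $\mathbf{Four}_{\Omega,s_{i}\Omega}$ is a perverse equivalence that identifies $\mathcal{P}_{\nu}$ for the two orientations. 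At the level of supports, the stratum $\mathbf{E}_{\bV,\Omega}^{i,t}$ (generic kernel dimension $t$ at $i$) corresponds to the stratum in $\mathbf{E}_{\bV,s_{i}\Omega}$ defined by generic cokernel dimension $t$ at $i$, so the invariant $t_{i}^{\ast}(L)$ on the source side matches the invariant $t_{i}$ of $\mathbf{Four}_{\Omega,s_{i}\Omega}(L)$ on the sink side.

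The second ingredient is that Fourier--Sato commutes with Lusztig's induction and restriction functors up to shift (as recalled in the paragraph preceding Definition 3.6 of the present paper, following \cite[Proposition 10.4.5]{MR4337423}). Consequently, the decomposition statements (a) and (b) transport term by term from the classical sink version to the present source version. The target space $\mathbf{E}_{\bV'',\Omega}=\{0\}$ is unaffected by the partial Fourier--Sato (as $\bV''$ is concentrated at $i$ and $i$ has no loops), so the factor $\mathbb{C}_{\mathbf{E}_{\bV'',\Omega}}$ in (b) is preserved. Statement (c) then follows from the uniqueness clauses of (a) and (b): the composition $L\mapsto K \mapsto L'$ must recover a simple perverse summand with $t_{i}^{\ast}(L')=t$, hence equals $L$ by the uniqueness in (b); the reverse composition is handled symmetrically.

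If one prefers a self-contained geometric argument, the alternative is to mimic the proof of \cite[Lemma 6.4]{MR1088333} with kernels in place of cokernels. Here one exploits that, since $i$ is a source and $\nu''=ti$, the defining locus $F'\subseteq \mathbf{E}_{\bV,\Omega}$ of the restriction functor is exactly $\{x\mid x_{h}|_{\bV''_{i}}=0\ \textrm{for every}\ h\in\Omega\ \textrm{with}\ h'=i\}$, and $\kappa:F'\to \mathbf{E}_{\bV',\Omega}\times\{0\}$ is an affine bundle. A generic point of $\mathrm{supp}(L)$ lies in $\mathbf{E}_{\bV,\Omega}^{i,t}$, whose preimage in $F'$ parametrises the choices of a $t$-dimensional subspace inside the generic kernel; taking generic fibres identifies the unique summand $K$ with $t_{i}^{\ast}(K)=0$. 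For induction, upper semicontinuity of the kernel dimension forces every summand other than the distinguished $L$ to have $t_{i}^{\ast}>t$.

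The main anticipated obstacle is bookkeeping: one must verify that under the partial Fourier--Sato the shifts defining $\mathbf{Ind}$ and $\mathbf{Res}$ match, so that the distinguished summands $K$ (respectively $L$) transport without a shift ambiguity and the enumeration of the remaining summands by strict inequalities on $t_{i}^{\ast}$ is preserved. If direct transport is considered delicate, the fallback geometric proof sidesteps this by a direct dimension count on the affine bundle $\kappa$ and the stratification $\bigsqcup_{t}\mathbf{E}_{\bV,\Omega}^{i,t}$, paralleling \cite[Section 6]{MR1088333}.
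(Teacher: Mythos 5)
Your primary route has a genuine gap. You assert that under the partial Fourier--Sato transform $\mathbf{Four}_{\Omega,s_{i}\Omega}$, the invariant $t_{i}^{\ast}(L)$ computed on the source side equals the invariant $t_{i}$ of $\mathbf{Four}_{\Omega,s_{i}\Omega}(L)$ computed on the sink side. This is false, and the failure is structural, not a matter of bookkeeping shifts. Consider the $A_{2}$ quiver $1\to 2$ with $\nu=(1,1)$ and $i=1$, so $\mathbf{E}_{\bV,\Omega}\cong\mathbb{C}$. The skyscraper $L=\mathbb{C}_{0}$ is a Lusztig simple perverse sheaf with $t_{1}^{\ast}(L)=1$; its Fourier--Sato transform is (up to shift) the constant sheaf $\mathbb{C}_{\mathbf{E}_{\bV,s_{1}\Omega}}$, whose generic cokernel at $1$ is zero, so $t_{1}$ of the transform is $0\neq1$. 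Symmetrically, the constant sheaf $\mathbb{C}_{\mathbf{E}_{\bV,\Omega}}$ has $t_{1}^{\ast}=0$ but its transform is $\mathbb{C}_{0}$ with $t_{1}=1$. Conceptually, Proposition \ref{Cry} shows that both $t_{i}$ and $t_{i}^{\ast}$ are determined by the singular support via $\epsilon_{i}$ and $\epsilon_{i}^{\ast}$; since Fourier--Sato preserves singular support, it preserves $t_{i}$ and it preserves $t_{i}^{\ast}$, but it does not interchange them. This is exactly why the paper, for a general orientation $\Omega$, defines $t_{i}^{\ast}(L)$ by transporting to a \emph{source} orientation and $t_{i}(L)$ by transporting to a \emph{sink} orientation --- they are a priori two distinct invariants of the same sheaf, and nothing turns one into the other under $\mathbf{Four}$.

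The correct reading of ``dual'' in the paper is the kernel/cokernel (opposite-quiver, or transpose) duality, not Fourier duality. Since the paper gives no proof here and simply states that Lemma \ref{rkey} is dual to Lusztig's Lemma 6.4, the intended justification is your fallback route: rerun Lusztig's argument for the functor $\mathbf{Ind}^{\bV}_{\bV',\bV''}$ with $\bV''$ a submodule concentrated at the \emph{source} $i$ (equivalently, supported in $\bigcap_{h'=i}\mathrm{Ker}\,x_{h}$), using the stratification $\mathbf{E}_{\bV,\Omega}^{i,t}$ by generic kernel dimension at $i$ in place of the cokernel stratification, and the semicontinuity of that kernel dimension to isolate the distinguished summand. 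Your outline of this alternative is essentially correct (one minor point: since $i$ is a source and there are no arrows into $i$, the map $\kappa:F'\to\mathbf{E}_{\bV',\Omega}$ here is in fact an isomorphism rather than a proper affine bundle, which only simplifies things), and it is also the proof the paper implicitly intends. You should therefore lead with that argument and drop the Fourier--Sato bridge, or, if you want to keep it, replace Fourier duality by the contravariant transpose equivalence $\mathbf{E}_{\bV,\Omega}\cong\mathbf{E}_{\bV^{\ast},\bar{\Omega}}$ which genuinely exchanges kernels with cokernels and interchanges the two slots of $\mathbf{Ind}$ and $\mathbf{Res}$; that is what carries $\{L\mid t_{i}^{\ast}(L)=t\}$ on a source to $\{L\mid t_{i}(L)=t\}$ on a sink.
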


Note that $i$ is a sink for the orientation $\bar{\Omega}$. For any $\nu\in \mathbb{N}I, 0\leqslant t\leqslant \nu_i$, we define ${^{t}\mathbf{E}_{\bV,\bar{\Omega}}^{i}}\subset \mathbf{E}_{\bV,\bar{\Omega}}$ to be the locally closed subset consisting of $x$ such that
$$\mathrm{codim}_{\bV_i}(\sum_{h\in \bar{\Omega}, h''=i}\mathrm{Im} (x_h:\bV_{h'}\rightarrow \bV_i))=t,$$
then all these ${^{t}\mathbf{E}_{\bV,\bar{\Omega}}^{i}}, 0\leqslant t\leqslant \nu_i$ form a  stratification of $\mathbf{E}_{\bV,\bar{\Omega}}$. For any simple equivariant perverse sheaf $L$, there exists a unique integer $t_i(L)$ such that ${^{t_i(L)}\mathbf{E}_{\bV,\bar{\Omega}}^{i}}\cap \mathrm{supp}(L)$ is open dense in $\mathrm{supp}(L)$. 

\begin{lemma}[{\cite[Lemma 6.4]{MR1088333}}]\label{lkey}
	For any $0 \leqslant t \leqslant \nu_{i}$, let $\nu'=ti$, then $\mathbf{E}_{\mathbf{V}',\bar{\Omega}}=\{0\}$ is a point. For any $\nu''\in \mathbb{N}I$ such that $\nu=\nu'+\nu''$, we identify $\mathbf{E}_{\mathbf{V}',\bar{\Omega}}\times \mathbf{E}_{\mathbf{V}'',\bar{\Omega}}$ with $\mathbf{E}_{\mathbf{V}'',\bar{\Omega}}$. \\
	{{\rm{(a)}}} For any $L\in \mP_{\nu}$ with $t_{i}(L)=t$, we have 
	$$\mathbf{Res}^{\mathbf{V}}_{\mathbf{V'},\mathbf{V}''}(L)= K \oplus \bigoplus_{t_{i}(K')>0} K'[d']^{\oplus n(K',d')},$$ 
	where $K, K'\in \mP_{\nu''},d'\in \mathbb{Z}, n(K',d')\in \mathbb{N}$ and $K$ is the unique direct summand satisfying $t_{i}(K)=0$.\\ 
	{{\rm{(b)}}} For any $K \in \mP_{\nu''}$ with $t_{i}(K)=0$, we have
	$$\mathbf{Ind}^{\mathbf{V}}_{\mathbf{V}',\mathbf{V}''}(\mathbb{C}_{\mathbf{E}_{\mathbf{V}',\bar{\Omega}}} \boxtimes K) = L \oplus \bigoplus_{t_{i}(L')>t} L'[d']^{\oplus n(L',d')}, $$ 
	where $L, L'\in \mP_{\nu},d'\in \mathbb{Z}, n(L',d')\in \mathbb{N}$ and $L$ is the unique direct summand satisfying $t_{i}(L)=t$.\\
	{{\rm{(c)}}} The maps $L\mapsto K$ and $K\mapsto L$ define a bijection  $\pi_{i,t}:\{ L \in \mP_{\nu}| t_{i}(L)=t \} \rightarrow \{ K \in \mP_{\nu''}| t_{i}(K)=0 \} $.
\end{lemma}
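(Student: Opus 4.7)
This is Lusztig's key lemma, and my plan is to follow the proof of \cite[Lemma 6.4]{MR1088333}, run in parallel with the dual statement of Lemma \ref{rkey}. Parts (a) and (b) are linked by the adjunction between $\mathbf{Ind}$ and $\mathbf{Res}$ combined with Verdier duality, and part (c) is formal once (a) and (b) are in hand.

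First I would set up the geometry. Because $\bV'$ is concentrated at the sink $i$ and $Q$ has no loops, $\bV'$ carries no arrows of $\bar{\Omega}$, so $\mathbf{E}_{\bV',\bar{\Omega}}=\{0\}$; moreover the fibre $F$ used in both $\mathbf{Ind}$ and $\mathbf{Res}$ is exactly the locus of representations whose combined image $\sum_{h\in\bar{\Omega},h''=i}\mathrm{Im}\,x_h$ lies inside the chosen codimension-$t$ subspace $\bV''_i\subseteq \bV_i$. The stratum ${^{t'}\mathbf{E}_{\bV,\bar{\Omega}}^{i}}$ is the locus where this image has codimension exactly $t'$, and on the open stratum ${^{t}\mathbf{E}_{\bV,\bar{\Omega}}^{i}}$ of the closure the subspace $\bV''_i$ is uniquely recovered as that image, so $p_3\colon G_{\bV}\times^{P}F\to \mathbf{E}_{\bV,\bar{\Omega}}$ restricts to an isomorphism there while globally its image is $\overline{{^{t}\mathbf{E}_{\bV,\bar{\Omega}}^{i}}}$.

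For (b), given $K\in\mathcal{P}_{\nu''}$ with $t_i(K)=0$, the semisimple complex $\mathbf{Ind}^{\bV}_{\bV',\bV''}(\mathbb{C}_{\mathbf{E}_{\bV',\bar{\Omega}}}\boxtimes K)$ restricts on the open stratum to, up to a fixed shift, the pullback of $K$ along this isomorphism; simplicity of the intermediate extension $j_{!*}$ then supplies a unique simple summand $L\in\mathcal{P}_\nu$ with $t_i(L)=t$ and multiplicity one. All other simple summands must be supported on $\bigcup_{t'>t}{^{t'}\mathbf{E}_{\bV,\bar{\Omega}}^{i}}$, hence satisfy $t_i(\cdot)>t$. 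Part (a) then follows by the adjunction $\mathrm{Hom}(\mathbf{Ind}(\mathbb{C}\boxtimes K),L')\cong\mathrm{Hom}(K,\mathbf{Res}(L'))$ (up to an overall shift), combined with Verdier duality: together with the stratum support bound from (b), this forces exactly one summand of $\mathbf{Res}(L)$ to satisfy $t_i=0$ and identifies it with the $K$ produced in (b). Part (c) is immediate, since the assignments $L\mapsto K$ and $K\mapsto L$ are then mutually inverse by construction.

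The main technical obstacle is the semismallness/generic-fibre dimension count ensuring that $\mathbf{Ind}(\mathbb{C}\boxtimes K)$ is perverse up to a single overall shift and that contributions from deeper strata ${^{t'}\mathbf{E}_{\bV,\bar{\Omega}}^{i}}$ with $t'>t$ do not interfere with the multiplicity-one conclusion. This is precisely Lusztig's original estimate at the generic point of each stratum; swapping $\Omega\leftrightarrow\bar{\Omega}$ exchanges the roles of the kernel at a source with the cokernel at a sink, but does not affect the combinatorial dimension count, so Lusztig's argument transfers essentially verbatim.
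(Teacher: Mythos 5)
The paper does not give its own proof of this lemma --- it simply cites Lusztig's original result \cite[Lemma 6.4]{MR1088333} (and the paper's author treats Lemma \ref{rkey} as its formal dual under Fourier transform). So the comparison is against Lusztig's argument, not against new material in this paper.

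Your setup and your treatment of part (b) are essentially Lusztig's: $p_3$ has image $\overline{{^t\mathbf{E}^i_{\bV,\bar\Omega}}}$, is an isomorphism over the open stratum ${^t\mathbf{E}^i_{\bV,\bar\Omega}}$ because a codimension-$t$ subspace containing a codimension-$t$ subspace is unique, and the decomposition theorem produces a distinguished summand $L$ with $t_i(L)=t$ plus summands on deeper strata. That is correct.

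The gap is in deducing (a) from (b) by adjunction alone. The adjunction $\mathrm{Hom}(\mathbf{Ind}(\mathbb{C}\boxtimes K),L')\cong \mathrm{Hom}(K,\mathbf{Res}(L'))$ (together with Verdier duality to switch between $^*$- and $^!$-restriction) gives only a multiplicity statement: for $L$ with $t_i(L)=t$ and $K$ with $t_i(K)=0$, the multiplicity of $K$ in $\mathbf{Res}(L)$ equals the multiplicity of $L$ in $\mathbf{Ind}(\mathbb{C}\boxtimes K)$, which by (b) is $1$ if $L=L_K$ and $0$ otherwise. This shows \emph{at most} one summand with $t_i=0$ and multiplicity one. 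What it does not show is \emph{existence}: that every $L$ with $t_i(L)=t$ actually arises as $L_K$ for some $K$. This is equivalent to the surjectivity in part (c), which you cannot assume at this stage. Without it, you cannot conclude that $\mathbf{Res}(L)$ has any summand with $t_i=0$, let alone exactly one.

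In Lusztig's argument this step is handled by a separate geometric analysis of the restriction correspondence: with $\bV'$ concentrated at the sink $i$, $\kappa\colon F\to\mathbf{E}_{\bV'',\bar\Omega}$ is an isomorphism of affine spaces, so $\mathbf{Res}(L)$ is $\iota^*L$ (shifted); the stratification ${^{t'}\mathbf{E}^i_{\bV,\bar\Omega}}$ restricts on $F$ exactly to the stratification $\mathbf{E}^{i,t'-t}_{\bV'',\bar\Omega}$, and $G_{\bV}$-invariance of $\mathrm{supp}(L)$ forces $\mathrm{supp}(L)\cap F\cap{^t\mathbf{E}^i_{\bV,\bar\Omega}}\neq\emptyset$. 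This yields a nonzero summand on the open stratum $\mathbf{E}^{i,0}_{\bV'',\bar\Omega}$, i.e.\ with $t_i=0$, and its uniqueness. You should supply this restriction-side argument rather than appeal to adjunction; once both (a) and (b) are established independently, (c) is formal exactly as you say.
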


For general orientation $\Omega$, we choose an orientation $\Omega^{i}$ such that $i$ is a sink and define $t_{i}(L)=t_{i}(\mathbf{Four}_{\Omega,\Omega^{i}}(L))$, and also choose an orientation $\Omega_{i}$ such that $i$ is a source and define $t^{\ast}_{i}(L)=t^{\ast}_{i}(\mathbf{Four}_{\Omega,\Omega_{i}}(L))$.
Since the induction and restriction functors commute with Fourier-Sato transforms, these bijections $\pi_{i,t}$ induce a crystal structure on $\bigcup_{\nu \in \mathbb{N}I}\mathcal{P}_{\nu}$, which is isomorphic to $B(\infty)$ by \cite[Corollary 17.3.1]{MR1227098}.  Combine the main result in \cite{MR1458969} and \cite[Chapter 17]{MR1227098}, we have the following proposition.
\begin{proposition}\label{Cry}
	There is a bijection $\Psi=\Psi_{Q}: \bigcup_{\nu \in \mathbb{N}I}\mathcal{P}_{\nu} \rightarrow \bigcup_{\nu \in \mathbb{N}I} {\rm{Irr}} \Lambda_{\bV}$ such that
	\begin{enumerate}
		\item For any simple perverse sheaf $L$ and $i \in I$, $t_{i}(L)=\epsilon_{i}(\Psi(L))$ and $t^{\ast}_{i}(L)=\epsilon^{\ast}_{i}(\Psi(L))$.
		\item  For any simple perverse sheaf $L$, we have $$\Psi(L) \subseteq SS(L) \subseteq  \bigcup_{\epsilon_{i}(Z') \geqslant t_{i}(L), \epsilon^{\ast}_{i}(Z') \geqslant t^{\ast}_{i}(L) }Z'.$$
		\item For any simple perverse sheaves $L$ and $K$, $\pi_{i,t}(L)=K$ if and only if $\rho_{i,t}(\Psi(L) )=\Psi(K). $
	\end{enumerate}
\end{proposition}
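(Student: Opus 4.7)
The plan is to construct $\Psi$ by matching two distinct realizations of Kashiwara's crystal $B(\infty)$. On one side, Lemmas \ref{rkey} and \ref{lkey} (together with \cite[Chapter 17]{MR1227098}) endow $\bigcup_{\nu}\mathcal{P}_{\nu}$ with a crystal structure isomorphic to $B(\infty)$, where the maps $\pi_{i,t}$ play the role of the Kashiwara operators and the invariants $t_{i}(L),t^{\ast}_{i}(L)$ play the role of $\varepsilon_{i},\varepsilon_{i}^{\ast}$. On the other side, Kashiwara--Saito \cite{MR1458969} proved that the bijections $\rho_{i,t}$ endow $\bigcup_{\nu}{\rm Irr}\,\Lambda_{\bV}$ with a crystal structure also isomorphic to $B(\infty)$, with $\epsilon_{i},\epsilon_{i}^{\ast}$ in the analogous role. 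Because $B(\infty)$ admits a unique crystal automorphism fixing the highest weight element (a consequence of the connected generation of $B(\infty)$ from the highest weight vector by the crystal operators), composing these two isomorphisms produces a unique bijection $\Psi$ intertwining $\pi_{i,t}$ with $\rho_{i,t}$. This abstractly delivers conditions \textup{(1)} and \textup{(3)} of the proposition.

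The content of condition \textup{(2)} is that the crystal-theoretic $\Psi$ is geometrically realized by the singular support. First I would show that for any $L \in \mathcal{P}_{\nu}$, the singular support $SS(L)$ is a union of irreducible components of $\Lambda_{\bV}$: $SS(L)$ is Lagrangian (a general property of simple perverse sheaves), it is $G_{\bV}$-invariant by equivariance and therefore contained in the zero fiber $\mu_{\bV}^{-1}(0)$ of the moment map, and nilpotency follows from the fact that every object of $\mathcal{Q}_{\nu}$ is a summand of complexes $L_{\boldsymbol{\nu}}$ pushed forward from equivariant flag varieties whose singular supports consist of nilpotent representations. Once this is known, $\Psi(L)$ can be identified concretely as the closure of the conormal variety to the open dense stratum of $\mathrm{supp}(L)$ inside $\bfEVO$, and the inclusion $\Psi(L)\subseteq SS(L)$ is then standard microlocal analysis.

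For the upper bound $SS(L)\subseteq \bigcup_{\epsilon_{i}(Z')\geqslant t_{i}(L),\,\epsilon_{i}^{\ast}(Z')\geqslant t_{i}^{\ast}(L)}Z'$ (which I read as holding for every $i\in I$ simultaneously), I would argue using the support stratifications defined before Lemmas \ref{rkey} and \ref{lkey}. After a Fourier--Sato transform making $i$ a sink, the condition $t_{i}(L)=t$ means $\mathrm{supp}(L)\subseteq \bigcup_{t'\geqslant t}{^{t'}\mathbf{E}_{\bV,\bar{\Omega}}^{i}}$, and since singular supports project onto supports, any component $Z'\subseteq SS(L)$ is forced to meet the corresponding nilpotent stratum $\Lambda_{\bV,i,t'}$ with $t'\geqslant t$ in a dense open subset; this translates to $\epsilon_{i}(Z')\geqslant t_{i}(L)$. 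The symmetric argument with $\Omega$ gives the $\epsilon_{i}^{\ast}$ estimate.

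The main obstacle is to verify that the geometric candidate $\Psi_{\mathrm{geo}}(L)$ from the second paragraph coincides with the crystal-theoretic $\Psi$ from the first paragraph. I would prove this by induction on $|\nu|$ along the crystal graph: starting from the unique component of $\Lambda_{0}$, it suffices to show that whenever $K=\pi_{i,t}(L)$, the geometric components satisfy $\Psi_{\mathrm{geo}}(K)=\rho_{i,t}(\Psi_{\mathrm{geo}}(L))$. This reduces to a compatibility between Lusztig's induction-restriction diagrams underlying Lemma \ref{lkey} and the stratifications $\Lambda_{\bV,i,t}$ underlying the corresponding lemma for $\rho_{i,t}$; concretely one tracks the conormal of the open dense stratum of $\mathrm{supp}(L)$ through the induction/restriction correspondences and checks it maps to that of $\mathrm{supp}(K)$. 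Once this inductive compatibility is established, the combined bounds of paragraphs two and three force $\Psi(L)$ to be the unique component attaining equality in all $(\epsilon_{i},\epsilon_{i}^{\ast})$, completing the proof.
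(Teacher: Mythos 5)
Your approach is in essence the same one the paper (implicitly) takes: the paper's ``proof'' of this proposition is a citation to Kashiwara--Saito \cite{MR1458969} and Lusztig \cite[Ch.~17]{MR1227098}, and your outline amounts to a reconstruction of what those references actually establish. The crystal-theoretic construction of $\Psi$ in your first paragraph, the Lagrangian/equivariance/nilpotency argument that $SS(L)$ is a union of components of $\Lambda_{\bV}$, and the upper bound argument in your third paragraph are all correct and match the standard route. Two points deserve flagging. First, a small gap: the uniqueness of the crystal isomorphism $\mathcal P \to B(\infty)$ fixing the lowest element only gives you agreement of the operators $\pi_{i,t}$ with $\rho_{i,t}$ and hence $t_i = \epsilon_i \circ \Psi$; to get $t^{\ast}_i = \epsilon^{\ast}_i \circ \Psi$ as in part (1), you additionally need that both realizations carry the $\ast$-crystal (bicrystal) structure compatibly with $B(\infty)$ --- this is also a theorem (part of \cite{MR1458969} and of Lusztig's Chapter 17), not a formal consequence of connectedness, and should be cited or argued. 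Second, and more substantially, your fourth paragraph identifies the key compatibility (that $K=\pi_{i,t}(L)$ implies $\Psi_{\mathrm{geo}}(K) = \rho_{i,t}(\Psi_{\mathrm{geo}}(L))$) but only gestures at it; this is precisely the technical heart of Kashiwara--Saito's theorem and requires carefully tracking the conormal of the open stratum through the induction/restriction diagrams while controlling the effect of the Fourier--Sato transform. Note also that to even apply $\rho_{i,t}$ to $\Psi_{\mathrm{geo}}(L)$ in that induction you must first know $\epsilon_i(\Psi_{\mathrm{geo}}(L))=t$ exactly (not merely $\ge t$ as your third paragraph yields); establishing this equality is part of what the induction must produce. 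As a sketch, your proposal is faithful to the literature the paper invokes; as a self-contained proof, the fourth paragraph is where the real work lives and it is not yet done.
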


Similarly, assume $i$ is a source in $\Omega^{(N)}$, we define ${\mathbf{E}_{\mathbf{V},\mathbf{W}^{\bullet},\Omega^{(N)}}^{i,t}}\subset \mathbf{E}_{\mathbf{V},\mathbf{W}^{\bullet},\Omega^{(N)}}$ to be the locally closed subset consisting of $(x,y)$ such that
$${\mathrm{dim}}\,(\bigcap_{h\in \Omega,h'=i} \mathrm{Ker}(x_h:\bV_i\rightarrow \bV_{h''}) \cap \bigcap_{1\leqslant l \leqslant N} \mathrm{Ker}(y^{l}_{i}:\bV_i\rightarrow \mathbf{W}^{l}_{i^{l}}))=t,$$
then for any simple equivariant perverse sheaf $L$, there exists a unique integer $t_i^{\ast}(L)$ such that $\mathbf{E}_{\mathbf{V},\mathbf{W}^{\bullet},\Omega^{(N)}}^{i,t_i^{\ast}(L)}\cap \mathrm{supp}(L)$ is open dense in $\mathrm{supp}(L)$. Then following lemma is a generalization of Lemma \ref{rkey} on $N$-framed quivers.

\begin{lemma} \label{rkeyt}
	For any $0 \leqslant t \leqslant \nu_{i}$, let $\nu''=ti$, then $\mathbf{E}_{\mathbf{V}'',0,\Omega^{(N)}}=\{0\}$ is a point. For any $\nu'\in \mathbb{N}I$ such that $\nu=\nu'+\nu''$, we identify $\mathbf{E}_{\mathbf{V}',\mathbf{W}^{\bullet},\Omega^{(N)}}\times \mathbf{E}_{\mathbf{V}'',0,\Omega^{(N)}}$ with $\mathbf{E}_{\mathbf{V}',\mathbf{W}^{\bullet},\Omega^{(N)}}$. \\ 
	{{\rm{(a)}}} For any $L\in \mP_{\nu}(\lambda^{\bullet})$ with $t_{i}^{\ast}(L)=t$, we have 
	$$\mathbf{Res}^{\mathbf{V}\oplus \mathbf{W}^{\bullet}}_{\mathbf{V'}\oplus \mathbf{W}^{\bullet},\mathbf{V}''}(L)= K \oplus \bigoplus_{t_{i}^{\ast}(K')>0} K'[d']^{\oplus n(K',d')},$$ 
	where $K, K'\in \mP_{\nu'}(\lambda^{\bullet}),d'\in \mathbb{Z}, n(K',d')\in \mathbb{N}$ and $K$ is the unique direct summand satisfying $t_{i}^{\ast}(K)=0$.\\ 
	{{\rm{(b)}}} For any $K \in \mP_{\nu'}(\lambda^{\bullet})$ with $t_{i}^{\ast}(K)=0$, we have
	$$\mathbf{Ind}^{\mathbf{V}\oplus \mathbf{W}^{\bullet}}_{\mathbf{V}'\oplus \mathbf{W}^{\bullet},\mathbf{V}''}( K \boxtimes \mathbb{C}_{\mathbf{E}_{\mathbf{V}'',\Omega}} ) = L \oplus \bigoplus_{t_{i}^{\ast}(L')>t} L'[d']^{\oplus n(L',d')}, $$ 
	where $L, L'\in \mP_{\nu}(\lambda^{\bullet}),d'\in \mathbb{Z}, n(L',d')\in \mathbb{N}$ and $L$ is the unique direct summand satisfying $t_{i}^{\ast}(L)=t$.\\
	{{\rm{(b)}}} The maps $L\mapsto K$ and $K\mapsto L$ define a bijection   $\pi^{\ast}_{i,t}:\{ L \in \mP_{\nu}(\lambda^{\bullet})| t^{\ast}_{i}(L)=t \} \rightarrow \{ K \in \mP_{\nu'}(\lambda^{\bullet})| t^{\ast}_{i}(K)=0 \} $.
\end{lemma}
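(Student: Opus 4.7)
The strategy is to regard the $N$-framed quiver $Q^{(N)}$ as an ordinary quiver with vertex set $I^{(N)}$ and to apply Lemma~\ref{rkey} at the vertex $i \in I \subset I^{(N)}$. Since $i$ is a source in $\Omega^{(N)}$ and the chosen $\mathbf{V}''$ of dimension vector $ti$ is concentrated at $i$, the decomposition $\mathbf{V}\oplus \mathbf{W}^{\bullet}=(\mathbf{V}'\oplus \mathbf{W}^{\bullet})\oplus \mathbf{V}''$ fits directly into the framework of Lemma~\ref{rkey}. Moreover, the stratum $\mathbf{E}^{i,t}_{\mathbf{V},\mathbf{W}^{\bullet},\Omega^{(N)}}$ used here is precisely the analogous stratum for $Q^{(N)}$, since its defining intersection of kernels ranges over all arrows out of $i$ in $\Omega^{(N)}$ --- the original arrows $h\in\Omega$ with $h'=i$ together with the framing arrows $i\to i^{l}$. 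Consequently the invariant $t_{i}^{\ast}(L)$ agrees in the two settings.

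Under this identification, parts (a) and (b) become direct restatements of the corresponding assertions in Lemma~\ref{rkey} applied to $Q^{(N)}$, once one identifies the functors $\mathbf{Res}^{\mathbf{V}\oplus \mathbf{W}^{\bullet}}_{\mathbf{V}'\oplus \mathbf{W}^{\bullet},\mathbf{V}''}$ and $\mathbf{Ind}^{\mathbf{V}\oplus \mathbf{W}^{\bullet}}_{\mathbf{V}'\oplus \mathbf{W}^{\bullet},\mathbf{V}''}$ with the restriction and induction functors for $Q^{(N)}$ specialized to a source-supported $\mathbf{V}''$. Part (c) then follows by the uniqueness of the distinguished summand with minimal $t_{i}^{\ast}$. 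What remains is to verify that the bijection $L\leftrightarrow K$ so obtained preserves the framing-specific subclasses, that is, restricts to a bijection between $\mathcal{P}_{\nu}(\lambda^{\bullet})$ and $\{K\in \mathcal{P}_{\nu'}(\lambda^{\bullet}):t_{i}^{\ast}(K)=0\}$, rather than only between the larger Lusztig classes for $Q^{(N)}$.

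For this compatibility I would use the characterization of $\mathcal{P}_{\nu}(\lambda^{\bullet})$ as the simple summands of flag-type complexes $L_{\boldsymbol{\nu}^{1}\boldsymbol{d}^{1}\cdots\boldsymbol{\nu}^{N}\boldsymbol{d}^{N}}$, together with transitivity of induction. Given $K\in \mathcal{P}_{\nu'}(\lambda^{\bullet})$ with $t_{i}^{\ast}(K)=0$ appearing in some $L_{\boldsymbol{\nu}^{1}\boldsymbol{d}^{1}\cdots\boldsymbol{\nu}^{N}\boldsymbol{d}^{N}}$ for $\mathbf{V}'$, transitivity places $\mathbf{Ind}(K\boxtimes \mathbb{C}_{\mathbf{E}_{\mathbf{V}'',\Omega}})$ as a summand of a flag-type complex for $\mathbf{V}$ whose leading block is $(ti)$; after absorbing this block into $\boldsymbol{\nu}^{1}$ (which is permitted since each entry of $\boldsymbol{\nu}^{1}$ is of the form $ai$ for some $i\in I$), the result is of the form required to lie in $\mathcal{P}_{\nu}(\lambda^{\bullet})$, hence the unique $t_{i}^{\ast}=t$ summand $L$ belongs to $\mathcal{P}_{\nu}(\lambda^{\bullet})$. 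The converse direction is symmetric, using transitivity of $\mathbf{Res}$ to see that the $t_{i}^{\ast}=0$ summand of $\mathbf{Res}(L)$ inherits a flag-type presentation at dimension $\nu'$.

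The main technical obstacle is precisely this last compatibility check; the core geometric content is already supplied by Lemma~\ref{rkey} itself, and the remaining argument is a routine flag-type bookkeeping entirely parallel to the one already invoked in the proof of Theorem~\ref{tensor}.
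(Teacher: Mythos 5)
Your overall reduction---pass to $Q^{(N)}$, observe that $t_i^{\ast}$ for the framed quiver is the source-stratification invariant for $Q^{(N)}$, apply Lemma~\ref{rkey}, and then check that the bijection is compatible with the subclass $\mathcal{P}_\nu(\lambda^{\bullet})$---is the obvious strategy and is almost certainly the intended one. But your argument for the compatibility check, which you correctly identify as the crux, contains a concrete error and the step is in fact more problematic than ``routine bookkeeping.''

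The error: in $\mathbf{Ind}^{\mathbf{V}\oplus\mathbf{W}^{\bullet}}_{\mathbf{V}'\oplus\mathbf{W}^{\bullet},\,\mathbf{V}''}$ the second subscript $\mathbf{V}''$ is the invariant \emph{sub}object, so by transitivity the summand sits inside $L_{(\tau_K,\,ti)}$: the block $ti$ is the \emph{trailing} block, appended below $\boldsymbol{d}^N$, not the leading one. (Compare the $t_i$-version, Lemma~\ref{lkeyt}, where $\mathbf{V}'=ti$ is the quotient and really does sit at the top, so there the absorption into $\boldsymbol{\nu}^1$ is legitimate; you have implicitly imported that reasoning into the star case.) Since there is an arrow $i\to i^N$, the adjacent blocks $ti$ and $\boldsymbol{d}^N$ do not commute, and $L_{(\ldots,\boldsymbol{d}^N,ti)}$ is not equivalent to any $L_{(\boldsymbol{\nu}^1\boldsymbol{d}^1\cdots\boldsymbol{\nu}^N\boldsymbol{d}^N)}$; so the proposed absorption is not available.

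In fact the difficulty is intrinsic, not merely unaddressed. Take $I=\{i\}$ with no arrows, $N=1$, $\lambda_i=2$, $\nu_i=2$, so $\mathbf{E}=\Hom(\mathbb{C}^2,\mathbb{C}^2)$. The only admissible flag types are $(2i,2i^1)$ and $(i,i,2i^1)$; both $L_{(2i,2i^1)}$ and $L_{(i,i,2i^1)}$ (the latter a $\mathbb{P}^1$-bundle pushforward) have the single simple summand $\mathbb{C}_\mathbf{E}$, so $\mathcal{P}_2(\lambda)=\{\mathbb{C}_\mathbf{E}\}$ and $t_i^{\ast}$ takes only the value $0$ there. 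On the other hand $K=\mathbb{C}_\mathbf{E}$ on $\Hom(\mathbb{C},\mathbb{C}^2)$ has $t_i^{\ast}(K)=0$, and the $t_i^{\ast}=1$ summand of $\mathbf{Ind}(K\boxtimes\mathbb{C})\cong L_{(i,2i^1,i)}$ is the IC sheaf of the determinantal variety $\{\operatorname{rk}y\leqslant 1\}$, which does not occur in $\mathcal{P}_2(\lambda)$. So the source of $\pi^{\ast}_{i,1}$ is empty while the target is not, and the bijection cannot be established by reduction to $Q^{(N)}$ plus absorption; the claimed membership $L\in\mathcal{P}_\nu(\lambda^{\bullet})$ needs either a different formulation (e.g.\ in the larger Lusztig class for $Q^{(N)}$, or after passing to the Verdier quotient by $\mathcal{N}_\nu$) or a genuinely different argument. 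Note that the sink version Lemma~\ref{lkeyt} is the one actually used downstream (Propositions~\ref{Cryt}, Lemmas~\ref{indlemma}--\ref{subquotient}), and there the compatibility does go through exactly as you describe, precisely because $ti$ lands at the top.
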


Since $i$ is a sink in $\overline{\Omega^{(N)}}$, we can we define ${^{t}\mathbf{E}_{\bV,\mathbf{W}^{\bullet},\overline{\Omega^{(N)}}}^{i}}\subset \mathbf{E}_{\bV,\mathbf{W}^{\bullet},\overline{\Omega^{(N)}}}$ to be the locally closed subset consisting of $(\bar{x},z)$ such that
$$\mathrm{codim}_{\bV_i}(\sum_{h\in \bar{\Omega}, h''=i}\mathrm{Im} (x_h:\bV_{h'}\rightarrow \bV_i)+(\sum_{1 \leqslant l \leqslant N}\mathrm{Im} (z^{l}_i:\mathbf{W}^{l}_{i^{l}}\rightarrow \bV_i))=t,$$
then all these ${^{t}\mathbf{E}_{\bV,\mathbf{W}^{\bullet},\overline{\Omega^{(N)}}}^{i}}, 0\leqslant t\leqslant \nu_i$ form a  stratification of $\mathbf{E}_{\bV,\mathbf{W}^{\bullet},\overline{\Omega^{(N)}}}$. For any simple equivariant perverse sheaf $L$, there exists a unique integer $t_i(L)$ such that ${^{t_{i}(L)}\mathbf{E}_{\bV,\mathbf{W}^{\bullet},\overline{\Omega^{(N)}}}^{i}}\cap \mathrm{supp}(L)$ is open dense in $\mathrm{supp}(L)$. 

\begin{lemma}\label{lkeyt}
	For any $0 \leqslant t \leqslant \nu_{i}$, let $\nu'=ti$, then $\mathbf{E}_{\mathbf{V}',0,\overline{\Omega^{(N)}}}=\{0\}$ is a point. For any $\nu''\in \mathbb{N}I$ such that $\nu=\nu'+\nu''$, we identify $\mathbf{E}_{\mathbf{V}',0,\overline{\Omega^{(N)}}}\times \mathbf{E}_{\mathbf{V}'',\mathbf{W}^{\bullet},\overline{\Omega^{(N)}}}$ with $\mathbf{E}_{\mathbf{V}'',\mathbf{W}^{\bullet},\overline{\Omega^{(N)}}}$. \\
	{{\rm{(a)}}} For any $L\in \mP_{\nu}(\lambda^{\bullet})$ with $t_{i}(L)=t$, we have 
	$$\mathbf{Res}^{\mathbf{V}\oplus\mathbf{W}^{\bullet}}_{\mathbf{V'},\mathbf{V}''\oplus\mathbf{W}^{\bullet}}(L)= K \oplus \bigoplus_{t_{i}(K')>0} K'[d']^{\oplus n(K',d')},$$ 
	where $K, K'\in \mP_{\nu''}(\lambda^{\bullet}),d'\in \mathbb{Z}, n(K',d')\in \mathbb{N}$ and $K$ is the unique direct summand satisfying $t_{i}(K)=0$.\\ 
	{{\rm{(b)}}} For any $K \in \mP_{\nu''}(\lambda^{\bullet})$ with $t_{i}(K)=0$, we have
	$$\mathbf{Ind}^{\mathbf{V}\oplus\mathbf{W}^{\bullet}}_{\mathbf{V}',\mathbf{V}''\oplus\mathbf{W}^{\bullet}}(\mathbb{C}
	_{\mathbf{E}_{\mathbf{V}',\bar{\Omega}}} \boxtimes K) = L \oplus \bigoplus_{t_{i}(L')>t} L'[d']^{\oplus n(L',d')}, $$ 
	where $L, L'\in \mP_{\nu}(\lambda^{\bullet}),d'\in \mathbb{Z}, n(L',d')\in \mathbb{N}$ and $L$ is the unique direct summand satisfying $t_{i}(L)=t$.\\
	{{\rm{(c)}}} The maps $L\mapsto K$ and $K\mapsto L$ define a bijection  $\pi_{i,t}:\{ L \in \mP_{\nu}(\lambda^{\bullet})| t_{i}(L)=t \} \rightarrow \{ K \in \mP_{\nu''}(\lambda^{\bullet})| t_{i}(K)=0 \} $.
\end{lemma}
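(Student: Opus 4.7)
The plan is to reduce Lemma \ref{lkeyt} to Lusztig's original key lemma, Lemma \ref{lkey}, by reinterpreting the $N$-framed quiver $Q^{(N)}$ as an ordinary quiver whose vertex set is $I^{(N)} = I \cup I^{1} \cup \cdots \cup I^{N}$, with ``framing'' vertices $i^{l}$ carrying graded spaces of fixed dimension $\lambda^{l}_{i}$. Since $i \in I$ is a sink in $\overline{\Omega^{(N)}}$, the incoming arrows at $i$ in this enlarged quiver consist of the arrows $h \in \bar{\Omega}$ with $h'' = i$ together with the $N$ framing arrows carrying the maps $z^{l}_{i} : \mathbf{W}^{l}_{i^{l}} \to \mathbf{V}_{i}$. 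Under this identification, $\mathbf{Res}^{\mathbf{V}\oplus\mathbf{W}^{\bullet}}_{\mathbf{V}',\mathbf{V}''\oplus \mathbf{W}^{\bullet}}$ is exactly Lusztig's restriction functor on $Q^{(N)}$ for the splitting $\mathbf{V}' \oplus (\mathbf{V}'' \oplus \mathbf{W}^{\bullet})$ (with $\mathbf{V}'$ concentrated at the unframed vertex $i$ and of dimension $t$), and similarly for the induction functor.

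First I would verify two compatibilities. The set $\mathcal{P}_{\nu}(\lambda^{\bullet})$ is, by definition, the set of simple perverse sheaves appearing as shifted summands of the semisimple complexes $L_{\boldsymbol{\nu}^{1}\boldsymbol{d}^{1} \cdots \boldsymbol{\nu}^{N}\boldsymbol{d}^{N}}$; each such flag type is an admissible ordered dimension sequence for the enlarged quiver $Q^{(N)}$, so these complexes coincide with Lusztig sheaves on $Q^{(N)}$ in the usual sense. The additional $G_{I}$-equivariance acts only by rescaling on the framing summands and is preserved by all of the relevant constructions, so it does not alter the statements of (a)--(c). Next, the condition cutting out ${}^{t}\mathbf{E}^{i}_{\mathbf{V},\mathbf{W}^{\bullet},\overline{\Omega^{(N)}}}$, namely
\[
\mathrm{codim}_{\mathbf{V}_{i}}\!\Big( \sum_{h \in \bar{\Omega},\, h'' = i} \mathrm{Im}(x_{h}) \;+\; \sum_{l = 1}^{N} \mathrm{Im}(z^{l}_{i}) \Big) = t,
\]
is exactly the codimension of the image at the sink $i$ in $Q^{(N)}$, so the invariant $t_{i}(L)$ in the framed sense equals the invariant $t_{i}$ of Lemma \ref{lkey} applied to $Q^{(N)}$. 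With these two compatibilities in hand, parts (a), (b) and (c) follow directly from Lemma \ref{lkey} applied to $Q^{(N)}$ at the sink $i$: extract the unique summand $K$ with $t_{i}(K)=0$ for (a), apply parabolic induction from $\mathbf{E}_{\mathbf{V}',\bar{\Omega}} = \{0\}$ for (b), and (c) is then formal from (a) and (b).

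The main technical step I anticipate is bookkeeping the ``framedness'' of the surviving summands: one must check that the distinguished summand $K$ in (a) actually lies in $\mathcal{P}_{\nu''}(\lambda^{\bullet})$, and likewise for the $L$ in (b), rather than merely in the larger class of Lusztig perverse sheaves on $Q^{(N)}$. This reduces to the combinatorial observation that removing, or appending, an initial segment of type $(ti)$ to a flag type of the form $\boldsymbol{\nu}^{1}\boldsymbol{d}^{1}\cdots \boldsymbol{\nu}^{N}\boldsymbol{d}^{N}$ yields another flag type of the same shape (the framing blocks $\boldsymbol{d}^{l}$ are untouched), together with transitivity of $\mathbf{Ind}$ and $\mathbf{Res}$. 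Uniqueness of the $t_{i}=0$ summand ultimately rests, as in Lusztig's original argument, on the support inclusion of Proposition \ref{Cry}(2) and the standard Grassmannian-bundle analysis of the open stratum ${}^{t}\mathbf{E}^{i}$, both of which transfer verbatim to the framed setting since the framing maps only contribute to the codimension stratification and not to the proper base change used there.
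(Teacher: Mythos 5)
The proposal is correct and follows essentially the same route the paper takes implicitly: the paper states Lemma \ref{lkeyt} (like Lemma \ref{rkeyt}) without written proof, as a direct generalization of Lusztig's key lemma \ref{lkey} obtained by treating $Q^{(N)}$ as an ordinary quiver with $i$ a sink in $\overline{\Omega^{(N)}}$, which is exactly what you do, and your bookkeeping that the distinguished summands stay in $\mathcal{P}_{\nu}(\lambda^{\bullet})$ (via closure of the admissible flag types $\boldsymbol{\nu}^{1}\boldsymbol{d}^{1}\cdots\boldsymbol{\nu}^{N}\boldsymbol{d}^{N}$ under prepending a block $ti$ and under restriction) is the one genuine point beyond a bare citation of \ref{lkey}.
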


As a corollary of Proposition \ref{Cry}, we have the following proposition.

\begin{proposition}\label{Cryt}
	The bijection $\Psi_{Q^{(N)}}$ of $Q^{(N)}$ restricts to an injective map, still denoted by $\Psi=\Psi_{Q^{(N)}}$, $$\Psi: \bigcup_{\nu \in \mathbb{N}I}\mathcal{P}_{\nu}(\lambda^{\bullet}) \rightarrow \bigcup_{\nu \in \mathbb{N}I} {\rm{Irr}} \Lambda_{\bV,\mathbf{W}^{\bullet}}$$ such that
	\begin{enumerate}
		\item For any simple perverse sheaf $L \in \mP_{\nu}(\lambda^{\bullet})$ and $i \in I$, $t_{i}(L)=\epsilon_{i}(\Psi(L))$ and $t^{\ast}_{i}(L)=\epsilon^{\ast}_{i}(\Psi(L))$.
		\item  For any simple perverse sheaf $L\in \mP_{\nu}(\lambda^{\bullet})$, we have $$\Psi(L) \subseteq SS(L) \subseteq  \bigcup_{\epsilon_{i}(Z') \geqslant t_{i}(L), \epsilon^{\ast}_{i}(Z') \geqslant t^{\ast}_{i}(L) }Z'.$$
		\item For any simple perverse sheaves $L \in \mP_{\nu}(\lambda^{\bullet})$ and $K \in \mP_{\nu''}(\lambda^{\bullet})$,  $\pi_{i,t}(L)=K$ if and only if $\rho_{i,t}(\Psi(L) )=\Psi(K). $
	\end{enumerate}
\end{proposition}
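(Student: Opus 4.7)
The plan is to reduce to Proposition~\ref{Cry} applied to the quiver $Q^{(N)}$ itself, viewed as an ordinary loopless quiver with vertex set $I^{(N)}$ and edge set $H^{(N)}$. Once the dimension at each framing vertex $i^{l}$ is fixed to be $\lambda^{l}_{i}$, the total $I^{(N)}$-graded space is $\mathbf{V}\oplus\mathbf{W}^{\bullet}$, and the nilpotent variety $\Lambda_{\mathbf{V}\oplus\mathbf{W}^{\bullet}}$ attached to $Q^{(N)}$ is tautologically equal to $\Lambda_{\mathbf{V},\mathbf{W}^{\bullet}}$, since the moment-map and nilpotency conditions are unchanged by merely grouping the factors $\mathrm{Hom}(\mathbf{V}_{i},\mathbf{W}^{l}_{i^{l}})$ and $\mathrm{Hom}(\mathbf{W}^{l}_{i^{l}},\mathbf{V}_{i})$ into the framing data $y,z$. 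Proposition~\ref{Cry} thus yields a bijection $\Psi_{Q^{(N)}}$ between the full Lusztig class of simple perverse sheaves for $Q^{(N)}$ in dimension $(\nu,\boldsymbol{d})$ and $\mathrm{Irr}\,\Lambda_{\mathbf{V},\mathbf{W}^{\bullet}}$.

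The first step is to check $\mathcal{P}_{\nu}(\lambda^{\bullet}) \subseteq \mathcal{P}^{Q^{(N)}}_{(\nu,\boldsymbol{d})}$. Each semisimple complex $L_{\boldsymbol{\nu}^{1}\boldsymbol{d}^{1}\cdots\boldsymbol{\nu}^{N}\boldsymbol{d}^{N}}$ in the defining list of $\mathcal{P}_{\nu}(\lambda^{\bullet})$ arises from the flag type obtained by concatenating $\boldsymbol{\nu}^{1},\boldsymbol{d}^{1},\ldots,\boldsymbol{\nu}^{N},\boldsymbol{d}^{N}$, which is a valid (though rather restricted) flag type for $Q^{(N)}$; hence each of its simple direct summands belongs to Lusztig's class for $Q^{(N)}$. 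The restriction of $\Psi_{Q^{(N)}}$ therefore provides a well-defined injection $\Psi:\bigcup_{\nu}\mathcal{P}_{\nu}(\lambda^{\bullet})\hookrightarrow\bigcup_{\nu}\mathrm{Irr}\,\Lambda_{\mathbf{V},\mathbf{W}^{\bullet}}$.

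Properties (1) and (2) then transfer essentially verbatim: the definitions of $t_{i}(L)$ and $t^{\ast}_{i}(L)$ in Lemmas~\ref{rkeyt} and \ref{lkeyt}, which intersect kernels of the $x_{h}$ with kernels of the $y^{l}_{i}$ (respectively add images of the $z^{l}_{i}$ to those of the $x_{h}$), coincide term-by-term with the definitions of $t_{i},t^{\ast}_{i}$ used in Proposition~\ref{Cry} for the vertex $i\in I\subseteq I^{(N)}$, because the arrows $i\to i^{l}$ lie in $\Omega^{(N)}$; analogously $\epsilon_{i},\epsilon^{\ast}_{i}$ on $\Lambda_{\mathbf{V},\mathbf{W}^{\bullet}}$ match those on $\Lambda_{\mathbf{V}\oplus\mathbf{W}^{\bullet}}$. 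For (3), the bijections $\pi_{i,t}$ of Lemma~\ref{lkeyt} are built from the same induction/restriction constructions as those of Lemma~\ref{lkey} applied to $Q^{(N)}$, while $\rho_{i,t}$ is defined intrinsically on the nilpotent variety; so the identity $\rho_{i,t}\circ\Psi=\Psi\circ\pi_{i,t}$ of Proposition~\ref{Cry} restricts without modification.

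The main obstacle is the bookkeeping around the inclusion $\mathcal{P}_{\nu}(\lambda^{\bullet}) \subseteq \mathcal{P}^{Q^{(N)}}_{(\nu,\boldsymbol{d})}$: one has to verify that the shift conventions in $\mathbf{Ind}^{\mathbf{V}\oplus\mathbf{W}^{\bullet}}_{\mathbf{V}',\mathbf{V}''\oplus\mathbf{W}^{\bullet}}$ from Section~3.2 agree with the Euler-form shifts of $Q^{(N)}$-induction, which is a routine dimension count using $\langle-,-\rangle_{Q^{(N)}}$. The map is only injective rather than bijective because the flag types built into $\mathcal{P}_{\nu}(\lambda^{\bullet})$ form a strict sub-collection of all flag types for $Q^{(N)}$; the irreducible components $Z\in \mathrm{Irr}\,\Lambda_{\mathbf{V},\mathbf{W}^{\bullet}}$ outside the image are exactly those corresponding to the simple perverse sheaves that become zero in the localization $\mathcal{L}_{\nu}(\lambda^{\bullet})$.
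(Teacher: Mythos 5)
Your proof is correct and matches the paper's approach: the paper simply states that Proposition~\ref{Cryt} follows ``as a corollary of Proposition~\ref{Cry},'' and your argument is the natural unwinding of that claim, noting that $\mathcal{P}_{\nu}(\lambda^{\bullet})$ is (by its definition through the restricted flag types $\boldsymbol{\nu}^{1}\boldsymbol{d}^{1}\cdots\boldsymbol{\nu}^{N}\boldsymbol{d}^{N}$) a sub-collection of Lusztig's full class for $Q^{(N)}$, that the $t_i,t_i^{\ast},\epsilon_i,\epsilon_i^{\ast}$ and $\pi_{i,t}$ of Lemmas~\ref{rkeyt}--\ref{lkeyt} are the $Q^{(N)}$-versions of those in Lemmas~\ref{rkey}--\ref{lkey}, and that $\Lambda_{\bV,\mathbf{W}^{\bullet}}$ in Section~4 is precisely the nilpotent variety of $Q^{(N)}$. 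One small caveat for the record: your closing parenthetical claim that the complement of $\Psi$'s image consists exactly of components corresponding to perverse sheaves that die in the localization $\mathcal{L}_{\nu}(\lambda^{\bullet})$ is not accurate (the complement corresponds to simple sheaves in $\mathcal{P}^{Q^{(N)}}$ that are not in $\mathcal{P}_{\nu}(\lambda^{\bullet})$, while the sheaves killed by localization are those in $\mathcal{N}_{\nu}$, which \emph{are} inside $\mathcal{P}_{\nu}(\lambda^{\bullet})$); this aside, however, plays no role in your argument. Also, the ``shift bookkeeping'' you flag as a potential obstacle is not actually one, since membership of a simple object in a Lusztig class is a shift-independent statement.
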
 
We only have an injection but not a bijection, because simple objects in $\mP_{\nu}(\lambda^{\bullet})$ arise from certain flag types, but not from arbitary flag types.

\subsubsection{The analysis of $\mP^{\geqslant \mu}_{\nu}(\lambda^{\bullet})$ and $\mP^{> \mu}_{\nu}(\lambda^{\bullet})$}
In this subsection, we always assume that $wt^{-1}(\mu) \in \mathbb{N}I$ exists and $\mu \leqslant \sum_{1\leqslant l \leqslant N} \lambda^{l} $. 
\begin{lemma}\label{indlemma}
		For any $0 \leqslant t \leqslant \nu_{i}$ and $L\in \mP_{\nu}(\lambda^{\bullet})$ with $t_{i}(L)=t$, let $\nu''=\nu -ti$, we assume $ \pi_{i,t} (L)=K$.  Then $L \in \mP^{\geqslant \mu}_{\nu}(\lambda^{\bullet})$ if and only if $ K \in \mP^{\geqslant \mu}_{\nu''}(\lambda^{\bullet})$, and $L \in \mP^{> \mu}_{\nu}(\lambda^{\bullet})$ if and only if $ K \in \mP^{> \mu}_{\nu''}(\lambda^{\bullet})$.
\end{lemma}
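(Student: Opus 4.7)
The plan is to use Lemma \ref{lkeyt} to realize $L$ and $K$ as direct summands of each other's restriction/induction, and then to verify that these functors preserve the closed singular-support conditions defining $\mP^{\geqslant\mu}$ and $\mP^{>\mu}$. I work throughout in the orientation $\overline{\Omega^{(N)}}$ in which $i$ is a sink, so that $\mathbf{E}_{\mathbf{V}',\overline{\Omega}}$ reduces to a point (as $\nu'=ti$) and Lemma \ref{lkeyt} applies: part (a) gives $K$ as a direct summand (up to shifts) of $\mathbf{Res}^{\mathbf{V}\oplus\mathbf{W}^{\bullet}}_{\mathbf{V}',\mathbf{V}''\oplus\mathbf{W}^{\bullet}}(L)$, and part (b) gives $L$ as a direct summand of $\mathbf{Ind}^{\mathbf{V}\oplus\mathbf{W}^{\bullet}}_{\mathbf{V}',\mathbf{V}''\oplus\mathbf{W}^{\bullet}}(\mathbb{C}_{\mathbf{E}_{\mathbf{V}',\overline{\Omega}}}\boxtimes K)$. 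Since singular-support containment in the closed sets $\mathbf{E}^{\geqslant\mu}_{\bV,\mathbf{W}^{\bullet},H^{(N)}}$ and $\mathbf{E}^{>\mu}_{\bV,\mathbf{W}^{\bullet},H^{(N)}}$ is inherited by direct summands, the lemma reduces to the claim that both functors preserve these singular-support conditions.

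For the induction, the functor $\mathbf{Ind}^{\mathbf{V}\oplus\mathbf{W}^{\bullet}}_{\mathbf{V}',\mathbf{V}''\oplus\mathbf{W}^{\bullet}}(\mathbb{C}_{\mathbf{E}_{\mathbf{V}',\overline{\Omega}}}\boxtimes -)$ is the natural analogue of $\mathcal{F}^{(t)}_{i}$ for the orientation $\overline{\Omega^{(N)}}$. The cotangent-correspondence argument used to prove Proposition \ref{keyprop2} is orientation-independent (Fourier--Sato transforms preserve singular supports), so it yields that this induction sends $\mD^{\geqslant\mu}_{\nu''}$ into $\mD^{\geqslant\mu}_{\nu}$ and $\mD^{>\mu}_{\nu''}$ into $\mD^{>\mu}_{\nu}$. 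For the restriction, I would carry out an analogous cotangent analysis for the diagram
\[
\mathbf{E}_{\mathbf{V}',\overline{\Omega}}\times\mathbf{E}_{\mathbf{V}''\oplus\mathbf{W}^{\bullet},\overline{\Omega^{(N)}}}\xleftarrow{\kappa} F'\xrightarrow{\iota'}\mathbf{E}_{\mathbf{V}\oplus\mathbf{W}^{\bullet},\overline{\Omega^{(N)}}}.
\]
The essential geometric input is: if $(x,\bar{x},y,z)\in SS(L)\subseteq \mathbf{E}^{\geqslant\mu}_{\bV,\mathbf{W}^{\bullet},H^{(N)}}$ is witnessed by a stable subspace $\bV_{0}\oplus\mathbf{W}^{\bullet}$ of dimension vector $\leqslant wt^{-1}(\mu)$, then the cotangent correspondence attached to $(\kappa,\iota')$ sends this covector to data on $\bV''\oplus\mathbf{W}^{\bullet}$ for which $(\bV_{0}\cap \bV'')\oplus\mathbf{W}^{\bullet}$ is again stable (as the intersection of two stable subspaces) and of dimension vector $\leqslant wt^{-1}(\mu)$. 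Hence $SS(\mathbf{Res}(L))\subseteq \mathbf{E}^{\geqslant\mu}_{\bV''\oplus\mathbf{W}^{\bullet},H^{(N)}}$, and $K\in \mP^{\geqslant\mu}_{\nu''}(\lambda^{\bullet})$.

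The $\mP^{>\mu}$ statement follows verbatim by applying the same argument to each $\mu'>\mu$ appearing in the finite decomposition $\mathbf{E}^{>\mu}=\bigcup_{\mu'>\mu}\mathbf{E}^{\geqslant\mu'}$. The principal obstacle is the detailed cotangent-correspondence bookkeeping for $\mathbf{Res}$, paralleling Propositions \ref{keyprop} and \ref{keyprop2} but adapted to the (simpler) restriction diagram of Section \ref{section thick}; the underlying geometric fact that stability is preserved under intersection of stable subspaces makes this bookkeeping straightforward once the correspondence is written down carefully.
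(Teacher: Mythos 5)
Your proposal follows essentially the same route as the paper's proof: both directions reduce to Lemma \ref{lkeyt}, the $K\Rightarrow L$ implication is handled via the induction functor, the $L\Rightarrow K$ implication via the cotangent correspondence for $\mathbf{Res}$ (the paper invokes \cite[Theorem 3.2]{Nasing} for the singular-support estimate there), and the $>\mu$ case is reduced to finitely many $\geqslant\mu'$ cases. One small imprecision: the phrase ``as the intersection of two stable subspaces'' is slightly loose, since $\bV''\oplus\mathbf{W}^{\bullet}$ is only invariant under the positional $(\bar{x},z)$-part (that is what membership in $F$ asserts), not under the full covector; what must be checked, and what the paper actually checks, is that $\mathbf{T}'=\mathbf{T}\cap\bV''$ is invariant under the \emph{new} tuple $(x',\bar{x}',y',z')$ produced by the correspondence $\kappa_{\pi}(d\kappa')^{-1}d\iota'\iota_{\pi}^{-1}$, which holds because $(\bar{x}',z')$ is the restriction of $(\bar{x},z)$ while the new $(x',y')$ are compatible with the original covector modulo the conormal directions to $F$.
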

\begin{proof}
	Since $\mP^{> \mu}_{\nu}(\lambda^{\bullet})$ is a finite union of $\mP^{\geqslant \mu'}_{\nu}(\lambda^{\bullet})$, we only need to prove the statement for $\mP^{\geqslant \mu}_{\nu}(\lambda^{\bullet})$.
	
	Assume $ K \in \mP^{\geqslant \mu}_{\nu''}(\lambda^{\bullet})$, then $SS(K) \subseteq \mathbf{E}^{\geqslant \mu}_{\bV'',\mathbf{W}^{\bullet},H^{(N)}}$. Since $\mathcal{F}^{(t)}_{i}$ is defined by the induction functor, $L$ is a direct summand of $\mathcal{F}^{(t)}_{i}(K)$, whose singular support is contained in $\mathbf{E}^{\geqslant \mu}_{\bV,\mathbf{W}^{\bullet},H^{(N)}}$. Hence $L \in \mP^{\geqslant \mu}_{\nu}(\lambda^{\bullet})$.
	
	Assume $L \in \mP^{\geqslant \mu}_{\nu}(\lambda^{\bullet})$, we claim that $\mathbf{Res}^{\mathbf{V}\oplus\mathbf{W}^{\bullet}}_{\mathbf{V'},\mathbf{V}''\oplus\mathbf{W}^{\bullet}}(L)$ has singular support contained in $\mathbf{E}^{\geqslant \mu}_{\bV'',\mathbf{W}^{\bullet},H^{(N)}} $.
	
	Fix a decomposition $\bV=\bV''\oplus \bV'$ such that dimension vector of $\bV'$ is $ti$ and  consider the following diagram which defines the functor $\mathbf{Res}^{\mathbf{V}\oplus\mathbf{W}^{\bullet}}_{\mathbf{V'},\mathbf{V}''\oplus\mathbf{W}^{\bullet}}$,
	$$ \mathbf{E}_{\bV'',\mathbf{W}^{\bullet},\overline{\Omega^{(N)}}} \xleftarrow{\kappa} F \xrightarrow{\iota}  \mathbf{E}_{\bV,\mathbf{W}^{\bullet},\overline{\Omega^{(N)}}},$$
	where $F$ is the subset of $\mathbf{E}_{\bV,\mathbf{W}^{\bullet},\overline{\Omega^{(N)}}}$ which consists of those $(\bar{x},z)$ such that $\bV'' \oplus \mathbf{W}^{\bullet}$ is $(\bar{x},z)$-stable. Then the cotangent correspondence of $\iota$ is 
	$$T^{\ast} F \xleftarrow{d\iota'} F \times_{\mathbf{E}_{\bV,\mathbf{W}^{\bullet},\overline{\Omega^{(N)}}} }\mathbf{E}_{\bV,\mathbf{W}^{\bullet},H^{(N)}}  \xrightarrow{\iota_{\pi} } \mathbf{E}_{\bV,\mathbf{W}^{\bullet},H^{(N)}}, $$ 
    and the cotangent correspondence of $\kappa$ is
    $$T^{\ast} F \xleftarrow{ d\kappa '} F \times_{\mathbf{E}_{\bV'',\mathbf{W}^{\bullet},\overline{\Omega^{(N)}}} }\mathbf{E}_{\bV'',\mathbf{W}^{\bullet},H^{(N)}} \xrightarrow{\kappa_{\pi}} \mathbf{E}_{\bV,\mathbf{W}^{\bullet},H^{(N)}}. $$
    
    By \cite[Theorem 3.2]{Nasing}, the singular support of $\mathbf{Res}^{\mathbf{V}\oplus\mathbf{W}^{\bullet}}_{\mathbf{V'},\mathbf{V}''\oplus\mathbf{W}^{\bullet}}(L)$ is contained in the subset $ \kappa_{\pi}(d\kappa')^{-1} d\iota' \iota^{-1}_{\pi} ( \Lambda_{\bV,\mathbf{W}^{\bullet}} ). $ Notice that any point $(x',\bar{x}',y',z')$ in  $ \kappa_{\pi}(d\kappa')^{-1} d\iota' \iota^{-1}_{\pi} ( \Lambda_{\bV,\mathbf{W}^{\bullet}}) $ stablizes the subspace $\bV'' \oplus \mathbf{W}^{\bullet}$. If $(x,\bar{x},y,z)$ in $\mathbf{E}^{\geqslant \mu}_{\bV,\mathbf{W}^{\bullet},H^{(N)}} \cap  \Lambda_{\bV,\mathbf{W}^{\bullet}}$, we choose $\mathbf{T}$ such that its dimension vector $\leqslant wt^{-1}(\mu)$ and $\mathbf{T}\oplus \mathbf{W}^{\bullet}$ is $(x,\bar{x},y,z)$-stable. For any point $(x',\bar{x}',y',z')$ in the subset $ \kappa_{\pi}(d\kappa')^{-1} d\iota' \iota^{-1}_{\pi} ((x,\bar{x},y,z))$, we take $\mathbf{T}'= \mathbf{T}\cap \bV''$, then the dimension vector of $\mathbf{T}'$ is $\leqslant wt^{-1}(\mu)$ and $\mathbf{T}'\oplus \mathbf{W}^{\bullet}$ is $(x',\bar{x}',y',z')$-stable. Hence $ \kappa_{\pi}(d\kappa')^{-1} d\iota' \iota^{-1}_{\pi} ( \mathbf{E}^{\geqslant \mu}_{\bV,\mathbf{W}^{\bullet},H^{(N)}} \cap \Lambda_{\bV,\mathbf{W}^{\bullet}}) \subseteq \mathbf{E}^{\geqslant \mu}_{\bV'',\mathbf{W}^{\bullet},H^{(N)}} $ and we finish the proof.
\end{proof}

\begin{lemma}\label{casexiao}
	If $\nu < wt^{-1}(\mu)$, then
	$$\mD_{\nu}^{\geqslant \mu}(\lambda^{\bullet})=\mD_{\nu}^{> \mu}(\lambda^{\bullet})=\mD_{\nu}(\lambda^{\bullet}), $$
	 $$\mP_{\nu}^{\geqslant \mu}(\lambda^{\bullet})=\mP_{\nu}^{> \mu}(\lambda^{\bullet})=\mP_{\nu}(\lambda^{\bullet}). $$
\end{lemma}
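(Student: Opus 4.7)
The plan is to reduce everything to the geometric claim that under the strict hypothesis $\nu<wt^{-1}(\mu)$, both of the closed subsets $\mathbf{E}^{\geqslant\mu}_{\bV,\mathbf{W}^{\bullet},H^{(N)}}$ and $\mathbf{E}^{>\mu}_{\bV,\mathbf{W}^{\bullet},H^{(N)}}$ coincide with the entire ambient space $\mathbf{E}_{\bV,\mathbf{W}^{\bullet},H^{(N)}}$. Once this is established, the categorical equalities $\mD^{\geqslant\mu}_{\nu}=\mD^{>\mu}_{\nu}=\mD_{\nu}$ and the statement $\mP^{\geqslant\mu}_{\nu}=\mP^{>\mu}_{\nu}=\mP_{\nu}$ are immediate, since each of the subcategories in question is carved out of $\mD_{\nu}(\lambda^{\bullet})$ by a singular-support containment that becomes vacuous.

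For the first equality, I would use the tautological choice of witness in Definition 4.2: take $\bV''=\bV$ itself. For any point $(x,\bar{x},y,z)\in\mathbf{E}_{\bV,\mathbf{W}^{\bullet},H^{(N)}}$, the space $\bV''\oplus\mathbf{W}^{\bullet}=\bV\oplus\mathbf{W}^{\bullet}$ is the whole ambient space and is therefore trivially $(x,\bar{x},y,z)$-stable, and its dimension vector is $\nu$, which by hypothesis satisfies $\nu<wt^{-1}(\mu)$ (and in particular $\nu\leqslant wt^{-1}(\mu)$). Hence every point is dominated by the weight $\mu$, so $\mathbf{E}^{\geqslant\mu}_{\bV,\mathbf{W}^{\bullet},H^{(N)}}=\mathbf{E}_{\bV,\mathbf{W}^{\bullet},H^{(N)}}$.

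For the strict version, I would upgrade the same witness using the weight $\mu':=wt(\nu)=\sum_{l}\lambda^{l}-\nu$. Then $wt^{-1}(\mu')=\nu\in\mathbb{N}I$ exists, and the strict hypothesis $\nu<wt^{-1}(\mu)$ translates precisely to $\mu'-\mu=wt^{-1}(\mu)-\nu\in\mathbb{N}I\setminus\{0\}$, so $\mu'>\mu$. Applying the previous step to $\mu'$ (again with $\bV''=\bV$, whose dimension vector $\nu$ now equals $wt^{-1}(\mu')$) gives $\mathbf{E}^{\geqslant\mu'}_{\bV,\mathbf{W}^{\bullet},H^{(N)}}=\mathbf{E}_{\bV,\mathbf{W}^{\bullet},H^{(N)}}$, and this is contained in the union defining $\mathbf{E}^{>\mu}_{\bV,\mathbf{W}^{\bullet},H^{(N)}}$, so the latter is also the whole space.

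The only delicate point, which I would flag for the reader, is to check that $\mu'=wt(\nu)$ is an admissible index in the union $\bigcup_{\mu'>\mu}\mathbf{E}^{\geqslant\mu'}$ used to define $\mathbf{E}^{>\mu}$; since $\mathbf{E}^{\geqslant\mu'}$ depends only on the existence of $wt^{-1}(\mu')\in\mathbb{N}I$ (Definition 4.2 only references this dimension vector, not any dominance of $\mu'$), this is immediate. No other serious technicalities arise, and the categorical consequences follow from the respective definitions of $\mD^{\geqslant\mu}_{\nu}$, $\mD^{>\mu}_{\nu}$, $\mP^{\geqslant\mu}_{\nu}$ and $\mP^{>\mu}_{\nu}$ at the end of Section~\ref{section thick}.
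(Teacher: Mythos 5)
Your proof is correct and takes essentially the same approach as the paper's: both simply observe that $\bV''=\bV$ is a tautological witness, and since $\nu<wt^{-1}(\mu)$, the weight $\mu'=wt(\nu)$ satisfies $\mu'>\mu$ (so the same witness puts every point in $\mathbf{E}^{\geqslant\mu'}\subseteq\mathbf{E}^{>\mu}$ as well). The caveat you flag about which $\mu'$ are admissible in the union $\bigcup_{\mu'>\mu}\mathbf{E}^{\geqslant\mu'}$ is real, and your reading (indexing by all $\mu'$ with $wt^{-1}(\mu')\in\mathbb{N}I$, not only the dominant ones) is the one the paper implicitly adopts, as confirmed by its own proof of Lemma~\ref{casenocompare}.
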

\begin{proof}
	Notice that when $\nu < wt^{-1}(\mu)$, we can take $\bV''=\bV$ for any point $(x,\bar{x},y,z)$ in $\mathbf{E}_{\bV'',\mathbf{W}^{\bullet},H^{(N)}}$, hence $\mathbf{E}^{\geqslant \mu}_{\bV,\mathbf{W}^{\bullet},H^{(N)}}= \mathbf{E}^{> \mu}_{\bV,\mathbf{W}^{\bullet},H^{(N)}}=\mathbf{E}_{\bV,\mathbf{W}^{\bullet},H^{(N)}}$.
\end{proof}

\begin{lemma}\label{casenocompare}
	If $\nu$ and $wt^{-1}(\mu)$ can not be compared, then 
	$$\mD_{\nu}^{\geqslant \mu}(\lambda^{\bullet})=\mD_{\nu}^{> \mu}(\lambda^{\bullet}), $$
	$$\mP_{\nu}^{\geqslant \mu}(\lambda^{\bullet})=\mP_{\nu}^{> \mu}(\lambda^{\bullet}). $$
\end{lemma}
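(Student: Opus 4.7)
The plan is to reduce both stated equalities to the stronger set-theoretic identity
\[
\mathbf{E}^{\geqslant \mu}_{\bV,\mathbf{W}^\bullet,H^{(N)}} = \mathbf{E}^{>\mu}_{\bV,\mathbf{W}^\bullet,H^{(N)}},
\]
from which the equalities $\mD^{\geqslant \mu}_\nu=\mD^{>\mu}_\nu$ and $\mP^{\geqslant \mu}_\nu=\mP^{>\mu}_\nu$ follow at once, since both subcategories are defined purely by containment of singular support in the respective closed subset. The inclusion $\supseteq$ is immediate from the definitions, so the task reduces to proving $\subseteq$.

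For the inclusion $\subseteq$, I would fix a point $(x,\bar x,y,z)\in \mathbf{E}^{\geqslant \mu}$ together with a witness $\bV''\subseteq \bV$ such that $\bV''\oplus \mathbf{W}^\bullet$ is $(x,\bar x,y,z)$-stable and $\dim \bV''\leqslant wt^{-1}(\mu)$. Because $\bV''\subseteq \bV$, we automatically have $\dim \bV''\leqslant \nu$. The incomparability of $\nu$ and $wt^{-1}(\mu)$ in $\mathbb{N}I$ provides some vertex $i_0\in I$ with $\nu_{i_0}<wt^{-1}(\mu)_{i_0}$; combining the two bounds at this vertex gives
\[
\dim \bV''_{i_0}\leqslant \nu_{i_0}<wt^{-1}(\mu)_{i_0},
\]
equivalently $\dim \bV'' \leqslant wt^{-1}(\mu)-[i_0]$ in $\mathbb{N}I$. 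In particular $wt^{-1}(\mu)_{i_0}\geqslant 1$, so setting $\mu'=\mu+\alpha_{i_0}$ we obtain a weight $\mu'>\mu$ strictly in the dominance order with $wt^{-1}(\mu')=wt^{-1}(\mu)-[i_0]\in \mathbb{N}I$.

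The above inequality now reads $\dim \bV''\leqslant wt^{-1}(\mu')$, so the same subspace $\bV''$ witnesses $(x,\bar x,y,z)\in \mathbf{E}^{\geqslant \mu'}\subseteq \mathbf{E}^{>\mu}$. This completes the set equality, and hence the lemma. I anticipate no serious obstacle in executing this plan; the one delicate point to flag is that the constructed $\mu'=\mu+\alpha_{i_0}$ need not be dominant, so the argument implicitly reads the union $\mathbf{E}^{>\mu}=\bigcup_{\mu'>\mu}\mathbf{E}^{\geqslant \mu'}$ as ranging over all weights $\mu'>\mu$ for which $wt^{-1}(\mu')\in \mathbb{N}I$, rather than only dominant ones. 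The defining stability condition on $\bV''$ makes sense for any such $\mu'$, and this reading is consistent with the brief proof of the companion Lemma \ref{casexiao}, which likewise invokes the inclusion $\mathbf{E}^{\geqslant \mu}=\mathbf{E}^{>\mu}$ without requiring the witnessing $\mu'$ to be dominant.
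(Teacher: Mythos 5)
Your proof is correct and follows essentially the same route as the paper: both arguments observe that incomparability of $\nu$ and $wt^{-1}(\mu)$ forces any witnessing dimension vector $\nu''\leqslant wt^{-1}(\mu)$ to be strictly smaller at some vertex, and then exhibit a weight $\mu'>\mu$ with $\nu''\leqslant wt^{-1}(\mu')$. The paper takes $\mu'=\mu+wt^{-1}(\mu)-\nu''$ while you take the minimal step $\mu'=\mu+\alpha_{i_0}$; both work, and the caveat you flag about $\mu'$ possibly failing to be dominant applies equally to the paper's own choice, so your reading of the union defining $\mathbf{E}^{>\mu}$ matches the paper's implicit convention.
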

\begin{proof}
	Assume $\bV''$ is a subspace of $\bV$ such that its dimension vector $\nu'' \leqslant wt^{-1}(\mu)$ and $\bV''\oplus \mathbf{W}^{\bullet}$ is  $(x,\bar{x},y,z)$-stable for some   $(x,\bar{x},y,z)$ in $\mathbf{E}^{\geqslant \mu}_{\bV'',\mathbf{W}^{\bullet},H^{(N)}}$, then we must have $\nu'' < wt^{-1}(\mu)$. Otherwise, $wt^{-1}(\mu)= \nu'' \leqslant \nu$ can be compared.  Hence
	$(x,\bar{x},y,z)$ belongs to $\mathbf{E}^{\geqslant \mu'}_{\bV'',\mathbf{W}^{\bullet},H^{(N)}} \subseteq  \mathbf{E}^{> \mu}_{\bV'',\mathbf{W}^{\bullet},H^{(N)}}$ for $\mu'=\mu +wt^{-1}(\mu)-\nu'' > \mu$, and
	 $\mathbf{E}^{\geqslant \mu}_{\bV,\mathbf{W}^{\bullet},H^{(N)}}= \mathbf{E}^{> \mu}_{\bV,\mathbf{W}^{\bullet},H^{(N)}}$. We finish the proof.
\end{proof}

\begin{lemma}\label{caseda}
	For $\nu > wt^{-1}(\mu)$ and $L \in \mP_{\nu}^{\geqslant \mu}(\lambda^{\bullet})$, there exists $i \in I$ such thta $t_{i}(L)>0$.
\end{lemma}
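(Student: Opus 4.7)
The plan is to translate the statement via the crystal bijection $\Psi$ of Proposition \ref{Cryt} into a geometric fact about the irreducible component $\Psi(L) \subseteq \Lambda_{\bV,\mathbf{W}^{\bullet}}$, and then prove that fact by a pointwise nilpotency argument on quotient representations. By Proposition \ref{Cryt}(1), $t_i(L) = \epsilon_i(\Psi(L))$, so it suffices to exhibit some $i \in I$ with $\epsilon_i(\Psi(L)) > 0$. By Proposition \ref{Cryt}(2) and the hypothesis $L \in \mP^{\geqslant \mu}_{\nu}(\lambda^{\bullet})$, we have $\Psi(L) \subseteq SS(L) \subseteq \mathbf{E}^{\geqslant \mu}_{\bV,\mathbf{W}^{\bullet},H^{(N)}}$, so $\Psi(L)$ is an irreducible component of $\Lambda_{\bV,\mathbf{W}^{\bullet}}$ lying entirely in $\mathbf{E}^{\geqslant \mu}_{\bV,\mathbf{W}^{\bullet},H^{(N)}}$.

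The main step is the pointwise claim: at every point $(x,\bar{x},y,z) \in \mathbf{E}^{\geqslant \mu}_{\bV,\mathbf{W}^{\bullet},H^{(N)}} \cap \Lambda_{\bV,\mathbf{W}^{\bullet}}$ with $\nu > wt^{-1}(\mu)$, there exists $i \in I$ such that
\[
\mathrm{codim}_{\bV_i}\Bigl(\sum_{h\in H, h''=i}\mathrm{Im}(x_h) + \sum_{1\leqslant l \leqslant N}\mathrm{Im}(z^{l}_i)\Bigr) > 0.
\]
To prove this, I choose a subspace $\bV'' \subseteq \bV$ with dimension vector $\nu'' \leqslant wt^{-1}(\mu) < \nu$ such that $\bV'' \oplus \mathbf{W}^{\bullet}$ is $(x,\bar{x},y,z)$-stable, and I form the quotient $M = \bV/\bV''$. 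The stability condition $z^{l}_i(\mathbf{W}^{l}_{i^{l}}) \subseteq \bV''_i$ guarantees that the framing maps $y,z$ descend to zero on the quotient, so $M$ inherits the structure of a nilpotent representation of the double quiver $Q$ (unframed), which satisfies the preprojective relation. Suppose for contradiction that $\epsilon_i = 0$ at the chosen point for every $i$, i.e.\ $\bV_i = \sum_h \mathrm{Im}(x_h) + \sum_l \mathrm{Im}(z^l_i)$ for all $i$. Projecting to $M$ (and using that the $z$-summand dies) yields $M_i = \sum_{h:h''=i}\mathrm{Im}(\bar x_h) = \mathrm{rad}(M)_i$ for every $i$, hence $M = \mathrm{rad}(M)$. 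Iterating gives $M = \mathrm{rad}^N(M)$ for every $N$, and by the nilpotency condition in the definition of $\Lambda_{\bV,\mathbf{W}^{\bullet}}$ we can take $N$ so large that $\mathrm{rad}^N(M) = 0$, contradicting $M \neq 0$ (which follows from $\nu'' < \nu$).

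To pass from the pointwise statement to the generic statement needed, I argue as follows. For each $i$, the irreducibility of $\Psi(L)$ gives a unique value $\epsilon_i(\Psi(L))$ attained on an open dense subset $U_i \subseteq \Psi(L)$. If every $\epsilon_i(\Psi(L))$ were zero, then $\bigcap_{i\in I} U_i$ would still be a nonempty open dense subset of $\Psi(L)$ on which every $\epsilon_i$ vanishes, contradicting the pointwise claim applied to any point in this intersection. Hence some $\epsilon_i(\Psi(L)) > 0$, and therefore $t_i(L) > 0$, finishing the proof.

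The only real obstacle is verifying the pointwise claim, and specifically that the framing indeed disappears in the quotient so that a clean Nakayama-style argument on the nilpotent preprojective module $M$ applies; this is exactly what the stability of $\bV'' \oplus \mathbf{W}^{\bullet}$ provides. The rest is formal: the translation through $\Psi$ is direct from Proposition \ref{Cryt}, and upgrading from pointwise to generic uses only irreducibility of $\Psi(L)$.
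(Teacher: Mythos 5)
Your proof is correct and follows the same route as the paper: reduce via $\Psi$ (Proposition \ref{Cryt}) to the pointwise claim that every point of $\Lambda_{\bV,\mathbf{W}^{\bullet}}\cap\mathbf{E}^{\geqslant\mu}_{\bV,\mathbf{W}^{\bullet},H^{(N)}}$ lies in some stratum $\Lambda_{\bV,\mathbf{W}^{\bullet},i,t}$ with $t>0$, and establish this by passing to the nonzero nilpotent quotient $\bV/\bV''$ over the double quiver. The only difference is that the paper cites Lusztig's Lemma 12.6 (existence of a simple quotient supported at a single vertex) for the final blow, whereas you re-derive that fact with the explicit Nakayama iteration $M=\mathrm{rad}^{N}(M)=0$; you are also more explicit than the paper about the finitude-of-$I$/irreducibility step that upgrades the pointwise statement to a generic one.
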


\begin{proof}
	By Proposition \ref{Cryt} and $L \in \mP_{\nu}^{\geqslant \mu}(\lambda^{\bullet})$, we can see that $SS(L) \subseteq \Lambda^{\geqslant \mu}_{\bV,\mathbf{W}^{\bullet}}= \Lambda_{\bV,\mathbf{W}^{\bullet}} \cap  \mathbf{E}^{\geqslant \mu}_{\bV'',\mathbf{W}^{\bullet},H^{(N)}}$.

	We claim that when $\nu > wt^{-1}(\mu)$, $\Lambda^{\geqslant \mu}_{\bV,\mathbf{W}^{\bullet}} \subseteq \bigcup_{i \in I, t>0} \Lambda_{\bV,\mathbf{W}^{\bullet},i,t}.$ In particular, there exits $i \in I$ such that $\epsilon_{i}(\Phi(L))=t>0$ and hence $t_{i}(L)>0$. 
	
	Given arbitrary $(x,\bar{x},y,z)$ in $\Lambda^{\geqslant \mu}_{\bV,\mathbf{W}^{\bullet}}$, take a subspace $\bV''$ of dimension vector $\leqslant wt^{-1}(\mu)$ such that $\bV'' \oplus \mathbf{W}^{\bullet}$ is $(x,\bar{x},y,z)$. Then $(x,\bar{x},y,z)$ induces a quotient  $(x',\bar{x}',y',z')$  with dimension vector $\nu' \in \mathbb{N}I$ with $\nu' \geqslant \nu- wt^{-1}(\mu) >0$. This quotient $(x',\bar{x}',y',z')$ is a nilpotent representation in Lusztig's nilpotent variety, hence it admits a simple quotient supported at some single vertex $i \in I$ by \cite[Lemma 12.6]{MR1088333}. In particular, it means that $(x,\bar{x},y,z)$ has a simple quotient supported at some single vertex $i \in I$ and $(x,\bar{x},y,z) \in \bigcup_{t \geqslant 1} \Lambda_{\bV,\mathbf{W}^{\bullet},i,t}$. 
\end{proof}

\begin{lemma}\label{casedeng}
	For $\nu=wt^{-1}(\mu)$ and $L \in \mP_{\nu}^{\geqslant \mu}(\lambda^{\bullet})$, then $L \notin \mP_{\nu}^{> \mu}(\lambda^{\bullet})$ if and only if $t_{i}(L)=0$ for any $i \in I$.
\end{lemma}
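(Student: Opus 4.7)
The plan is to first note that, because $\nu=wt^{-1}(\mu)$, the witness $\bV''=\bV$ puts every point of $\mathbf{E}_{\bV,\mathbf{W}^{\bullet},H^{(N)}}$ into $\mathbf{E}^{\geqslant\mu}_{\bV,\mathbf{W}^{\bullet},H^{(N)}}$, so the hypothesis $L\in\mP^{\geqslant\mu}_{\nu}(\lambda^{\bullet})$ is automatic and the claim reduces to asking when $SS(L)\not\subseteq \mathbf{E}^{>\mu}_{\bV,\mathbf{W}^{\bullet},H^{(N)}}$. Equivalently, one must characterize when $SS(L)$ contains a point admitting no \emph{proper} stable $\bV''\oplus\mathbf{W}^{\bullet}$ with $\bV''\subsetneq\bV$.

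For the direction $(\Leftarrow)$, I assume $t_{i}(L)=0$ for every $i\in I$. Proposition~\ref{Cryt}(1) gives $\epsilon_{i}(\Psi(L))=t_{i}(L)=0$ for all $i$, so the open dense loci $\Psi(L)\cap\Lambda_{\bV,\mathbf{W}^{\bullet},i,0}$ intersect in an open dense subset of the irreducible variety $\Psi(L)$. Pick a generic point $(x,\bar x,y,z)$ there, so that $\sum_{h\in H,h''=i}\mathrm{Im}(x_{h})+\sum_{l}\mathrm{Im}(z^{l}_{i})=\bV_{i}$ for every $i$. Suppose for contradiction that $\bV''\oplus\mathbf{W}^{\bullet}$ is stable with $\bV''\subsetneq\bV$. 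The quotient $\bar\bV=\bV/\bV''$ inherits a representation $(\bar x,\bar{\bar x})$ of the unframed double quiver; it is nilpotent (as a quotient of a nilpotent representation in $\Lambda_{\bV,\mathbf{W}^{\bullet}}$) and satisfies the unframed moment map equation, since the framing contribution $\sum_{l}z^{l}_{i}y^{l}_{i}$ induces the zero endomorphism of $\bar\bV_{i}$ (because $\mathrm{Im}(z^{l}_{i})\subseteq\bV''_{i}$). Thus $(\bar x,\bar{\bar x})\in\Lambda_{\bar\bV}$, and the generic identity descends to $\sum_{h\in H,h''=i}\mathrm{Im}(\bar x_{h})=\bar\bV_{i}$ for every $i$; so $\bar\bV$ has no simple quotient at any vertex. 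This contradicts \cite[Lemma~12.6]{MR1088333} unless $\bar\bV=0$, forcing $\bV''=\bV$. Hence the generic point of $\Psi(L)\subseteq SS(L)$ is strictly dominated, and $L\notin\mP^{>\mu}_{\nu}(\lambda^{\bullet})$.

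For the direction $(\Rightarrow)$ I argue the contrapositive: if some $t_{i_{0}}(L)\geqslant 1$, then $L\in\mP^{>\mu}_{\nu}(\lambda^{\bullet})$. By Proposition~\ref{Cryt}(2) every point of $SS(L)$ lies in an irreducible component $Z'$ of $\Lambda_{\bV,\mathbf{W}^{\bullet}}$ with $\epsilon_{i_{0}}(Z')\geqslant t_{i_{0}}(L)\geqslant 1$. Upper semicontinuity of the codimension function on the irreducible variety $Z'$ forces $\mathrm{codim}_{\bV_{i_{0}}}\bigl(\sum_{h\in H,h''=i_{0}}\mathrm{Im}(x_{h})+\sum_{l}\mathrm{Im}(z^{l}_{i_{0}})\bigr)\geqslant\epsilon_{i_{0}}(Z')\geqslant 1$ at every point of $Z'$. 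For such $(x,\bar x,y,z)$, set $\bV''_{j}=\bV_{j}$ for $j\neq i_{0}$ and $\bV''_{i_{0}}=\sum_{h\in H,h''=i_{0}}\mathrm{Im}(x_{h})+\sum_{l}\mathrm{Im}(z^{l}_{i_{0}})\subsetneq\bV_{i_{0}}$. Stability of $\bV''\oplus\mathbf{W}^{\bullet}$ is immediate: since $Q$ has no loops, every $x_{h}$ ending at $i_{0}$ is absorbed by construction, arrows avoiding $i_{0}$ preserve $\bV''$ because $\bV''_{j}=\bV_{j}$ there, and $\mathrm{Im}(z^{l}_{i_{0}})\subseteq\bV''_{i_{0}}$ by definition. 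Since $\bV''\subsetneq\bV$, the point lies in $\mathbf{E}^{\geqslant\mu'}\subseteq\mathbf{E}^{>\mu}$ with $\mu'=wt(\nu'')>\mu$, so $SS(L)\subseteq\mathbf{E}^{>\mu}$ and the contrapositive is done.

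The main obstacle is the $(\Leftarrow)$ direction: translating the numerical vanishing $\epsilon_{i}(\Psi(L))=0$ into the geometric nonexistence of any proper stable $\bV''\oplus\mathbf{W}^{\bullet}$. The crux is the reduction to the unframed nilpotent variety and Lusztig's Lemma~12.6, which exploits essentially that $SS(L)\subseteq\Lambda_{\bV,\mathbf{W}^{\bullet}}$ consists of nilpotent points; without nilpotency a nonzero quotient representation with no simple top would be possible and the argument would collapse.
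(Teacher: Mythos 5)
Your proof is correct and follows essentially the same route as the paper. The contrapositive for the forward direction (if some $t_{i_0}(L)\geqslant 1$, build the stable $\bV''$ from $\sum_{h''=i_0}\mathrm{Im}(x_h)+\sum_l\mathrm{Im}(z^l_{i_0})$ at every point of $SS(L)$) is word-for-word the paper's second paragraph, and your backward direction re-derives at the generic point of $\Psi(L)$ exactly the content of Lemma~\ref{caseda} (quotient to the unframed preprojective variety plus Lusztig's Lemma 12.6), which the paper instead just cites.
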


\begin{proof}
	If $L \in \mP_{\nu}^{> \mu}(\lambda^{\bullet})$, then $L \in\mP_{\nu}^{\geqslant \mu'}(\lambda^{\bullet}) $ for some $\mu' > \mu$. By the lemma above, $\nu > wt^{-1}(\mu')$ implies that there exists some $i$ such that $t_{i}(L)>0$.
	
	Conversely, if $t_{i}(L)=t > 0$, then by Proposition \ref{Cryt}, $SS(L) \subseteq \bigcup_{t' \geqslant t} \Lambda_{\bV,\mathbf{W}^{\bullet},i,t'}$. In particular, any $(x,\bar{x},y,z) \in SS(L)$ admits a quotient $S^{\oplus t}_{i}$, where $S_{i}$ is the simple representation of the doubled quiver supported at $i \in I$. Hence any $(x,\bar{x},y,z) \in SS(L)$ admits a invariant subspace $\bV'' \oplus \mathbf{W}^{\bullet}$ such that $\bV''$ has dimension vector $\nu -ti$. It implies that $L$ belongs to $\mP_{\nu}^{\geqslant \mu+ti}(\lambda^{\bullet}) \subseteq \mP_{\nu}^{> \mu}(\lambda^{\bullet})$. 
\end{proof}

When $ \nu=wt^{-1}(\mu)$, we denote the set  $ \mP_{\nu}^{\geqslant \mu}(\lambda^{\bullet})\backslash \mP_{\nu}^{> \mu}(\lambda^{\bullet})$ by $\mP^{\mu,hi}$.

\subsubsection{The subquotients of the tensor product}
 
We denote the following direct sum of Grothendieck groups by 
$$\mK^{\mu}(\lambda^{\bullet}) =\bigoplus_{\nu \in \mathbb{N}I}\mK(\mL^{\mu}_{\nu}(\lambda^{\bullet})),$$
$$\mK^{>\mu}(\lambda^{\bullet}) =\bigoplus_{\nu \in \mathbb{N}I}\mK(\mL^{>\mu}_{\nu}(\lambda^{\bullet})),$$
$$\mK^{\geqslant \mu}(\lambda^{\bullet}) =\bigoplus_{\nu \in \mathbb{N}I}\mK(\mL^{\geqslant\mu}_{\nu}(\lambda^{\bullet})).$$
In this subsection, we prove that the Grothendieck groups $\mK^{\mu}(\lambda^{\bullet}),\mK^{>\mu}(\lambda^{\bullet})$ and $\mK^{\geqslant \mu}(\lambda^{\bullet})$  are canonically isomorphic the based module $M[\geqslant \mu]/M[> \mu],M[> \mu]$ and $M[\geqslant \mu]$ respectively, where $M$ is the tensor product of $L(\lambda^{1}),L(\lambda^{2}),\cdots, L(\lambda^{N})$.

Notice that the inclusion functors 
$$ \mL^{>\mu}_{\nu}(\lambda^{\bullet}) \rightarrow \mL_{\nu} (\lambda^{\bullet}),$$
$$ \mL^{\geqslant \mu}_{\nu}(\lambda^{\bullet}) \rightarrow \mL_{\nu} (\lambda^{\bullet}),$$
and the localization functor
$$ \mL^{\geqslant \mu}_{\nu}(\lambda^{\bullet}) \rightarrow \mL^{\mu}_{\nu} (\lambda^{\bullet})$$
commute with functors $\mathcal{E}^{(r)}_{i},\mathcal{F}^{(r)}_{i}$, hence they induce injective morphisms of $_{\mathcal{A}}\mathbf{U}$-modules
\begin{equation}\label{ig}
	\mK^{>\mu}(\lambda^{\bullet}) \rightarrow \mK(\lambda^{\bullet}),
\end{equation}
\begin{equation}\label{ige}
	\mK^{\geqslant \mu}(\lambda^{\bullet}) \rightarrow \mK(\lambda^{\bullet}),
\end{equation}
and surjective morphism
\begin{equation}\label{q}
	\mK^{\geqslant \mu}(\lambda^{\bullet}) \rightarrow \mK^{\mu}(\lambda^{\bullet}) .
\end{equation}

\begin{lemma} \label{subquotient}
	With the actions of $\mathcal{E}^{(r)}_{i},\mathcal{F}^{(r)}_{i}, i\in I$, the Grothendieck group $\mK^{\mu}(\lambda^{\bullet})$ is an integrable $_{\mathcal{A}}\mathbf{U}$-module, and is isomorphic to the direct sum of $n_{\mu}$ copies of the irreducible highest weight module $L(\mu)$,
	$$ \mK^{\mu}(\lambda^{\bullet}) \rightarrow {_{\mathcal{A}}L}(\mu)^{\oplus n_{\mu}}, $$ 
	where $n_{\mu}$ is the number of simple perverse sheaves in $ S_{1}(\mu)=\mP^{\mu,hi} \backslash (\mP^{\mu,hi} \cap \mN_{wt^{-1}(\mu)})$.
	Moreover, if a simple perverse sheave in $S_{1}(\mu)$, then its image in the Grothendieck group provides a highest weight vector of a direct summand. 
\end{lemma}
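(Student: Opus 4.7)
The plan is to endow $\mK^\mu(\lambda^\bullet)$ with a $_{\mathcal{A}}\mathbf{U}$-module structure, pin down its weight-$\mu$ subspace combinatorially, and then invoke complete reducibility. First, by Corollary 4.13 the functors $\mathcal{E}^{(r)}_i$, $\mathcal{F}^{(r)}_i$, $\mathcal{K}_i^{\pm}$ descend to $\mL^\mu_{\nu}(\lambda^\bullet)$, and the categorical identities of Propositions \ref{c1}--\ref{c2} pass to any Verdier subquotient, so they furnish $\mK^\mu(\lambda^\bullet)$ with a $_{\mathcal{A}}\mathbf{U}$-action. The maps (\ref{ige}) and (\ref{q}) are $_{\mathcal{A}}\mathbf{U}$-linear, exhibiting $\mK^\mu$ as a subquotient of the integrable module $\mK(\lambda^\bullet)\cong{_\mathcal{A}}L(\lambda^\bullet)$ (Theorem \ref{tensor}); hence $\mK^\mu$ is itself integrable.

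Next I would analyze the weight spaces. A class in $\mK^\mu_\nu$ carries weight $\sum_l\lambda^l-\nu$. By Lemmas \ref{casexiao} and \ref{casenocompare}, $\mL^\mu_\nu=0$ whenever $\nu$ is either strictly smaller than, or incomparable with, $wt^{-1}(\mu)$, so all weights of $\mK^\mu$ are $\leqslant\mu$, and the weight-$\mu$ space coincides with $\mK^\mu_{wt^{-1}(\mu)}$. Lemma \ref{casedeng} then identifies this space as the $\mathbb{Z}[v^{\pm}]$-span of $\{[L]\mid L\in\mP^{\mu,hi}\}$, and precisely the classes of $L\in\mP^{\mu,hi}\cap\mN_{wt^{-1}(\mu)}$ vanish in the Grothendieck group, yielding a basis indexed by $S_1(\mu)$ and dimension $n_\mu$. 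For each such $[L]$ and any $i$, $\mathcal{E}_i[L]\in\mK^\mu_{wt^{-1}(\mu)-i}=0$, so every such $[L]$ is a highest weight vector of the dominant weight $\mu$.

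To conclude I would invoke complete reducibility of integrable modules over symmetric Kac--Moody quantum groups, which decomposes $\mK^\mu\cong\bigoplus_{\mu'}{_\mathcal{A}}L(\mu')^{\oplus m_{\mu'}}$ with $\mu'\leqslant\mu$ dominant; matching weight-$\mu$ subspaces forces $m_\mu=n_\mu$, so it remains to rule out summands $L(\mu')$ with $\mu'<\mu$, equivalently, to show that $\mK^\mu$ is generated as a $_{\mathcal{A}}\mathbf{U}$-module by $\mK^\mu_{wt^{-1}(\mu)}$. For this I will combine Lemma \ref{caseda}---which produces, for each simple $L\in\mP^{\geqslant\mu}_\nu$ with $\nu>wt^{-1}(\mu)$, an index $i$ with $t_i(L)>0$---with the recursion of Lemma \ref{indlemma} and the key lemma (Lemma \ref{lkeyt}), which gives $\mathcal{F}_i^{(t_i(L))}[\pi_{i,t_i(L)}(L)]=[L]+\sum[L']$ where each $L'$ has strictly larger $t_i$-invariant and, by Lemma \ref{indlemma}, both $\pi_{i,t_i(L)}(L)$ and every $L'$ remain inside the $\mP^{\geqslant\mu}$-family. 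A two-parameter induction, first descending on $\nu$ and then on $t_i$ at fixed $\nu$, then rewrites any class in $\mK^\mu$ as an $_{\mathcal{A}}\mathbf{U}$-combination of classes in $\mK^\mu_{wt^{-1}(\mu)}$.

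I expect the main obstacle to be this last generation step: one must verify that the upper-triangular recursion, which a priori lives in $\mK(\lambda^\bullet)$, descends compatibly through the Verdier quotient $\mN^{>\mu}$ defining $\mK^\mu$, and that the induction terminates genuinely at elements of $\mP^{\mu,hi}$ rather than being truncated by the vanishing of $\mL^\mu_{\nu}$ at dimension vectors incomparable with $wt^{-1}(\mu)$; once both points are secured, the desired isomorphism $\mK^\mu(\lambda^\bullet)\cong{_\mathcal{A}}L(\mu)^{\oplus n_\mu}$ and the highest weight statement for $S_1(\mu)$ follow at once.
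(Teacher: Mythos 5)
Your proposal is correct and follows essentially the same route as the paper: you reduce the weight-$\mu$ space to $\mK^\mu_{wt^{-1}(\mu)}$ using Lemmas \ref{casexiao}--\ref{casedeng}, identify it with the $\mathcal{A}$-span of $S_1(\mu)$, observe these classes are automatically highest weight since weights below $\mu$ vanish, and then reduce an arbitrary class via the $t_i$-invariant recursion (Lemmas \ref{caseda}, \ref{lkeyt}, \ref{indlemma}) to this weight space by a double induction. The "obstacle" you flag about the recursion descending through the quotient is the very point Lemma \ref{indlemma} is designed to settle (together with the thickness of $\mN_\nu$, $\mD^{>\mu}_\nu$), so the argument closes exactly as the paper's does.
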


\begin{proof}
By Lemma \ref{casexiao} and \ref{casenocompare}, the $\mathcal{A}$-module $\mK(\mL^{\mu}_{\nu}(\lambda^{\bullet}))=0$ if $\nu < wt^{-1}(\mu)$ or $\nu$ and $wt^{-1}(\mu)$ can not be compared. We only need to study $\mK(\mL^{\mu}_{\nu}(\lambda^{\bullet}))$ for $\nu \geqslant wt^{-1}(\mu) $.

We claim that $\mK^{\mu}(\lambda^{\bullet})$ equals to the $_{\mathcal{A}}\mathbf{U}^{-}$-module generated by images of simple perverse sheaves in $\mP^{\mu,hi} \backslash (\mP^{\mu,hi} \cap \mN_{wt^{-1}(\mu)})$.
\begin{enumerate}
	\item[(1)] If $\nu = wt^{-1}(\mu)$, by Lemma \ref{casedeng}, $L \in \mP^{\geqslant \mu}_{\nu}(\lambda^{\bullet})$ is nonzero in $\mL^{\mu}_{\nu}(\lambda^{\bullet})$ if and only if it belongs to $\mP^{\mu,hi} \backslash (\mP^{\mu,hi} \cap \mN_{wt^{-1}(\mu)})$. 
    \item[(2)]If $\nu > wt^{-1}(\mu)$ and $L \in \mL^{\mu}_{\nu}(\lambda^{\bullet})$ is a nonzero simple perverse sheaf, then by Lemma \ref{caseda}, there exists $i \in I $ such that $t_{i}(L)=t>0$. By Lemma \ref{lkeyt}, there is some $K$ in $\mP_{\nu'}(\lambda^{\bullet})$ such that $$\mathcal{F}^{(t)}_{i}K=L \oplus \bigoplus_{t_{i}(L')>t} L'[d']^{\oplus n(L',d')}.$$
	
	Notice that $K$ doesn't belong to $\mD^{> \mu}_{\nu'}(\lambda^{\bullet})$ or $\mN_{\nu'}$. Otherwise, the fact $\mathcal{F}^{(t)}_{i}K$ belongs to $\mD^{> \mu}_{\nu}(\lambda^{\bullet})$ or $\mN_{\nu}$ implies that $L \in \mL^{\mu}_{\nu}(\lambda^{\bullet})$ is a zero object. 
	
	By increasing induction on $\nu$ and decreasing induction on $t$, we can prove that the image $[L]$ of $L$ in  $\mK^{\mu}(\lambda^{\bullet})$ is generated by some simple perverse sheaf $L_{0}$ with $t_{i}(L_{0})=0$ for all $i$, and $L_{0}$ does not belongs to $\mD^{> \mu}_{\nu'}(\lambda^{\bullet})$ and $\mN_{\nu'}$. It forces that $\nu' = wt^{-1}(\mu)$ and $L_{0} \in \mP^{\mu,hi} \backslash (\mP^{\mu,hi} \cap \mN_{wt^{-1}(\mu)})$.
\end{enumerate}

Since $\mK^{\mu}(\lambda^{\bullet})$ equals to the $_{\mathcal{A}}\mathbf{U}^{-}$-module generated by images of simple perverse sheaves in $S_{1}(\mu)$, and it is obvious that simple perverse sheaves in $S_{1}(\mu)$ provide highest weight vectors in Grothendieck groups, the statement follows from the fact that $\mK^{\mu}(\lambda^{\bullet})$ is an integrable module.
\end{proof}

\begin{theorem}\label{main1}
Let	$M$ be the tensor product  $L(\lambda^{N})\otimes L(\lambda^{N-1})\otimes \cdots\otimes L(\lambda^{1})$ and $B$ be its canonical basis in the sense of Theorem \ref{CBT}.
 Given any dominant weight $\mu \leqslant \sum_{1\leqslant l \leqslant N} \lambda^{l}$ such that $wt^{-1}(\mu)$ exists, we have the following statements:
 
 (1) Under the composition of $\chi^{\lambda^{\bullet}}$ in Theorem \ref{tensor}(2) and the morphism defined by formula (\ref{ig}), the Grothendieck group $\mK^{>\mu}(\lambda^{\bullet})$ is canonically  isomorphic to the module $_{\mathcal{A}} M[> \mu]$. 
 We still denote the composition by $\chi^{\lambda^{\bullet}} $, then there is a commutative diagram
 \[
 \xymatrix{
 \mK^{>\mu}(\lambda^{\bullet}) \ar[r]^{(\ref{ig})} \ar[d]_{\chi^{\lambda^{\bullet}}} &  \mK(\lambda^{\bullet}) \ar[d]^{\chi^{\lambda^{\bullet}}} \\
 _{\mathcal{A}}M[> \mu] \ar[r] &  _{\mathcal{A}}M.
 }
 \]
 Moreover, the images of nonzero simple perverse sheaves in $\mL^{>\mu}_{\nu}(\lambda^{\bullet}),\nu \in \mathbb{N}I$ provide an $\mathcal{A}$-basis, which coincides with $B \cap M[> \mu]$.
 
 (2) Under the composition of $\chi^{\lambda^{\bullet}}$ and the morphism defined by formula (\ref{ige}), the Grothendieck group $\mK^{\geqslant \mu}(\lambda^{\bullet})$ is canonically  isomorphic to the module $_{\mathcal{A}} M[\geqslant  \mu]$, and  there is a commutative diagram
  \[
 \xymatrix{
 	\mK^{\geqslant\mu}(\lambda^{\bullet}) \ar[r]^{(\ref{ige})} \ar[d]_{\chi^{\lambda^{\bullet}}} &  \mK(\lambda^{\bullet}) \ar[d]^{\chi^{\lambda^{\bullet}}} \\
 	_{\mathcal{A}}M[\geqslant \mu] \ar[r] &  _{\mathcal{A}}M.
 	}
 \]
 
  Moreover, the images of nonzero simple perverse sheaves in $\mL^{\geqslant \mu}_{\nu}(\lambda^{\bullet}),\nu \in \mathbb{N}I$ provide an $\mathcal{A}$-basis, which coincides with $B \cap M[\geqslant \mu]$.
 
 (3)  The canonical isomorphism $\chi^{\lambda^{\bullet}}$ induced a canonical isomorphism  $\mK^{ \mu}(\lambda^{\bullet}) \rightarrow  _{\mathcal{A}} (M[\geqslant  \mu]/M[>\mu])$. We still denote this isomorphism by $\chi^{\lambda^{\bullet}}$, then there is a commutative diagram
   \[
 \xymatrix{
 	\mK^{\geqslant\mu}(\lambda^{\bullet}) \ar[r]^{(\ref{q})} \ar[d]_{\chi^{\lambda^{\bullet}}} &  \mK^{\mu}(\lambda^{\bullet}) \ar[d]^{\chi^{\lambda^{\bullet}}} \\
 	_{\mathcal{A}}M[\geqslant \mu] \ar[r]^-{\pi_{\geqslant \mu}} &  _{\mathcal{A}}(M[\geqslant \mu]/M[> \mu]).
 }
 \]
 
 Moreover, the images of nonzero simple perverse sheaves in $\mL^{ \mu}_{\nu}(\lambda^{\bullet}),\nu \in \mathbb{N}I$ provide an $\mathcal{A}$-basis, which coincides with $\pi_{\geqslant \mu}(B \cap M[\geqslant \mu] )$.
 
\end{theorem}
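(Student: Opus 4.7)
The plan is to establish parts~(1), (2), and~(3) simultaneously by downward induction on the dominance order on dominant weights $\mu$ with $wt^{-1}(\mu)\in\mathbb{N}I$. The three main inputs are Theorem~\ref{tensor} (the canonical isomorphism $\chi^{\lambda^{\bullet}}:\mK(\lambda^{\bullet})\xrightarrow{\sim}{_{\mathcal{A}}}M$ sending simple perverse sheaves to the canonical basis $B$), Lemma~\ref{subquotient} (the identification $\mK^{\mu}\cong {_{\mathcal{A}}}L(\mu)^{\oplus n_\mu}$ with highest weight generators coming from $S_1(\mu)$), and the short exact sequence
\begin{equation*}
0\to \mK^{>\mu}\to \mK^{\geqslant\mu}\to \mK^{\mu}\to 0
\end{equation*}
built into the Verdier quotient definitions. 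The base case $\mu=\sum_l\lambda^l$ is trivial: then only $\nu=0$ contributes and the constant sheaf maps to $v_{\lambda^1}\otimes\cdots\otimes v_{\lambda^N}$.

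For the inductive step, assume parts~(1)--(3) for every $\mu'>\mu$. Part~(1) for $\mu$ follows immediately: since $\mathbf{E}^{>\mu}$ is the finite union $\bigcup_{\mu'>\mu}\mathbf{E}^{\geqslant\mu'}$, every nonzero simple perverse sheaf in $\mL^{>\mu}$ lies in some $\mL^{\geqslant\mu'}$ with $\mu'>\mu$, and assembling the inductive hypotheses yields $\chi^{\lambda^{\bullet}}(\mK^{>\mu})={_{\mathcal{A}}}M[>\mu]$ with basis $B\cap M[>\mu]$.

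For parts~(2) and~(3), I consider the composite
\begin{equation*}
\alpha:\mK^{\geqslant\mu}\hookrightarrow\mK(\lambda^{\bullet})\xrightarrow{\chi^{\lambda^{\bullet}}}{_{\mathcal{A}}}M\twoheadrightarrow{_{\mathcal{A}}}(M/M[>\mu]).
\end{equation*}
By part~(1), $\alpha$ vanishes on $\mK^{>\mu}$ and descends to $\bar{\alpha}:\mK^{\mu}\to{_{\mathcal{A}}}(M/M[>\mu])$; injectivity of $\bar{\alpha}$ is immediate since $\chi^{\lambda^{\bullet}}$ is an isomorphism and $(\chi^{\lambda^{\bullet}})^{-1}({_{\mathcal{A}}}M[>\mu])=\mK^{>\mu}$ by part~(1). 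By Lemma~\ref{subquotient}, $\mK^{\mu}$ is ${_{\mathcal{A}}}\mathbf{U}^{-}$-generated by the weight-$\mu$ images of sheaves in $S_1(\mu)$, and the weight-space identity $(M/M[>\mu])_\mu=M[\mu]_\mu$ (which uses that $L(\mu')_\mu\neq 0$ forces $\mu'\geqslant\mu$) forces these generators into the $\mathbf{U}$-submodule ${_{\mathcal{A}}}(M[\geqslant\mu]/M[>\mu])\subseteq {_{\mathcal{A}}}(M/M[>\mu])$, so $\bar{\alpha}(\mK^{\mu})\subseteq {_{\mathcal{A}}}(M[\geqslant\mu]/M[>\mu])$.

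Surjectivity of $\bar{\alpha}$ amounts to the multiplicity equality $n_\mu=m_\mu$, where $m_\mu$ is the multiplicity of $L(\mu)$ in $M$, and is obtained by a dimension count at $\nu=wt^{-1}(\mu)$: the key observation is that the choice $\mathbf{V}''=\mathbf{V}$ is always admissible in the definition of $\mathbf{E}^{\geqslant\mu}$ at this $\nu$, so $\mathbf{E}^{\geqslant\mu}_{\mathbf{V},\mathbf{W}^{\bullet},H^{(N)}}$ fills the entire doubled moduli space and $\mL^{\geqslant\mu}_\nu=\mL_\nu$; combining $\dim\mK_\nu=\dim{_{\mathcal{A}}}M_\mu$ from Theorem~\ref{tensor} with the inductive equality $\dim\mK^{>\mu}_\nu=\dim{_{\mathcal{A}}}M[>\mu]_\mu$ gives
\begin{equation*}
n_\mu=\dim\mK^{\mu}_\nu=\dim{_{\mathcal{A}}}M_\mu-\dim{_{\mathcal{A}}}M[>\mu]_\mu=\dim{_{\mathcal{A}}}M[\mu]_\mu=m_\mu.
\end{equation*}
This establishes part~(3); part~(2) follows from parts~(1) and~(3) via the five lemma, and the canonical-basis matchings in all three parts then follow from Theorem~\ref{tensor} combined with the dimension identities just established. \emph{The main obstacle to anticipate is the weight-space analysis confining $\bar{\alpha}(\mK^{\mu})$ to $M[\mu]$} rather than allowing it to leak into the $\mu$-weight components of other $L(\mu')$ appearing in $M/M[>\mu]$; this rests on the clean identification $(M/M[>\mu])_\mu=M[\mu]_\mu$, and once it is in hand, the dimensional count at $\nu=wt^{-1}(\mu)$ finishes the job cleanly.
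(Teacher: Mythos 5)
Your proposal is correct and follows essentially the same downward-induction strategy as the paper: the same base case $\mu = \sum_l\lambda^l$, the same structural inputs (the exact sequence $0 \to \mK^{>\mu} \to \mK^{\geqslant\mu} \to \mK^{\mu} \to 0$, Lemma~\ref{subquotient}, and the observation that $\mD^{\geqslant\mu}_{\nu} = \mD_{\nu}$ whenever $\nu \leqslant wt^{-1}(\mu)$). The two arguments diverge only in how the equality --- rather than mere inclusion --- is pinned down, i.e.\ how $n_{\mu} = m_{\mu}$ is produced. The paper establishes, for $\mu' < \mu$, that the multiplicity of $L(\mu)$ in $\mK^{\geqslant\mu'}$ stabilizes at $n_{\mu}$ (because the weight spaces of $\mK^{\geqslant\mu}$, $\mK^{\geqslant\mu'}$, $\mK^{>\mu'}$ agree for $\nu\leqslant wt^{-1}(\mu)$), and then identifies $m_{\mu}$ by observing $\mK^{\geqslant 0}$ coincides with $\mK(\lambda^{\bullet})$. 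You instead settle surjectivity directly inside the inductive step via a dimension count at $\nu = wt^{-1}(\mu)$: since $\mathbf{V}'' = \mathbf{V}$ is always admissible there, $\mL^{\geqslant\mu}_{\nu} = \mL_{\nu}$, and the inductive part~(1) gives $\dim\mK^{>\mu}_{\nu} = \dim {_{\mathcal{A}}}M[>\mu]_{\mu}$, forcing $n_{\mu} = \dim M[\mu]_{\mu} = m_{\mu}$. This local dimension count is arguably tidier than the paper's terminal appeal to $\mu = 0$. Two places you should be a little more explicit in the final write-up: (i) the assertion that every nonzero simple perverse sheaf in $\mL^{>\mu}$ lies in a single $\mL^{\geqslant\mu'}$ with $\mu' > \mu$ (so that $\mK^{>\mu} = \sum_{\mu'>\mu}\mK^{\geqslant\mu'}$) is not automatic from $\mathbf{E}^{>\mu}$ being a finite union of closed sets --- the paper asserts it too, and it follows from Lemma~\ref{indlemma} by inducing down via $\pi_{i,t}$, but it deserves a sentence; (ii) your five lemma step needs the prior observation that $\chi^{\lambda^{\bullet}}(\mK^{\geqslant\mu}) \subseteq {_{\mathcal{A}}}M[\geqslant\mu]$, which follows because $\mK^{\geqslant\mu}$ has only composition factors $L(\mu')$, $\mu'\geqslant\mu$, and $\chi^{\lambda^{\bullet}}$ preserves isotypic components.
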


\begin{proof}
	Since morphisms (\ref{ig}) (\ref{ige}) and (\ref{q}) send the image of a simple perverse sheaf to itself or zero, they are morphisms of based module. Hence it suffices to show the  Grothendieck groups are isomorphic to the corresponding based modules. 
	
	If $\mu = \sum_{1\leqslant l \leqslant N} \lambda^{l}$, then $\mD^{> \mu}_{\nu}(\lambda^{\bullet})$ is empty and $\mL^{\geqslant \mu}_{\nu}(\lambda^{\bullet})= \mL^{\mu}_{\nu}(\lambda^{\bullet})$. By Lemma \ref{subquotient}, we can see that $$\mK^{\geqslant\mu}(\lambda^{\bullet}) = \mK^{\mu}(\lambda^{\bullet}) \cong {_{\mathcal{A}}L}( \sum_{1\leqslant l \leqslant N} \lambda^{l})= {_{\mathcal{A}}M}[\geqslant \sum_{1\leqslant l \leqslant N} \lambda^{l}] .$$
	
	We assume that for any $\mu' > \mu$, $\mK^{\geqslant \mu'}(\lambda^{\bullet})$ is canonically  isomorphic to a submodule of the module $_{\mathcal{A}} M[\geqslant  \mu']$ via  $\chi^{\lambda^{\bullet}}$. Since $\mK^{> \mu}(\lambda^{\bullet})=\sum_{\mu'>\mu}\mK^{\geqslant \mu'}(\lambda^{\bullet})$ in $\mK(\lambda^{\bullet})$, we can see that $\mK^{> \mu}(\lambda^{\bullet})$ is canonically  isomorphic to a submodule of the module $_{\mathcal{A}} M[> \mu]$ via  $\chi^{\lambda^{\bullet}}$.
	
	Notice that the inclusion functor and the localization functor induces a short exact sequence
	$$ 0 \rightarrow \mK^{> \mu}(\lambda^{\bullet}) \rightarrow \mK^{\geqslant  \mu}(\lambda^{\bullet}) \rightarrow \mK^{ \mu}(\lambda^{\bullet}) \rightarrow 0. $$
	By Lemma \ref{subquotient} and the exact sequence above, the Grothendieck group $\mK^{\geqslant  \mu}(\lambda^{\bullet})$ decomposes as a direct sum of highest weight modules with the highest weight $\geqslant \mu$, hence it is canonically isomorphic to  a submodule of the module $_{\mathcal{A}} M[\geqslant \mu]$ via  $\chi^{\lambda^{\bullet}}$. Finally, we can prove that for any dominant weight $\mu \leqslant \sum_{1\leqslant l \leqslant N} \lambda^{l}$ such that $wt^{-1}(\mu)$ exists, $\mK^{\geqslant  \mu}(\lambda^{\bullet})$ is canonically isomorphic to  a submodule of the module $_{\mathcal{A}} M[\geqslant \mu]$. 
	
	For $\mu > \mu' $, we claim that the multiplicity of $L(\mu)$ in $\mK^{\geqslant  \mu'}(\lambda^{\bullet})$ or $\mK^{> \mu'}(\lambda^{\bullet})$ is exactly $n_{\mu}$ in Lemma \ref{subquotient}. Notice that for any $\nu \leqslant wt^{-1}(\mu)<wt^{-1}(\mu')$, we have $\mD^{\geqslant \mu}_{\nu}(\lambda^{\bullet})=\mD^{\geqslant \mu'}_{\nu}(\lambda^{\bullet})=\mD^{> \mu'}_{\nu}(\lambda^{\bullet})=\mD_{\nu}(\lambda^{\bullet}).$
	Consider the inclusion maps $$\mK^{\geqslant \mu} \rightarrow \mK^{\geqslant  \mu'}(\lambda^{\bullet}),$$
	$$\mK^{\geqslant \mu} \rightarrow \mK^{>  \mu'}(\lambda^{\bullet}),$$
	the argument above implies that the quotients of these maps can not contain direct summands isomorphic to $L(\mu'')$ with $\mu'' \geqslant \mu$. In particular, the multiplicity of $L(\mu)$ in $\mK^{\geqslant  \mu'}(\lambda^{\bullet})$ or $\mK^{> \mu'}(\lambda^{\bullet})$ is equal to the  multiplicity of $L(\mu)$ in $\mK^{\geqslant  \mu}(\lambda^{\bullet})$, which is $n_{\mu}$ by the short exact sequence above.
	
	Notice that when $wt(\nu) \geqslant 0$, $\mD_{\nu}^{\geqslant 0}(\lambda^{\bullet})$ is the whole triangulated category, hence for any $\mu \geqslant 0$, the $\mu$-weight spaces of  $\mK^{\geqslant  0}(\lambda^{\bullet})$ and $\mK(\lambda^{\bullet})$  are the same. In particular, $\mK^{\geqslant  0}(\lambda^{\bullet})=\mK(\lambda^{\bullet})$ is isomorphic to $_{\mathcal{A}} M={_{\mathcal{A}}M}[\geqslant 0]$.  From the proof, we can see that the inclusion $\chi^{\lambda^{\bullet}} (\mK^{\geqslant  \mu}(\lambda^{\bullet}) )  \subseteq {_{\mathcal{A}} M}[\geqslant \mu]$ should be  $\chi^{\lambda^{\bullet}} (\mK^{\geqslant  \mu}(\lambda^{\bullet}) )  = {_{\mathcal{A}} M}[\geqslant \mu]$ for any $\mu$. We are done.
\end{proof}

In particular, the category $\mL^{0}_{\nu}(\lambda^{\bullet})$ provides a sheaf theoretic realization of coinvariants and their distinguished basis, which generalize the results in \cite{CB2}. As mentioned in \cite{MR1227098}, if we take $N=2k$ and assume  for any $1\leqslant l \leqslant N$, the weight $\lambda^{l}=\lambda$ is self dual,i.e. $L(\lambda) \cong L(\lambda)^{\ast}$, then simple perverse sheaves in $\mL^{\geqslant 0}_{\nu}(\lambda^{\bullet})$ provide a distinguished (dual)  basis for the algebra  ${\textrm{End}}_{\mathbf{U}(\mathfrak{g})}(L(\lambda)^{\otimes k})$, which is similar to Kazhdan-Lusztig bases defined in \cite{KL}.

As an corollary, we also know that the decomposition number  is determined by  perverse sheaves.

\begin{corollary}
	We have the following decomposition of $\mathbf{U}$-module 
	$$ L(\lambda^{N})\otimes L(\lambda^{N-1})\otimes \cdots\otimes L(\lambda^{1}) \cong \bigoplus_{\mu} L(\mu)^{\oplus n_{\mu}}, $$
	where $n_{\mu}$ is the number of simple perverse sheaves in $S_{1}(\mu)=\mP^{\mu,hi} \backslash (\mP^{\mu,hi} \cap \mN_{wt^{-1}(\mu)})$.
\end{corollary}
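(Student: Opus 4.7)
The plan is to deduce the corollary directly from Theorem~\ref{main1}(3) together with Lemma~\ref{subquotient}, using the standard fact that the subquotient $M[\geqslant \mu]/M[> \mu]$ of an integrable highest weight module recovers the isotypic component $M[\mu]$.

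First I would invoke Theorem~\ref{main1}(3) to identify $\mK^{\mu}(\lambda^{\bullet})$ with $_{\mathcal{A}}(M[\geqslant \mu]/M[> \mu])$ as based modules via $\chi^{\lambda^{\bullet}}$. Next I would apply Lemma~\ref{subquotient} to obtain $\mK^{\mu}(\lambda^{\bullet}) \cong {_{\mathcal{A}}L}(\mu)^{\oplus n_{\mu}}$ as $_{\mathcal{A}}\mathbf{U}$-modules, where $n_{\mu}=\#S_{1}(\mu)$. Combining these two isomorphisms yields
\[
M[\geqslant \mu]/M[> \mu] \;\cong\; L(\mu)^{\oplus n_{\mu}}
\]
as $\mathbf{U}$-modules, after extending scalars from $\mathcal{A}$ to $\mathbb{Q}(v)$.

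Then I would observe that by the definition of $M[\mu]$ as the sum of the irreducible subobjects of $M$ isomorphic to $L(\mu)$, we have the internal direct sum decomposition $M[\geqslant \mu] = M[\mu] \oplus M[> \mu]$, so that $M[\geqslant \mu]/M[> \mu] \cong M[\mu]$. Combined with the isomorphism above, this forces $M[\mu] \cong L(\mu)^{\oplus n_{\mu}}$, i.e.\ the multiplicity of $L(\mu)$ in the isotypic decomposition is exactly $n_{\mu}$. Summing over all dominant weights $\mu$ with $wt^{-1}(\mu) \in \mathbb{N}I$ (the remaining weights contribute zero by Lemma~\ref{casexiao} and Lemma~\ref{casenocompare}) gives
\[
L(\lambda^{N})\otimes L(\lambda^{N-1})\otimes \cdots\otimes L(\lambda^{1}) \;=\; \bigoplus_{\mu} M[\mu] \;\cong\; \bigoplus_{\mu} L(\mu)^{\oplus n_{\mu}}.
\]

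There is essentially no obstacle here, since all the heavy lifting has already been done: the geometric construction of the thick subcategories $\mathcal{D}^{\geqslant \mu}$, $\mathcal{D}^{> \mu}$, the verification that $\mathcal{E}_{i}^{(r)}$ and $\mathcal{F}_{i}^{(r)}$ descend to these localizations (Propositions~\ref{keyprop} and~\ref{keyprop2}), and the crystal-theoretic counting of highest weight vectors via $\mP^{\mu,hi}$ (Lemmas~\ref{caseda} and~\ref{casedeng}) together give the identification of $\mK^{\mu}(\lambda^{\bullet})$ with the $\mu$-isotypic component. The only point worth being careful about is ensuring that $n_{\mu}$ as defined by the number of nonzero simple perverse sheaves in $\mP^{\mu,hi}$ matches the combinatorial count in Lemma~\ref{subquotient}, i.e.\ that passing from $\mP^{\mu,hi}$ to $\mP^{\mu,hi}\setminus(\mP^{\mu,hi}\cap \mN_{wt^{-1}(\mu)})$ removes exactly the perverse sheaves that become zero in the localization, which is immediate from the definition of $\mL^{\mu}_{\nu}(\lambda^{\bullet})$.
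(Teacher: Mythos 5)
Your proof is correct and follows essentially the route the paper intends, since the corollary is stated immediately after Theorem~\ref{main1} with no separate argument: combining Theorem~\ref{main1}(3) (which identifies $\mK^{\mu}(\lambda^{\bullet})$ with $_{\mathcal{A}}(M[\geqslant\mu]/M[>\mu])$) with Lemma~\ref{subquotient} (which identifies $\mK^{\mu}(\lambda^{\bullet})$ with $_{\mathcal{A}}L(\mu)^{\oplus n_\mu}$), and then using the tautological decomposition $M[\geqslant\mu]=M[\mu]\oplus M[>\mu]$. One small inaccuracy at the end: the fact that dominant $\mu$ with $wt^{-1}(\mu)\notin\mathbb{N}I$ contribute nothing is not a consequence of Lemma~\ref{casexiao} or Lemma~\ref{casenocompare} (those lemmas presuppose $wt^{-1}(\mu)\in\mathbb{N}I$); it follows simply because such $\mu$ is not even a weight of $M$, so $M[\mu]=0$ automatically. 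This is a misattribution, not a gap, and the argument stands.
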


\subsection{Generalization to restricted modules}

Let $J$ be a subset of $I$ and denote the associated Levi subalgebra by $\mathfrak{l}=\mathfrak{l}_{J}$. We assume $M$ is the restriction $\mathbf{res}^{\mathfrak{g}}_{\mathfrak{l}} L(\lambda^{N})\otimes L(\lambda^{N-1})\otimes \cdots\otimes L(\lambda^{1})$ of the tensor product of $L(\lambda^{1}),L(\lambda^{2}),\cdots, L(\lambda^{N})$. Together with the canonical basis $B=B(\lambda^{\bullet})$ of $L(\lambda^{N})\otimes L(\lambda^{N-1})\otimes \cdots\otimes L(\lambda^{1})$,  it becomes a based $\mathbf{U}(\mathfrak{l})$-module. In this section, we give a geometric realization of $M[\geqslant \mu_{J}]$ and $M[> \mu_{J}]$ for given dominant integral weight $\mu_{J}$ of $\mathfrak{l}$.

\subsubsection{The thick subcategory $\mD^{\geqslant \mu_{J}},\mD^{>\mu_{J}}$ and localizations}

For any dimension vector $\nu \in \mathbb{N}I$,  we can regard  vertices in $I \backslash J$ as framing, and define its associated $\mathfrak{l}$-weight $wt_{J}(\nu)$. More precisely, we define
$$wt_{J}(\nu)=\sum_{1\leqslant l \leqslant N, j\in J}\lambda^{l}_{j} \Lambda_{j} + \sum_{h \in H,h'=j \in J,h'' \notin J}  \nu_{h''}\Lambda_{j}   -\sum_{j \in J} \nu_{j} j. $$  
In particular, we can see that $wt_{I}=wt$.

\begin{definition}
	Given a fixed dominant weight $\mu_{J}$ of $\mathfrak{l}$, we say a point $(x,\bar{x},y,z)\in \mathbf{E}_{\bV,\mathbf{W}^{\bullet},H^{(N)}}$ is dominated by a dominant weight $\geqslant \mu_{J}$, if there exists a subspace $\bV''$  of $\bV$ such that  $\bV'' \oplus \mathbf{W}^{\bullet}$ is  $(x,\bar{x},y,z)$-stable, and its dimension vector $\nu''$ satisfies the following conditions
	\begin{enumerate}
		\item[(1)] $\nu''_{i}=\nu_{i}$ for any $ i\in I$ but $i \notin J$,
		\item[(2)] $wt_{J}(\nu'')\geqslant \mu_{J}. $
	\end{enumerate}
	
	Define $\mathbf{E}^{\geqslant \mu_{J}}_{\bV,\mathbf{W}^{\bullet},H^{(N)}}$ to be the subset of $\mathbf{E}_{\bV,\mathbf{W}^{\bullet},H^{(N)}}$, which consists of those points  dominated by the dominant weight $\geqslant \mu_{J}$, and
	we also denote $\mathbf{E}^{> \mu_{J}}_{\bV,\mathbf{W}^{\bullet},H^{(N)}}=\bigcup_{\mu_{J}'>\mu_{J}} \mathbf{E}^{\geqslant \mu'}_{\bV,\mathbf{W}^{\bullet},H^{(N)}}.$
	
	We say a point $(x,\bar{x},y,z)$ in $\mathbf{E}_{\bV,\mathbf{W}^{\bullet},H^{(N)}}$ is strictly dominated by  $\mu_{J}$ if it belongs to the subset  $\mathbf{E}^{\geqslant \mu_{J}}_{\bV,\mathbf{W}^{\bullet},H^{(N)}} \backslash \mathbf{E}^{> \mu_{J}}_{\bV,\mathbf{W}^{\bullet},H^{(N)}}$
\end{definition}

By a similar argument of Lemma \ref{close}, one can prove that $\mathbf{E}^{\geqslant \mu_{J}}_{\bV,\mathbf{W}^{\bullet},H^{(N)}}$ and $\mathbf{E}^{> \mu_{J}}_{\bV,\mathbf{W}^{\bullet},H^{(N)}}$ are close subsets of $\mathbf{E}_{\bV,\mathbf{W}^{\bullet},H^{(N)}}$. In particular, we can define  thick subcategories and localizations by micro-local conditions. 

\begin{definition}
	Assume that $\mu_{J}$ is a  dominant integral weight  of $\mathfrak{l}$  and $\nu \in \mathbb{N}I$ is a dimension vector.
	\begin{enumerate}
		\item [(1)]	Let $\mD_{\nu}^{\geqslant \mu_{J}}(\lambda^{\bullet})$ be the full subcategory of $\mD_{\nu}(\lambda^{\bullet})$, which consists of objects whose singular supports are contained in $\mathbf{E}^{\geqslant \mu_{J}}_{\bV,\mathbf{W}^{\bullet},H^{(N)}}$. Similarly, let $\mD_{\nu}^{> \mu_{J}}(\lambda^{\bullet})$ be the full subcategory of $\mD_{\nu}(\lambda^{\bullet})$, which consists of objects whose singular supports are contained in $\mathbf{E}^{> \mu_{J}}_{\bV,\mathbf{W}^{\bullet},H^{(N)}}$.
		\item[(2)] 	We denote $\mP^{\geqslant \mu_{J}}_{\nu}(\lambda^{\bullet})$ and  $\mP^{>\mu_{J}}_{\nu}(\lambda^{\bullet})$ by the subsets of $\mP_{\nu}(\lambda^{\bullet})$, which consist of simple perverse sheaves in $\mD_{\nu}^{\geqslant \mu_{J}}(\lambda^{\bullet})$ and $\mD_{\nu}^{> \mu_{J}}(\lambda^{\bullet})$ respectively.
		\item [(3)] 
		We define $\mD_{\nu}^{\geqslant \mu_{J}}(\lambda^{\bullet})/\mN_{\nu}$ to be the full subcategory of $\mD_{\nu}(\lambda^{\bullet})/\mN_{\nu}$, which consists of objects isomorphic to objects of $\mD_{\nu}^{\geqslant \mu_{J}}(\lambda^{\bullet})$. Similarly, define $\mD_{\nu}^{> \mu_{J}}(\lambda^{\bullet})/\mN_{\nu}$ to be the full subcategory of $\mD_{\nu}(\lambda^{\bullet})/\mN_{\nu}$, which consists of objects isomorphic to objects of $\mD_{\nu}^{> \mu_{J}}(\lambda^{\bullet})$.
		\item [(4)] Let $\mN_{\nu}^{>\mu_{J}}$ be the thick subcategory of $\mD_{\nu}(\lambda^{\bullet})$ generated by $\mN_{\nu}$ and $\mD^{>\mu_{J}}_{\nu}(\lambda^{\bullet})$, define the localization $\mD_{\nu}(\lambda^{\bullet})/\mN_{\nu}^{>\mu_{J}}$ to be the Verdier quotient of $\mD_{\nu}(\lambda^{\bullet})$ with respect to the thick subcategory.
	\end{enumerate}
\end{definition}

We can regard $I \backslash J$ as framed vertices, then by  similar argument as  Proposition \ref{keyprop}, \ref{keyprop2} and \ref{keyprop3}, we have the following proposition.
\begin{proposition}\label{keypropg}
	For any $j\in J,r\in \mathbb{N}_{>0}$ and $\nu=rj+\nu''\in\mathbb{N}I$, the functors 
	$$\mathcal{E}^{(r)}_{j}:\mD_{\nu}^{\geqslant \mu_{J}}(\lambda^{\bullet})/\mN_{\nu} \rightarrow \mD_{\nu''}^{\geqslant \mu_{J}}(\lambda^{\bullet})/\mN_{\nu''},$$
	$$\mathcal{E}^{(r)}_{j}:\mD_{\nu}^{> \mu_{J}}(\lambda^{\bullet})/\mN_{\nu} \rightarrow \mD_{\nu''}^{> \mu_{J}}(\lambda^{\bullet})/\mN_{\nu''}$$
	and
	$$\mathcal{F}^{(r)}_{j}:\mD_{\nu''}^{\geqslant \mu_{J}}(\lambda^{\bullet})/\mN_{\nu''} \rightarrow \mD_{\nu}^{\geqslant \mu_{J}}(\lambda^{\bullet})/\mN_{\nu},$$
	$$\mathcal{F}^{(r)}_{j}:\mD_{\nu''}^{> \mu_{J}}(\lambda^{\bullet})/\mN_{\nu''} \rightarrow \mD_{\nu}^{> \mu_{J}}(\lambda^{\bullet})/\mN_{\nu}$$
	are well-defined. 
	They also induce well-defined functors
	$$\mathcal{E}^{(r)}_{j}:\mD_{\nu}(\lambda^{\bullet})/\mN_{\nu}^{> \mu_{J}} \rightarrow \mD_{\nu''}(\lambda^{\bullet})/\mN_{\nu''}^{>\mu_{J}},$$
	$$\mathcal{F}^{(r)}_{j}:\mD_{\nu''}(\lambda^{\bullet})/\mN_{\nu''}^{>\mu_{J}} \rightarrow \mD_{\nu}(\lambda^{\bullet})/\mN_{\nu}^{>\mu_{J}}.$$
\end{proposition}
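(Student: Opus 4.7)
\textbf{Proof proposal for Proposition \ref{keypropg}.} The plan is to reduce to the already proved Propositions \ref{keyprop}, \ref{keyprop2}, and \ref{keyprop3} by reinterpreting the vertices of $I \setminus J$ as additional framing. Given $\nu \in \mathbb{N}I$ with the splitting $\nu = \nu_J + \nu_{I\setminus J}$ induced by $I = J \sqcup (I\setminus J)$, I will introduce an auxiliary framed quiver $\tilde{Q}$ whose unframed vertex set is $J$, whose arrows are the edges of $Q^{(N)}$ with at least one endpoint in $J$, and whose framing data is $\tilde{\mathbf{W}} = \mathbf{W}^{\bullet} \oplus \bigoplus_{i\in I\setminus J}\mathbf{V}_i$. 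Since the cotangent moduli space is additive over arrows, one has a natural product decomposition
\begin{equation*}
\mathbf{E}_{\mathbf{V},\mathbf{W}^{\bullet},H^{(N)}} \;\cong\; \mathbf{E}_{\mathbf{V}_J,\tilde{\mathbf{W}},\tilde{H}} \;\times\; F_{I\setminus J},
\end{equation*}
where $\tilde{H}$ collects the edges of $\tilde{Q}$ and their duals and $F_{I\setminus J}$ is the linear space of arrows whose two endpoints both lie in $I\setminus J$ together with their duals.

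First I will verify that under this decomposition, $\mathbf{E}^{\geqslant \mu_J}_{\mathbf{V},\mathbf{W}^{\bullet},H^{(N)}}$ corresponds to $\mathbf{E}^{\geqslant \mu_J}_{\mathbf{V}_J,\tilde{\mathbf{W}},\tilde{H}} \times F_{I\setminus J}$ in the sense of the micro-local definition of Section 4.1 applied to $\tilde{Q}$ with the dominant weight $\mu_J$. The condition $\nu''_i=\nu_i$ for $i\in I\setminus J$ in Definition 4.11 forces $\mathbf{V}''\supseteq \mathbf{V}_{I\setminus J}$, which is precisely the stability requirement when $\mathbf{V}_{I\setminus J}$ is treated as framing of $\tilde{Q}$. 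Moreover, the inequality $wt_J(\nu'')\geqslant \mu_J$ is, by the explicit formula for $wt_J$, exactly the dominance condition for $\mu_J$ with respect to the Cartan datum of $\tilde{Q}$; the extra summands $\nu_{h''}\Lambda_j$ for $h'=j\in J$, $h''\notin J$ account for the new framing arrows. The same identification is then inherited by $\mathbf{E}^{> \mu_J}$.

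Next I will observe that for $j \in J$, the defining diagrams (\ref{indd}), (\ref{resd}), (\ref{UD}) for $\mathcal{E}^{(r)}_j$ and $\mathcal{F}^{(r)}_j$ only involve arrows incident to $j$, all of which are edges of $\tilde{Q}$. Hence their cotangent correspondences factor as the corresponding correspondences for $\tilde{Q}$ on the first factor times the identity on $F_{I\setminus J}$. The same is true for the Fourier--Sato transforms at $j \in J$ needed to define $\mathcal{E}^{(r)}_j$, which exist by the mutation-equivalence criterion of Section 3.3 applied at a $J$-vertex. The thick subcategory $\mN_\nu$ is also compatible with the decomposition: the Fourier transforms at sources $i\in J$ preserve the product, and for sources $i\in I\setminus J$ the corresponding condition (\ref{thick}) only involves arrows out of $i$ and framing maps $y^l_i$, so the resulting subcategory factors accordingly.

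With these two compatibilities in place, the singular-support analyses of Lemmas \ref{key lemma} and \ref{key lemma}-type arguments used in the proofs of Propositions \ref{keyprop} and \ref{keyprop2} go through verbatim on the first factor, and the Verdier-quotient argument of Proposition \ref{keyprop3} formally gives the induced functors on $\mD_{\nu}(\lambda^{\bullet})/\mN_\nu^{>\mu_J}$. The main obstacle I expect is the bookkeeping of the identification: one has to confirm that the construction of $\mathcal{E}^{(r)}_j$ (via a Fourier transform, Grassmannian pullback and pushforward, and an inverse Fourier transform) really is invariant under the product decomposition, and in particular that the inverse Fourier transform lands back in the correct factored cotangent bundle. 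Once this compatibility is carefully set up, everything else is a direct application of the three prior propositions to $\tilde{Q}$.
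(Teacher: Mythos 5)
Your proposal is correct and is a faithful elaboration of the paper's one-line proof, which simply says to regard $I\setminus J$ as framed vertices and repeat the arguments of Propositions \ref{keyprop}, \ref{keyprop2} and \ref{keyprop3}. The explicit construction of the auxiliary framed quiver $\tilde{Q}$ with the product decomposition of $\mathbf{E}_{\mathbf{V},\mathbf{W}^{\bullet},H^{(N)}}$, and the observation that the $j$-th cotangent correspondences and Fourier transforms act trivially on the $F_{I\setminus J}$ factor, is precisely what the paper's hint intends.
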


\begin{definition}
	For any dominant integral weight $\mu_{J}$ of $\mathfrak{l}$ and dimension vector $\nu \in \mathbb{N}I$, we define the following localizations. 
	\begin{enumerate}
		\item [(1)] 	Let $\mL^{\geqslant \mu_{J}}_{\nu}(\lambda^{\bullet})$ be the full subcategory of $\mD_{\nu}^{\geqslant \mu_{J}}(\lambda^{\bullet})/\mN_{\nu}$, which consists of objects isomorphic to objects of $\mQ_{\nu}(\lambda^{\bullet})$.
		\item [(2)]  Let $\mL^{> \mu_{J}}_{\nu}(\lambda^{\bullet})$ be the full subcategory of $\mD_{\nu}^{> \mu_{J}}(\lambda^{\bullet})/\mN_{\nu}$, which consists of objects isomorphic to objects of $\mQ_{\nu}(\lambda^{\bullet})$. 
		\item [(3)] Let $\mL^{\mu_{J}}_{\nu}(\lambda^{\bullet})$ be the full subcategory of $\mD_{\nu}(\lambda^{\bullet})/\mN_{\nu}^{>\mu_{J}}$, which consists of objects isomorphic to objects of $\mL^{\geqslant \mu_{J}}_{\nu}(\lambda^{\bullet})$.
	\end{enumerate}
\end{definition}
\begin{corollary}\label{keycog}
	For any $j\in J,r\in \mathbb{N}_{>0}$ and $\nu=rj+\nu''\in\mathbb{N}I$, the functors $\mathcal{E}^{(r)}_{j}$ and $\mathcal{F}^{(r)}_{j}$ restrict to the following functors between localizations
	$$\mathcal{E}^{(r)}_{j}:\mL_{\nu}^{\geqslant \mu_{J}}(\lambda^{\bullet}) \rightarrow \mL_{\nu''}^{\geqslant \mu_{J}}(\lambda^{\bullet}),$$
	$$\mathcal{F}^{(r)}_{j}:\mL_{\nu''}^{\geqslant \mu_{J}}(\lambda^{\bullet}) \rightarrow \mL_{\nu}^{\geqslant \mu_{J}}(\lambda^{\bullet}),$$
	$$\mathcal{E}^{(r)}_{j}:\mL_{\nu}^{> \mu_{J}}(\lambda^{\bullet}) \rightarrow \mL_{\nu''}^{> \mu_{J}}(\lambda^{\bullet}),$$
	$$\mathcal{F}^{(r)}_{j}:\mL_{\nu''}^{> \mu_{J}}(\lambda^{\bullet}) \rightarrow \mL_{\nu}^{> \mu_{J}}(\lambda^{\bullet}),$$
		$$\mathcal{E}^{(r)}_{j}:\mL^{\mu_{J}}_{\nu}(\lambda^{\bullet}) \rightarrow \mL^{\mu_{J}}_{\nu''}(\lambda^{\bullet}),$$
	$$\mathcal{F}^{(r)}_{j}:\mL^{\mu_{J}}_{\nu''}(\lambda^{\bullet}) \rightarrow \mL^{\mu_{J}}_{\nu}(\lambda^{\bullet}).$$
\end{corollary}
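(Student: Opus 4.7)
The plan is to combine two ingredients already in place. First, Proposition \ref{keypropg} supplies the well-definedness of $\mathcal{E}^{(r)}_{j}$ and $\mathcal{F}^{(r)}_{j}$ on the three Verdier quotients $\mD_{\nu}(\lambda^{\bullet})/\mN_{\nu}$, $\mD^{\geqslant \mu_{J}}_{\nu}(\lambda^{\bullet})/\mN_{\nu}$, $\mD^{>\mu_{J}}_{\nu}(\lambda^{\bullet})/\mN_{\nu}$, and $\mD_{\nu}(\lambda^{\bullet})/\mN_{\nu}^{>\mu_{J}}$. Second, all that remains is to check that these functors send objects isomorphic to objects of $\mQ_{\nu}(\lambda^{\bullet})$ to objects isomorphic to objects of $\mQ_{\nu''}(\lambda^{\bullet})$ inside the relevant quotient. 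Once both points are settled, the six claimed functors on $\mL^{\geqslant \mu_{J}}_{\nu}$, $\mL^{>\mu_{J}}_{\nu}$, and $\mL^{\mu_{J}}_{\nu}$ are simply the restrictions of the functors from Proposition \ref{keypropg}. So the real content is preservation of the Lusztig class.

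For $\mathcal{F}^{(r)}_{j}$, this is immediate from the definition via Lusztig's induction with a constant sheaf as input: applied to a semisimple complex of flag type $(\boldsymbol{\nu}^{1}\boldsymbol{d}^{1}\cdots\boldsymbol{\nu}^{N}\boldsymbol{d}^{N})$, the output is, up to shift, a direct summand of the complex of flag type $((rj),\boldsymbol{\nu}^{1}\boldsymbol{d}^{1}\cdots\boldsymbol{\nu}^{N}\boldsymbol{d}^{N})$, and so lies in $\mQ_{\nu}(\lambda^{\bullet})$ even before passing to any quotient.

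For $\mathcal{E}^{(r)}_{j}$ the argument is subtler and I would follow the scheme of \cite[Proposition 3.13]{fang2023tensor}, also invoked in the unframed corollary just before Section 4.3: induct on the total length of the flag type. Any simple object of $\mQ_{\nu}(\lambda^{\bullet})$ appears, up to shifts and summands, as $\mathcal{F}^{(a)}_{k}(L')$ with $L'$ of strictly smaller length, where for a framed vertex $k = i^{l}$ one treats $i^{l}$ as unframed and uses $\mathcal{F}^{(d^{l}_{i^{l}})}_{i^{l}}$. The commutation relations of Propositions \ref{c1} and \ref{c2} then compute $\mathcal{E}^{(r)}_{j}\mathcal{F}^{(a)}_{k}(L')$ in the localization: when $k \neq j$ (which always holds if $k = i^{l}$, since $j \in J \subseteq I$ while $i^{l} \notin I$) the two functors commute, while when $k = j$ the $[\mathcal{E}_{j},\mathcal{F}_{j}]$-relation together with the divided-power identity rewrite the composition as a sum of $\mathcal{F}^{(\ast)}_{j}\mathcal{E}^{(\ast)}_{j}(L')$ plus identity shifts. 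In both cases the inductive hypothesis places $\mathcal{E}^{(\ast)}_{j}(L')$ in the Lusztig class, and the preservation by $\mathcal{F}^{(\ast)}_{j}$ just established closes the step.

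The main obstacle is ensuring that the induction genuinely reduces the flag-type length even in the presence of framed contributions $\boldsymbol{d}^{l}$; this is exactly what the commutation $\mathcal{E}_{j}\mathcal{F}_{i^{l}} \cong \mathcal{F}_{i^{l}}\mathcal{E}_{j}$ delivers, and the reason the identification of framed vertices with unframed ones is harmless. Once this bookkeeping is in place, every intermediate term remains in $\mQ_{\nu}(\lambda^{\bullet})$, and Proposition \ref{keypropg} automatically keeps all three support conditions intact, so the six desired restrictions follow simultaneously.
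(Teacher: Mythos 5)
Your argument is correct and follows the same route the paper relies on: Proposition \ref{keypropg} gives the support preservation on the Verdier quotients, $\mathcal{F}^{(r)}_{j}$ preserves the Lusztig class directly from its definition via induction, and $\mathcal{E}^{(r)}_{j}$ preserves it by the flag-type-length induction of \cite[Proposition 3.13]{fang2023tensor} using the commutation relations (treating framed vertices as unframed), exactly as in the proof of the analogous corollary in Section 4.2. The paper states this corollary without a separate proof precisely because the argument is the one you spell out.
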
 

\subsubsection{The structure of the Grothendieck groups}

With the action of functors $\mathcal{E}^{(r)}_{j}$, $\mathcal{F}^{(r)}_{j}$ and $\mathcal{K}^{\pm}_{j}$, $j \in J, r \in \mathbb{N}_{>0}$, the Grothendieck groups
$$\mK^{\mu_{J}}(\lambda^{\bullet}) =\bigoplus_{\nu \in \mathbb{N}I}\mK(\mL^{\mu_{J}}_{\nu}(\lambda^{\bullet})),$$
$$\mK^{>\mu_{J}}(\lambda^{\bullet}) =\bigoplus_{\nu \in \mathbb{N}I}\mK(\mL^{>\mu_{J}}_{\nu}(\lambda^{\bullet})),$$
$$\mK^{\geqslant \mu_{J}}(\lambda^{\bullet}) =\bigoplus_{\nu \in \mathbb{N}I}\mK(\mL^{\geqslant\mu_{J}}_{\nu}(\lambda^{\bullet}))$$
become  integrable $_{\mathcal{A}}\mathbf{U}(\mathfrak{l})$-modules. Then the localization functor and inclusion functor induce  morphisms of $_{\mathcal{A}}\mathbf{U}(\mathfrak{l})$-modules
\begin{equation}\label{igg}
	\mK^{>\mu_{J}}(\lambda^{\bullet}) \rightarrow \mK(\lambda^{\bullet}),
\end{equation}
\begin{equation}\label{igeg}
	\mK^{\geqslant \mu_{J}}(\lambda^{\bullet}) \rightarrow \mK(\lambda^{\bullet}),
\end{equation}
and surjective morphism
\begin{equation}\label{qg}
	\mK^{\geqslant \mu_{J}}(\lambda^{\bullet}) \rightarrow \mK^{\mu}(\lambda^{\bullet}) ,
\end{equation}
and a short exact sequence of  $_{\mathcal{A}}\mathbf{U}(\mathfrak{l})$-modules
\begin{equation}\label{exactg} 
	0 \rightarrow \mK^{> \mu_{J}}(\lambda^{\bullet}) \rightarrow \mK^{\geqslant  \mu_{J}}(\lambda^{\bullet}) \rightarrow \mK^{ \mu_{J}}(\lambda^{\bullet}) \rightarrow 0.
\end{equation}

Now we can state our second main result. It is an analogy of Theorem \ref{main1} and we will prove this theorem in the rest of this section.  
\begin{theorem}\label{main2}
	Let	$M$ be the restricted $\mathbf{U}(\mathfrak{l})$-module  $\mathbf{res}^{\mathfrak{g}}_{\mathfrak{l}}L(\lambda^{1})\otimes \cdots\otimes L(\lambda^{N})$ and $B$ be its canonical basis.
	Given any dominant weight $\mu_{J}$ of $\mathfrak{l}$, we have the following statements:
	
	(1) Under the composition of $\chi^{\lambda^{\bullet}}$ in Theorem \ref{tensor}(2) and the morphism defined by formula (\ref{igg}), the Grothendieck group $\mK^{>\mu_{J}}(\lambda^{\bullet})$ is canonically  isomorphic to the module $_{\mathcal{A}} M[> \mu_{J}]$, and there is a commutative diagram
	\[
	\xymatrix{
		\mK^{>\mu_{J}}(\lambda^{\bullet}) \ar[r]^{(\ref{igg})} \ar[d]_{\chi^{\lambda^{\bullet}}} &  \mK(\lambda^{\bullet}) \ar[d]^{\chi^{\lambda^{\bullet}}} \\
		_{\mathcal{A}}M[> \mu_{J}] \ar[r] &  _{\mathcal{A}}M.
	}
	\]
	Moreover, the images of nonzero simple perverse sheaves in $\mL^{>\mu_{J}}_{\nu}(\lambda^{\bullet}),\nu \in \mathbb{N}I$ provide an $\mathcal{A}$-basis, which coincides with $B \cap M[> \mu_{J}]$.
	
	(2) Under the composition of $\chi^{\lambda^{\bullet}}$ and the morphism defined by formula (\ref{igeg}), the Grothendieck group $\mK^{\geqslant \mu_{J}}(\lambda^{\bullet})$ is canonically  isomorphic to the module $_{\mathcal{A}} M[\geqslant  \mu_{J}]$, and  there is a commutative diagram
	\[
	\xymatrix{
		\mK^{\geqslant\mu_{J}}(\lambda^{\bullet}) \ar[r]^{(\ref{igeg})} \ar[d]_{\chi^{\lambda^{\bullet}}} &  \mK(\lambda^{\bullet}) \ar[d]^{\chi^{\lambda^{\bullet}}} \\
		_{\mathcal{A}}M[\geqslant \mu_{J}] \ar[r] &  _{\mathcal{A}}M.
	}
	\]
	
	Moreover, the images of nonzero simple perverse sheaves in $\mL^{\geqslant \mu_{J}}_{\nu}(\lambda^{\bullet}),\nu \in \mathbb{N}I$ provide an $\mathcal{A}$-basis, which coincides with $B \cap M[\geqslant \mu_{J}]$.
	
	(3)  The canonical isomorphism $\chi^{\lambda^{\bullet}}$ induced a canonical isomorphism  $$\mK^{ \mu_{J}}(\lambda^{\bullet}) \rightarrow  _{\mathcal{A}} (M[\geqslant  \mu_{J}]/M[>\mu_{J}]),$$ and there is a commutative diagram
	\[
	\xymatrix{
		\mK^{\geqslant\mu_{J}}(\lambda^{\bullet}) \ar[r]^{(\ref{qg})} \ar[d]_{\chi^{\lambda^{\bullet}}} &  \mK^{\mu_{J}}(\lambda^{\bullet}) \ar[d]^{\chi^{\lambda^{\bullet}}} \\
		_{\mathcal{A}}M[\geqslant \mu_{J}] \ar[r]^-{\pi_{\geqslant \mu_{J}}} &  _{\mathcal{A}}(M[\geqslant \mu_{J}]/M[> \mu_{J}]).
	}
	\]
	
	Moreover, the images of nonzero simple perverse sheaves in $\mL^{ \mu_{J}}_{\nu}(\lambda^{\bullet}),\nu \in \mathbb{N}I$ provide an $\mathcal{A}$-basis, which coincides with $\pi_{\geqslant \mu_{J}}(B \cap M[\geqslant \mu_{J}] )$.
\end{theorem}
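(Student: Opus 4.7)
\medskip

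\noindent\textbf{Proof proposal for Theorem \ref{main2}.}
The plan is to run the argument of Theorem \ref{main1} in parallel, treating the vertices in $I\setminus J$ as additional ``framing'' and working only with the functors $\mathcal{E}^{(r)}_{j},\mathcal{F}^{(r)}_{j},\mathcal{K}^{\pm}_{j}$ for $j\in J$. The morphisms (\ref{igg}), (\ref{igeg}) and (\ref{qg}) are already $_{\mathcal{A}}\mathbf{U}(\mathfrak{l})$-linear and, since they either send a nonzero simple perverse sheaf to itself or kill it, will automatically be morphisms of based modules once we know the target is a based module. Therefore the real content is to identify each of $\mK^{\geqslant\mu_J}(\lambda^{\bullet})$, $\mK^{>\mu_J}(\lambda^{\bullet})$ and $\mK^{\mu_J}(\lambda^{\bullet})$ with the corresponding sub/quotient of $_{\mathcal{A}}M$ as $_{\mathcal{A}}\mathbf{U}(\mathfrak{l})$-modules. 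I will proceed by downward induction on $\mu_J$ in the dominance order, starting from the (finitely many) maximal $\mu_J$ for which $\mD^{>\mu_J}_{\nu}(\lambda^{\bullet})$ is empty.

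The key step is to establish the $J$-analogs of Lemmas \ref{indlemma}--\ref{casedeng}. Fix $j\in J$ and $\nu=tj+\nu''$. The commutation between $\mathcal{F}^{(t)}_{j}$ and the singular-support condition $SS(L)\subseteq \mathbf{E}^{\geqslant \mu_J}_{\bV,\mathbf{W}^{\bullet},H^{(N)}}$ is already contained in Proposition \ref{keypropg}; the converse direction (reducing via $\mathbf{Res}^{\bV\oplus \mathbf{W}^{\bullet}}_{\bV',\bV''\oplus \mathbf{W}^{\bullet}}$ along a $J$-vertex) follows verbatim from the cotangent-correspondence argument of Lemma \ref{indlemma}, since the witnessing subspace $\mathbf{T}\cap \bV''$ still satisfies $\mathbf{T}'_i=\mathbf{T}_i$ for $i\notin J$ and $wt_J(\dim\mathbf{T}')\geqslant \mu_J$. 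The dichotomy from Lemmas \ref{casexiao} and \ref{casenocompare} carries over once $wt^{-1}(\mu)$ is replaced by the set of $\nu''$ with $wt_J(\nu'')=\mu_J$ and $\nu''_i=\nu_i$ for $i\notin J$; it produces a unique ``minimal $J$-dimension'' $\nu^{\min}_J(\mu_J,\nu_{I\setminus J})$ recording where $\geqslant\mu_J$ first collapses into $>\mu_J$.

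The main obstacle, and the replacement for Lemma \ref{caseda}, is the following statement: if $\nu_j>\nu^{\min}_J(\mu_J,\nu_{I\setminus J})_j$ for some $j\in J$ and $L\in\mP^{\geqslant\mu_J}_{\nu}(\lambda^{\bullet})$, then $t_{j'}(L)>0$ for some $j'\in J$. By Proposition \ref{Cryt} it suffices to check this for the associated irreducible components $\Psi(L)\subseteq\Lambda_{\bV,\mathbf{W}^{\bullet}}^{\geqslant\mu_J}$. Given such a point $(x,\bar{x},y,z)$ with witnessing subspace $\bV''\oplus\mathbf{W}^{\bullet}$ of dimension $\leqslant\nu^{\min}_J$, the quotient representation is nilpotent, supported entirely on $J$ (by the constraint $\nu''_i=\nu_i$ for $i\notin J$) and of nonzero $J$-dimension, so by \cite[Lemma 12.6]{MR1088333} admits a simple quotient at some $j'\in J$; lifting this simple quotient gives an invariant subspace witnessing $\epsilon_{j'}(\Psi(L))>0$, hence $t_{j'}(L)>0$. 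The analog of Lemma \ref{casedeng} then follows in the same way from Proposition \ref{Cryt}, and defines the ``$J$-highest'' subset $\mP^{\mu_J,hi}$ of simple perverse sheaves living over the minimal dimension.

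Granted these, the $J$-analog of Lemma \ref{subquotient} asserts that $\mK^{\mu_J}(\lambda^{\bullet})$ is an integrable $_{\mathcal{A}}\mathbf{U}(\mathfrak{l})$-module, generated as a $_{\mathcal{A}}\mathbf{U}^{-}(\mathfrak{l})$-module by the images of the simple perverse sheaves in $\mP^{\mu_J,hi}\setminus (\mP^{\mu_J,hi}\cap\mN_{\ast})$, each of which is an $\mathfrak{l}$-highest-weight vector of weight $\mu_J$; hence $\mK^{\mu_J}(\lambda^{\bullet})\cong {_{\mathcal{A}}L}(\mu_J)^{\oplus n_{\mu_J}}$. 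Combining this with the short exact sequence (\ref{exactg}) and the inductive hypothesis $\chi^{\lambda^{\bullet}}(\mK^{\geqslant\mu_J'}(\lambda^{\bullet}))={_{\mathcal{A}}M}[\geqslant\mu_J']$ for all $\mu_J'>\mu_J$, an $\mathfrak{l}$-multiplicity count as in the proof of Theorem \ref{main1} shows the multiplicity of $L(\mu_J')$ in $\mK^{\geqslant\mu_J}(\lambda^{\bullet})$ matches that in $_{\mathcal{A}}M[\geqslant\mu_J]$, forcing $\chi^{\lambda^{\bullet}}(\mK^{\geqslant\mu_J}(\lambda^{\bullet}))={_{\mathcal{A}}M}[\geqslant\mu_J]$. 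The remaining statements about $\mK^{>\mu_J}$ and $\mK^{\mu_J}$ follow from (\ref{exactg}), and the identification of the distinguished basis with $B\cap M[\geqslant\mu_J]$ (respectively $B\cap M[>\mu_J]$, $\pi_{\geqslant\mu_J}(B\cap M[\geqslant\mu_J])$) follows from Theorem \ref{tensor}(3) and the fact that the maps (\ref{igg}), (\ref{igeg}), (\ref{qg}) preserve simple perverse sheaves.
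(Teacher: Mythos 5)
Your outline reproduces the paper's strategy in broad strokes: establish $J$-analogs of Lemmas~\ref{indlemma}--\ref{subquotient} (the paper collects these as Lemma~\ref{indlemmag}), then do a downward induction on $\mu_{J}$ using the short exact sequence (\ref{exactg}) and a multiplicity comparison. The transfer of the key lemmas, including the ``simple quotient supported at $j'\in J$'' step via \cite[Lemma 12.6]{MR1088333} and Proposition~\ref{Cryt}, is correct and matches the paper.

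However, there is a gap in the way you set up the induction. You announce ``downward induction on $\mu_{J}$ \ldots starting from the (finitely many) maximal $\mu_{J}$ for which $\mD^{>\mu_{J}}_{\nu}(\lambda^{\bullet})$ is empty.'' No such finite set of maximal $\mu_{J}$ exists: for each $\nu^{\backslash}\in\mathbb{N}(I\setminus J)$ the category $\mD^{>\mu_{J}}_{\nu}(\lambda^{\bullet})$ is empty precisely when $\mu_{J}\geqslant wt_{J}(\nu^{\backslash})$, and these top weights $wt_{J}(\nu^{\backslash})$ are unbounded as $\nu^{\backslash}$ varies. So your induction has no global starting point, and, more seriously, the ``$\mathfrak{l}$-multiplicity count as in the proof of Theorem~\ref{main1}'' cannot be applied globally: the restriction coefficients $m^{\lambda}_{\mu_{J}}$ can be infinite, so comparing multiplicities of $L(\mu_{J}')$ in $\mK^{\geqslant\mu_{J}}(\lambda^{\bullet})$ with those in ${_{\mathcal{A}}M}[\geqslant\mu_{J}]$ is not a well-defined numerical comparison. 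The paper fixes this by first decomposing $\mK^{?}(\lambda^{\bullet})=\bigoplus_{\nu^{\backslash}}\mK^{?}(\lambda^{\bullet})_{\nu^{\backslash}}$ along the projection $pr_{I\setminus J}$ (using that $\mathcal{E}^{(r)}_{j},\mathcal{F}^{(r)}_{j},\mathcal{K}^{\pm}_{j}$ for $j\in J$ preserve the fiber over $\nu^{\backslash}$). In each fiber the $\mathfrak{l}$-weights are bounded above by $wt_{J}(\nu^{\backslash})$, so the downward induction has a genuine top and all multiplicities are finite; the base case is $\mu_{J}=wt_{J}(\nu^{\backslash})$ where (\ref{exactg}) degenerates, and equality for $\mu_{J}=0$ is then obtained by comparing nonnegative weight spaces inside each finite fiber. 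Your notation $\nu^{\min}_{J}(\mu_{J},\nu_{I\setminus J})$ already records this dependence, so the fix is to promote it to an explicit direct-sum decomposition of the Grothendieck groups and run the induction fiberwise; as written the argument does not close.
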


By a similar argument as Lemma \ref{indlemma}-\ref{subquotient}, we have the following lemma.
\begin{lemma}\label{indlemmag}
	For any $0 \leqslant t \leqslant \nu_{j}$ and $L\in \mP_{\nu}(\lambda^{\bullet})$ with $t_{j}(L)=t$, let $\nu''=\nu -tj$, we assume $ \pi_{j,t} (L)=K$.  Then $L \in \mP^{\geqslant \mu_{J}}_{\nu}(\lambda^{\bullet})$ if and only if $ K \in \mP^{\geqslant \mu_{J}}_{\nu''}(\lambda^{\bullet})$, and $L \in \mP^{> \mu_{J}}_{\nu}(\lambda^{\bullet})$ if and only if $ K \in \mP^{> \mu_{J}}_{\nu''}(\lambda^{\bullet})$. Moreover, we have the following statements:
	\begin{enumerate}
		\item [(1)] If $wt_{J}(\nu) > \mu_{J}$, then
		$$\mD_{\nu}^{\geqslant \mu_{J}}(\lambda^{\bullet})=\mD_{\nu}^{> \mu_{J}}(\lambda^{\bullet})=\mD_{\nu}(\lambda^{\bullet}), $$
		$$\mP_{\nu}^{\geqslant \mu_{J}}(\lambda^{\bullet})=\mP_{\nu}^{> \mu_{J}}(\lambda^{\bullet})=\mP_{\nu}(\lambda^{\bullet}). $$
		
		\item[(2)] 	If  $wt_{J}(\nu)$ and $\mu_{J}$ can not be compared, then 
		$$\mD_{\nu}^{\geqslant \mu_{J}}(\lambda^{\bullet})=\mD_{\nu}^{> \mu_{J}}(\lambda^{\bullet}), $$
		$$\mP_{\nu}^{\geqslant \mu_{J}}(\lambda^{\bullet})=\mP_{\nu}^{> \mu_{J}}(\lambda^{\bullet}). $$
		\item[(3)] If $wt_{J}(\nu) < \mu_{J}$ and $L \in \mP_{\nu}^{\geqslant \mu_{J}}(\lambda^{\bullet})$, then  exists $j \in J$ such thta $t_{j}(L)>0$.
		\item [(4)] If $wt_{J}(\nu) = \mu_{J}$ and $L \in \mP_{\nu}^{\geqslant \mu_{J}}(\lambda^{\bullet})$,  then $L \notin \mP_{\nu}^{> \mu}(\lambda^{\bullet})$ if and only if $t_{j}(L)=0$ for any $j \in J$.
	\end{enumerate}
	In particular, the $_{\mathcal{A}}\mathbf{U}(\mathfrak{l})$ module $\mK^{ \mu_{J}}(\lambda^{\bullet})$ is isomorphic to a (maybe infinite) direct sum  of some copies of the irreducible highest weight module $L(\mu_{J})$.
\end{lemma}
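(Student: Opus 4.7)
The plan is to mirror, statement by statement, the arguments already given for the ``full $I$'' case (Lemmas \ref{indlemma}, \ref{casexiao}, \ref{casenocompare}, \ref{caseda}, \ref{casedeng} and \ref{subquotient}), with vertices in $I\setminus J$ reinterpreted as frozen framing and the weight $wt$ replaced by $wt_J$. The opening claim, compatibility of $\mP^{\geqslant\mu_J}$ and $\mP^{>\mu_J}$ with the crystal bijections $\pi_{j,t}$ for $j\in J$, is proved as in Lemma \ref{indlemma}: the direction $K\in\mP^{\geqslant\mu_J}_{\nu''}\Rightarrow L\in\mP^{\geqslant\mu_J}_\nu$ is immediate because $L$ is a direct summand of $\mathcal{F}^{(t)}_j K$ and Proposition \ref{keypropg} says $\mathcal{F}^{(t)}_j$ preserves the micro-local condition. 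The reverse direction uses the cotangent correspondence for the restriction diagram; given $(x,\bar x,y,z)\in SS(L)$ together with a stable subspace $\bV''\oplus\mathbf{W}^\bullet$ realizing dominance by $\geqslant\mu_J$, one intersects with the $\bV'$-complement to obtain, for any point in $ \kappa_\pi (d\kappa')^{-1} d\iota' \iota_\pi^{-1}((x,\bar x,y,z))$, a new stable subspace whose dimensions outside $J$ are unchanged (since the restriction is along a $j$-direction with $j\in J$) and whose $wt_J$-value is still $\geqslant\mu_J$.

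Items (1) and (2) are purely combinatorial: if $wt_J(\nu)>\mu_J$ we may take $\bV''=\bV$, and if $wt_J(\nu)$ and $\mu_J$ are incomparable then any admissible $\nu''\leqslant\nu$ with $wt_J(\nu'')\geqslant\mu_J$ actually satisfies $wt_J(\nu'')>\mu_J$ (else $\mu_J=wt_J(\nu'')\leqslant wt_J(\nu)$ would be comparable), so $\mathbf{E}^{\geqslant\mu_J}=\mathbf{E}^{>\mu_J}$. Item (3) adapts Lemma \ref{caseda}: by Proposition \ref{Cryt} a generic point of the irreducible component $\Psi(L)\subseteq SS(L)$ lies in $\mathbf{E}^{\geqslant\mu_J}$, so there is a stable $\bV''\oplus\mathbf{W}^\bullet$ with $wt_J(\nu'')\geqslant\mu_J>wt_J(\nu)$; the quotient by $\bV''\oplus\mathbf{W}^\bullet$ is then a nonzero nilpotent representation of the doubled quiver that is supported entirely on $J$ (the $I\setminus J$-coordinates agree with those of $\bV$ because they are frozen in the definition of $\mathbf{E}^{\geqslant\mu_J}$), hence by \cite[Lemma 12.6]{MR1088333} it admits a simple quotient at some vertex $j\in J$, forcing $\epsilon_j(\Psi(L))>0$ and thus $t_j(L)>0$.

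Item (4) is the direct transcription of Lemma \ref{casedeng}. If $L\in\mP^{>\mu_J}_\nu$ then $L\in\mP^{\geqslant\mu_J'}_\nu$ for some $\mu_J'>\mu_J$, and (3) applied at $\mu_J'$ produces $j\in J$ with $t_j(L)>0$. Conversely, if $t_j(L)=t>0$, then by Proposition \ref{Cryt} every point of $SS(L)$ admits a stable subspace $\bV''\oplus\mathbf{W}^\bullet$ with dimension vector $\nu-tj$, and this shifts $wt_J$ strictly upward, placing $L$ in $\mP^{\geqslant\mu_J+tj}_\nu\subseteq\mP^{>\mu_J}_\nu$. The final assertion about $\mK^{\mu_J}(\lambda^\bullet)$ then follows exactly as in Lemma \ref{subquotient}: the simple perverse sheaves $L$ with $wt_J(\nu)=\mu_J$ and $t_j(L)=0$ for all $j\in J$ survive to highest weight vectors of weight $\mu_J$ in $\mK^{\mu_J}(\lambda^\bullet)$, and by double induction (increasing in $wt_J(\nu)-\mu_J$, decreasing in the maximal $t_j$) combined with Lemma \ref{lkeyt} every other simple object is obtained from such a highest weight vector by applying a product of the $\mathcal{F}^{(r)}_j$, $j\in J$; integrability of the resulting $_{\mathcal{A}}\mathbf{U}(\mathfrak{l})$-module forces the decomposition into copies of $L(\mu_J)$.

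The main obstacle is the verification inside item (3) that the simple quotient produced by the Lusztig argument really lives at a vertex $j\in J$. The point is that the dimension vector of the nilpotent quotient has support contained in $J$, which has to be read off carefully from the definition of $\mathbf{E}^{\geqslant\mu_J}_{\bV,\mathbf{W}^\bullet,H^{(N)}}$, specifically the requirement $\nu''_i=\nu_i$ for $i\notin J$; once this is set up cleanly, the rest of the proof is essentially a restatement of the $\mathfrak{g}$-case.
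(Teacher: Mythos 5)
Your proposal fills in exactly the argument the paper intends when it says ``By a similar argument as Lemma \ref{indlemma}--\ref{subquotient}'': you correctly re-run the cotangent-correspondence argument of Lemma \ref{indlemma} with $j\in J$, reduce items (1)--(2) to the same dimension-vector combinatorics, and in items (3)--(4) you correctly isolate the one genuinely new point, namely that the definition of $\mathbf{E}^{\geqslant\mu_J}$ forces $\nu''_i=\nu_i$ for $i\notin J$, so the nilpotent quotient in the Lusztig-type argument has support contained in $J$ and its simple quotient sits at a vertex $j\in J$. This matches the paper's route; the only cosmetic caveat is that citing Proposition \ref{keypropg} (stated on Verdier quotients) for the ``$K\in\mP^{\geqslant\mu_J}\Rightarrow L\in\mP^{\geqslant\mu_J}$'' direction is a slight shortcut — what is actually used, as in the paper's own proof of Lemma \ref{indlemma}, is that the induction functor preserves the singular-support bound on the nose, which is what the proof of that proposition establishes.
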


Now we can prove our theorem.

\begin{proof}[Proof of Theorem \ref{main2}]

Let $pr_{I \backslash J}: \mathbb{N}I \rightarrow  \mathbb{N}(I \backslash J)$ be the projection map, and assume $\nu^{\backslash} \in \mathbb{N}(I \backslash J)$ is a dimension vector vanishing on $J$, 
we consider the following $\mathcal{A}$-submodules
$$\mK^{\mu_{J}}(\lambda^{\bullet})_{\nu^{\backslash} } =\bigoplus_{\nu \in \mathbb{N}I,pr_{I \backslash J}=\nu^{\backslash} }\mK(\mL^{\mu_{J}}_{\nu}(\lambda^{\bullet})),$$
$$\mK^{>\mu_{J}}(\lambda^{\bullet})_{\nu^{\backslash} } =\bigoplus_{\nu \in \mathbb{N}I,pr_{I \backslash J}=\nu^{\backslash} }\mK(\mL^{>\mu_{J}}_{\nu}(\lambda^{\bullet})),$$
$$\mK^{\geqslant \mu_{J}}(\lambda^{\bullet})_{\nu^{\backslash} } =\bigoplus_{\nu \in \mathbb{N}I,pr_{I \backslash J}=\nu^{\backslash} }\mK(\mL^{\geqslant\mu_{J}}_{\nu}(\lambda^{\bullet})).$$

Since $\mathcal{E}^{(r)}_{j}$, $\mathcal{F}^{(r)}_{j}$ and $\mathcal{K}^{\pm}_{j}$, $j \in J, r \in \mathbb{N}_{>0}$ don't change $\nu_{i}, i \in (I \backslash J)$, we can see that $\mK^{\mu_{J}}(\lambda^{\bullet})_{\nu^{\backslash} }$, $\mK^{>\mu_{J}}(\lambda^{\bullet})_{\nu^{\backslash} }$ and $\mK^{\geqslant \mu_{J}}(\lambda^{\bullet})_{\nu^{\backslash} }$ are $_{\mathcal{A}}\mathbf{U}(\mathfrak{l})$-modules, and we have the following decompositions of $_{\mathcal{A}}\mathbf{U}(\mathfrak{l})$-modules
$$ \mK(\lambda^{\bullet})=\bigoplus_{\nu^{\backslash} \in \mathbb{N}(I \backslash J)} \mK(\lambda^{\bullet})_{\nu^{\backslash} }, $$
$$ \mK^{\mu_{J}}(\lambda^{\bullet})=\bigoplus_{\nu^{\backslash} \in \mathbb{N}(I \backslash J)} \mK^{\mu_{J}}(\lambda^{\bullet})_{\nu^{\backslash} }, $$
$$ \mK^{>\mu_{J}}(\lambda^{\bullet})=\bigoplus_{\nu^{\backslash} \in \mathbb{N}(I \backslash J)} \mK^{>\mu_{J}}(\lambda^{\bullet})_{\nu^{\backslash} }, $$
$$ \mK^{\geqslant \mu_{J}}(\lambda^{\bullet})=\bigoplus_{\nu^{\backslash} \in \mathbb{N}(I \backslash J)} \mK^{\geqslant \mu_{J}}(\lambda^{\bullet})_{\nu^{\backslash} }. $$

We can naturally regard $\nu^{\backslash}$ as a dimension vector in $\mathbb{N}I$, then $wt_{J}(\nu^{\backslash})$ is defined. Notice that for any $\nu^{\backslash}$, the $\mathfrak{l}$-weights of $\mK(\lambda^{\bullet})_{\nu^{\backslash} }$ are always $\leqslant wt_{J}(\nu^{\backslash})$. So do the $\mathfrak{l}$-weights of $\mK^{\mu_{J}}(\lambda^{\bullet})_{\nu^{\backslash} }$, $\mK^{>\mu_{J}}(\lambda^{\bullet})_{\nu^{\backslash} }$  and $\mK^{\geqslant \mu_{J}}(\lambda^{\bullet})_{\nu^{\backslash} }$.

From now we fix a dimension vector $\nu^{\backslash}$.

(1) When $\nu= \nu^{\backslash} \in \mathbb{N}I$ and $\mu_{J}=wt_{J}(\nu^{\backslash})$, we can see that  $\mD_{\nu}^{> \mu_{J}}(\lambda^{\bullet})$ is empty, hence $\mK^{\geqslant \mu_{J}}(\lambda^{\bullet})_{\nu^{\backslash} }= \mK^{ \mu_{J}}(\lambda^{\bullet})_{\nu^{\backslash} }$ is a  direct sum  of some copies of the irreducible highest weight module $L(\mu_{J})$.

(2)Restrict the short exact sequence (\ref{exactg}) to its direct summands, we have the following short exact sequence
$$0 \rightarrow \mK^{> \mu_{J}}(\lambda^{\bullet})_{\nu^{\backslash} } \rightarrow \mK^{\geqslant  \mu_{J}}(\lambda^{\bullet})_{\nu^{\backslash} } \rightarrow \mK^{ \mu_{J}}(\lambda^{\bullet})_{\nu^{\backslash} } \rightarrow 0.$$
By decreasing induction on $\mu_{J}$ as what we have done in the proof of Theorem \ref{main1}, we can see that for any $\mu_{J} \leqslant wt_{J}(\nu^{\backslash})$, the $_{\mathcal{A}}\mathbf{U}(\mathfrak{l})$-module $\mK^{\geqslant \mu_{J}}(\lambda^{\bullet})_{\nu^{\backslash} }$ is a  direct sum  of irreducible highest weight modules with the highest weights greater or equal to $ \mu_{J}$ and less or equal to $wt_{J}(\nu^{\backslash})$. 

In particular, the $_{\mathcal{A}}\mathbf{U}(\mathfrak{l})$-module $\mK^{\geqslant \mu_{J}}(\lambda^{\bullet})_{\nu^{\backslash} }$ is  isomorphic to a  direct sum  of irreducible highest weight modules with the highest weights greater or equal to $ \mu_{J}$. We denote its image of $\chi^{\lambda^{\bullet}}$ by $M^{\geqslant \mu_{J}}_{\nu^{\backslash} },$
and also denote the image of $\mK(\lambda^{\bullet})_{\nu^{\backslash} }$ under $\chi^{\lambda^{\bullet}}$ by $M_{\nu^{\backslash} },$
then $$\bigoplus_{\nu^{\backslash} \in \mathbb{N}(I \backslash J)} M^{\geqslant \mu_{J}}_{\nu^{\backslash} } \subseteq {_{\mathcal{A}}M}[\geqslant \mu_{J}], $$
$$\bigoplus_{\nu^{\backslash} \in \mathbb{N}(I \backslash J)} M_{\nu^{\backslash} } = {_{\mathcal{A}}M}. $$
Let ${_{\mathcal{A}}M}[\geqslant \mu_{J}]_{\nu^{\backslash}}$ be $M_{\nu^{\backslash}} \cap {_{\mathcal{A}}M}[\geqslant \mu_{J}]$, we also have 
$$ \bigoplus_{\nu^{\backslash} \in \mathbb{N}(I \backslash J)}  {_{\mathcal{A}}M}[\geqslant \mu_{J}]_{\nu^{\backslash}} =  {_{\mathcal{A}}M}[\geqslant \mu_{J}].$$

For any $\nu$ such that $wt_{J}(\nu) \geqslant 0$,  $\mK(\mathcal{L}^{\geqslant 0}_{\nu}(\lambda^{\bullet})) \cong \mK(\mathcal{L}_{\nu}(\lambda^{\bullet}))$, it implies that  $M_{\nu^{\backslash}}$ and $M^{\geqslant 0}_{\nu^{\backslash} }$ have the same $\mu'_{J}$-weight space for $\mu'_{J} \geqslant 0$. In particular, $M_{\nu^{\backslash}}$ and $M^{\geqslant 0}_{\nu^{\backslash} }$ are isomorphic to each other any $\nu^{\backslash}$. From (1) and (2), after a similar argument as Theorem \ref{main1}, we can prove that each $\mK^{\geqslant \mu_{J}}(\lambda^{\bullet})_{\nu^{\backslash} }$ is canonically isomorphic to $_{\mathcal{A}}M^{\geqslant \mu_{J}}_{\nu^{\backslash} }$, hence $\mK^{\geqslant \mu_{J}}(\lambda^{\bullet})= \bigoplus_{\nu^{\backslash} \in \mathbb{N}(I \backslash J)}\mK^{\geqslant \mu_{J}}(\lambda^{\bullet})_{\nu^{\backslash}} $ is canonically isomorphic to $ \bigoplus_{\nu^{\backslash} \in \mathbb{N}(I \backslash J)}{_{\mathcal{A}}M}[\geqslant \mu_{J}]_{\nu^{\backslash}}={_{\mathcal{A}}M}[\geqslant \mu_{J}]$. The other statements can be easily proved. 
\end{proof}

When $ \nu \in wt_{J}^{-1}(\mu_{J})$, we denote the set  $ \mP_{\nu}^{\geqslant \mu_{J}}(\lambda^{\bullet})\backslash \mP_{\nu}^{> \mu_{J}}(\lambda^{\bullet})$ by $\mP_{\nu}^{\mu_{J},hi}$. Then we have the following corollary. 
\begin{corollary}
	Denote the set $\bigcup_{\nu \in wt^{-1}_{J}(\mu_{J})} \mP_{\nu}^{\mu_{J},hi} \backslash (\mP_{\nu}^{\mu_{J},hi} \cap \mN_{\nu})$ by $S_{1}(\mu_{J})$, then we have the following restriction rule $$\mathbf{res}^{\mathfrak{g}}_{\mathfrak{l}} L(\lambda^{N})\otimes L(\lambda^{N-1})\otimes \cdots\otimes L(\lambda^{1}) \cong \bigoplus_{\mu_{J}}\bigoplus_{\alpha \in S_{1}(\mu_{J})}L(\mu_{J})^{\alpha},$$ 
	where each $L(\mu_{J})^{\alpha}$ is a copy of irreducible highest weight $\mathbf{U}(\mathfrak{l})$-module with the highest weight $\mu_{J}$.
\end{corollary}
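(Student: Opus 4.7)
The plan is to derive this restriction rule as a direct counting corollary of Theorem \ref{main2}(3) together with the structural Lemma \ref{indlemmag}. The starting point is the observation that the restricted module $M = \mathbf{res}^{\mathfrak{g}}_{\mathfrak{l}} L(\lambda^{\bullet})$, being an integrable highest weight $\mathbf{U}(\mathfrak{l})$-module, admits an isotypic decomposition $M = \bigoplus_{\mu_{J}} M[\mu_{J}]$ where $M[\mu_{J}] \cong M[\geqslant \mu_{J}]/M[> \mu_{J}]$ is a direct sum of copies of $L(\mu_{J})$. Hence, establishing the restriction rule reduces to computing the multiplicity $m_{\mu_{J}}$ of $L(\mu_{J})$ in $M[\mu_{J}]$ for each dominant integral weight $\mu_{J}$ of $\mathfrak{l}$.

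First I would apply Theorem \ref{main2}(3) to transport the counting problem into the sheaf-theoretic world: under $\chi^{\lambda^{\bullet}}$, the Grothendieck group $\mK^{\mu_{J}}(\lambda^{\bullet})$ is canonically isomorphic to $_{\mathcal{A}}(M[\geqslant \mu_{J}]/M[> \mu_{J}]) = {_{\mathcal{A}}M}[\mu_{J}]$, and the nonzero images of simple perverse sheaves in the localization provide an $\mathcal{A}$-basis. Thus $m_{\mu_{J}}$ equals the number of nonzero simple perverse sheaves in $\coprod_{\nu} \mL^{\mu_{J}}_{\nu}(\lambda^{\bullet})$ that are $\mathbf{U}(\mathfrak{l})$-highest weight vectors of weight $\mu_{J}$.

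Next I would use Lemma \ref{indlemmag} to identify these highest weight vectors explicitly. A simple perverse sheaf $L \in \mP_{\nu}(\lambda^{\bullet})$ represents a highest weight vector of $\mathfrak{l}$-weight $\mu_{J}$ in $\mK^{\mu_{J}}(\lambda^{\bullet})$ if and only if $\mathcal{E}_{j}(L) = 0$ in the localization for every $j \in J$ and $wt_{J}(\nu) = \mu_{J}$. Parts (3) and (4) of Lemma \ref{indlemmag} translate these conditions into: $\nu \in wt_{J}^{-1}(\mu_{J})$, $t_{j}(L) = 0$ for all $j \in J$, which is exactly the condition $L \in \mP^{\mu_{J},hi}_{\nu} = \mP^{\geqslant \mu_{J}}_{\nu}(\lambda^{\bullet}) \setminus \mP^{>\mu_{J}}_{\nu}(\lambda^{\bullet})$. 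Since $L \in \mP^{\mu_{J},hi}_{\nu}$ is automatically not in $\mD^{>\mu_{J}}_{\nu}(\lambda^{\bullet})$, the vanishing of $[L]$ in $\mK^{\mu_{J}}(\lambda^{\bullet}) = \mK(\mD(\lambda^{\bullet})/\mN^{>\mu_{J}}_{\nu})$ is controlled purely by membership in $\mN_{\nu}$. Therefore $m_{\mu_{J}} = |S_{1}(\mu_{J})|$.

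The routine step remaining is to assemble $M = \bigoplus_{\mu_{J}} M[\mu_{J}]$ from the isotypic components and read off the stated decomposition. I do not anticipate a substantial obstacle since all technical work is already carried out in Theorem \ref{main2} and Lemma \ref{indlemmag}; the only subtlety worth double-checking is that the two possible sources of vanishing in $\mK^{\mu_{J}}(\lambda^{\bullet})$, namely $\mN_{\nu}$ and $\mD^{>\mu_{J}}_{\nu}(\lambda^{\bullet})$, do not interact in an unexpected way on $\mP^{\mu_{J},hi}_{\nu}$, but this is immediate from the definition of $\mP^{\mu_{J},hi}$ as the complement of $\mP^{>\mu_{J}}$.
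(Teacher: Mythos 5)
Your proposal is correct and follows essentially the same route the paper intends: reduce to the isotypic decomposition $M=\bigoplus_{\mu_J}M[\mu_J]$, transport $M[\mu_J]\cong M[\geqslant\mu_J]/M[>\mu_J]$ to $\mK^{\mu_J}(\lambda^{\bullet})$ via Theorem \ref{main2}(3), use Lemma \ref{indlemmag} to see that $\mK^{\mu_J}(\lambda^{\bullet})$ is a multiple of $L(\mu_J)$, and count the highest-weight vectors as the perverse sheaves in $S_1(\mu_J)$. This is precisely the $J$-analogue of the argument given in Lemma \ref{subquotient}, which the paper leaves implicit for the restricted case.

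One caveat worth flagging, since you label it ``immediate'': the assertion that a simple $L\in\mP^{\mu_J,hi}_\nu$ is zero in $\mL^{\mu_J}_\nu(\lambda^{\bullet})$ exactly when $L\in\mN_\nu$ requires knowing that, for such a simple perverse sheaf, membership in the thick subcategory $\mN_\nu^{>\mu_J}=\langle\mN_\nu,\,\mD^{>\mu_J}_\nu(\lambda^{\bullet})\rangle$ forces membership in one of the two generating subcategories. This is not automatic from $L\notin\mD^{>\mu_J}_\nu$ alone; in general an object can lie in a thick subcategory generated by two pieces without lying in either. The paper silently uses the same fact in Lemma \ref{subquotient}, and it can be justified via the singular-support and monodromic characterizations of $\mN_\nu$ and $\mD^{>\mu_J}_\nu$ (cf.\ \cite[Proposition 6.4]{fang2025lusztigsheavescharacteristiccycles}), but it is a substantive microlocal input rather than a formal consequence of the definition of $\mP^{\mu_J,hi}_\nu$, and should be cited rather than dismissed as immediate.
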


\section{Decomposition rule and restriction rule}
In \cite{fang2025lusztigsheavescharacteristiccycles}, the authors use characteristic cycles to build isomorphisms of $\mathfrak{g}$-modules from Lusztig's perverse sheaves to BM homology groups of Nakajima's quiver variety.  In this section, we use this result to construct certain subquotients of $\mathfrak{g}$-modules arising from quiver varieties. Moreover, we prove the decomposition and restriction numbers, if they are finite, are equal to the dimensions of  BM homology groups of certain locally closed subsets of quiver varieties.

\subsection{quiver variety and tensor product variety}
In this section, we recall some results about the quiver variety and the tensor product variety in  \cite{MR1302318}, \cite{MR1604167} , \cite{MR1865400} and \cite{MR3077693}.
\subsubsection{Nakajima's quiver variety}
Consider the framed graph $(I^{(1)},H^{(1)})$ of the graph $(I,H)$ and the framed quiver $Q^{(1)}=(I^{(1)},H^{(1)},\Omega^{(1)})$ of the quiver $Q=(I,H,\Omega)$. For any $\nu\in \mathbb{N}I$ and $\omega\in \mathbb{N}I^1$, we fix an $I$-graded $\mathbb{C}$-vector space $\bV$ and an $I^1$-graded $\mathbb{C}$-vector space $\mathbf{W}$ such that their dimension vectors are $\nu$ and $\omega$, then $\bV\oplus \mathbf{W}$ is an $I^{(1)}$-graded $\mathbb{C}$-vector space such that its dimension vector is $\nu+\omega$. There is an affine space 
$$\mathbf{E}_{\bV,\mathbf{W},H^{(1)}}=\bigoplus_{h \in H}\mathbf{Hom}(\mathbf{V}_{h'},\mathbf{V}_{h''}) \oplus 
\bigoplus_{i \in I N}\mathbf{Hom}(\mathbf{V}_{i},\mathbf{W}_{i})  \oplus \bigoplus_{i \in I}\mathbf{Hom}(\mathbf{W}_{i},\mathbf{V}_{i}).  $$
which has a  $G_\bV$-action on it.

We fix a function $\varepsilon:H^{(1)} \rightarrow\mathbb{C}^*$ such that $\varepsilon(h)+\varepsilon(\overline{h})=0$ for any $h\in H$, and $\varepsilon(i\rightarrow i^1)=-1,\varepsilon(i^1\rightarrow i)=1$ for any $i\in I$. Then the corresponding moment map attached to be $G_\bV$-action on the symplectic vector space $\mathbf{E}_{\bV,\mathbf{W},H^{(1)}}$ is given by 
\begin{align*}
	\mu_{\bV\oplus \mathbf{W}}:\mathbf{E}_{\bV,\mathbf{W},H^{(1)}} &\rightarrow \bigoplus_{i\in I}\mathrm{End}(\bV_i)\\
	(x,\bar{x},y,z)&\mapsto(\sum_{h\in H, h''=i}\varepsilon(h)x_h x_{\overline{h}}+z_iy_i)_{i\in I}  ,
\end{align*}
where $	(x,\bar{x},y,z)$ is the notations in Section \ref{section thick}.
Let $\Lambda_{\bV\oplus \mathbf{W}}$ be the subset of $\mathbf{E}_{\bV,\mathbf{W},H^{(1)}}$, which consists of nilpotent elements in $\mu_{\bV\oplus \mathbf{W}}^{-1}(0). $ We also define the subset
\begin{align*}
	\Lambda_{\bV,\mathbf{W}}=\Lambda_\bV\times \bigoplus_{i \in I} \Hom(\bV_{i},\mathbf{W}_{i}) \subseteq \Lambda_{\bV\oplus \mathbf{W}}.
\end{align*}
where $\Lambda_{\bV}$ is Lusztig's nilpotent variety in Section 4.3.

An element $(x,\bar{x},y,z)$ of $\mathbf{E}_{\bV,\mathbf{W},H^{(1)}}$ is said to be stable, if the zero space is the unique $I$-graded subspace $\bV'$ of $\bV$ such that $(x,\bar{x})(\bV')\subset \bV'$ and $y_i(\bV'_i)=0$ for any $h\in H,i\in I$, see \cite[Lemma 3.8]{MR1604167}. We define $\Lambda_{\bV,\mathbf{W}}^s,\Lambda_{\bV\oplus \mathbf{W}}^{s}$ to be the subset of $\Lambda_{\bV,\mathbf{W}}$ and $\Lambda_{\bV\oplus \mathbf{W}}$ consisting of stable elements respectively. 

By \cite[Lemma 3.10]{MR1604167}, the group $G_{\bV}$ acts freely on $\mu_{\bV\oplus \mathbf{W}}^{-1}(0)^s$ and $\Lambda_{\bV,\mathbf{W}}^s$. Nakajima's quiver varieties 
$\mathfrak{M}(\nu,\omega)$ and  $\mathfrak{L}(\nu,\omega)$ 
are defined to be the geometric quotients of $\mu_{\bV\oplus \mathbf{W}}^{-1}(0)^s$ and $\Lambda_{\bV,\mathbf{W}}^s$ by $G_{\bV}$ respectively. We denote by $[x,\bar{x},y,z]$ the $G_\bV$-orbit of $(x,\bar{x},y,z)$, considered as an element in the geometric quotients.

\subsubsection{Nakajima's tensor product variety}
Let $\mathbf{W}=\mathbf{W}^{1} \oplus \mathbf{W}^{2}$ be a fixed $I^1$-graded $\mathbb{C}$-vector space decomposition such that $\mathbf{W}^1,\mathbf{W}^{2}$ have dimension vectors $\omega^1$ and $\omega^2$ respectively. There is an one-parameter subgroup 
\begin{align*}
	\rho:\mathbb{G}_m&\rightarrow G_{\mathbf{W}^1}\times G_{\mathbf{W}^2}\subset G_{\mathbf{W}}\\
	t&\mapsto ({\mathrm{Id}}_{\mathbf{W}^{1}},t\, {\mathrm{Id}}_{\mathbf{W}^{2}})
\end{align*}
acting on $\mathfrak{M}(\nu,\omega)$. By \cite[Lemma 3.2]{MR1865400}, the $\rho(\mathbb{G}_m)$-fixed point set $\mathfrak{M}(\nu,\omega)^{\rho(\mathbb{G}_m)}$ is isomorphic to 
$$\bigsqcup_{\nu'+\nu''=\nu} \mathfrak{M}(\nu',\omega^{1}) \times \mathfrak{M}(\nu'',\omega^{2}).$$

Nakajima's tensor product variety is defined to be
$$\tilde{\mathfrak{Z}}(\nu,\omega)=\{ [x,y,z] \in \mathfrak{M}(\nu,\omega)\mid \lim_{t \rightarrow  0} \rho(t). [x,y,z]  \in \bigsqcup_{\nu'+\nu''=\nu} \mathfrak{L} (\nu',\omega^{1}) \times \mathfrak{L}(\nu'',\omega^{2}) \}.$$

Following \cite{Malkin2003Tensor}, the tensor product variety $\tilde{\mathfrak{Z}}$ is a geometric quotient of $\Pi_{\bV,\mathbf{W}^1\oplus \mathbf{W}^2}^{s}$. The variety $\Pi_{\bV,\mathbf{W}^1\oplus \mathbf{W}^2}^{s}$ consists of stable points in $\Pi_{\bV,\mathbf{W}^1\oplus \mathbf{W}^2}$, where $\Pi_{\bV,\mathbf{W}^1\oplus \mathbf{W}^2}$ is the subvariety of $\mathbf{E}_{\bV,\mathbf{W},H^{(1)}}$ consisting of those nilpotent elements $(x,\bar{x},y,z)$ such that $z(\mathbf{W}^{2})=0$ and $\mathbf{S}_{1}(x,\bar{x},y,z) \subset \mathbf{S}_{2}(x,\bar{x},y,z)$. Here $\mathbf{S}_{1}(x,\bar{x},y,z)$ is the smallest $(x,\bar{x})$-stable subspace of $\mathbf{V}$ containing $z(\mathbf{W})$, and $\mathbf{S}_{2}(x,\bar{x},y,z)$ is the largest $(x,\bar{x})$-stable subspace of $\mathbf{V}$ contained in $y^{-1}(\mathbf{W}^2)$. 

\subsubsection{Hecke correspondence and $\mathfrak{g}$-module structure}

For any $i\in I$ and $\nu=\nu'+i\in \mathbb{N}I$, we assume that the fixed $I$-graded $\mathbb{C}$-vector space $\bV,\bV'$ have dimension vector $\nu,\nu'$ respectively and $\bV'$ is a subspace of $\bV$. The Hecke correspondence $\mathfrak{P}_{i}(\nu,\omega)$ is defined to be the subvariety of $\mathfrak{M}(\nu',\omega) \times \mathfrak{M}(\nu,\omega)$ consisting of $([x',\bar{x}',y',z'],[x,\bar{x},y,z])$ such that there exists $\xi=(\xi_{j})_{j \in I} \in \bigoplus_{j \in I}\Hom(\bV'_{j},\bV_{j})$ such that 
$$\xi (x',\bar{x'})=(x,\bar{x})\xi,y\xi=y',\xi z'=z.$$

For a fixed orientation $\Omega\subset H$, let $\mathbf{A}_{\Omega}$ be the adjacency matrix of the quiver $Q=(I,H,\Omega)$, that is, it is a $I\times I$-matrix whose $(i,j)$ entry is the number of arrows $h\in \Omega$ such that $h'=i,h''=j$. Let $\mathbf{C}_{\Omega}=\mathrm{Id}_{I\times I}-\mathbf{A}_{\Omega}$. In \cite{MR1604167} and \cite{MR1865400}, Nakajima defined the following operators
\begin{align*}
	&e_i=(-1)^{\langle i , \mathbf{C}_{\bar{\Omega}}\nu \rangle} [\mathfrak{P}_{i}(\nu,\omega)]:\mathbf{H}^{\mathrm{BM}}_{\mathrm{top}}(\mathfrak{L}(\nu,\omega) ,\mathbb{Q}) \rightarrow \mathbf{H}^{\mathrm{BM}}_{\mathrm{top}}(\mathfrak{L}(\nu',\omega) ,\mathbb{Q}),\\
	&e_i=(-1)^{\langle i , \mathbf{C}_{\bar{\Omega}}\nu \rangle} [\mathfrak{P}_{i}(\nu,\omega)]:\mathbf{H}^{\mathrm{BM}}_{\mathrm{top}}(\tilde{\mathfrak{Z}}(\nu,\omega) ,\mathbb{Q}) \rightarrow \mathbf{H}^{\mathrm{BM}}_{\mathrm{top}}(\tilde{\mathfrak{Z}}(\nu',\omega) ,\mathbb{Q}),\\
	&f_{i}=(-1)^{\langle i , \omega- \mathbf{C}_{\Omega}\nu \rangle} [\mathrm{sw}(\mathfrak{P}_{i}(\nu,\omega))]:\mathbf{H}^{\mathrm{BM}}_{\mathrm{top}}(\mathfrak{L}(\nu',\omega) ,\mathbb{Q}) \rightarrow \mathbf{H}^{\mathrm{BM}}_{\mathrm{top}}(\mathfrak{L}(\nu,\omega) ,\mathbb{Q}),\\
	&f_{i}=(-1)^{\langle i , \omega- \mathbf{C}_{\Omega}\nu \rangle} [\mathrm{sw}(\mathfrak{P}_{i}(\nu,\omega))]:\mathbf{H}^{\mathrm{BM}}_{\mathrm{top}}(\tilde{\mathfrak{Z}}(\nu',\omega) ,\mathbb{Q}) \rightarrow \mathbf{H}^{\mathrm{BM}}_{\mathrm{top}}(\tilde{\mathfrak{Z}}(\nu,\omega) ,\mathbb{Q}),
\end{align*}
where $\mathrm{sw}: \mathfrak{M}(\nu',\omega) \times \mathfrak{M}(\nu,\omega) \rightarrow \mathfrak{M}(\nu,\omega) \times \mathfrak{M}(\nu',\omega) $ is the swapping map, and $\mathbf{H}^{\mathrm{BM}}_{\mathrm{top}}(X,\mathbf{k})$ is the top degree of the Borel-Moore homology of a variety $X$ with coefficients $\mathbf{k}$. 

The fundamental class $[\Delta(\nu,\omega)]$ of the diagonal defines  operators
\begin{align*}
	&[\Delta(\nu,\omega)]:\mathbf{H}^{\mathrm{BM}}_{\mathrm{top}}(\mathfrak{L}(\nu,\omega) ,\mathbb{Q}) \rightarrow \mathbf{H}^{\mathrm{BM}}_{\mathrm{top}}(\mathfrak{L}(\nu,\omega) ,\mathbb{Q}),\\
	&[\Delta(\nu,\omega)]:\mathbf{H}^{\mathrm{BM}}_{\mathrm{top}}(\tilde{\mathfrak{Z}}(\nu,\omega) ,\mathbb{Q}) \rightarrow \mathbf{H}^{\mathrm{BM}}_{\mathrm{top}}(\tilde{\mathfrak{Z}}(\nu,\omega) ,\mathbb{Q}).
\end{align*}

\begin{theorem}[{\cite[Thoerem 10.2]{MR1604167}},{\cite[Theorem 5.2]{MR1865400}}]
	With the operators $e_{i},f_{i},i \in I$ and $[\Delta(\nu,\omega)]$, the direct sum of Borel-Moore homology group $$\mathbf{H}(\mathfrak{L}(\omega))= \bigoplus_{\nu \in \mathbb{N}I} \mathbf{H}^{\mathrm{BM}}_{\mathrm{top}}(\mathfrak{L}(\nu,\omega) ,\mathbb{Q}) $$ becomes an integrable $\mathbf{U}_{1}(\mathfrak{g})$-module, which is isomorphic to the highest weight vector of $L_{1}(\lambda)$.
	
	The direct sum of Borel-Moore homology group $$\mathbf{H}(\tilde{\mathfrak{Z}}(\omega))= \bigoplus_{\nu \in \mathbb{N}I} \mathbf{H}^{\mathrm{BM}}_{\mathrm{top}}(\tilde{\mathfrak{Z}}(\nu,\omega) ,\mathbb{Q}) $$ becomes an integrable $\mathbf{U}_{1}(\mathfrak{g})$-module, which is isomorphic to the tensor products $L_{1}(\lambda^{1}) \otimes L_{1}(\lambda^{2})$.
\end{theorem}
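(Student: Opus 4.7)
The plan is to follow Nakajima's original strategy: verify directly that the convolution operators $e_i$, $f_i$, and $[\Delta(\nu,\omega)]$ satisfy the defining relations of $\mathbf{U}_{1}(\mathfrak{g})$ on $\mathbf{H}^{\mathrm{BM}}_{\mathrm{top}}$, and then identify the resulting representations. First I would fix the weight grading so that $\mathbf{H}^{\mathrm{BM}}_{\mathrm{top}}(\mathfrak{L}(\nu,\omega),\mathbb{Q})$ sits in weight $\sum_i \omega_i \Lambda_i - \nu$, and check that $[\Delta(\nu,\omega)]$ acts as the Cartan element corresponding to this weight. The Chevalley relations then split into two parts. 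Commutativity $e_ie_j=e_je_i$ for non-adjacent vertices and the classical Serre relations are obtained by a dimension-count on the iterated fiber products $\mathfrak{P}_i\times_{\mathfrak{M}}\mathfrak{P}_j$: one shows that the two sides of each relation come from the same underlying cycle, and the normalization signs $(-1)^{\langle i,\mathbf{C}_{\bar\Omega}\nu\rangle}$ and $(-1)^{\langle i,\omega-\mathbf{C}_{\Omega}\nu\rangle}$ built into the definition of $e_i$ and $f_i$ combine so that the top-degree pieces cancel in the desired way.

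The crucial step is the commutator $[e_i,f_j]=\delta_{ij}h_i$. The approach is to decompose the two iterated compositions $\mathfrak{P}_i \circ \mathrm{sw}(\mathfrak{P}_j)$ and $\mathrm{sw}(\mathfrak{P}_j)\circ \mathfrak{P}_i$ set-theoretically: for $i\neq j$ the two fiber products are transverse and the underlying cycles are literally equal, so the convolution classes agree; for $i=j$ the two loci share a common open piece but differ by components supported on the diagonal of $\mathfrak{M}(\nu,\omega)\times \mathfrak{M}(\nu,\omega)$, and a local excess-intersection computation identifies the difference with the scalar $\langle i,\omega-\mathbf{C}_{H}\nu\rangle[\Delta(\nu,\omega)]$, i.e. exactly the required $h_i$-action. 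This step is the main obstacle, both because the set-theoretic description of the excess locus requires care and because the sign conventions (and the choice $\varepsilon(h)+\varepsilon(\bar h)=0$) must be matched to the combinatorics of the quantum Serre relations at $v=1$. Integrability is then essentially free: for fixed $\omega$ and any $i$, the variety $\mathfrak{L}(\nu,\omega)$ becomes empty once $\nu_i$ is large enough (bounded by $\omega_i+\sum_{h'=i}\nu_{h''}$), so the $e_i$ and $f_i$ act locally nilpotently.

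For the identification of $\mathbf{H}(\mathfrak{L}(\omega))$ with $L_1(\lambda)$, I would note that $\mathfrak{L}(0,\omega)$ is a single point, so its top Borel--Moore homology is a one-dimensional weight space of weight $\lambda$ killed by every $e_i$, giving a nonzero highest-weight vector. It then suffices to show this vector generates the whole module and satisfies $f_i^{\langle i,\lambda\rangle+1}=0$. Generation follows from the crystal-theoretic fact that every irreducible component of $\mathfrak{L}(\nu,\omega)$ is obtained from the base point by an iterated Hecke correspondence (one can use either Nakajima's original inductive argument or Kashiwara--Saito's crystal realization), so the images of the corresponding convolution operators span each graded piece. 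The vanishing relation is a direct consequence of integrability together with the $\mathfrak{sl}_2$-representation theory generated by a single $(e_i,f_i,h_i)$-triple.

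For the tensor product statement I would exploit the $\rho(\mathbb{G}_m)$-action and the Białynicki--Birula decomposition: by construction $\tilde{\mathfrak{Z}}(\nu,\omega)$ is the attracting locus of the fixed-point set $\bigsqcup_{\nu'+\nu''=\nu}\mathfrak{L}(\nu',\omega^1)\times \mathfrak{L}(\nu'',\omega^2)$, and hyperbolic restriction provides a vector-space isomorphism
\[
\mathbf{H}^{\mathrm{BM}}_{\mathrm{top}}(\tilde{\mathfrak{Z}}(\nu,\omega),\mathbb{Q})\;\cong\;\bigoplus_{\nu'+\nu''=\nu}\mathbf{H}^{\mathrm{BM}}_{\mathrm{top}}(\mathfrak{L}(\nu',\omega^1),\mathbb{Q})\otimes \mathbf{H}^{\mathrm{BM}}_{\mathrm{top}}(\mathfrak{L}(\nu'',\omega^2),\mathbb{Q}).
\]
What remains is to verify that this linear isomorphism intertwines the convolution action of each $e_i$, $f_i$ on the left with the coproduct action on the right. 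This is proved by examining how the Hecke correspondence $\mathfrak{P}_i$ interacts with the $\rho(\mathbb{G}_m)$-action: the attracting pieces of $\mathfrak{P}_i(\nu,\omega)$ over the fixed components split into two parts corresponding to the two factors in the coproduct $\Delta(f_i)=f_i\otimes 1+K_i\otimes f_i$, and the weights recorded by the $\rho$-grading provide precisely the $K_i$-twist. The hardest conceptual point here is again the sign-bookkeeping, which ultimately reduces to the computation in the previous paragraph. With the $\mathbf{U}_{1}(\mathfrak{g})$-action matched, a character comparison (using the character formula for $\bigoplus_\nu \mathbf{H}(\mathfrak{L}(\omega^l))$ established above) closes the identification with $L_1(\lambda^1)\otimes L_1(\lambda^2)$.
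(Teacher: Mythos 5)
The paper does not prove this theorem at all: it is recalled verbatim from Nakajima's work, with the citations \cite[Theorem 10.2]{MR1604167} and \cite[Theorem 5.2]{MR1865400} placed directly in the theorem header. So there is no ``paper's own proof'' to compare against; your proposal is a reconstruction of Nakajima's original argument rather than an alternative to anything in this article. As such a reconstruction it is broadly faithful: the verification of the Chevalley--Serre relations by convolution on Hecke correspondences, the excess-intersection computation for $[e_i,f_i]$ producing $\langle i,\omega-\mathbf{C}_H\nu\rangle[\Delta]$, the identification of $\mathbf{H}(\mathfrak{L}(\omega))$ as a highest weight module generated by the class of the point $\mathfrak{L}(0,\omega)$, and the $\rho(\mathbb{G}_m)$-attracting-set (Bia\l{}ynicki-Birula) decomposition together with its compatibility with the coproduct in the tensor product case are indeed the core ingredients of Nakajima's two proofs.

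Two small cautions on accuracy. First, your claim that integrability follows because ``$\mathfrak{L}(\nu,\omega)$ becomes empty once $\nu_i$ is large enough'' is not right as stated: for Kac--Moody types of infinite type the Lagrangian fibers $\mathfrak{L}(\nu,\omega)$ are nonempty for arbitrarily large $\nu$. What is bounded is the length of any $\mathfrak{sl}_2$-string at the vertex $i$ through a fixed vector, which is what Nakajima actually uses (via the $(e_i,f_i,h_i)$-subalgebra and finite-dimensionality of each weight space). Second, your route to generation of $\mathbf{H}(\mathfrak{L}(\omega))$ via the Kashiwara--Saito crystal structure on irreducible components is historically anachronistic relative to \cite{MR1604167}, where Nakajima gives a direct inductive geometric argument; the crystal description is a later and somewhat different packaging, though logically it does close the gap. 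Neither of these affects the validity of your outline, but they are worth fixing if you intend this to stand as a self-contained proof rather than a pointer to the literature.
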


\subsection{decomposition and restriction rule of Kac-Moody algebras}
\subsubsection{Characteristic cycles for framed quivers}
Assume $N=1$ and denote $\mathcal{K}(\lambda^{\bullet})$ by $\mathcal{K}(\lambda)$. We also assume that $\lambda$ is the dominant weight associated to $\omega$.  By Theorem \ref{high}, when $v=-1$, the Grothendieck group  $\mathcal{K}(\lambda)|_{v=-1}$ is an irreducible $_{\mathbb{Z}}\mathbf{U}_{-1}(\mathfrak{g})$-module. Following \cite{fang2025lusztigsheavescharacteristiccycles}, we consider the sign twist $$\psi^{-}=\psi^{-}_{Q^{(1)}}=(-1)^{\langle \nu',\nu''\rangle_{Q^{(1)}}  },$$  $$\psi^{+}=\psi^{+}_{Q^{(1)}}=(-1)^{\langle \nu',\nu''\rangle_{\overline{Q^{(1)}}}}$$ for the framed quiver, here $\langle \nu',\nu''\rangle_{Q^{(1)}}$ is the Euler form of the framed quiver $Q^{(1)}$ and $\langle \nu',\nu''\rangle_{\overline{Q^{(1)}}}$ is the Euler form of the framed quiver with the opposite orientation.
Consider operators $[\mathcal{F}^{(r)}_{i}]^{\psi}$ and $[\mathcal{E}^{(r)}_{i}]^{\psi}$ defined by
\begin{equation}\label{twist}
	\begin{split}
		[\mathcal{F}^{(r)}_{i}]^{\psi}([L])= \psi^{-}(ri,deg([L]))[\mathcal{F}^{(r)}_{i}]([L]), \\
		[\mathcal{E}^{(r)}_{i}]^{\psi}([L])= \psi^{+}(ri,deg([L]))[\mathcal{E}^{(r)}_{i}]([L]).  
	\end{split}
\end{equation}
Then under the action of  $[\mathcal{F}^{(r)}_{i}]^{\psi}$ and $[\mathcal{E}^{(r)}_{i}]^{\psi}$, the Grothendieck group $\mathcal{K}(\lambda)|_{v=-1}$ becomes a $_{\mathbb{Z}}\mathbf{U}_{1}(\mathfrak{g})$-module, which is canonically isomorphic to $L_{1}(\lambda)$ by \cite[Proposition 6.11]{fang2025lusztigsheavescharacteristiccycles}. We denote this twisted module by $mK(\lambda)|_{v=-1}^{\psi}$.

 Since $SS(L) \subseteq \Lambda_{\bV,\mathbf{W}}$ for any Lusztig's sheaf $L \in \mP_{\nu}(\lambda)$, we can construct the characteristic cycle map  $\CC_{\bV}$ following  \cite{hennecart2024geometric},
 \begin{equation}
 	\begin{split}
 		\CC_{\bV} : \mK(\mQ_{\nu}(\lambda))|_{v=-1}^{\psi} \xrightarrow{incl}  & \mK_{0}(\mD^{b}_{G_{I} \times G_{\bV}}(\mathbf{E}_{\mathbf{V},\mathbf{W},\Omega^{(1)}}, \Lambda_{\bV,\mathbf{W}})) \xrightarrow{\mathbf{For}}\\
 		  &\mK_{0}(\mD^{b}_{c}(\mathbf{E}_{\mathbf{V},\mathbf{W},\Omega^{(1)}}, \Lambda_{\bV,\mathbf{W}}))  \xrightarrow{CC} \mathbf{H}^{\mathrm{BM}}_{\mathrm{top}}(\Lambda_{\bV,\mathbf{W}} ,\mathbb{Z}), 
 	\end{split}
 \end{equation}
 where $\mK_{0}(\mD)$ is the Grothendieck group of triangulated category $\mD$,  $\mD^{b}_{G_{I} \times G_{\bV}}(\mathbf{E}_{\mathbf{V},\mathbf{W},\Omega^{(1)}}, \Lambda_{\bV,\mathbf{W}})$ is the full subcategory of $\mD^{b}_{G_{I} \times G_{\bV}}(\mathbf{E}_{\mathbf{V},\mathbf{W},\Omega^{(1)}})$ consisting of objects which have singular supports contained in $\Lambda_{\bV,\mathbf{W}} $, and $CC$ is the usual characteristic cycle map. See details in \cite[Section 2.1]{hennecart2024geometric}.
 
 By \cite[Proposition 6.4]{fang2025lusztigsheavescharacteristiccycles}, a simple Lusztig sheaf $L$ belongs to $\mN_{\nu}$ if and only if its singular support is contained in the subset of unstable points. In particular, the characteristic cycle map  $\bigoplus_{\bV}\CC_{\bV}$ induces a well-defined morphism 
$$  \CC^{s,\omega}:\mK(\lambda)|^{\psi}_{v=-1} \longrightarrow \mathbf{H}^{\mathrm{BM}}_{\mathrm{top}}(\mathfrak{L}(\omega),\mathbb{Z}),$$
which is an isomorphism of $_{\mathbb{Z}}\mathbf{U}_{1}(\mathfrak{g})$-modules by \cite[Theorem 6.16]{fang2025lusztigsheavescharacteristiccycles}.

\subsubsection{Characteristic cycles for $2$-framed quivers}
Assume $N=2$ and denote $\mathcal{K}(\lambda^{\bullet})$ by $\mathcal{K}(\lambda^{1},\lambda^{2})$. We also assume that $\lambda^{1},\lambda^{2}$ are the dominant weights associated to $\omega^{1},\omega^{2}$ respectively. Consider  the following sign twist for $2$-framed quivers
 $$\psi^{-}=\psi^{-}_{Q^{(2)}}=(-1)^{\langle \nu',\nu''\rangle_{Q^{(2)}}  },$$ 
 $$\psi^{+}=\psi^{+}_{Q^{(2)}}=(-1)^{\langle \nu',\nu''\rangle_{\overline{Q^{(2)}}}},$$
 then  the equation (\ref{twist}) defines  actions of  $[\mathcal{F}^{(r)}_{i}]^{\psi}$ and $[\mathcal{E}^{(r)}_{i}]^{\psi}$ on $\mathcal{K}(\lambda^{1},\lambda^{2})|_{v=-1}$. With the operators $[\mathcal{F}^{(r)}_{i}]^{\psi}$ and $[\mathcal{E}^{(r)}_{i}]^{\psi}$, $\mathcal{K}(\lambda^{1},\lambda^{2})|_{v=-1}$ becomes a  $_{\mathbb{Z}}\mathbf{U}_{1}(\mathfrak{g})$-module and we denote this $\mathfrak{g}$-module by $\mathcal{K}(\lambda^{1},\lambda^{2})|^{\psi}_{v=-1}$.
 
 By \cite[Lemma 6.5 and Proposition 6.8]{fang2025lusztigsheavescharacteristiccycles},
 the characteristic cycle map
 \begin{equation}
 	\begin{split}
 		\CC_{\bV} : \mK(\mQ_{\nu}(\lambda^{1},\lambda^{2}))|_{v=-1}^{\psi} \xrightarrow{incl}  & \mK_{0}(\mD^{b}_{G_{I} \times G_{\bV}}(\mathbf{E}_{\mathbf{V},\mathbf{W}^{1}\oplus \mathbf{W}^{2},\Omega^{(2)}}, \Pi_{\bV,\mathbf{W}^{1}\oplus \mathbf{W}^{2}})) \xrightarrow{\mathbf{For}}\\
 		&\mK_{0}(\mD^{b}_{c}(\mathbf{E}_{\mathbf{V},\mathbf{W}^{1}\oplus \mathbf{W}^{2},\Omega^{(2)}}, \Pi_{\bV,\mathbf{W}^{1}\oplus \mathbf{W}^{2}}))  \xrightarrow{CC} \mathbf{H}^{\mathrm{BM}}_{\mathrm{top}}(\Pi_{\bV,\mathbf{W}^{1}\oplus \mathbf{W}^{2}}),\mathbb{Z})
 	\end{split}
 \end{equation}
induces a well-defined morphism
 $$  \CC^{s,\omega^{1},\omega^{2}}:\mK(\lambda^{1},\lambda^{2})|^{\psi}_{v=-1} \longrightarrow \mathbf{H}^{\mathrm{BM}}_{\mathrm{top}}(\tilde{\mathfrak{Z}},\mathbb{Z}),$$
 which is an isomorphism of $_{\mathbb{Z}}\mathbf{U}_{1}(\mathfrak{g})$-modules by \cite[Theorem 6.23]{fang2025lusztigsheavescharacteristiccycles}.

\subsubsection{The restriction rule} 

Now we assume $N=1$ and take a subset $J \subset I$.
\begin{definition}
 	Let $\mu_{J}$ be a dominant integral weight of $\mathfrak{l}=\mathfrak{l}_{J}$. 
 	Given any locally closed subset $X \subseteq \mathbf{E}_{\bV,\mathbf{W},H^{(1)}}$, we denote $X \cap \mathbf{E}^{\geqslant \mu_{J}}_{\bV,\mathbf{W},H^{(1)}}$ by $X ^{\geqslant \mu_{J}}$,  $X \cap \mathbf{E}^{> \mu_{J}}_{\bV,\mathbf{W},H^{(1)}}$ by $X ^{> \mu_{J}}$, and $X^{\mu_{J}}$ by $X ^{\geqslant \mu_{J}} \backslash X ^{> \mu_{J}}$.
\end{definition}

In particular, $\Lambda_{\bV,\mathbf{W}}^{s,\geqslant \mu_{J} }$ and $\Lambda_{\bV,\mathbf{W}}^{s,> \mu_{J} }$ are close subsets of $\Lambda_{\bV,\mathbf{W}}^{s}$ and $G_{\bV}$ acts freely on them. We denote their geometric quotients by $\mathfrak{L}^{\geqslant \mu_{J}}(\nu,\omega)$ and  $\mathfrak{L}^{> \mu_{J}}(\nu,\omega)$ respectively, then  $\mathfrak{L}^{> \mu_{J}}(\nu,\omega) \subseteq \mathfrak{L}^{\geqslant \mu_{J}}(\nu,\omega)$  are close subvarieties of  $\mathfrak{L}(\nu,\omega)$.  The close embeddings induce morphisms 
\begin{equation}\label{i1}
	\mathbf{H}^{\mathrm{BM}}_{\mathrm{top}}(\mathfrak{L}^{> \mu_{J}}(\nu,\omega) ,\mathbb{Z}) \rightarrow \mathbf{H}^{\mathrm{BM}}_{\mathrm{top}}(\mathfrak{L}^{\geqslant \mu_{J}}(\nu,\omega) ,\mathbb{Z}),
\end{equation}
\begin{equation}\label{i2}
	\mathbf{H}^{\mathrm{BM}}_{\mathrm{top}}(\mathfrak{L}^{\geqslant \mu_{J}}(\nu,\omega) ,\mathbb{Z}) \rightarrow \mathbf{H}^{\mathrm{BM}}_{\mathrm{top}}(\mathfrak{L}(\nu,\omega) ,\mathbb{Z}).
\end{equation}

Combine Theorem \ref{main2} and \cite[Theorem 6.16]{fang2025lusztigsheavescharacteristiccycles}, we have the following proposition.

\begin{proposition}\label{main3}
		With the operator $[\Delta(\nu,\omega)]$, the Hecke correspondences $e_{j},f_{j},j \in J$ and their divided powers, the direct sums of Borel-Moore homology groups $$\mathbf{H}(\mathfrak{L}^{> \mu_{J}}(\omega),\mathbb{Z})= \bigoplus_{\nu \in \mathbb{N}I} \mathbf{H}^{\mathrm{BM}}_{\mathrm{top}}(\mathfrak{L}^{> \mu_{J}}(\nu,\omega) ,\mathbb{Z}) $$
		$$\mathbf{H}(\mathfrak{L}^{\geqslant \mu_{J}}(\omega),\mathbb{Z})= \bigoplus_{\nu \in \mathbb{N}I} \mathbf{H}^{\mathrm{BM}}_{\mathrm{top}}(\mathfrak{L}^{\geqslant \mu_{J}}(\nu,\omega) ,\mathbb{Z}) $$ 
		become  integrable $_{\mathbb{Z}}\mathbf{U}_{1}(\mathfrak{l})$-modules, which are isomorphic to  the $\mathbb{Z}$-forms of $\mathbf{res}^{\mathfrak{g}}_{\mathfrak{l}}L_{1}(\lambda)[>\mu_{J}]$ and $\mathbf{res}^{\mathfrak{g}}_{\mathfrak{l}}L_{1}(\lambda)[\geqslant \mu_{J}]$ respectively.  Moreover, we have the following commutative diagram
		\[
		\xymatrix{
		\mK^{>\mu_{J}}(\lambda)|^{\psi}_{v=-1} \ar[d] \ar[r] & \mK^{\geqslant\mu_{J}}(\lambda)|^{\psi}_{v=-1}\ar[d] \ar[r] & \mK(\lambda)|^{\psi}_{v=-1} \ar[d] \\
		\mathbf{H}(\mathfrak{L}^{> \mu_{J}}(\omega),\mathbb{Z}) \ar[r]^{(\ref{i1})} & \mathbf{H}(\mathfrak{L}^{\geqslant \mu_{J}}(\omega),\mathbb{Z}) \ar[r]^{(\ref{i2})} & \mathbf{H}(\mathfrak{L}(\omega),\mathbb{Z}),
		}
		\]
		where all the vertical maps are $\CC^{s,\omega}$.

		The pullback of $\mathfrak{L}^{\mu_{J}}(\omega) \rightarrow \mathfrak{L}^{\geqslant \mu_{J}}(\omega)$ induces a surjective morphism of $\mathfrak{l}$-modules
		$$\pi'_{\geqslant \mu_{J}}:\mathbf{H}(\mathfrak{L}^{\geqslant \mu_{J}}(\omega),\mathbb{Z}) \mathbf{H}(\mathfrak{L}^{\mu_{J}}(\omega),\mathbb{Z}),$$ then the map $\CC^{s,\omega}$ also induces an isomorphism   from
		$$\mathbf{H}(\mathfrak{L}^{ \mu_{J}}(\omega),\mathbb{Z})= \bigoplus_{\nu \in \mathbb{N}I} \mathbf{H}^{\mathrm{BM}}_{\mathrm{top}}(\mathfrak{L}^{ \mu_{J}}(\nu,\omega) ,\mathbb{Z}) $$
		to the $\mathbb{Z}$-forms of $\mathbf{res}^{\mathfrak{g}}_{\mathfrak{l}}L_{1}(\lambda)[\geqslant \mu_{J}]/ \mathbf{res}^{\mathfrak{g}}_{\mathfrak{l}}L_{1}(\lambda)[> \mu_{J}]$, which fits into the following commutative diagram of short exact sequences
		\[
		\xymatrix{
			\mK^{>\mu_{J}}(\lambda)|^{\psi}_{v=-1} \ar[d] \ar[r] & \mK^{\geqslant\mu_{J}}(\lambda)|^{\psi}_{v=-1}\ar[d] \ar[r]^{\pi_{\geqslant \mu_{J}}} & \mK^{\mu_{J}}(\lambda)|^{\psi}_{v=-1} \ar[d] \\
			\mathbf{H}(\mathfrak{L}^{> \mu_{J}}(\omega),\mathbb{Z}) \ar[r] & \mathbf{H}(\mathfrak{L}^{\geqslant \mu_{J}}(\omega),\mathbb{Z}) \ar[r]^{\pi'_{\geqslant \mu_{J}}} & \mathbf{H}(\mathfrak{L}^{\mu_{J}}(\omega),\mathbb{Z}).
		}
		\]
\end{proposition}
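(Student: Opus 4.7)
The plan is to leverage Theorem \ref{main2} (specialised to $N=1$) together with the characteristic cycle isomorphism $\CC^{s,\omega}$ of \cite[Theorem 6.16]{fang2025lusztigsheavescharacteristiccycles}, and to show that the two filtrations---the sheaf-theoretic one on the Grothendieck group and the closed-subvariety one on the Lagrangian quiver variety---correspond under $\CC^{s,\omega}$. The first step is support-tracking: for any $L\in \mD^{\geqslant \mu_J}_\nu(\lambda)$ (resp.\ $\mD^{> \mu_J}_\nu(\lambda)$), the characteristic cycle $CC(L)$ is supported on $SS(L)\subseteq \mathbf{E}^{\geqslant \mu_J}_{\bV,\mathbf{W},H^{(1)}}$ (resp.\ $\mathbf{E}^{> \mu_J}_{\bV,\mathbf{W},H^{(1)}}$); intersecting with the stable locus $\Lambda^s_{\bV,\mathbf{W}}$ and taking the free $G_{\bV}$-quotient places the resulting class inside $\mathbf{H}^{\mathrm{BM}}_{\mathrm{top}}(\mathfrak{L}^{\geqslant \mu_J}(\nu,\omega),\mathbb{Z})$ (resp.\ $\mathbf{H}^{\mathrm{BM}}_{\mathrm{top}}(\mathfrak{L}^{> \mu_J}(\nu,\omega),\mathbb{Z})$). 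This immediately factors $\CC^{s,\omega}$ through the closed inclusions (\ref{i1}) and (\ref{i2}) and produces candidate maps
$$\CC^{s,\omega}_{\geqslant\mu_J}:\mK^{\geqslant\mu_J}(\lambda)|^{\psi}_{v=-1}\to \mathbf{H}(\mathfrak{L}^{\geqslant\mu_J}(\omega),\mathbb{Z}),\qquad \CC^{s,\omega}_{>\mu_J}:\mK^{>\mu_J}(\lambda)|^{\psi}_{v=-1}\to \mathbf{H}(\mathfrak{L}^{>\mu_J}(\omega),\mathbb{Z}),$$
making the first square of the desired diagram commute by construction.

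Next I would install the $_{\mathbb{Z}}\mathbf{U}_1(\mathfrak{l})$-structures on both sides. On the algebraic side, Theorem \ref{main2} tells us that $\mathcal{E}^{(r)}_j,\mathcal{F}^{(r)}_j$ for $j\in J$ preserve $\mK^{\geqslant\mu_J}$ and $\mK^{>\mu_J}$ and give exactly the restriction $\mathbf{U}(\mathfrak{l})$-module structure. On the geometric side, the Hecke correspondences $e_j,f_j$ for $j\in J$ do not alter the components of the dimension vector indexed by $I\setminus J$, and an argument parallel to Proposition \ref{keypropg}---transported across the Hecke correspondence rather than the induction/restriction diagram---shows they preserve the closed subvarieties $\mathfrak{L}^{\geqslant\mu_J}(\omega)$ and $\mathfrak{L}^{>\mu_J}(\omega)$; this endows the two BM homology groups with $_{\mathbb{Z}}\mathbf{U}_1(\mathfrak{l})$-actions compatible with the inclusions (\ref{i1}), (\ref{i2}). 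Because $\CC^{s,\omega}$ intertwines $\mathcal{E}^{(r)}_i,\mathcal{F}^{(r)}_i$ with $e_i^{(r)},f_i^{(r)}$ by \cite[Theorem 6.16]{fang2025lusztigsheavescharacteristiccycles}, the same intertwining restricts to $\CC^{s,\omega}_{\geqslant\mu_J}$ and $\CC^{s,\omega}_{>\mu_J}$. Injectivity is automatic from the injectivity of $\CC^{s,\omega}$; surjectivity follows by combining the rank count furnished by Theorem \ref{main2} (yielding the correct weight-space dimensions for each subquotient) with the basis description of top Borel-Moore homology in terms of fundamental classes of irreducible components of $\Lambda^{s,\geqslant\mu_J}_{\bV,\mathbf{W}}$, using Proposition \ref{Cryt} and \cite[Proposition 6.4]{fang2025lusztigsheavescharacteristiccycles} to match components with simple perverse sheaves stratum by stratum. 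Finally, for the subquotient statement, the open-closed decomposition $\mathfrak{L}^{\mu_J}(\nu,\omega)=\mathfrak{L}^{\geqslant\mu_J}(\nu,\omega)\setminus \mathfrak{L}^{>\mu_J}(\nu,\omega)$ yields a long exact sequence in Borel-Moore homology which truncates to a short exact sequence on top homology by the purity/equidimensionality arguments in \cite{MR1604167}; comparing this with the short exact sequence (\ref{exactg}) via a snake-lemma chase produces the third vertical isomorphism $\CC^{s,\omega}_{\mu_J}$ and the second commutative diagram.

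The main obstacle is the last step of the surjectivity argument: the containment $CC([L])\subseteq SS(L)$ from Proposition \ref{Cryt} is generically strict, so even though every simple $L\in \mP^{\geqslant\mu_J}_\nu(\lambda)\setminus \mN_\nu$ produces a nonzero class in $\mathbf{H}^{\mathrm{BM}}_{\mathrm{top}}(\mathfrak{L}^{\geqslant\mu_J}(\nu,\omega),\mathbb{Z})$, one must ensure that these classes form a basis. Equivalently, the irreducible components of $\Lambda^{s,\geqslant\mu_J}_{\bV,\mathbf{W}}$ must be exactly those irreducible components of $\Lambda^s_{\bV,\mathbf{W}}$ lying in $\mathbf{E}^{\geqslant\mu_J}_{\bV,\mathbf{W},H^{(1)}}$, and they must correspond bijectively under $\Psi$ to the relevant simple perverse sheaves. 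Establishing this refined stratification-component bijection, together with checking that the Borel-Moore long exact sequence of the pair $(\mathfrak{L}^{\geqslant\mu_J},\mathfrak{L}^{>\mu_J})$ truncates as stated, is where the essential geometric work lies; the remainder of the proposition then follows formally.
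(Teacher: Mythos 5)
Your opening steps coincide with the paper's: you track $CC(L)$ inside $SS(L)$ to produce the restricted maps, you inherit injectivity from the global isomorphism $\CC^{s,\omega}$, and you note that $\mathfrak l$-linearity comes from the intertwining of $\mathcal E^{(r)}_{j},\mathcal F^{(r)}_{j}$ with the Hecke correspondences for $j\in J$; your observation that one must first check that those Hecke correspondences preserve the closed loci $\mathfrak L^{\geqslant\mu_J}(\omega)$ and $\mathfrak L^{>\mu_J}(\omega)$ before the target modules even make sense is a genuine point that the paper treats implicitly. The divergence is in the surjectivity argument. You propose to biject the top-dimensional irreducible components of $\Lambda^{s,\geqslant\mu_J}_{\bV,\mathbf W}$ with the nonzero simple objects of $\mP^{\geqslant\mu_J}_{\nu}(\lambda)\setminus\mN_{\nu}$ via $\Psi$, and you yourself flag that this needs the implication $\Psi(L)\subseteq\mathbf E^{\geqslant\mu_J}_{\bV,\mathbf W,H^{(1)}}\Rightarrow SS(L)\subseteq\mathbf E^{\geqslant\mu_J}_{\bV,\mathbf W,H^{(1)}}$. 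That implication is not furnished by Proposition~\ref{Cryt}(2): the containment $\Psi(L)\subseteq SS(L)$ is generically strict, and the upper bound on $SS(L)$ given there is phrased in terms of the crystal data $t_j(L),t^*_j(L)$ rather than the domination condition defining $\mathbf E^{\geqslant\mu_J}$. So the step you label as the ``essential geometric work'' is a real unresolved gap in your argument, and filling it would require an analogue of Lemma~\ref{indlemmag} transported to components and a matching at the $t_j=0$ stratum.

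The paper's proof avoids this entirely. Having established that $\CC^{s,\omega}_{\geqslant\mu_J}$ and $\CC^{s,\omega}_{>\mu_J}$ are injective and $\mathfrak l$-linear, it simply compares the two resulting submodules of $\mathbf H(\mathfrak L(\omega),\mathbb Z)\cong\mathbf{res}^{\mathfrak g}_{\mathfrak l}L_1(\lambda)$ by the same downward-induction-on-$\mu_J$ scheme already used in the proof of Theorem~\ref{main2}: within each $\nu^{\backslash}$-graded piece, both the thick subcategories and the varieties literally coincide with the ambient ones at the extreme weights (because $\mathbf E^{\geqslant\mu_J}_{\bV,\mathbf W,H^{(1)}}=\mathbf E_{\bV,\mathbf W,H^{(1)}}$ and hence $\mathfrak L^{\geqslant\mu_J}(\nu,\omega)=\mathfrak L(\nu,\omega)$ whenever $wt_J(\nu)\geqslant\mu_J$, in particular at $\mu_J=wt_J(\nu^{\backslash})$ and at $\mu_J=0$), and the short exact sequence (\ref{exactg}) together with its Borel--Moore counterpart from the open-closed decomposition propagates equality down the chain of $\mu_J$'s. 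The component-matching bijection you wanted then falls out as a corollary of this comparison rather than an input to it, which is why the paper's argument is both shorter and free of the difficulty you identified.
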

\begin{proof}
	By the definition of $\mD^{\geqslant \mu_{J}}_{\nu}(\lambda)$ and $\mD^{> \mu_{J}}_{\nu}(\lambda) $, we can see that $\CC^{s,\omega}$ defines  morphisms $$ \mK^{>\mu_{J}}(\lambda)|^{\psi}_{v=-1} \rightarrow \mathbf{H}(\mathfrak{L}^{> \mu_{J}}(\omega),\mathbb{Z}), $$
	$$ \mK^{\geqslant \mu_{J}}(\lambda)|^{\psi}_{v=-1} \rightarrow \mathbf{H}(\mathfrak{L}^{\geqslant  \mu_{J}}(\omega),\mathbb{Z}). $$
	By \cite[Theorem 6.16]{fang2025lusztigsheavescharacteristiccycles}, these morphisms are injective and $_{\mathbb{Z}}\mathbf{U}(\mathfrak{l})$-linear. Hence $\mK^{\geqslant \mu_{J}}(\lambda)|^{\psi}_{v=-1}$ is isomorphic to a submodule of $\mathbf{res}^{\mathfrak{g}}_{\mathfrak{l}}L_{1}(\lambda)[\geqslant \mu_{J}]$ via the composition of $\chi^{\lambda}$ and $\CC^{s,\omega}$. However, for $\mu_{J}=0$, we can see that $\mK^{\geqslant \mu_{J}}(\lambda)|^{\psi}_{v=-1}$ is isomorphic to $\mathbf{res}^{\mathfrak{g}}_{\mathfrak{l}}L_{1}(\lambda)$. By a similar argument as Theorem \ref{main2}, the proposition can be proved.
\end{proof}

	For any irreducible component $Z_{0}$ of $\mathfrak{L}^{\mu_{J}}(\nu,\omega)$ with $wt_{J}(\nu)=\mu_{J}$,  the closure $Z=\bar{Z_{0}}$ of $Z_{0}$ is a irreducible component in Nakajima's quiver variety $\mathfrak{L}(\nu,\omega)$, we say such $Z$ is strictly dominated by $\mu_{J}$. We denote the set of such irreducible components by $S_{2}(\mu_{J})$.  For any irreducible component $Z \in S_{2}(\mu_{J}) $, there exists   a unique highest weight vector $v_{Z}$ in $\mathbf{H}(\mathfrak{L}^{\geqslant \mu_{J}}(\omega),\mathbb{Z}) \subseteq \mathbf{H}(\mathfrak{L}(\omega),\mathbb{Z})$ such that $v_{Z}-[Z] \in \mathbf{H}(\mathfrak{L}^{> \mu_{J}}(\omega),\mathbb{Z})$.  We still denote the image of $v_{Z}$ in $\mathbf{res}^{\mathfrak{g}}_{\mathfrak{l}}L_{1}(\lambda)$ by $v_{Z}$, then $L_{1}(\mu_{J})^{Z}=\mathbf{U}^{-}(\mathfrak{l}) v_{Z}$ is an irreducible submodule isomorphic to $L_{1}(\mu_{J})$. 
\begin{corollary}\label{resrule}
	With the notations above, we have the following decomposition of $\mathfrak{l}$-modules,
	$$\mathbf{res}^{\mathfrak{g}}_{\mathfrak{l}}L_{1}(\lambda)=\bigoplus_{\mu_{J}}\bigoplus_{Z\in S_{2}(\mu_{J})} L_{1}(\mu_{J})^{Z} .  $$ 
	In particular, if the restriction coefficient $m^{\lambda}_{\mu_{J}}$ is finite, we have 
	$$m^{\lambda}_{\mu_{J}}= \sum_{\nu \in wt^{-1}_{J}(\mu_{J})} \dim \mathbf{H}^{\mathrm{BM}}_{\mathrm{top}}(\mathfrak{L}^{ \mu_{J}}(\nu,\omega),\mathbb{Q}) .$$
\end{corollary}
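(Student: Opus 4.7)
The plan is to deduce the corollary by combining Proposition \ref{main3} with the structural analysis of $\mathbf{res}^{\mathfrak{g}}_{\mathfrak{l}}L_{1}(\lambda)[\geqslant\mu_{J}]/\mathbf{res}^{\mathfrak{g}}_{\mathfrak{l}}L_{1}(\lambda)[>\mu_{J}]$ already carried out for the based-module side, transported to Borel--Moore homology via the characteristic cycle isomorphism $\CC^{s,\omega}$.

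First I would invoke Proposition \ref{main3} (tensored with $\mathbb{Q}$) to identify $\mathbf{H}(\mathfrak{L}^{\geqslant\mu_{J}}(\omega),\mathbb{Q})$, $\mathbf{H}(\mathfrak{L}^{>\mu_{J}}(\omega),\mathbb{Q})$, and the quotient $\mathbf{H}(\mathfrak{L}^{\mu_{J}}(\omega),\mathbb{Q})$ with $M[\geqslant\mu_{J}]$, $M[>\mu_{J}]$, and $M[\geqslant\mu_{J}]/M[>\mu_{J}]$ respectively, where $M=\mathbf{res}^{\mathfrak{g}}_{\mathfrak{l}}L_{1}(\lambda)$, compatibly with the $\mathfrak{l}$-action. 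By Lemma \ref{indlemmag}, specialized at $v=-1$ and twisted by $\psi$, the quotient $M[\geqslant\mu_{J}]/M[>\mu_{J}]$ is a direct sum of copies of $L_{1}(\mu_{J})$, so it suffices to identify its highest-weight space with the span of the fundamental classes $[Z_{0}]$, $Z_{0}\in S_{2}(\mu_{J})$.

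Next, I would examine the weight $\mu_{J}$ part of $\mathbf{H}(\mathfrak{L}^{\mu_{J}}(\omega),\mathbb{Q})$. A weight-$\mu_{J}$ vector in this quotient must come from a summand $\mathbf{H}^{\mathrm{BM}}_{\mathrm{top}}(\mathfrak{L}^{\mu_{J}}(\nu,\omega),\mathbb{Q})$ with $wt_{J}(\nu)=\mu_{J}$, since smaller $wt_{J}(\nu)$ would force the class to be annihilated by all $e_{j}$, $j\in J$, in a weight strictly less than $\mu_{J}$, impossible in an isotypic component of type $L_{1}(\mu_{J})$. Unwinding the definitions at $wt_{J}(\nu)=\mu_{J}$, the taut choice $\bV''=\bV$ shows $\mathfrak{L}^{\geqslant\mu_{J}}(\nu,\omega)=\mathfrak{L}(\nu,\omega)$, while $\mathfrak{L}^{>\mu_{J}}(\nu,\omega)$ is the closed subvariety of points admitting a proper $(x,\bar x,y,z)$-stable subspace with $\nu''_{j}<\nu_{j}$ for some $j\in J$. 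Hence $\mathfrak{L}^{\mu_{J}}(\nu,\omega)$ is open in $\mathfrak{L}(\nu,\omega)$, and its top Borel--Moore homology has a basis given by the fundamental classes $[Z_{0}]$ of its irreducible components; these are precisely the closures intersected, $Z=\overline{Z_{0}}\in S_{2}(\mu_{J})$.

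Each such $[Z_{0}]$ gives a highest-weight vector $v_{Z}$ (as in the statement) whose $\mathbf{U}^{-}(\mathfrak{l})$-orbit generates a copy $L_{1}(\mu_{J})^{Z}$, and different $Z$ give linearly independent vectors in the quotient. Summing over all $\mu_{J}$ and $Z$ produces the asserted decomposition; finiteness of $m^{\lambda}_{\mu_{J}}$ then immediately yields the dimension formula $m^{\lambda}_{\mu_{J}}=\sum_{\nu\in wt_{J}^{-1}(\mu_{J})}\dim\mathbf{H}^{\mathrm{BM}}_{\mathrm{top}}(\mathfrak{L}^{\mu_{J}}(\nu,\omega),\mathbb{Q})$ by comparing dimensions of highest-weight spaces. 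The main obstacle is the compatibility step: identifying the \emph{algebraic} highest-weight vectors in $\mK^{\mu_{J}}(\lambda)|^{\psi}_{v=-1}$ coming from simple perverse sheaves in $\mP^{\mu_{J},hi}_{\nu}\setminus(\mP^{\mu_{J},hi}_{\nu}\cap\mN_{\nu})$ (Lemma \ref{indlemmag}) with the \emph{geometric} highest-weight vectors $[Z_{0}]$ under $\CC^{s,\omega}$. This compatibility follows from Proposition \ref{Cryt}, which sends a simple perverse sheaf $L$ to an irreducible component $\Psi(L)\subseteq SS(L)$ appearing with coefficient $1$ in the characteristic cycle, combined with the vertical isomorphisms in the commutative diagram of Proposition \ref{main3}, which translate this bijection on the highest-weight level into a bijection between $S_{1}(\mu_{J})$ and $S_{2}(\mu_{J})$.
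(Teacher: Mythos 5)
Your proof is correct and follows essentially the same route the paper takes implicitly (the paper states this as a direct consequence of Proposition~\ref{main3} together with the paragraph defining $S_2(\mu_J)$ and the vectors $v_Z$): invoke the isomorphisms of $\mathfrak{l}$-modules in Proposition~\ref{main3}, observe that the $\mu_J$-weight space of $\mathbf{H}(\mathfrak{L}^{\mu_J}(\omega),\mathbb{Q})$ sits in the summands with $wt_J(\nu)=\mu_J$ and is spanned by fundamental classes of irreducible components of the open subsets $\mathfrak{L}^{\mu_J}(\nu,\omega)\subseteq\mathfrak{L}(\nu,\omega)$, lift each such class $[Z_0]$ to the unique highest weight vector $v_Z$ in $\mathbf{H}(\mathfrak{L}^{\geqslant\mu_J}(\omega))$, and sum.

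One small remark: the final paragraph you flag as the ``main obstacle''---matching the algebraic highest-weight vectors coming from $\mP^{\mu_J,hi}_\nu\setminus(\mP^{\mu_J,hi}_\nu\cap\mN_\nu)$ against the geometric ones $[Z_0]$ under $\CC^{s,\omega}$---is not actually needed for this corollary. The statement is entirely about the geometric module $\mathbf{H}(\mathfrak{L}(\omega),\mathbb{Q})\cong\mathbf{res}^{\mathfrak{g}}_{\mathfrak{l}}L_1(\lambda)$, and Proposition~\ref{main3} already delivers the commuting ladder of isomorphisms together with the isotypic description of the quotient. A bijection $S_1(\mu_J)\leftrightarrow S_2(\mu_J)$ would only be required if one wished to compare the two indexing sets, which the corollary does not assert.
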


\subsubsection{The decomposition rule and coinvariants}

Now we assume $N=2$. We take dominant weights $\lambda^{1},\lambda^{2}$ and the associated framing $\mathbf{W}^{1}$ and $\mathbf{W}^{2}$.
\begin{definition}
	Let $\mu$ be a dominant integral weight of $\mathfrak{g}$. 
	Given any locally closed subset $X \subseteq \mathbf{E}_{\bV,\mathbf{W}^{\bullet},H^{(2)}}$, we denote $X \cap \mathbf{E}^{\geqslant \mu}_{\bV,\mathbf{W}{\bullet},H^{(2)}}$ by $X ^{\geqslant \mu}$,  $X \cap \mathbf{E}^{> \mu}_{\bV,\mathbf{W}{\bullet},H^{(2)}}$ by $X ^{> \mu}$, and $X^{\mu}$ by $X ^{\geqslant \mu} \backslash X ^{> \mu}$.
\end{definition}

The decomposition $\mathbf{W}=\mathbf{W}^{1}\oplus \mathbf{W}^{2}$ gives an isomorphism of affine spaces, so $\Pi_{\bV,\mathbf{W}^1\oplus \mathbf{W}^2}^{s}$ can be regarded as a locally closed subset of $\mathbf{E}_{\bV,\mathbf{W}^{\bullet},H^{(2)}}$. In particular, $\Pi_{\bV,\mathbf{W}^1\oplus \mathbf{W}^2}^{s,\geqslant \mu}$ and $\Pi_{\bV,\mathbf{W}^1\oplus \mathbf{W}^2}^{s,>\mu}$ are close subsets of $\Pi_{\bV,\mathbf{W}^1\oplus \mathbf{W}^2}^{s}$. Their geometric quotients $\tilde{\mathfrak{Z}}^{\geqslant \mu}(\nu,\omega)$ and $\tilde{\mathfrak{Z}}^{> \mu}(\nu,\omega)$ are close subvarities of $\tilde{\mathfrak{Z}}(\nu,\omega)$. We also denote $\tilde{\mathfrak{Z}}^{\geqslant \mu}(\nu,\omega) \backslash \tilde{\mathfrak{Z}}^{> \mu}(\nu,\omega)$ by $\tilde{\mathfrak{Z}}^{\mu}(\nu,\omega)$.
The push-forward of close embeddings induce morphisms
\begin{equation}\label{i3}
	\mathbf{H}^{\mathrm{BM}}_{\mathrm{top}}(\tilde{\mathfrak{Z}}^{> \mu}(\nu,\omega) ,\mathbb{Z}) \rightarrow \mathbf{H}^{\mathrm{BM}}_{\mathrm{top}}(\tilde{\mathfrak{Z}}^{\geqslant \mu}(\nu,\omega) ,\mathbb{Z}),
\end{equation}
\begin{equation}\label{i4}
	\mathbf{H}^{\mathrm{BM}}_{\mathrm{top}}(\tilde{\mathfrak{Z}}^{\geqslant \mu}(\nu,\omega) ,\mathbb{Z}) \rightarrow \mathbf{H}^{\mathrm{BM}}_{\mathrm{top}}(\tilde{\mathfrak{Z}}(\nu,\omega) ,\mathbb{Z}),
\end{equation}
and the pull-back of the open embedding induce a morphism
\begin{equation}
	\pi'_{\geqslant \mu}:\mathbf{H}^{\mathrm{BM}}_{\mathrm{top}}(\tilde{\mathfrak{Z}}^{\geqslant \mu}(\nu,\omega) ,\mathbb{Z}) \rightarrow \mathbf{H}^{\mathrm{BM}}_{\mathrm{top}}(\tilde{\mathfrak{Z}}^{\mu}(\nu,\omega) ,\mathbb{Z}).
\end{equation}

By a similar argument as Proposition \ref{main3}, we have the following proposition.

\begin{proposition}\label{main4}
	Let $M$ be the tensor product $L_{1}(\lambda^{2}) \otimes L_{1}(\lambda^{1})$ of irreducible highest weight $\mathfrak{g}$-modules. With the operator $[\Delta(\nu,\omega)]$, the Hecke correspondences $e_{i},f_{i},i \in I$ and their divided powers, the direct sums of Borel-Moore homology groups $$\mathbf{H}(\tilde{\mathfrak{Z}}^{> \mu}(\omega),\mathbb{Z})= \bigoplus_{\nu \in \mathbb{N}I} \mathbf{H}^{\mathrm{BM}}_{\mathrm{top}}(\tilde{\mathfrak{Z}}^{> \mu}(\nu,\omega) ,\mathbb{Z}) $$
    $$\mathbf{H}(\tilde{\mathfrak{Z}}^{\geqslant \mu}(\omega),\mathbb{Z})= \bigoplus_{\nu \in \mathbb{N}I} \mathbf{H}^{\mathrm{BM}}_{\mathrm{top}}(\tilde{\mathfrak{Z}}^{\geqslant \mu}(\nu,\omega) ,\mathbb{Z}) $$ 
	become  integrable $_{\mathbb{Z}}\mathbf{U}_{1}(\mathfrak{g})$-modules, which are isomorphic to  the $\mathbb{Z}$-forms of $M[>\mu]$ and $M[\geqslant \mu]$ respectively.  Moreover, we have the following commutative diagram
	\[
	\xymatrix{
		\mK^{>\mu}(\lambda^{1},\lambda^{2})|^{\psi}_{v=-1} \ar[d] \ar[r] & \mK^{\geqslant\mu}(\lambda^{1},\lambda^{2})|^{\psi}_{v=-1}\ar[d] \ar[r] & \mK(\lambda^{1},\lambda^{2})|^{\psi}_{v=-1} \ar[d] \\
		\mathbf{H}(\tilde{\mathfrak{Z}}^{> \mu}(\omega),\mathbb{Z}) \ar[r]^{(\ref{i3})} & \mathbf{H}(\tilde{\mathfrak{Z}}^{\geqslant \mu}(\omega),\mathbb{Z}) \ar[r]^{(\ref{i4})} & \mathbf{H}(\tilde{\mathfrak{Z}}(\omega),\mathbb{Z}),
	}
	\]
	where all the vertical maps are $\CC^{s,\omega^{1},\omega^{2}}$. The direct sum of Borel-Moore homology groups
	$$\mathbf{H}(\tilde{\mathfrak{Z}}^{\mu}(\omega),\mathbb{Z})= \bigoplus_{\nu \in \mathbb{N}I} \mathbf{H}^{\mathrm{BM}}_{\mathrm{top}}(\tilde{\mathfrak{Z}}^{ \mu}(\omega)(\nu,\omega) ,\mathbb{Z}) $$
	is isomorphic to the $\mathbb{Z}$-forms of $M[\geqslant \mu]/ M[> \mu]$, which fits into the following commutative diagram of short exact sequences
	\[
	\xymatrix{
		\mK^{>\mu}(\lambda^{1},\lambda^{2})|^{\psi}_{v=-1} \ar[d] \ar[r] & \mK^{\geqslant \mu}(\lambda^{1},\lambda^{2})|^{\psi}_{v=-1}\ar[d] \ar[r]^{\pi_{\geqslant \mu}} & \mK^{\mu}(\lambda^{1},\lambda^{2})(\lambda)|^{\psi}_{v=-1} \ar[d] \\
		\mathbf{H}(\tilde{\mathfrak{Z}}^{> \mu}(\omega),\mathbb{Z}) \ar[r] & \mathbf{H}(\tilde{\mathfrak{Z}}^{\geqslant \mu}(\omega),\mathbb{Z}) \ar[r]^{\pi'_{\geqslant \mu}} & \mathbf{H}(\tilde{\mathfrak{Z}}^{\mu}(\omega),\mathbb{Z}).
	}
	\]
\end{proposition}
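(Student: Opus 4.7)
The strategy is to transport the argument of Proposition \ref{main3} from the one-framing setting to the two-framing setting, using Theorem \ref{main1} in place of Theorem \ref{main2} and the characteristic cycle map $\CC^{s,\omega^{1},\omega^{2}}$ in place of $\CC^{s,\omega}$. The key observation is that the closed subvarieties $\tilde{\mathfrak{Z}}^{\geqslant \mu}(\nu,\omega)$ and $\tilde{\mathfrak{Z}}^{> \mu}(\nu,\omega)$ are, by construction, the geometric quotients of the stable loci of $\Pi_{\bV,\mathbf{W}^{1}\oplus\mathbf{W}^{2}}\cap \mathbf{E}^{\geqslant \mu}_{\bV,\mathbf{W}^{\bullet},H^{(2)}}$ and $\Pi_{\bV,\mathbf{W}^{1}\oplus\mathbf{W}^{2}}\cap \mathbf{E}^{> \mu}_{\bV,\mathbf{W}^{\bullet},H^{(2)}}$ respectively, so the singular support conditions defining $\mD^{\geqslant \mu}_{\nu}(\lambda^{\bullet})$ and $\mD^{> \mu}_{\nu}(\lambda^{\bullet})$ match the cycle support conditions defining these subvarieties.

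First, I would verify that $\CC^{s,\omega^{1},\omega^{2}}$ restricts to well-defined maps
\[
\mK^{>\mu}(\lambda^{1},\lambda^{2})|^{\psi}_{v=-1}\to \mathbf{H}(\tilde{\mathfrak{Z}}^{>\mu}(\omega),\mathbb{Z}),\qquad \mK^{\geqslant \mu}(\lambda^{1},\lambda^{2})|^{\psi}_{v=-1}\to \mathbf{H}(\tilde{\mathfrak{Z}}^{\geqslant \mu}(\omega),\mathbb{Z}).
\]
This follows because, for any $L\in \mP^{\geqslant \mu}_{\nu}(\lambda^{1},\lambda^{2})$, we have $SS(L)\subseteq \Lambda_{\bV,\mathbf{W}^{1}\oplus\mathbf{W}^{2}}\cap \mathbf{E}^{\geqslant \mu}_{\bV,\mathbf{W}^{\bullet},H^{(2)}}$, so its characteristic cycle is supported on the preimage of $\tilde{\mathfrak{Z}}^{\geqslant \mu}(\omega)$ under the quotient map; by \cite[Proposition 6.4]{fang2025lusztigsheavescharacteristiccycles} the unstable contributions die, and the surviving cycle lies in $\mathbf{H}(\tilde{\mathfrak{Z}}^{\geqslant \mu}(\omega),\mathbb{Z})$. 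The same argument handles the $>\mu$ case. Commutativity of the two squares in the diagram is then automatic, since the horizontal maps in both rows are the tautological inclusions induced by inclusions of supports.

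Next, I would show that these restricted maps are $_{\mathbb{Z}}\mathbf{U}_{1}(\mathfrak{g})$-linear and compatible with the short exact sequence for the quotient $\mL^{\mu}_{\nu}(\lambda^{1},\lambda^{2})$. The $\mathfrak{g}$-linearity is inherited from the full map $\CC^{s,\omega^{1},\omega^{2}}$, which is $\mathfrak{g}$-linear by \cite[Theorem 6.23]{fang2025lusztigsheavescharacteristiccycles}; one must only check that the Hecke-induced operators preserve the closed subvarieties, which follows from Proposition \ref{keyprop} and Proposition \ref{keyprop2}, since the Hecke correspondence $\mathfrak{P}_{i}(\nu,\omega)$ is precisely the Lagrangian used to define $\mathcal{E}_{i},\mathcal{F}_{i}$, and the same stability-of-singular-support arguments transcribe to stability of the geometric cycles. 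The excision sequence for the closed pair $\tilde{\mathfrak{Z}}^{>\mu}(\nu,\omega)\subseteq \tilde{\mathfrak{Z}}^{\geqslant \mu}(\nu,\omega)$ provides the short exact sequence $0\to \mathbf{H}(\tilde{\mathfrak{Z}}^{>\mu}(\omega),\mathbb{Z})\to \mathbf{H}(\tilde{\mathfrak{Z}}^{\geqslant \mu}(\omega),\mathbb{Z})\xrightarrow{\pi'_{\geqslant\mu}} \mathbf{H}(\tilde{\mathfrak{Z}}^{\mu}(\omega),\mathbb{Z})\to 0$ on top degree, paralleling the categorical short exact sequence (\ref{exactg}).

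Finally, to upgrade the $_{\mathbb{Z}}\mathbf{U}_{1}(\mathfrak{g})$-linear morphism to an isomorphism, I would argue exactly as in the proof of Proposition \ref{main3}: for $\mu=0$, the whole space, the map $\CC^{s,\omega^{1},\omega^{2}}$ is already known to be an isomorphism by \cite[Theorem 6.23]{fang2025lusztigsheavescharacteristiccycles}, identifying $\mK(\lambda^{1},\lambda^{2})|^{\psi}_{v=-1}$ with $\mathbf{H}(\tilde{\mathfrak{Z}}(\omega),\mathbb{Z})$ and with $_{\mathbb{Z}}M=M[\geqslant 0]$. By Theorem \ref{main1}, the source $\mK^{\geqslant \mu}(\lambda^{1},\lambda^{2})|^{\psi}_{v=-1}$ is canonically isomorphic to $_{\mathbb{Z}}M[\geqslant \mu]$ for every $\mu$, and the composition with the inclusion into $\mK(\lambda^{1},\lambda^{2})|^{\psi}_{v=-1}$ realizes $_{\mathbb{Z}}M[\geqslant\mu]\hookrightarrow {_{\mathbb{Z}}M}$; hence the restricted $\CC^{s,\omega^{1},\omega^{2}}$ must land inside the image of $\mathbf{H}(\tilde{\mathfrak{Z}}^{\geqslant \mu}(\omega),\mathbb{Z})\to \mathbf{H}(\tilde{\mathfrak{Z}}(\omega),\mathbb{Z})$ and identify it, via decreasing induction on $\mu$ and the five lemma applied to the short exact sequences, with $\mathbf{H}(\tilde{\mathfrak{Z}}^{\geqslant \mu}(\omega),\mathbb{Z})$. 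The main obstacle I anticipate is verifying that the pushforward $\mathbf{H}(\tilde{\mathfrak{Z}}^{\geqslant \mu}(\omega),\mathbb{Z})\to \mathbf{H}(\tilde{\mathfrak{Z}}(\omega),\mathbb{Z})$ is injective on top Borel-Moore homology, so that one genuinely identifies the image with $_{\mathbb{Z}}M[\geqslant\mu]$ rather than only a quotient; this will use that $\tilde{\mathfrak{Z}}^{\geqslant \mu}(\omega)$ is a union of irreducible components of $\tilde{\mathfrak{Z}}(\omega)$ of the expected dimension, together with Proposition \ref{Cryt} relating singular supports of simple perverse sheaves to irreducible components of the nilpotent variety, so that no top-dimensional irreducible components are lost or gained under the embedding.
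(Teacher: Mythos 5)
Your proposal is correct and follows essentially the same route as the paper, which simply invokes ``a similar argument as Proposition~\ref{main3}'': injectivity and $\mathfrak{g}$-linearity of the restricted characteristic cycle map via \cite[Theorem~6.23]{fang2025lusztigsheavescharacteristiccycles}, identification with $_{\mathbb{Z}}M[\geqslant\mu]$ first for the extreme $\mu$ and then by decreasing induction using Theorem~\ref{main1} and the short exact sequences. Your explicit attention to the top-degree injectivity of the pushforward $\mathbf{H}(\tilde{\mathfrak{Z}}^{\geqslant\mu}(\omega),\mathbb{Z})\to\mathbf{H}(\tilde{\mathfrak{Z}}(\omega),\mathbb{Z})$ is a point the paper leaves implicit, but it is indeed handled exactly as you describe, since $\tilde{\mathfrak{Z}}^{\geqslant\mu}$ is a closed union of top-dimensional irreducible components.
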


	For any irreducible component $Z_{0}$ of $\tilde{\mathfrak{Z}}^{\mu}(wt^{-1}(\mu),\omega)$,  we say its closure $Z=\bar{Z_{0}}$ is strictly dominated by $\mu$. We denote the set of such irreducible components by $S_{2}(\mu)$.  For any irreducible component $Z \in S_{2}(\mu) $, there exists   a unique highest weight vector $v_{Z}$ in $\mathbf{H}^{\textrm{BM}}_{\textrm{top}}(\tilde{\mathfrak{Z}}^{\geqslant \mu}(wt^{-1}(\mu),\omega),\mathbb{Z}) \subseteq \mathbf{H}^{\textrm{BM}}_{\textrm{top}}(\tilde{\mathfrak{Z}}(wt^{-1}(\mu),\omega),\mathbb{Z})$ such that $v_{Z}-[Z] \in \mathbf{H}^{\textrm{BM}}_{\textrm{top}}(\tilde{\mathfrak{Z}}^{> \mu}(wt^{-1}(\mu),\omega),\mathbb{Z})$, then $L_{1}(\mu)^{Z}=\mathbf{U}^{-}(\mathfrak{g}) v_{Z}$ is an irreducible submodule isomorphic to $L_{1}(\mu)$.

\begin{corollary}\label{decrule}
	With the notations above, we have the following decomposition of $\mathfrak{g}$-modules,
	$$L_{1}(\lambda^{1}) \otimes L_{1}(\lambda^{2})=\bigoplus_{\mu}\bigoplus_{Z \in S_{2}(\mu)} L_{1}(\mu)^{Z}.  $$
	 In particular, for any dominant integral weights $\mu$ of $\mathfrak{g}$, the decomposition coefficient $m^{\lambda^{1},\lambda^{2}}_{\mu}$ is determined by  
	$$m^{\lambda^{1},\lambda^{2}}_{\mu}= \dim \mathbf{H}^{\mathrm{BM}}_{\mathrm{top}}(\tilde{\mathfrak{Z}}^{\mu}(wt^{-1}(\mu),\omega),\mathbb{Q}) .$$
\end{corollary}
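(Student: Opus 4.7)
The plan is to combine Proposition \ref{main4} with the classical decomposition of a based integrable module through its $M[\geqslant\mu]/M[>\mu]$-filtration. Write $M=L_{1}(\lambda^{2})\otimes L_{1}(\lambda^{1})$.

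First, by Proposition \ref{main4}, the characteristic cycle map induces a canonical isomorphism of $_{\mathbb{Z}}\mathbf{U}_{1}(\mathfrak{g})$-modules $\mathbf{H}(\tilde{\mathfrak{Z}}^{\mu}(\omega),\mathbb{Z})\cong{_{\mathbb{Z}}(M[\geqslant\mu]/M[>\mu])}$, compatible with the corresponding identifications of $\mathbf{H}(\tilde{\mathfrak{Z}}^{\geqslant\mu}(\omega),\mathbb{Z})$ and $\mathbf{H}(\tilde{\mathfrak{Z}}^{>\mu}(\omega),\mathbb{Z})$ with $_{\mathbb{Z}}M[\geqslant\mu]$ and $_{\mathbb{Z}}M[>\mu]$ respectively. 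By the based-module theory recalled in Section 2.2, the subquotient $M[\geqslant\mu]/M[>\mu]$ equals the $L_{1}(\mu)$-isotypic component $M[\mu]\cong L_{1}(\mu)^{\oplus m_{\mu}^{\lambda^{1},\lambda^{2}}}$. Computing the $\mu$-weight space (the highest weight space of $M[\mu]$) on each side of the isomorphism and noting that this weight space is concentrated in dimension vector $\nu=wt^{-1}(\mu)$, we obtain
\[
m_{\mu}^{\lambda^{1},\lambda^{2}}=\dim_{\mathbb{Q}}\mathbf{H}^{\mathrm{BM}}_{\mathrm{top}}(\tilde{\mathfrak{Z}}^{\mu}(wt^{-1}(\mu),\omega),\mathbb{Q}),
\]
which is the multiplicity statement.

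Next, to produce the explicit summands, I would use that $\mathbf{H}^{\mathrm{BM}}_{\mathrm{top}}(\tilde{\mathfrak{Z}}^{\mu}(wt^{-1}(\mu),\omega),\mathbb{Q})$ admits a $\mathbb{Q}$-basis indexed by its top-dimensional irreducible components, namely $S_{2}(\mu)$ (pure-dimensionality being inherited from the Lagrangian variety $\tilde{\mathfrak{Z}}(\nu,\omega)$). For each $Z\in S_{2}(\mu)$, the distinguished lift $v_{Z}\in\mathbf{H}(\tilde{\mathfrak{Z}}^{\geqslant\mu}(\omega),\mathbb{Z})$ defined before the corollary exists because $\pi'_{\geqslant\mu}$ is surjective and any preimage of $[Z]$ can be corrected by an element of $\mathbf{H}(\tilde{\mathfrak{Z}}^{>\mu}(\omega),\mathbb{Z})$ so as to be annihilated by every $e_{i}$; uniqueness follows since the difference of two such lifts would be a weight-$\mu$ highest weight vector inside $M[>\mu]$, but every irreducible summand of $M[>\mu]$ has highest weight strictly greater than $\mu$. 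The submodule $L_{1}(\mu)^{Z}=\mathbf{U}^{-}(\mathfrak{g})v_{Z}$ is then irreducible of type $L_{1}(\mu)$, and summing over $Z\in S_{2}(\mu)$ exhausts the isotypic component $M[\mu]$.

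Finally, using the short exact sequences $0\to M[>\mu]\to M[\geqslant\mu]\to M[\mu]\to 0$ and decreasing induction on dominant weights $\mu\leqslant\lambda^{1}+\lambda^{2}$, the full decomposition $M=\bigoplus_{\mu}\bigoplus_{Z\in S_{2}(\mu)}L_{1}(\mu)^{Z}$ follows. The main technical point lies in the uniqueness of $v_{Z}$, which ultimately reduces to the tautology that $M[>\mu]$ contains no weight-$\mu$ highest weight vectors; beyond this, the argument is pure bookkeeping, transporting the classical highest-weight decomposition of $M[\geqslant\mu]$ through the chain of canonical isomorphisms supplied by Proposition \ref{main4}.
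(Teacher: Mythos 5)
Your proof is correct and follows the same route the paper intends: transport the classical based-module decomposition of $M[\geqslant\mu]/M[>\mu]\cong M[\mu]$ through the characteristic-cycle isomorphisms of Proposition~\ref{main4}, read off the multiplicity from the $\mu$-weight space concentrated in dimension vector $wt^{-1}(\mu)$, and realize the isotypic summands via the distinguished lifts $v_{Z}$, whose existence and uniqueness you justify exactly as the paper's discussion implicitly requires. The one point worth stating more carefully is the claim that the classes of all irreducible components of $\tilde{\mathfrak{Z}}^{\mu}(wt^{-1}(\mu),\omega)$ span the top Borel--Moore homology; this holds because $\tilde{\mathfrak{Z}}^{\geqslant\mu}$ and $\tilde{\mathfrak{Z}}^{>\mu}$ are unions of irreducible components of the equidimensional Lagrangian $\tilde{\mathfrak{Z}}(wt^{-1}(\mu),\omega)$ (a consequence of the crystal/micro-local description behind Proposition~\ref{Cryt}), rather than being automatic for an arbitrary closed subset of a pure-dimensional variety.
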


Now we  take dominant weights $\lambda^{1},\lambda^{2},\cdots ,\lambda^{N}$ and consider the associated $N$-framed quiver.
\begin{definition}
	We say a point $(x,\bar{x},y,z)$ in $\mathbf{E}_{\bV,\mathbf{W}^{\bullet},H^{(N)}}$ is costable if and only if it is in the open complement of $\mathbf{E}^{> 0}_{\bV,\mathbf{W}^{\bullet},H^{(N)}}$. For any locally closed subset $X \subseteq \mathbf{E}_{\bV,\mathbf{W}^{\bullet},H^{(N)}}$, we denote the subset of its costable points by $X^{s\ast }$.
\end{definition}
\begin{remark}
	When $N=1$, Nakajima's stable condition is equivalent to that $(x,\bar{x},y,z)$ doesn't admit nonzero invariant subspace supported on $I$, while to be costable means $(x,\bar{x},y,z)$ doesn't admit nonzero quotient space supported on $I$.  That is why we use costable to call this property. 
\end{remark}

By \cite{MR1865400}, one can inductively define $\mathbb{G}_{m}$-action on $\tilde{\mathfrak{Z}}(\omega)$ to construct Nakajima's tensor product variety for general tensor product $M=L_{1}(\lambda^{1}) \otimes L_{1}(\lambda^{2}) \otimes \cdots \otimes L_{1}(\lambda^{N})$ of $N$ irreducible $\mathfrak{g}$-modules. 
Fix a decomposition $\mathbf{W}=\bigoplus_{1\leqslant l\leqslant N}\mathbf{W}^{l}$, let $\tilde{\mathfrak{Z}}(\omega^{\bullet})=\bigcup_{\nu \in \mathbb{N}I} \tilde{\mathfrak{Z}}(\nu,\omega^{\bullet})$ be the tensor product variety associated to $M=L_{1}(\lambda^{1}) \otimes L_{1}(\lambda^{2}) \otimes \cdots \otimes L_{1}(\lambda^{N})$. The variety $\tilde{\mathfrak{Z}}(\nu,\omega^{\bullet})$ can be identified with a geometric quotient of a locally close subset $\Pi_{\bV,\mathbf{W}^{\bullet}}^{s} \subseteq \mathbf{E}_{\bV,\mathbf{W}^{\bullet},H^{(N)}}$, define $\tilde{\mathfrak{Z}}(\omega^{\bullet})^{s\ast}$ to be the open subset of $\tilde{\mathfrak{Z}}(\omega^{\bullet})$, which is  the disjoint union of the geometric quotients of $\Pi_{\bV,\mathbf{W}^{\bullet}}^{s,s\ast}$ for different $\bV$. Indeed, $\tilde{\mathfrak{Z}}(\nu,\omega^{\bullet})^{s\ast}$ is empty unless $wt(\nu)=0$.

Even though the authors in \cite{fang2025lusztigsheavescharacteristiccycles} only build the characteristic maps for $N=1,2$ cases, their results can be easily generalized to general $N$ by  induction. Hence we have the following proposition for coinvariants.
\begin{proposition}\label{main5}
	Let $M=L_{1}(\lambda^{1}) \otimes L_{1}(\lambda^{2}) \otimes \cdots \otimes L_{1}(\lambda^{N})$ be the tensor product, then the dimension of the coinvariant $M_{\ast}$ is equal to 
	$\dim \mathbf{H}^{\mathrm{BM}}_{\mathrm{top}}(\tilde{\mathfrak{Z}}(\omega^{\bullet})^{s\ast},\mathbb{Q}) $.
\end{proposition}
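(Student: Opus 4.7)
The plan is to combine the based-module realization of the coinvariants from Theorem~\ref{main1}(3) (applied with $\mu=0$) with an $N$-framed generalization of the characteristic cycle isomorphism of Proposition~\ref{main4}. Since every dominant weight of $\mathfrak{g}$ is $\geqslant 0$, we have $M[\geqslant 0]=M$ and $M[>0]=M[\neq 0]$, so the coinvariants $M_\ast=M/M[\neq 0]$ coincide with $M[\geqslant 0]/M[>0]$. Taking $\mu=0$ in Theorem~\ref{main1}(3) therefore yields a canonical isomorphism $\chi^{\lambda^\bullet}:\mK^0(\lambda^\bullet)\xrightarrow{\sim} {}_{\mathcal{A}} M_\ast$, and by Lemmas~\ref{casexiao} and \ref{casenocompare} the category $\mL^0_\nu(\lambda^\bullet)$ vanishes unless $\nu=wt^{-1}(0)$, so both sides are concentrated in that dimension vector (the statement is vacuous if $wt^{-1}(0)\notin \mathbb{N}I$, in which case both sides vanish).

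The next step is to extend the characteristic cycle map of \cite{fang2025lusztigsheavescharacteristiccycles} from $N\leqslant 2$ to arbitrary $N$. I plan to do this inductively using the coproduct-type isomorphism $\Delta_N$ of Theorem~\ref{tensor}, together with the iterated $\mathbb{G}_m$-attractor realization of the tensor product variety $\tilde{\mathfrak{Z}}(\omega^\bullet)$ from \cite{MR1865400}, so as to produce a $\mathfrak{g}$-linear isomorphism
\[
\CC^{s,\omega^\bullet}:\mK(\lambda^\bullet)|^\psi_{v=-1}\longrightarrow \mathbf{H}^{\mathrm{BM}}_{\mathrm{top}}(\tilde{\mathfrak{Z}}(\omega^\bullet),\mathbb{Z})
\]
by reducing at each step to the $N=2$ case already handled in \cite{fang2025lusztigsheavescharacteristiccycles}; the sign twists in (\ref{twist}) are to be iterated accordingly, with the appropriate $\psi^\pm_{Q^{(N)}}$ defined through the Euler form of $Q^{(N)}$.

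It remains to descend $\CC^{s,\omega^\bullet}$ to the $\mu=0$ quotient. By the singular-support definitions in Section~4, objects of $\mD^{>0}_\nu(\lambda^\bullet)$ have singular support in $\mathbf{E}^{>0}_{\bV,\mathbf{W}^\bullet,H^{(N)}}$ and objects of $\mN_\nu$ have singular support in the unstable locus; after intersection with $\Pi_{\bV,\mathbf{W}^\bullet}^s$ and passage to the GIT quotient, both kinds of cycles land in the complement $\tilde{\mathfrak{Z}}(\omega^\bullet)\setminus \tilde{\mathfrak{Z}}(\omega^\bullet)^{s\ast}$, since by definition the costable locus is precisely the complement of $\mathbf{E}^{>0}$ inside the stable locus. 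Consequently $\CC^{s,\omega^\bullet}$ factors through a well-defined map
\[
\mK^0(\lambda^\bullet)|^\psi_{v=-1}\longrightarrow \mathbf{H}^{\mathrm{BM}}_{\mathrm{top}}(\tilde{\mathfrak{Z}}(\omega^\bullet)^{s\ast},\mathbb{Z}),
\]
and a five-lemma argument applied to the short exact sequence $0\to \mK^{>0}\to\mK^{\geqslant 0}\to\mK^{0}\to 0$ from the proof of Theorem~\ref{main1} and the open-closed homological exact sequence associated with $\tilde{\mathfrak{Z}}^{>0}\hookrightarrow\tilde{\mathfrak{Z}}^{\geqslant 0}\hookleftarrow\tilde{\mathfrak{Z}}^{s\ast}$ shows that this induced map is an isomorphism. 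Composing with the isomorphism of the first step and taking $\mathbb{Q}$-dimensions yields the desired equality $\dim M_\ast=\dim \mathbf{H}^{\mathrm{BM}}_{\mathrm{top}}(\tilde{\mathfrak{Z}}(\omega^\bullet)^{s\ast},\mathbb{Q})$.

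The hard part will be the $N$-framed extension of the characteristic cycle map in the second step: although the geometric ingredients (the iterated attractor decomposition, the compatibility of $\Delta_N$ with restriction of Borel--Moore classes to $\mathbb{G}_m$-fixed loci, and the agreement of Lusztig's induction with the Hecke correspondences) are all available in the $N=2$ case and in \cite{MR1865400}, verifying that the sign twists compose consistently under iteration of $\Delta_N$ and that the resulting map descends through the larger thick subcategory $\mN^{>0}$ (not merely through $\mN$) requires careful micro-local bookkeeping; once this is in place, the $\mu=0$ localization step parallels Proposition~\ref{main4} directly.
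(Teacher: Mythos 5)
Your proposal takes essentially the same route that the paper intends: the paper itself gives no proof environment for Proposition~\ref{main5}, only the one-sentence remark preceding it that the characteristic cycle map of \cite{fang2025lusztigsheavescharacteristiccycles} extends inductively from $N\le 2$ to general $N$. You correctly reduce to the $\mu=0$ localization via Theorem~\ref{main1}(3), you correctly identify the $N$-framed extension of the characteristic cycle map as the crux (exactly what the paper waves away), and the descent argument via singular supports relative to $\mN^{>0}$ (not merely $\mN$) is sound, since characteristic cycles are additive over distinguished triangles so it suffices to check the two generating classes of $\mN^{>0}_\nu$ separately.

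One point in the final step is slightly under-argued. In the commutative ladder between $0\to\mK^{>0}\to\mK^{\geqslant 0}\to\mK^{0}\to 0$ and the top-degree open--closed sequence for $\tilde{\mathfrak{Z}}^{>0}\hookrightarrow\tilde{\mathfrak{Z}}^{\geqslant 0}\hookleftarrow\tilde{\mathfrak{Z}}^{s\ast}$, knowing that the middle vertical map is an isomorphism and the left vertical map is injective only yields surjectivity of the right-hand map; by the snake lemma its kernel is isomorphic to the cokernel of the left vertical map, so you also need that $\mK^{>0}|^\psi_{v=-1}\to\mathbf{H}^{\mathrm{BM}}_{\mathrm{top}}(\tilde{\mathfrak{Z}}^{>0},\mathbb{Z})$ is surjective. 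This is supplied (as in the paper's proofs of Proposition~\ref{main3} and Theorem~\ref{main2}) by running the decreasing induction on $\mu$ to first establish the isomorphisms on all the $\mK^{\geqslant\mu}$-pieces, i.e.\ by first proving the $N$-framed analogue of Proposition~\ref{main4} and then specializing to $\mu=0$. That is a fixable omission rather than a genuine gap, and the rest of the argument matches the paper's intent.
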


\end{spacing}

\end{document}